\newtheorem{theorem}{Theorem}[section]
\newtheorem{theo}{Theorem}[section]
\newtheorem{lemma}[theorem]{Lemma}
\newtheorem{lem}[theorem]{Lemma}
\newtheorem{proposition}[theorem]{Proposition}
\newtheorem{prop}[theorem]{Proposition}
\newtheorem{remark}[theorem]{Remark}
\def\eps{\varepsilon }
\newcommand{\RR}{\mathbb{R}}
\newcommand{\cO}{\mathcal{O}}
\newcommand{\CC}{\mathbb{C}}
\newcommand{\NN}{{\mathbb N}}
\newcommand{\ZZ}{{\mathbb Z}}
\newcommand{\TT}{{\mathbb T}}
\newcommand{\Range}{{\rm range}  \,}
\def\beq{\begin{equation}}
\def\eeq{\end{equation}}
\def\bb1{{1\!\!1}}
\def\I{\mbox{Im }}
\def\cB{\mathcal{B}}
\newcommand{\pa}{{\partial}}
\def\bl{\mathrm{bl}}
\def\cA{\mathcal{A}}
\def\rit{{\Bbb R}}
\def\cit{{\Bbb C}}
\def\zit{{\Bbb Z}}
\def\eps{\varepsilon}
\begin{document}

\title{Stability of shear flows near a boundary}

\author{Emmanuel Grenier\footnotemark[1]
 \and Toan T. Nguyen\footnotemark[2]}

\date\today

\maketitle 

\renewcommand{\thefootnote}{\fnsymbol{footnote}}

\footnotetext[1]{Equipe Projet Inria NUMED,
 INRIA Rh\^one Alpes, Unit\'e de Math\'ematiques Pures et Appliqu\'ees., 
 UMR 5669, CNRS et \'Ecole Normale Sup\'erieure de Lyon,
               46, all\'ee d'Italie, 69364 Lyon Cedex 07, France. Email: egrenier@umpa.ens-lyon.fr}

\footnotetext[2]{Department of Mathematics, Penn State University, State College, PA 16803. Email: nguyen@math.psu.edu. TN is partly supported by the NSF under grant DMS-1764119 and an AMS Centennial fellowship.}

\tableofcontents


\chapter{Introduction}\label{chapter-intro}


\section{Introduction}


This book is devoted to the study of the linear and nonlinear stability of shear flows for Navier Stokes equations
for incompressible fluids with Dirichlet boundary conditions in the case of small viscosity.

More precisely, we shall consider the classical Navier-Stokes equations for an incompressible
fluid in a  spatial domain $\Omega\subset \RR^d$, with $d \ge2$, 
\beq \label{NavierStokes1}
\partial_t u +  u\cdot\nabla u - \nu \Delta u + \nabla p = 0,
\eeq
\beq \label{NavierStokes2}
\nabla\cdot u = 0,
\eeq
together with the initial  condition
\beq \label{NavierStokes3}
u(0,\cdot)  = u_0,
\eeq
and the classical no-slip boundary condition \beq \label{NavierStokes4}
u = 0 \quad \hbox{on} \quad \partial \Omega.
\eeq
In these equations, $u(t,x)$ is the velocity of the fluid, $p(t,x)$ the pressure, $\nu > 0$
the viscosity and $u_0$ an initial velocity field. The Dirichlet boundary condition (\ref{NavierStokes4}) 
expresses the fact that the fluid "sticks" to the boundary, and hence its velocity vanishes on $\partial \Omega$.

\medskip

In this book we will focus on the study of the stability of particular solutions of Navier Stokes, known as
"shear flows" solutions. These solutions are of the form
\beq \label{shear}
U_{shear}(t,x,y,z) = \left( \begin{array}{c}
U_s(t,z) \cr
0 \cr
\end{array} \right),
\eeq
where $U_s$ satisfies the classical heat equation
\beq \label{heatUs}
\partial_t U_s - \nu \partial_z^2 U_s = 0 .
\eeq
Note that the solution of this equation is well defined and smooth for any positive time. The shear
flow changes of time scales of order $O(\nu^{-1})$.

We will also consider the case of a time independent shear flow $U_{shear}(z)$, up to an additional forcing term
in the right hand side of (\ref{NavierStokes1}), of the form
\beq \label{shearforce}
F_{shear}(t,x,y,z) = \left( \begin{array}{c}
- \nu \partial_z^2 U_s \cr
0 \cr
\end{array} \right).
\eeq
Note that the forcing term is small, of order $O(\nu)$. 

\medskip

We are interested in the stability of these shear flows for small viscosity $\nu$. 
As $\nu \to 0$, formally,
the limit is the classical Euler equations for an incompressible ideal fluid, namely
\beq \label{Euler1}
\partial_t u + u\cdot \nabla u + \nabla p = 0,
\eeq
\beq \label{Euler2}
\nabla\cdot u = 0,
\eeq
together with the initial condition
\beq \label{Euler3}
u(0,\cdot)  = u_0,
\eeq
and the boundary condition 
\beq \label{Euler4}
u \cdot n= 0 \quad \hbox{on} \quad \partial \Omega,
\eeq
where $n(x)$ denotes the unit vector normal to the boundary $\partial \Omega$.

\medskip

Let us now describe the general landscape in an informal way. Two cases arise.

\begin{itemize}

\item If the shear flow $U_{shear}(0,x,y,z)$ is spectrally unstable for Euler equation, then we
can expect that it is also unstable for Navier Stokes equations provided $\nu$ is small enough.
We will prove in this book that it is indeed the case: if $\nu$ is small enough, the shear flow is linearly and
nonlinearly unstable for Navier Stokes equations.

\item If the shear flow $U_{shear}(0,x,y,z)$ is spectrally stable for Euler equation, then we could expect that it would
be also stable for Navier Stokes equations. This turns out not to be the case, which is somehow counter intuitive.
We will prove in this book that such profiles are linearly unstable for Navier Stokes equations, and
give preliminary nonlinear instability results.

\end{itemize}


\section{Main results}


Let us now state the two main results of the book, in an informal way.

\medskip

{\bf Result $1$}

\medskip

If the shear layer profile $U_s$ is spectrally unstable for Euler equations, then it is also spectrally unstable for
Navier Stokes equations provided the viscosity is small enough. 
It is also nonlinearly unstable in the following sense. For arbitrarily large
$N$ and $s$ we can find a perturbation of order $\nu^N$ in Sobolev space $H^s$, such that at
a later time $T_\nu$ of order $\log \nu^{-1}$, the perturbation reaches a size $O(1)$ in $L^\infty$ and $L^2$.

\medskip

{\bf Result $2$}

\medskip

If the shear layer profile $U_s$ is spectrally stable for Euler equations, then it is spectrally
{\it unstable} for Navier Stokes equations provided the viscosity is small enough. The corresponding
eigenvalue has a very small real part of order $O(\nu^{1/2})$. It is also "weakly" nonlinearly
unstable in the following sense. For arbitrarily large
$N$ and $s$ we can find a perturbation of order $\nu^N$ in Sobolev space $H^s$, such that at
a later time $T_\nu$ of order $\nu^{-1/2} \log \nu^{-1}$, 
the perturbation reaches a size $O(\nu^{1/4})$ in $L^\infty$ and $L^2$.

\medskip


\section{Physical introduction}


The question of the linear and nonlinear instability of shear flows is one of the most classical questions
of fluid mechanics. Its study goes back to Rayleigh at the end of the nineteenth century, and
expanded through the beginning of the twentieth century, thanks to Orr, Sommerfeld, C.C. Lin, Tollmien, Schlichting,...

\medskip

Physically, the inviscid limit is linked to the high Reynolds number limit. 
The Reynolds number is a non-dimensional number, defined by 
\begin{equation}\label{def-Re}
Re = {U L \over \nu}
\end{equation}
where $U$ is a typical velocity of the flow, $L$ a typical length and $\nu$ the viscosity. 
Clearly, for each fixed $U$ and $L$, $\nu \to 0$ if and only if $Re \to \infty$. 
Throughout out the book, small viscosity or high Reynolds number is used interchangeably. 

In his seminal experiments in $1883$,  Reynolds first pointed out that flows at a high Reynolds number experience turbulence.
 In other words, well-organized flows can become chaotic under infinitesimal disturbances when the Reynolds number exceeds 
 a critical number. 
 
 Let us give a simple example, and consider a time independent solution $u_0$ of Navier Stokes equations with a forcing
term $f_0$. Let us prove that $u_0$ is stable provided the Reynolds number is small enough.
For sake of simplicity, let us assume that $\Omega$ is a bounded domain. Let $u$ be any time-dependent solution of
Navier Stokes equations with the same forcing term $f$. Then, the perturbation $v = u - u_0$ satisfies
$$
\partial_t v + (u_0 + v)\cdot \nabla v + v\cdot \nabla u_0  - \nu \Delta v 
+ \nabla q = 0,
$$
$$
\nabla\cdot v = 0
$$
with the zero boundary condition $v = 0$ on $\partial \Omega$. The  energy induced by the perturbation satisfies  
$$
\frac{1}{2}\frac{d}{dt} \int_\Omega |v|^2 \; dx  + \nu \int_\Omega | \nabla v |^2\; dx =   - \int_\Omega (v \cdot \nabla) u_0\cdot v \; dx .
$$
Clearly, the term on the right is responsible for any possible energy production
and is bounded by $\| v \|_{L^2}^2 \| \nabla u_0 \|_{L^\infty}$.
The Poincar\'e inequality gives 
$$
\| v \|^2_{L^2} \le C(\Omega) \| \nabla v \|^2_{L^2},
$$ 
for some constant $C(\Omega)$ that depends on $\Omega$. This yields 
$$
\frac{1}{2}\frac{d}{dt} \| v\|_{L^2}^2  \le  \Bigl(C(\Omega) \| \nabla u_0 \|_{L^\infty} - \nu   \Bigr)
\| v \|_{L^2}^2 .
$$
Hence, if
$$
\nu  \ge  C(\Omega) \| \nabla u_0 \|_{L^\infty} 
$$
then 
$$
\frac{d}{dt} \| v \|_{L^2}^2 \le 0.
$$
There is no energy production for any perturbation of the steady solution. This implies that the steady solution is nonlinearly stable.
By the dimensional analysis, it follows that $C(\Omega)$ behaves like $C L^2$ and $ \| \nabla u_0 \|_{L^\infty}$ like
$\| u_0 \|_{L^\infty} / L$, where $L$ is a typical length of $\Omega$.
Thus, the stationary solution $u_0$ is stable, provided that the ratio 
$$
Re = {\| u_0 \|_{L^\infty} L \over \nu } 
$$
is sufficiently small, or equivalently, the corresponding Reynolds number is small. 

\medskip

On the contrary, if the Reynolds number is large enough, the steady solution $u_0$ would become 
unstable. This second assertion is much more delicate to prove, and has been the subject
of many studies since the $19^{th}$ century, from both a mathematical and a physical
point of view. Many works focus on the particular case of a shear layer profile, with the fluid domain $\Omega$
 being the half plane $\RR_+^2$ or half space $\RR_+^3$, and the flow of the form 
$$
u_0 = \left( \begin{array}{c} U(z) \cr 0 \cr 0 \cr \end{array} \right) , \qquad z\ge 0,
$$
where $U(0) = 0$ and $\lim_{z \to + \infty} U(z)$ exists and is finite.
Let us formalize our claim. 

\medskip

{\it Claim: Any shear layer profile is linearly and nonlinearly unstable if the Reynolds number is large enough.}

\medskip

The rigorous study of this claim turns out to be very delicate.
The natural idea is to study the stability of a shear profile for the limiting system, namely
Euler equations ($Re = \infty$), and then to make a perturbative argument to deduce the stability or instability in the high Reynolds number regime. 
This perturbation however turns out to be very singular and subtle. In fact, two cases arise:

\medskip

{\it Case 1: Instability for Euler equations:} there are shear layer profiles that are unstable to Euler equations.
When viscosity is added, but remains sufficiently small, it is then conceivable that the shear layer remains 
unstable for Navier Stokes equations. This is indeed the case, but requires a delicate analysis to prove it rigorously. 
In this book, we shall provide a complete nonlinear proof of this instability, arising from that of Euler equations. 

\medskip

{\it Case 2: Stability for Euler equations:} there are shear layer profiles that are stable for Euler equations. 
For instance, all the profiles that do not have an inflection point are stable, thanks to the classical Rayleigh's stability condition. 
In this case, a naive idea is that viscosity should have an overall stabilizing effect: for a small viscosity, the shear layer
would be stable for Navier Stokes equations. Strikingly, this appears to be false, which is somehow
paradoxical. A small viscosity does destabilize the flow.

\medskip

That is, {\em all shear profiles are unstable for large Reynolds numbers.} Physics textbooks claim that
there are lower and upper marginal stability branches $\alpha_\mathrm{low}(Re), \alpha_\mathrm{up}(Re)$, 
depending on the Reynolds number, so that whenever the horizontal wave number $\alpha$ 
of a perturbation belongs to $[\alpha_\mathrm{low}(Re),\alpha_\mathrm{up}(Re)]$, 
the linearized Navier-Stokes equations about the shear profile is spectrally and linearly unstable, that is, admits 
an exponentially growing solution.

\begin{figure}[t]
\centering
\includegraphics[scale=.4]{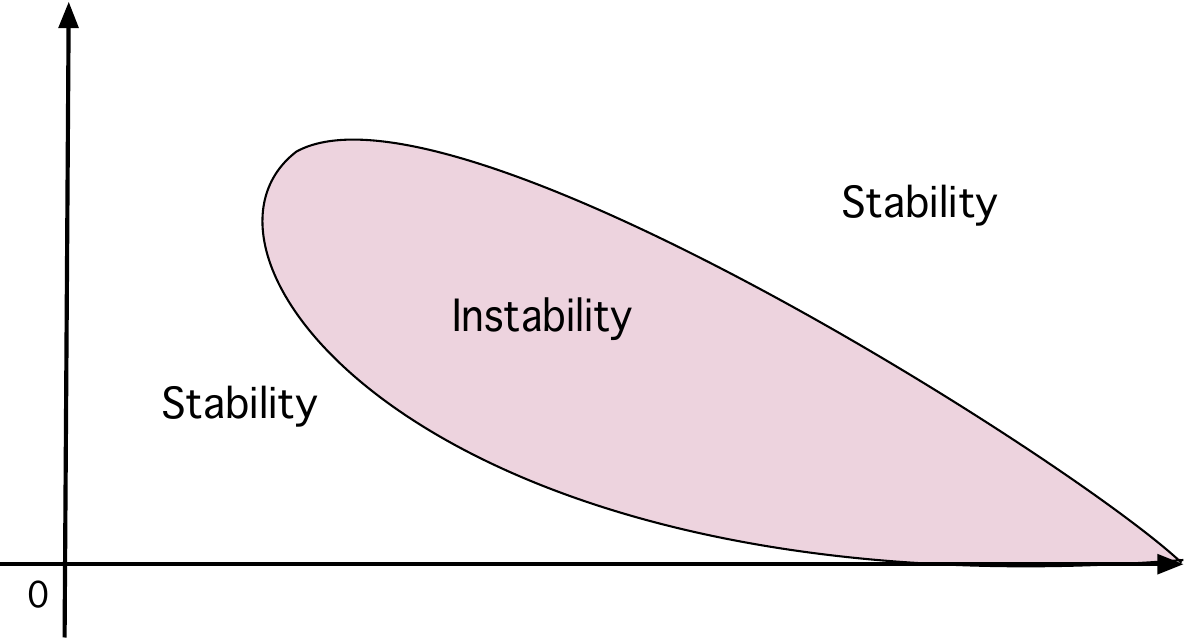}
\put(-20,1){$Re^{1/5}$}
\put(-210,117){$\alpha^2$}
\put(-213,30){$\alpha_\mathrm{low}\approx Re^{-1/4}$}
\put(-75,70){$\alpha_\mathrm{up}\approx Re^{-1/10}$}
\caption{\em Instability in the stable case.}
\label{fig-shear2}
\end{figure}

The asymptotic behavior of these branches $\alpha_\mathrm{low}$ and $\alpha_\mathrm{up}$ depends on the profile:

\begin{itemize}

\item for plane Poiseuille flow in a channel: $U(z) = 1- z^2 $ for $-1 < z < 1$, 
\begin{equation}\label{ranges-alpha0}\alpha_\mathrm{low}(R) = A_{1c}R^{-1/7}\qquad  \mbox{and}\qquad  \alpha_\mathrm{up}(R)= A_{2c} R^{-1/11}\end{equation}

\item for boundary layer profiles,
\begin{equation}\label{ranges-alphabl}\alpha_\mathrm{low}(R) = A_{1c}R^{-1/4}\qquad  \mbox{and}\qquad  \alpha_\mathrm{up}(R)= A_{2c} R^{-1/6}\end{equation}   

\item for Blasius (a particular boundary layer) profile (see figure \ref{fig-shear2}),
\begin{equation}\label{ranges-alpha}\alpha_\mathrm{low}(R) = A_{1c}R^{-1/4}\qquad  \mbox{and}\qquad  \alpha_\mathrm{up}(R)= A_{2c} R^{-1/10}.\end{equation} 

\end{itemize}

These expressions have been compared with modern numerical computations
and also with real  experiments, showing a very good agreement. 

\medskip

 Heisenberg, then Tollmien and C. C. Lin \cite{LinBook} were among the first physicists to use asymptotic expansions to study the instability
 (see also Drazin and Reid \cite{Reid} for a complete account of the physical literature on the subject). Their formal analysis has been compared with modern numerical computations
and also with experiments, showing a very good agreement; see \cite[Figure 5.5]{Reid} or Figure \ref{fig-shear2} for a sketch of the marginal stability curves. Not until recently, the complete mathematical proof of the linear stability theory was given \cite{GGN1,GGN3,GGN2}.


\section{Rayleigh and Orr Sommerfeld equations}


This section is devoted to the introduction of Rayleigh and Orr Sommerfeld equations, which
are reformulations of linearized Euler and Navier Stokes equations near a steady shear flow
$$
 U_{shear}: = \begin{pmatrix}U_s(z) \\ 0\end{pmatrix}.
$$
The first observation is that if a shear profile $U(z)$ is unstable in three dimensions, then it is also unstable
in two dimension. Physically this is known as Squire's theorem. We therefore just need to
focus on the two dimensional case. 

The 2D Euler equations linearized around a shear flow read
\begin{equation}\label{lin-shear}
\begin{aligned}
 v_t +  U_{shear} \cdot \nabla  v +  v\cdot \nabla  U_{shear} + \nabla q &= 0 \\
 \nabla \cdot  v & =0, \end{aligned}\end{equation}   
with $v = 0$ on the boundary $z=0$. Let 
$$
\omega = \nabla \times  v = \partial_x v - \partial_z u
$$
be the vorticity. Then, $\omega$ solves 
$$ 
(\partial_t + U_s \partial_x)\omega + v U''  = 0.
$$
We write this equation in term of the stream function defined through 
$$
u = \partial_z \psi \quad \hbox{and} \quad  v = - \partial_x \psi.
$$
 We note that 
 $$
 \omega = \partial_z u - \partial_x v = \Delta \psi.
 $$
  The vorticity equation then reads
$$  
(\partial_t + U_s \partial_x) \Delta \psi - U_s'' \partial_x \psi   = 0.
$$
We then take the Fourier transform in the tangential variable $x$ and the Fourier-Laplace
transform in time and introduce
 $$
 \psi = e^{i\alpha (x-c t)} \phi(z),
 $$
  in which $\alpha$ is a real-valued positive wave number (Fourier variable in $x$),
  $$
  \lambda = -i\alpha c
  $$
  is the Laplace variable in time (and so, $c$ is a complex number), 
  and $\phi(z)$ is a complex-valued function.  The link between $\lambda$ and $c$ is traditional. 

Putting $\psi = e^{i\alpha (x-c t)} \phi(z)$ into the vorticity equation yields the well-known Rayleigh equations, which are just linearized Euler equations in the stream function formulation, and after 
an horizontal Fourier transform and a time Fourier Laplace transform
\begin{equation}\label{Rayleigh}
\left \{ 
\begin{aligned}
(U_s-c) (\partial_z^2 - \alpha^2) \phi - U_s'' \phi &= 0
\\
\phi_{\vert_{z=0}}  = 0, \qquad \lim_{z\to \infty} \phi(z) &=0.
\end{aligned}
\right.
\end{equation} 
Here, the boundary conditions is exactly:  $v =0$ on the boundary. 

Note that in three dimensional space, we have to take Fourier transform in two horizontal
variables instead of only one. Up to a change in horizontal variables, the analysis is similar.
This remark is known as Squire's theorem in the physical literature.
  
Its important to note that, up to a multiplication by $- i \alpha$, the spectrum of linearized Euler
equations and Rayleigh equations are identical. Rayleigh is much easier to deal with than Euler
linearized equation since the divergence free condition is built in.

Now if we consider (\ref{lin-shear}) with a source term 
$$
f(t,x,y) = e^{i \alpha (x - c t) } F(z),
$$
where $F(z)$ is a two components vector,
we are led to 
\beq \label{Rayleigh3}
(U_s - c) (\partial_z^2 - \alpha^2) \phi - U_s'' \phi = {S \over i \alpha}
\eeq
where 
$$
S = \partial_z F_1 - i \alpha F_2 .
$$
Note that (\ref{Rayleigh3}) is the resolvent equation of linearized Euler equations with forcing term $f$.

Rayleigh equation is a second order differential equation, with boundary conditions at $z = 0$ and at infinity.
It is singular when $U_s - c$ vanishes, or at least is very small. This remark will play a central part
in the study of Euler stable profiles.

\medskip

If we consider Navier Stokes equations instead of Euler equations, we get the so called
Orr Sommerfeld equations, namely
\begin{equation}\label{OS1}
\left \{ 
\begin{aligned}
(U-c) (\partial_z^2 - \alpha^2) \phi - U'' \phi &= \epsilon (\partial_z^2 - \alpha^2)^2 \phi , \qquad \epsilon = \frac{\nu}{i\alpha} 
\\
\phi_{\vert_{z=0}} = \phi'_{\vert_{z=0}} &= 0, \qquad \lim_{z\to \infty} \phi(z) =0.
\end{aligned}
\right.
\end{equation} 
These are fourth order ordinary differential equations. As $\nu$ goes to $0$, the "viscous" term
$\epsilon (\partial_z^2 - \alpha^2)^2 \phi$ disappears, and this fourth order equation degenerates in
a second order equation, namely Rayleigh equations.
This singular limit is classical if Rayleigh is not singular, namely if $U_s - c$ is bounded away from $0$.
It Rayleigh is itself singular, namely if $U_s - c$ vanishes at some point $z_c$, then near $z_c$ the
situation is dramatic, since both the fourth order and the second order terms vanish at $z_c$. The Orr
Sommerfeld equation then reduces to $- U''\phi =0$, which is a very severe degeneracy.
Such a $z_c$ is called a critical layer. The study of Orr Sommerfeld in the critical layer involves Airy functions.


\section{Link with Prandtl equations}


The question of the stability of shear flows is highly connected to the so called Prandtl equation.
More precisely, the study of the inviscid limit with the Dirichlet boundary condition is a long standing problem,
with a rich physical and mathematical history. The leading question is the convergence of Navier-Stokes solutions in 
energy norm in the inviscid limit: 

\medskip

{\it For $\nu >0$, let $u^{\nu}$ be a sequence of (smooth) solutions of
the Navier-Stokes problem (\ref{NavierStokes1})-(\ref{NavierStokes4})
on a given interval $[0,T]$. Does
$u^\nu$ converge in $L^\infty([0,T],L^2(\Omega))$, as $\nu\to 0$, to a vector field $u$, which is a solution of the Euler problem (\ref{Euler1})-(\ref{Euler4}) ? }

\medskip

The main problem in this inviscid limit is the change in the boundary condition. As the viscosity 
vanishes, the Laplacian term $- \nu \Delta u$ disappears, leading to a change in the number
of boundary conditions. Instead of $u = 0$, in the limit we only have $u\cdot n = 0$ on the boundary. 
In particular, the tangential velocity may be non zero in the limit. 
This change in the number of boundary conditions leads to a boundary layer type behavior
near $\partial \Omega$: the velocity $u$ will rapidly change near the boundary in order to``recover", 
or rather ``correct'', the missing boundary conditions. 
It turns out that the Dirichlet boundary condition \eqref{NavierStokes4}, 
which is the main focus of this book, is the most difficult boundary condition to study the inviscid limit problem. 
There are other interesting physical boundary conditions such as Navier slip boundary conditions. 
These latter conditions are in fact easier to handle in the inviscid limit,
 for the reason that they yield certain control on the vorticity near the boundary, see for instance 
 \cite{IftimieSueur, MasmoudiRousset1}.

\medskip
\medskip

Up to now, this question appears to be out of reach in the case when $\Omega$ has a boundary. Its answer is probably linked to some extent to the understanding of wall turbulence, a mathematically widely open subject. There is a beautiful criterium by Kato \cite{Kato84} which asserts that the convergence fails in the inviscid limit if and only if the dissipation of energy in a strip of size $\nu$ near the boundary does not go to zero. For similar conditional results, see \cite{BardosTiti, ConstVV18, Kelliher17}, and the references therein.  Instead of $L^2$, we will restrict ourself to $L^\infty$ norms and focus on the following question:

\medskip

{\it Can we describe the limiting behavior of $u^\nu$ in $L^\infty([0,T],\Omega)$ in the inviscid limit ? }

\medskip

This question is stronger, since it requires to describe the behavior of $u^\nu$ very close to the wall, which may lead to a cascade of boundary layers of thinner and thinner sizes starting from the classical Prandtl layer of size $\sqrt \nu$. In this book we will give preliminary results in that direction.

We end the introduction with a brief mathematical bibliography on the study of Prandtl boundary layers. The existence and uniqueness of solutions to the Prandtl equations 
have been constructed for monotonic data by Oleinik  \cite{Ole} in the sixties. There are also recent reconstructions \cite{Alex, MW} of Oleinik's solutions via a more direct energy method. For data with analytic or Gevrey regularity, the well-posedness of the Prandtl equations is established in \cite{SammartinoCaflisch1, GVMasmoudi15, DGV19}, among others. In the case of non-monotonic data with Sobolev regularity, the Prandtl boundary layer equations are known to be ill-posed (\cite{GVDormy,GVN1,GN1}). 

On the other hand, the justification of Prandtl's boundary layer Ansatz for the behavior of solutions to the Navier Stokes equations has been justified for analytic data in a pioneered work by Caflisch and Sammartino \cite{SammartinoCaflisch1,SammartinoCaflisch2}. See also \cite{Mae,2N}. The stability of shear flows under perturbations with Gevrey regularity is recently proved in \cite{GVM}.

However these positive results hide a strong instability occurring at high spatial frequencies. For some profiles, instabilities
with horizontal wave numbers of order $\nu^{-1/2}$ grow like $\exp(C t / \sqrt{\nu})$. Within an analytic framework, these
instabilities are initially of order $\exp(-D / \sqrt{\nu})$ and grow like $\exp( (Ct - D) /\sqrt{\nu})$. They remain negligible in bounded
time (as long as $t < D/ 2 C$ for instance).

Within Sobolev spaces, these instabilities are predominant. Indeed, the first author proved in \cite{Grenier00CPAM} that Prandtl's asymptotic expansion for Sobolev data near unstable profiles is false,  up to a remainder of order $\nu^{1/4}$ in $L^\infty$ norm. In this book, we shall present our recent instability results for data with Sobolev regularity, which in particular prove that the Prandtl's boundary layer Ansatz is false and the boundary layer asymptotic expansions for stable monotone profiles are invalid.


\section{Structure of the book}


The aim of this book is to provide a comprehensive presentation to recent advances on boundary layers stability. 
It targets graduate students 
and researchers in mathematical fluid dynamics and only assumes that the readers have a basic knowledge on ordinary differential equations
and complex analysis. No prerequisites are required in fluid mechanics, excepted a basic knowledge on Navier Stokes and Euler equations,
including Leray's theorem.

\medskip

Part I is devoted to the presentation of classical results and methods:
Green functions techniques, resolvent techniques, analytic functions.
Part II focuses on the linear analysis, first of Rayleigh equations, then of Orr Sommerfeld equations. This enables the construction
of Green functions for Orr Sommerfeld, and then the construction of the resolvent of linearized Navier Stokes equations.
Part III details
the construction of approximate solutions for the complete nonlinear problem and nonlinear instability results.


\part{Preliminaries}



\chapter{Estimates using resolvent}\label{chapter-linear}

The aim of this chapter is to give a short introduction to the use of the classical Laplace transform  
to construct solutions for the following simple equations: 
linear systems in finite dimension space,  heat equation and second order linear parabolic 
equations. These techniques will be used for Rayleigh and
Orr Sommerfeld equations in forthcoming chapters.


\section{Finite dimension case}


Let us consider in this section the following simple linear ordinary differential system of $N$ equations
\beq \label{matrice1}
\dot x = A x,
\eeq
with initial condition 
\beq \label{matrice2}
x(0) = x_0.
\eeq
Here, $A$ is a given fixed $N \times N$ constant matrix, $x(t)$ is a vector function in $\RR^N$,
and $x_0$ a given vector.
The solution to \eqref{matrice1}-\eqref{matrice2} is simply given by 
$$
x(t) = \exp( t A) x_0 .
$$
In the case when $A$ is diagonalizable, we can write 
$$
A = P D P^{-1}
$$
where $D$ is diagonal, with coefficients
$(\lambda_i)_{1 \le i \le N}$, and $P$ is a change of basis. Then, there holds 
\beq \label{matrice2a}
x(t) = P \exp(t D) P^{-1} x_0,
\eeq
in which $ \exp(t D) $ is diagonal with coefficients $(e^{\lambda_i t})_{1\le i\le N}$. 

In the case when $A$ is no longer diagonalizable, then polynomials in time will appear in front of the exponential term. We will not
detail this case here.
Of course, this approach is limited to matrices and finite dimensional systems. In order to deal
with partial differential equations, we will have to develop another approach, based on the Laplace / Fourier
transform.

To be more specific, let us define 
$$
y(t) = x(t)  1_{t \ge 0}.
$$
Then $y(t)$ satisfies in the sense of distributions
$$
\dot y = A y + x_0 \delta_0
$$
where $\delta_0$ is the Dirac mass centered at $t = 0$. Note that $y$ may have an exponential growth
in large times. To turn this potential growth into a decay we multiply $y$ by $e^{ - M t}$, 
for some sufficiently large constant $M>0$, and introduce
$$
z(t) = y(t) e^{- M t} = x(t) e^{- M t} 1_{t \ge 0} .
$$
Then 
\begin{equation}\label{intro-zODE}
\dot z(t) = (A - M \mathrm{Id}) z(t) + x_0 \delta_0 ,
\end{equation}
where $\mathrm{Id}$ denotes the identity matrix. Occasionally, we simply drop the notation $\mathrm{Id}$. 
Now $z(t)$ has an exponential decay as $t$ goes to $+ \infty$, and vanishes for negative $t$.

To solve \eqref{intro-zODE}, we may introduce its Fourier
transform $\hat z(\xi)$, yielding
$$
i \xi \hat z(\xi) = (A - M \mathrm{Id}) \hat z(\xi) + x_0.
$$
Therefore,
$$
\hat z(\xi) = - ( A - M \mathrm{Id} - i \xi)^{-1} x_0 .
$$
Now inverting the Fourier transform gives
$$
z(t) =-  {1 \over 2 \pi} \int_\RR e^{i \xi t} (A - M \mathrm{Id} - i \xi )^{-1} x_0  d\xi ,
$$
which solves \eqref{intro-zODE}. 
Using analyticity in $\xi$, we can shift the contour integration, yielding 
$$
x(t) =-  {1 \over 2 \pi} \int_{\Im \xi = i M} e^{i t \xi}   (A - i \xi )^{-1} x_0  d\xi .
$$
Following the traditional notation, we introduce the Laplace transform variable $\lambda = i \xi$, and thus 
\begin{equation}\label{soln-ODE-Laplace}
x(t) = {1 \over 2 i \pi} \int_{\Gamma} e^{ \lambda t} (\lambda  - A)^{-1} x_0 d\lambda 
\end{equation}
where $\Gamma$ is a contour "on the right" of the spectrum of $A$, as depicted in Figure \ref{fig-contourODE1}. 
This defines a solution to the ODE problem \eqref{matrice1}-\eqref{matrice2}.

Let us introduce the resolvent operator 
\beq \label{matrice3}
R(\lambda) = (\lambda - A)^{-1} .
\eeq
Then, the solution $x(t)$ is represented by 
\beq \label{resolvantgeneral}
x(t) = {1 \over 2 i \pi} \int_{\Gamma} e^{\lambda t}R(\lambda) x_0 d \lambda ,
\eeq
in which the function $R(\lambda) x_0$ is simply the solution $\phi$ of the resolvent equation
$$
\lambda \phi = A \phi + x_0
$$
(which is of course the Laplace transform of (\ref{matrice1}) with source term $x_0$).

\begin{figure}[t]
\centering
\includegraphics[scale=.5]{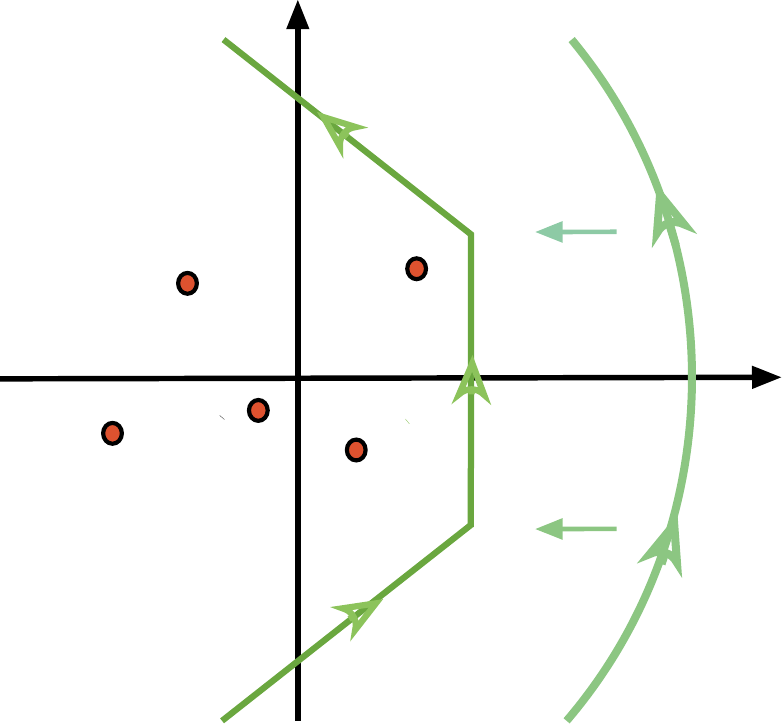}
\put(-35,1){$\Gamma$}
\put(-100,10){$\Gamma_1$}
\put(-70,70){$\Gamma_2$}
\put(-100,150){$\Gamma_3$}
\caption{\em Illustrated are possible point spectrum of $A$ and a contour decomposition of $\Gamma$ lying on the right of the spectrum.}
\label{fig-contourODE1}
\end{figure}

Now, in order to obtain bounds on the solution $x(t)$ in term of time growth, we may decompose the contour of integration $\Gamma$ to the left, as long as $\Gamma$ does not cross the spectrum of $A$; see
Figure \ref{fig-contourODE1}. For instance, we can deform $\Gamma$ as follows:  
$$\Gamma = \Gamma_1 \cup \Gamma_2 \cup \Gamma_3$$
where $\Gamma_2 = P + i [-B,B]$,
$\Gamma_1 = (1+i) \rit_- + P - i B$,
and $\Gamma_3 = (-1 + i) \rit_+ + P + iB$, for some constant $P$ and sufficiently large constant $B$ so that the spectrum of $A$ remains on the left. 

The integral on $\Gamma_2$ is simply bounded by $Ce^{P t}$.
The integrals on $\Gamma_1$ and $\Gamma_3$ are bounded by
$$
Ce^{P t} \int_{\rit_+} {e^{- s t} \over 1+ s} ds \le C' e^{P t}.
$$
Hence, for any constant $P$ that is greater than the maximum of real part of the spectrum of $A$, there exists a constant $C_P$ so that  
$$
\| x(t) \| \le C_Pe^{P t} , \qquad \forall~ t\ge 0. 
$$
To get optimal bounds, and even an explicit expression for the solution, we need to detail the behavior of the resolvent operator $R(\lambda)$
$$
R(\lambda) := \frac{1}{\det(\lambda  - A) }{\mathrm{com}( \lambda - A)^{tr} }
$$
in which $\mathrm{com}(M)$ denotes the comatrix of a matrix $M$. Note that $R(\lambda)$ is an holomorphic function, 
singular on $Sp(A)$, and
which has a Laurent's asymptotic expansion at $\lambda \in Sp(A)$. By the residue theorem, the integral
(\ref{resolvantgeneral}) is the sum of the residues at the poles of $R(\lambda)$. These residues
are sum of products of projectors with polynomials  in time (of degree the multiplicity
of the eigenvalue minus one).
We thereby recover (\ref{matrice2a}) in the case of simple eigenvalues.


\section{Heat equation}


%
%
%
%

Let us now detail how to solve the classical heat equation
\beq \label{heatc1}
\partial_t u - \nu \partial_z^2 u = 0,
\eeq
with initial condition $u_0(z)$ on the whole line $z \in \rit$, using a similar approach. 
Using the Laplace transform approach, we have 
\beq \label{heatc}
u(t) = {1 \over 2 i\pi} \int_{\Gamma} e^{\lambda t} u_\lambda d\lambda
\eeq
in which $\Gamma$ is a contour of integration lying on the right of the spectrum of $\nu \partial_z^2$, say in $L^2$, and $u_\lambda$ is the solution of the resolvent equation
\beq \label{heatc2}
\lambda u_\lambda - \nu \partial_z^2 u_\lambda = u_0 .
\eeq
Next, we solve (\ref{heatc2}) via the Green function approach. To this end, we first construct increasing and decreasing 
solutions $\psi_{\pm}$ of the homogenous equation
\beq \label{heatc3}
\lambda \psi_{\pm} = \nu \partial_z^2 \psi_{\pm},
\eeq
which are
\beq \label{heatc4}
\psi_{\pm}(z) = e^{\mp z \sqrt{\lambda / \nu}} .
\eeq
Note that $\psi_\pm$ is holomorphic in $z \in \cit$, and locally in $\lambda \ne 0$, with a "branching point" at $\lambda = 0$.
However if the contour $\Gamma$ does not cross $\rit_-$, we can choose $\sqrt{\lambda / \nu}$ (and hence $\psi_\pm$) in an holomorphic
way such that its real part is strictly positive. 

On $\rit$, the Green function of (\ref{heatc2}) is simply
$$
G_\lambda(x,z) = - { 1\over 2 \sqrt{\lambda \nu}} e^{- |x-z|\sqrt{\lambda / \nu}} ,
$$
and hence, the solution of (\ref{heatc2}) is given, for real values of $z$, by
\beq \label{heatc5}
u_\lambda(z) = \int_\rit G_\lambda(x,z) u_0(x) dx .
\eeq
Now to explicitly compute the value of $u(t)$ via \eqref{heatc}, we choose a contour $\Gamma$ which passes on the right of $0$ and is asymptotic to the half lines 
$L_\pm = \rit_+ e^{\pm 2 i \theta}$ for some $\pi / 2 < 2 \theta < \pi$.
In this case, the integral of (\ref{heatc}) converges and gives the solution of (\ref{heatc1}).


Alternatively, we introduce the temporal Green function $G(t,x,z)$, defined by 
\begin{equation}\label{def-tempG-heat} 
G(t,x,z) = \frac{1}{2i \pi} \int_\Gamma e^{\lambda t} G_\lambda(x,z) \; d\lambda,
\end{equation}
and derive its pointwise bounds. The solution to the heat equation is then the convolution of $G(t,x,z)$
 with the initial data $u_0(z)$. To estimate the integral \eqref{def-tempG-heat}, we bound the integrand 
by
$$ |e^{\lambda t} e^{-|x-z|\sqrt{\lambda/\nu} } | = e^{\Re \lambda t - |x-z|\Re \sqrt{\lambda /\nu}} .$$
As the imaginary part of $\sqrt{\lambda/\nu}$ plays no role in the modulus estimate, we parametrize $\Gamma$ through
the imaginary part of $\sqrt{\lambda/\nu}$,  introduce
$$
\sqrt{\lambda/\nu} = a + ik,
$$
and parametrize $\Gamma$ through $k$.
Note that $e^{\Re \lambda t - |x-z|\Re \sqrt{\lambda /\nu}}$ is minimal (for each fixed $k$) when 
$$
a = \frac{|x-z|}{2\nu t}.
$$
We therefore fix $a$ to this value. We then have
$$
{\lambda \over \nu} = (a + i k)^2 ,
$$
which leads to the following choice of $\Gamma$
$$
\Gamma: = \Big\{ \lambda  =  a^2 \nu - k^2 \nu + 2i a k \nu, \quad k\in \RR\Big\}.
$$
Then, we compute 
$$
\begin{aligned}
|G(t,x,z)| 
&\le \frac{1}{4\pi \sqrt\nu} \int_\Gamma e^{\Re \lambda t - |x-z|\Re \sqrt{\lambda /\nu}} \;  \frac{|d\lambda|}{|\sqrt{\lambda}|}
\\&\le \frac{1}{2\pi} \int_\RR e^{a^2 \nu t - |x-z| a} e^{-k^2 \nu t}\;  dk 
\\&\le \frac{1}{\sqrt{4\pi \nu t}} e^{-\frac{|x-z|^2}{4\nu t}}. 
\end{aligned}
$$
The temporal Green function of the heat problem \eqref{heatc1} behaves exactly as the classical Gaussian kernel. 

%
%
%
%
%


\section{Second order linear parabolic equations}


Let us turn to systems of the form
\beq \label{para1}
\partial_t u - \nu \Delta u = A(x) u
\eeq
where $A(x)$ is a given smooth matrix, with initial data $u_0$.
Formula (\ref{resolvantgeneral}) can be extended to this case and gives
\beq \label{para2}
u(t) = \int_\Gamma e^{\tau t} R(\tau) u_0 \, d\tau
\eeq
where 
\beq \label{para3}
R(\tau) = (i \tau \mathrm{Id} - \nu \Delta - A)^{-1}
\eeq
is the resolvent, and where $\Gamma$ is a contour on the right of the singularities
of $R$.
To bound $u(t)$ we need to bound $R(\tau)$, namely to solve
\beq \label{para4}
i \tau v - \nu \Delta v - A(x) v = u_0(x),
\eeq
which is a system of linear ordinary  differential equations.


\subsubsection*{Green function}


To solve (\ref{para4}) we introduce the Green function $G(x,y) u$,
solution of
\beq \label{para5}
i \tau G - \nu \Delta G - A(x) G = \delta_y u,
\eeq
where $u$ is a fixed vector. Note that we skipped $\tau$ is the notation of the Green function.
 The solution of (\ref{para4}) is then given by 
\beq \label{para6}
v(x) = \int_\rit G(x,y) u_0(y) dy .
\eeq
Next to define the Green function $G$ we introduce 
 $\psi_j^-$ and $\psi_j^+$
with $1 \le j \le N$,  independent solutions of 
$$
i \tau v - \nu \Delta v - A(x) v = 0
$$
which go to $0$ as $x \to - \infty$ (for $\psi_j^-$) or  $x \to + \infty$
(for $\psi_j^+$).
Then
$$
G(x,y) u = \sum_{1 \le j \le N} \alpha_j(y) \psi_j^-(x)
$$
for $x < y$ and for some constants $\alpha_j(y)$ depending on $u$ and
$$
G(x,y) u = \sum_{1 \le j \le N} \beta_j(y) \psi_j^+(x)
$$
for $x > y$ and for some constant $\beta_j(y)$.
Note that $G(x,y) u$ is continuous at $y$ and that its derivative has a jump $u$ at $y$.
Hence
$$
 \sum_{1 \le j \le N} \alpha_j \psi_j^+ =  \sum_{1 \le j \le N} \beta_j \psi_j^-
 $$
and
$$
 \sum_{1 \le j \le N} \alpha_j \partial_x \psi_j^+ =  \sum_{1 \le j \le N} \beta_j \partial_x \psi_j^-
 - \nu^{-1} u.
 $$
Let
$$
M = \left( \begin{array}{cc}
\psi_j^+ & \psi_j^- \cr
\partial_x \psi_j^+ & \partial_x \psi_j^- \cr
\end{array} \right) .
$$
Note that $M$ is simply the jacobian matrix of the independent solutions $\psi_j^+$, $\psi_j^-$,
which depends on $y$ and $\tau$.
Then
\beq \label{para12}
\left( \begin{array}{c} \alpha_j \cr - \beta_j \end{array} \right) 
= M^{-1} 
\left( \begin{array}{c} 0 \cr - \nu^{-1} u \end{array} \right)  .
\eeq
We  introduce the Evans function 
\beq \label{para13}
{\cal E} = \| \nu^{-1}M^{-1} \|.
\eeq
Then if $| \tau | \le \alpha$ we get 
\beq \label{para14}
\sum_{1 \le j \le N} | \alpha_j | + | \beta_j |  \le  {\cal E}^{-1} | u | 
\eeq
for some constant $C$.


\subsubsection*{Analyticity}


Let us study $\psi_j^{\pm}$.
Let us assume that $A(x)$ is holomorphic in the strip $S = \{ | \Im x | \le \sigma_0\} $ for some positive $\sigma_0$.
For bounded $\tau$, the functions $\psi_j^{\pm}$ are solutions of a holomorphic equation on $S$. As a consequence they
are defined and holomorphic on $S$. It is also possible to show that $\psi_j^{\pm}(z)$ go to zero as $\Re z$ go to
$\pm \infty$.

For large $\tau$,  $\psi_j^{\pm}$  behave like the solutions $\psi$ of
$$
\tau \psi - \nu \partial_x^2 \psi = 0,
$$
namely like
$$
\psi_\pm(z) \sim {1 \over 2} \sqrt{\nu \over \tau} e^{- \pm z \sqrt{\tau / \nu}} ,
$$
which is similar to the heat equation case.
The bounds on the Green function for large $\tau$ are therefore the same as the bounds for the heat equation.


\subsubsection*{Bounds on the resolvant}


To bound the resolvant $R(\tau)$ we need
\begin{itemize}
\item a bound for bounded $\tau$, given by (\ref{para14})
\item another bound for unbounded $\tau$, of the form $\tau = A + e^{i \theta} t$
with $t > 0$, for $t$ large. In this case $A$ can be treated as a small perturbation of
$i \tau \mathrm{Id} - \nu \Delta$, which lead to bounds similar to the case of the heat equation.

\end{itemize}
Combining these two estimates we can then bound $R(\tau)$ and thus $u(t)$.
We will not detail this point here.

In (\ref{para2}) the contour $\Gamma$ can be moved to another contour $\Gamma'$ on its
left, leaving some eigenvalues $\lambda_i$ with $1 \le i \le p$ on its right.
Then, if all the eigenvalues are simple,
\beq \label{para2bis}
u(t) = \int_{\Gamma'} e^{\tau t} R(\tau) u_0 d\tau
+ \sum_i P_i u_0 e^{\lambda_i t},
\eeq
where $P_i$ is the projector on the eigenspace $E_i$ associated with the eigenvalue $\lambda_i$.

By carefully choosing  the contour $\Gamma'$ we can get an asymptotic expansion on $u(t)$.
In particular if there exists some unstable eigenvalue $\lambda_i$ then $u(t)$ generically
behaves like $\exp(\lambda_i t)$. Note that the eigenvalues are simply the singularities of
${\cal E}$, namely the zeros of the Evans function $\det(M)$, which is explicit in the functions 
$\psi_j^{\pm}$.

\chapter{From linear to nonlinear instability }\label{chapter-ode}

 This chapter is devoted to simple remarks on ordinary differential equations in order to clarify basic ideas on the link between
linear and nonlinear instabilities.


\section{From linear to nonlinear instability}


Let us recall in this section how we can prove nonlinear instability, assuming linear instability in the case of ordinary differential 
equations. Let us consider the following system
\beq \label{ooode}
\partial_t \phi = A \phi + Q(\phi,\phi)
\eeq
where $\phi$ is a vector, $A$ a matrix and $Q$ a quadratic term. Let us assume that $A$ is spectrally unstable, namely
that there exists an eigenvalue $\lambda$ and a corresponding eigenvector $v_0$ with $\Re \lambda > 0$. For simplicity we
assume that $\lambda$ is a single eigenvalue. We want to prove nonlinear instability, and more precisely

\begin{theo}
There exists a constant $\sigma_0$ such that for any arbitrarly small $\eps > 0$, there exists a solution $\phi$
to (\ref{ooode}) satisfying the following properties:
$$
| \phi(0) | \le \eps,
$$
$$
| \phi(T_\eps) | \ge \sigma_0
$$
for some positive time $T_\eps$. 
\end{theo}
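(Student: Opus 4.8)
The plan is to use the standard Duhamel / bootstrap argument for going from linear to nonlinear instability. First I would fix the unstable eigenvalue $\lambda$ with $\Re\lambda=:\mu>0$ and eigenvector $v_0$, and recall from the finite-dimensional analysis in Section~1.1 that $\|e^{tA}\|\le C_0 e^{\mu t}$ for all $t\ge 0$ (since $\mu$ is the maximal real part of the spectrum, up to possibly polynomial factors which I absorb by slightly enlarging $\mu$; if $\lambda$ is the unique eigenvalue of maximal real part one even has $e^{tA}v_0 = e^{\lambda t}v_0$ exactly). I also record the quadratic bound $|Q(\phi,\psi)|\le C_Q|\phi|\,|\psi|$ valid on, say, the unit ball. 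The idea is to start from the initial datum $\phi(0)=\e v_0$, so that the linear part grows like $\e e^{\mu t}$, and to show the nonlinear term stays negligible until the solution reaches a fixed size $\sigma_0$.

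Next I would write the Duhamel formula
\beq
\phi(t) = e^{tA}\phi(0) + \int_0^t e^{(t-s)A} Q(\phi(s),\phi(s))\, ds,
\eeq
and run a continuity argument. Define $T_\e$ to be the first time $|\phi(t)|$ reaches $\sigma_0$ (with $\sigma_0$ small, to be fixed, so that the quadratic estimate applies on $[0,T_\e]$). On $[0,T_\e)$ I claim $|\phi(t)|\le 2C_0\e e^{\mu t}$: indeed, inserting this a priori bound into Duhamel gives
\beq
|\phi(t)| \le C_0\e e^{\mu t} + C_0 C_Q \int_0^t e^{\mu(t-s)} (2C_0\e e^{\mu s})^2\, ds
= C_0\e e^{\mu t} + 4C_0^3 C_Q \e^2 \frac{e^{2\mu t}-e^{\mu t}}{\mu},
\eeq
and as long as $\e e^{\mu t}$ is smaller than a fixed threshold $\delta_0:=\mu/(8C_0^2 C_Q)$, the second term is bounded by $C_0\e e^{\mu t}$, closing the bootstrap strictly. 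So the bound $|\phi(t)|\le 2C_0\e e^{\mu t}$ persists as long as $\e e^{\mu t}\le \delta_0$ and $|\phi(t)|\le\sigma_0$. Now I choose $\sigma_0 := \min(\delta_0, \text{radius of validity of the quadratic estimate})/2$, say, which is a fixed constant independent of $\e$.

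The final step is to check that the solution does reach size $\sigma_0$ in finite time and to read off $T_\e$. One shows a matching lower bound: projecting Duhamel onto the $\lambda$-eigendirection (or onto a suitable left eigenvector $\ell$ with $\ell(v_0)=1$) gives, using $\ell(e^{tA}\phi(0)) = \e e^{\lambda t}$ and the quadratic remainder controlled as above by $O(\e^2 e^{2\mu t})$,
\beq
|\phi(t)| \ge |\ell(\phi(t))|/\|\ell\| \ge \frac{1}{\|\ell\|}\Bigl( \e e^{\mu t} - 4C_0^3 C_Q \e^2 \frac{e^{2\mu t}}{\mu}\Bigr) \ge \frac{\e e^{\mu t}}{2\|\ell\|}
\eeq
as long as $\e e^{\mu t}\le \delta_0$. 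Hence $|\phi(t)|$ grows at least like $\e e^{\mu t}/(2\|\ell\|)$ until it is forced to hit $\sigma_0$; this happens at the time $T_\e$ determined (up to a bounded factor) by $\e e^{\mu T_\e} \sim \sigma_0$, i.e. $T_\e \sim \mu^{-1}\log(\sigma_0/\e) = O(\log \e^{-1})$, which in particular is finite and $T_\e\to\infty$ as $\e\to 0$. This yields $|\phi(0)|=\e|v_0|\le\e\|v_0\|$ (rescale $\e$ by $\|v_0\|$ at the start to get exactly $|\phi(0)|\le\e$) and $|\phi(T_\e)|\ge\sigma_0$, which is the claim.

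The main obstacle — really the only delicate point — is making sure the constants are chosen in the right order: $\sigma_0$ must be fixed first (depending only on $C_0,C_Q,\mu$ and the domain of definition of $Q$), and only afterwards is $\e$ sent to $0$; one must also be careful that the quadratic estimate $|Q(\phi,\psi)|\le C_Q|\phi||\psi|$ is only assumed locally, so the bootstrap has to keep $|\phi(t)|$ within that neighborhood, which is exactly guaranteed by stopping at $T_\e$. If $A$ is not diagonalizable on the $\lambda$-eigenspace there are harmless polynomial-in-$t$ corrections to $e^{tA}v_0$; these can be absorbed by replacing $\mu$ with $\mu-\eta$ for small $\eta>0$ in the upper bound and keeping $\mu$ in the lower bound via the top Jordan vector, or simply by noting the statement only needs strict exponential growth, not a sharp rate.
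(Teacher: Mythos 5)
Your proof is correct (up to minor constant bookkeeping in the lower-bound step, where the threshold $\delta_0$ and the constant $\sigma_0$ should both pick up a factor of $\|\ell\|^{-1}$, but this does not affect the structure). It does, however, take a genuinely different route from the paper.

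The paper does \emph{not} run a Duhamel bootstrap. Instead it builds a high-order approximate solution $\phi_{app}=\sum_{i=1}^N\phi_i$, with $\phi_1=\eps v_0 e^{\lambda t}$ and the $\phi_i$ defined iteratively, proves $|\phi_i|\le C_i\eps^i e^{i\Re\lambda\, t}$, and then controls the remainder $\theta=\phi-\phi_{app}$ by a bare energy estimate of the form $\partial_t|\theta|^2\le C_0|\theta|^2+|R_{app}|^2$. The crucial feature is that the energy constant $C_0$ (coming from $(A\theta,\theta)$ together with the quadratic terms) is \emph{not} $\Re\lambda$ and may be much larger; the proof closes only because $N$ is chosen so that $2(N+1)\Re\lambda>C_0$, so the residual $R_{app}=O(\eps^{N+1}e^{(N+1)\Re\lambda t})$ decays relative to the bad exponential $e^{C_0 t}$ after integration. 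Your argument bypasses all of this: by starting from $\phi(0)=\eps v_0$ and using $\|e^{tA}\|\le C_0 e^{\mu t}$ with $\mu=\Re\lambda$ (true in finite dimension, up to polynomial factors which you correctly handle), the Duhamel remainder has the \emph{same} exponential rate as the linear term, so a direct bootstrap at order $N=1$ already closes, with no approximate solution at all. In this finite-dimensional toy model your route is cleaner and shorter. The point of the paper's more elaborate argument, as the subsequent subsection on Navier--Stokes makes explicit, is that it is designed to scale to the PDE setting, where one does not have a sharp semigroup bound for free, only an energy estimate whose constant exceeds the spectral growth rate; the high-order $\phi_{app}$ is precisely what compensates for that gap, and it is the mechanism the chapter is meant to illustrate. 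So your proof is a valid and arguably more elementary proof of the stated theorem, but it sidesteps rather than demonstrates the technique the paper is building toward.
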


The proof of this nonlinear instability result relies on two steps: first the construction
of an accurate approximate solution, then an estimate between the approximate and the true solution.


\subsection{Construction of an approximate solution}


We look for an approximate solution of the form
$$
\phi_{app} = \sum_{i=1}^N \phi_i
$$
where $N$ will be chosen later. We start with 
$$
\phi_1 = \eps v_0 e^{\lambda t} ,
$$
where $\eps$ is a small parameter. Note that $\phi_0$ is bounded by
$$
| \phi_1 | \le C \eps e^{\Re \lambda t} .
$$
For $i \ge 2$, $\phi_i$ is constructed by iteration through
$$
\partial_t \phi_i = A \phi_i + \sum_{j+k = i} Q(\phi_j,\phi_{k}),
$$
with $\phi_i(0) = 0$ (where $\phi_j =0$ for $j\le 0$).
We will prove by iteration that
\beq \label{oodeiter}
| \phi_j | \le C_j \eps^j e^{j \Re \lambda t} .
\eeq
This bound is true for $j = 1$. If it is true for $j < i$ then
$$
\phi_i(t) = \sum_{j+k = i} \int_0^t e^{A (t - \tau)} Q(\phi_j,\phi_{k}) d \tau,
$$
which leads to
$$
| \phi_i(t) | \le C_i \eps^i \sum_{j+k = i} \int_0^t  e^{\Re \lambda (t - \tau)}  e^{i \Re \lambda \tau} d\tau
$$
which gives (\ref{oodeiter}).

Now, $\phi_{app}$ is an approximate solution in the sense that 
$$
\partial_t \phi_{app} = A \phi_{app} + Q(\phi_{app},\phi_{app}) + R_{app},
$$
where $R_{app}$ is an error term. A short computation shows that
$$
| R_{app} | \le C_{N+1} \eps^{N+1} e^{(N+1) \Re \lambda t} .
$$


\subsection{Stability}


The next step is to use a stability estimate. Let $\phi$ be the solution of (\ref{ooode}) with initial data $\phi_0(0)$.
Let 
$$
\theta = \phi - \phi_{app}
$$
be the difference between the "true" solution and the approximate one. We have
$$
\partial_t \theta = A \theta + Q(\phi_{app},\theta) + Q(\theta,\phi_{app}) + Q(\theta,\theta) - R_{app}.
$$
Let us work on the largest time $T_0$ such that on $0 \le t \le T_0$, $| \phi_{app}(t) | \le 1$ and
$| \theta(t) | \le 1$. Let $0 \le t \le T_0$. We take the scalar product of the previous equation with $\theta$. This leads to
$$
\partial_t | \theta |^2 \le C_0 | \theta |^2 + | R_{app} |^2 
$$
since $(A \theta,\theta)$, $(Q(\phi_{app},\theta),\theta)$, $(Q(\theta,\phi_{app}),\theta)$, $(Q(\theta,\theta),\theta)$
are all bounded by $C_0 | \theta |^2$ for some constant $C_0$. Therefore
$$
| \theta |^2(t) \le \eps^{2(N+1)} \int_0^t e^{C_0 (t - \tau)} e^{2 (N+1) \Re \lambda \tau} d\tau
\le C \eps^{2 (N+1)} e^{2 (N+1) \Re \lambda t},
$$
provided 
$$
2 (N+1) \Re \lambda > C_0,
$$
namely, provided $N$ is large enough.

We now define
$$
T_1 = - {\log \eps \over \Re \lambda } - \sigma
$$
where $\sigma$ will be chosen later. Before $T_1$, all the $\phi_i$ are bounded 
A direct computation shows that provided $\sigma$ is large enough, $T_1 < T_0$. 
Then at $T_1$, 
$$
| \phi(T_1) | \ge | \phi_{app}(T_1) | - | \theta(T_1) | 
\ge e^{- \sigma} - \sum_{i=1}^{N+1} C_i e^{-i \sigma} 
\ge {e^{- \sigma} \over 2} 
$$
provided $\sigma$ is small enough. This ends the proof of the Theorem, chosing $\sigma_0 = e^{- \sigma} / 2$.


\subsection{Link with Navier Stokes equations}


The previous theorem is a toy model for the stability of viscous boundary layers. In the case of Navier Stokes equations, $Q$ is the nonlinear
transport term and $A$ the linearized Navier Stokes equations near a boundary layer profile.
The first difficulty is to prove the existence of an unstable mode for linearized Navier Stokes equation, 
which is the equivalent of the construction of $\lambda$ and $v_0$. This is obtained through a detailed spectral 
analysis of  Orr Sommerfeld equations.
 The second difficulty is to have good estimates on $e^{A t}$, which is done through the construction of the corresponding
 Green function. The last step, namely the energy estimate, is the standard $L^2$ estimate on Navier Stokes.
 
 However, because of large gradients in the sublayer of the instability, the construction ends 
 before the instability reaches $O(1)$. This approach stops when the instability reaches a size $O(\nu^{1/4})$,
 as is detailed in \cite{Grenier00CPAM}. To go up to $O(1)$ we will have to design a new method, developed
 in the next chapter.


\section{Small unstable eigenvalues}


Let us recall that we face two different cases. A first case arises when the shear layer profile is unstable for Rayleigh equations. 
In this case the largest unstable eigenvalue for Navier Stokes equation is of order $O(1)$. In the second case, 
when the shear layer
is stable for Rayleigh equation, $\Re \lambda$ is small, of order $\nu^{1/4}$. This latest case can not be handled by
the techniques developed in the previous section. We will discuss it, always on a toy model case, in this section.

Let us focus in this section on systems of the form
\beq \label{oode}
\partial_t \phi = A_\eps \phi + Q(\phi,\phi)
\eeq
where $\phi$ is a vector, $A_\eps$ is a diagonal matrix and depends on a small parameter $\eps > 0$, and $Q$ is quadratic.
We assume that $A_\eps$ has only one eigenvalue with positive real part, denoted by $\lambda(\eps)$. We assume
that 
$$
\Re \lambda(\eps) = \eps,
$$
which is always possible, up to a renumbering of $\eps$. Let $v_0(\eps)$ be the corresponding eigenvalue. Then
$$
\phi_1(t) = \eps^N v_0(\eps) e^{\lambda(\eps) t}
$$ 
is an exponential growing solution to the linearized equation, which initially is of order $O(\eps^N)$.
Let us construct an approximate solution of (\ref{oode}). The second term in the expansion is defined by
$$
\partial_t \phi_2 = A_\eps \phi_2 + Q(\eps^N v_0 e^{ \lambda(\eps) t},\eps^N v_0 e^{\lambda(\eps) t}),
$$ 
which leads to
$$
\phi_2(t) = \int_0^t e^{A_\eps (t - \tau)} Q \Bigl( \eps^N v_0 e^{\lambda(\eps) \tau},  \eps^N v_0 e^{\lambda(\eps) \tau} \Bigr) d\tau,
$$
assuming $\phi_2(0) = 0$. Note that
$$
| Q \Bigl( \eps^N v_0 e^{\lambda(\eps) \tau},\eps^N v_0 e^{\lambda(\eps) t} \Bigr)  | \le C \eps^{2N} e^{2 \Re \lambda \tau},
$$
hence
$$
| \phi_2(t) | \le C \eps^{2N} \int_0^t  e^{\Re \lambda (t - \tau) } e^{2 \Re \lambda \tau} d\tau \le C 
{C \eps^{2 N} \over \Re \lambda } e^{2 \Re \lambda t}.
$$
Therefore $\phi_2(t)$ is of size $\eps^{2N-1} e^{2 \Re \lambda t}$, namely much larger than in the previous section, where it was of order
$\eps^{2N} e^{2 \Re \lambda t}$. Iterating the process we 
can construct an approximate solution of the form 
$$
\phi \sim \sum_n \phi_n,
$$
where
$$
| \phi_n | \le {C_n \eps^{n(N-1) +1}} e^{n \Re \lambda t} .
$$
The first two terms of this series are of the same order when $e^{\Re \lambda t}$ is of order $\eps^{1-N}$. 
Then all the terms of the series are of the same order, namely of order $O(\eps)$. 
It is therefore not possible to construct a solution of (\ref{oode}) larger than $O(\eps)$ with this method.

\medskip

To understand this point, let us take the simplest possible example, namely the scalar example where 
$A_\eps = \eps$ and $Q(u) = \alpha u^2$.
We assume that $\phi(0) > 0$.
Then (\ref{oode}) is simply
\beq \label{oode2}
\partial_t \phi = \eps \phi + \alpha \phi^2 
\eeq
for some constant $\alpha$. The qualitative evolution of $\phi$ depends on the sign of $\alpha$. If $\alpha < 0$, then $\phi(t)$ converges
to $\alpha^{-1} \eps$ as $t$ goes to infinity. If on the contrary $\alpha > 0$ then $\phi(t)$ blows up in finite time. Qualitatively,
$\phi(t)$ increases exponentially until it reaches $O(\eps)$ where the quadratic term becomes important and speeds up the
growth.

\medskip

Therefore in the case of small eigenvalues, the nonlinear term plays a crucial role. 
The situation is in fact very close to a bifurcation.

Let us go back to Navier Stokes equations. When the shear flow is linearly stable for Euler equation, it will be shown that it is linearly
unstable for Navier Stokes equations, but with a very slow instability, of order $O(\nu^{1/4})$, which plays the
role of the $\eps$ of this paragraph. We then suffer from similar limitations: the instability can only be proven
up to a size $O(\nu^{1/4})$ in $L^\infty$ norm.

\chapter{Analyticity and generator functions}\label{chapter-tech}

This chapter is devoted to the definition of various spaces of analytic functions.


\section{Analyticity \label{defAeta}}


As a warm up we first describe some very classical analytic spaces.


\subsection{On the real line}


Let us first define various spaces of analytic functions of the real line $\rit$. 
The first idea is to consider the extension of analytic functions to the complex plane. Namely, for $\rho > 0$, 
we define the complex strip
$$
\Gamma_\rho = \Big \{z \in \CC, \quad | \Im z | \le \rho \Big \} ,
$$
and consider the space of functions $f$ which are holomorphic on this strip. We then define the analytic spaces ${\cal A}^\rho$ by: $f \in {\cal A}^\rho$ if and only if there exists a bounded
holomorphic function $g$ defined on $\Gamma_{\rho}$ such that $f(z) = g(z)$ for all real $z\in \rit$.
The norm on ${\cal A}^\rho$ is defined by
 $$
\| f \|_{\rho} = \sup_{z \in \Gamma_\rho} | g(z) | e^{\beta |\Re z|}  
 $$
for some fixed nonnegative number $\beta$. The exponential weight allows to control the behavior of $f$ at infinity. 
 By a slight abuse of notation, we denote by $f$ the extension of $f(z)$ to $\Gamma_\rho$. 
 
 \begin{proposition} \label{propanalytic1}
For every functions $f$ and $g$ in ${\cal A}^\rho$ there holds
$$
\| f g \|_\rho \le \| f \|_\rho \| g \|_\rho .
$$
 \end{proposition}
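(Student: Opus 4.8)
The plan is to argue directly from the definition of the norm, the only subtlety being the presence of the exponential weight $e^{\beta|\Re z|}$ rather than $e^{2\beta|\Re z|}$. First I would observe that if $f,g\in\mathcal{A}^\rho$, with bounded holomorphic extensions to the strip $\Gamma_\rho$ again denoted $f,g$, then the pointwise product $fg$ is holomorphic on $\Gamma_\rho$ (product of holomorphic functions) and bounded there (product of bounded functions), and its restriction to $\rit$ is the product of the restrictions; hence $fg\in\mathcal{A}^\rho$ and the quantity $\|fg\|_\rho$ is well defined.

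Next, for any $z\in\Gamma_\rho$ I would write
$$
|f(z)g(z)|\,e^{\beta|\Re z|} \le \Bigl(|f(z)|\,e^{\beta|\Re z|}\Bigr)\Bigl(|g(z)|\,e^{\beta|\Re z|}\Bigr),
$$
using that $\beta\ge 0$ so that $e^{\beta|\Re z|}\ge 1$ and therefore $e^{\beta|\Re z|}\le e^{2\beta|\Re z|}$. The right-hand side is bounded by $\|f\|_\rho\,\|g\|_\rho$ by definition of the two norms. Taking the supremum over $z\in\Gamma_\rho$ on the left gives $\|fg\|_\rho\le\|f\|_\rho\|g\|_\rho$, which is the claim.

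There is essentially no obstacle here: the only point worth flagging is the weight bookkeeping just described, namely that dropping one factor $e^{\beta|\Re z|}$ is legitimate precisely because $\beta\ge 0$; if one instead used a genuine double exponential weight the inequality would be an equality-type estimate with no slack, and if $\beta<0$ it would fail. I would keep the write-up to these two short paragraphs.
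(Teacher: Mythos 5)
Your proof is correct and takes the same route as the paper: both argue pointwise on $\Gamma_\rho$ from the inequality $|f(z)g(z)|e^{\beta|\Re z|}\le |f(z)|e^{\beta|\Re z|}\,|g(z)|e^{\beta|\Re z|}$, valid because $\beta\ge 0$, and then take suprema. Your added remark that the product $fg$ is holomorphic and bounded on $\Gamma_\rho$, so that $\|fg\|_\rho$ is well defined, is a small extra bit of care but not a different argument.
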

 
\begin{proof} 
Let $f$ and $g$ be the extensions to the complex strip $\Gamma_\rho$. Then
$$
| f(z) g(z) | e^{\beta |\Re z|} \le | f(z) | e^{\beta |\Re z|} | g(z) | e^{\beta |\Re z|}
$$
since $\beta \ge 0$. We then can the supremum norm of the right hand side, and then of the left hand side.
\end{proof}

One of the main advantages of analytic functions is that the $L^\infty$ norm
of their derivatives can be controlled
by the $L^\infty$ norm of the function. Namely we have
\begin{proposition} \label{propanalytic2}
There exists a constant $C > 0$ such that
for any function $f \in {\cal A}^\rho$ and for any $0 < \rho' < \rho$, we have
$$
\| \partial_z f \|_{\rho'} \le {C \over \rho - \rho'} \| f \|_{\rho} .
$$
 \end{proposition}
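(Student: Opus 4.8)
The plan is to use the Cauchy integral formula for the first derivative, the only feature requiring a little care being the exponential weight $e^{\beta|\Re z|}$ built into the norm $\|\cdot\|_\rho$. Write $g$ for the bounded holomorphic extension of $f$ to $\Gamma_\rho$, and fix a point $z_0\in\Gamma_{\rho'}$, so that $|\Im z_0|\le\rho'$. For any radius $0<r<\rho-\rho'$ the closed disc $\overline{D(z_0,r)}$ lies in the interior of $\Gamma_\rho$, since $|w-z_0|\le r$ forces $|\Im w|\le|\Im z_0|+r<\rho$; hence $g$ is holomorphic on a neighbourhood of that disc and
$$
\partial_z f(z_0)=\frac{1}{2\pi i}\oint_{|w-z_0|=r}\frac{g(w)}{(w-z_0)^2}\,dw .
$$

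Next I would estimate the contour integral crudely: on the circle $|w-z_0|=r$ the integrand has modulus $|g(w)|\,r^{-2}$, and the contour has length $2\pi r$, so $|\partial_z f(z_0)|\le r^{-1}\sup_{|w-z_0|=r}|g(w)|$. To reinsert the weight, bound $|g(w)|\le\|f\|_\rho\, e^{-\beta|\Re w|}$ for $w\in\Gamma_\rho$, and note that $|\Re z_0|-|\Re w|\le|\Re z_0-\Re w|\le|z_0-w|=r$. Multiplying the previous bound by $e^{\beta|\Re z_0|}$ then gives
$$
|\partial_z f(z_0)|\,e^{\beta|\Re z_0|}\le\frac{1}{r}\,\|f\|_\rho\sup_{|w-z_0|=r}e^{\beta(|\Re z_0|-|\Re w|)}\le\frac{e^{\beta r}}{r}\,\|f\|_\rho .
$$

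Finally I would let $r\uparrow\rho-\rho'$ and take the supremum over $z_0\in\Gamma_{\rho'}$, obtaining $\|\partial_z f\|_{\rho'}\le\dfrac{e^{\beta(\rho-\rho')}}{\rho-\rho'}\,\|f\|_\rho$; since the width loss $\rho-\rho'$ is bounded by $\rho$ (and in the applications by a fixed number), the prefactor $e^{\beta(\rho-\rho')}$ is an absolute constant $C$ and the claimed inequality follows. I do not expect a genuine obstacle here: the single point to keep in mind is that the disc used in the Cauchy formula must stay inside the strip $\Gamma_\rho$, which caps its radius at $\rho-\rho'$ — and this cap is precisely what produces the $(\rho-\rho')^{-1}$ blow-up, the usual price one pays when differentiating an analytic function at the cost of a sliver of analyticity width.
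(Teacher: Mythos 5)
Your proof is correct and takes essentially the same approach as the paper: both apply Cauchy's integral formula for the derivative on a disc of radius at most $\rho-\rho'$ contained in $\Gamma_\rho$, then estimate the contour integral. Your treatment is if anything slightly more careful than the paper's, since you explicitly track the shift in the weight $e^{\beta|\Re z|}$ across the circle (producing the harmless factor $e^{\beta r}$), a detail the paper's proof glosses over.
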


\begin{proof}
The proof is based on the use of Cauchy's formula
$$
\partial_z f(z) = {1 \over 2 i \pi} \int_{C(z,R)} {f(z') \over (z' - z)^2} dz'
$$
where $C(z,R)$ is the circle, centered at $z$ and of radius $R$. We can take $R$ sufficiently small so that $C(z,R) \in \Gamma_\rho$, for $z \in \Gamma_\rho$. Hence
$$
| \partial_z f(z) | \le {\| f \|_\rho e^{- \beta |\Re z|} \over d(z,\partial \Gamma_{\rho})},
$$
where $d(z,\partial \Gamma_{\rho})$ is the distance from $z$ to the boundary of $\Gamma_{\rho}$. Now if $z \in \Gamma_{\rho'}$, 
$$
d(z,\partial \Gamma_{\rho,r}) \ge C (\rho - \rho'),
$$
which ends the proof.
\end{proof}


\subsection{On the half line}


We next study the case of the half real line $\rit_+$. Let  $\sigma > 0$ and let $r > 0$. Let us introduce the "pencil" type domain
 $$
 \Gamma_{\sigma,r} = \Bigl\{ z, | \Im z | \le \min( \sigma \Re z, \sigma r), \Re z \ge 0 \Bigl\} .
 $$
 In the sequel, $r$ will be fixed and we will omit it in the various notations.
Let $f$ be a smooth function of real $z \ge 0$. As in the previous case, we define the analytic spaces ${\cal B}^\sigma$ by: $f \in {\cal B}^\sigma$ if and only if there exists a bounded
holomorphic function $g$ defined on $\Gamma_{\sigma,r}$ such that $f(z) = g(z)$ for all real $z\ge 0$.
The norm on ${\cal B}^\sigma$ is defined by
 $$
\| f \|_\sigma = \sup_{z \in \Gamma_{\sigma,r}} | g(z) | e^{\beta \Re z} 
 $$
for some fixed nonnegative constant $\beta$. By a slight abuse of notation, we denote by $f$ the extension of $f(x)$ to $\Gamma_{\sigma,r}$. As in Proposition \ref{propanalytic1}, we have 
$$
\| f g \|_\sigma \le \| f \|_\sigma \| g \|_\sigma ,
$$
For every functions $f$ and $g$ in ${\cal B}^\sigma$. In addition, we have 

\begin{proposition} \label{propanalytic3}
Let
$$
\varphi(z) = {z \over 1 + z} .
$$ 
There exists a constant $C > 0$ such that
for any function $f \in {\cal B}^\sigma$, and for any $0 < \sigma' < \sigma$, we have
$\varphi(z) \partial_z f \in {\cal B}^{\sigma'}$ and
\beq \label{derivee2ana}
\| \varphi(z)  \partial_z f \|_{\sigma'} \le {C \over \sigma - \sigma'} \| f \|_{\sigma} .
\eeq
 \end{proposition}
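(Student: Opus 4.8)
The plan is to reproduce the interior Cauchy estimate used in the proof of Proposition~\ref{propanalytic2}. The only new feature is that the pencil $\Gamma_{\sigma,r}$ pinches to a point at the origin, so the radius of the largest admissible Cauchy circle centered at $z\in\Gamma_{\sigma',r}$ degenerates like $\Re z$ as $z\to0$; this loss is compensated exactly by the prefactor $\varphi(z)$, which vanishes linearly at $z=0$. Let $g$ denote the bounded holomorphic extension of $f$ to $\Gamma_{\sigma,r}$, so that $\sup_{w\in\Gamma_{\sigma,r}}|g(w)|e^{\beta\Re w}=\|f\|_\sigma$. Since $\sigma'<\sigma$ we have $\Gamma_{\sigma',r}\subset\Gamma_{\sigma,r}$, and $\varphi g'$ is holomorphic on $\Gamma_{\sigma',r}$ (the only pole of $\varphi$, at $z=-1$, lies outside the half-plane $\Re z\ge0$). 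It therefore suffices to bound $|\varphi(z)g'(z)|e^{\beta\Re z}$ uniformly on $\Gamma_{\sigma',r}$, noting this bound extends continuously through $z=0$, where $\varphi$ vanishes and $g'$ is finite.

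First I would produce a lower bound for $d(z,\partial\Gamma_{\sigma,r})$ when $z\in\Gamma_{\sigma',r}$. Write $\Gamma_{\sigma,r}$ as the intersection of the five closed half-planes
$$
\{\Re w\ge0\},\quad \{\pm\,\Im w\le\sigma r\},\quad \{\sigma\Re w\mp\Im w\ge0\}.
$$
For each such half-plane $H_i=\{\ell_i\ge0\}$ with $\ell_i$ affine of unit gradient, the closed disk of radius $\ell_i(z)=d(z,\partial H_i)$ centered at $z$ stays inside $H_i$; hence the closed disk of radius $\min_i d(z,\partial H_i)$ stays inside $\Gamma_{\sigma,r}$. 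Computing the five elementary distances and using $|\Im z|\le\sigma'\min(\Re z,r)$ on $\Gamma_{\sigma',r}$, each of them is bounded below by
$$
\rho(z):=\frac{\sigma-\sigma'}{\sqrt{1+\sigma^2}}\,\min(\Re z,\,r),
$$
so $d(z,\partial\Gamma_{\sigma,r})\ge\rho(z)$. Note $\rho(z)\le\sigma r/\sqrt{1+\sigma^2}$ is bounded by a constant depending only on $\sigma$ and $r$.

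Next, applying Cauchy's formula on the circle $C(z,\rho(z))$ exactly as in Proposition~\ref{propanalytic2} gives $|g'(z)|\le\rho(z)^{-1}\sup_{C(z,\rho(z))}|g|$; since $\Re w\ge\Re z-\rho(z)$ on that circle, $|g(w)|\le\|f\|_\sigma e^{\beta\rho(z)}e^{-\beta\Re z}$, whence
$$
|g'(z)|\,e^{\beta\Re z}\ \le\ \frac{e^{\beta\rho(z)}}{\rho(z)}\,\|f\|_\sigma\ \le\ \frac{C_1}{(\sigma-\sigma')\,\min(\Re z,r)}\,\|f\|_\sigma,
$$
using that $\rho(z)$ is bounded. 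Finally I would record the elementary pointwise bound $|\varphi(z)|\le C_2\min(\Re z,r)$ valid on the whole pencil: when $\Re z\le r$ one has $|1+z|\ge1$ and $|z|\le\sqrt{1+\sigma^2}\,\Re z$ (because $|\Im z|\le\sigma\Re z$), so $|\varphi(z)|\le\sqrt{1+\sigma^2}\,\Re z$; when $\Re z\ge r$ one has $|\varphi(z)|=|1-(1+z)^{-1}|\le2$. Multiplying the previous display by $|\varphi(z)|$ yields $|\varphi(z)g'(z)|e^{\beta\Re z}\le\frac{C}{\sigma-\sigma'}\|f\|_\sigma$ on $\Gamma_{\sigma',r}$, which is \eqref{derivee2ana}; in particular $\varphi\,\partial_z f\in{\cal B}^{\sigma'}$.

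The genuinely delicate point is the distance estimate near the vertex: one must see that the admissible Cauchy radius at $z$ is $\gtrsim(\sigma-\sigma')\Re z$ and not smaller, and that the factor $\varphi(z)\sim z$ compensates precisely this degeneracy. One must also keep $\rho(z)$ bounded (hence the truncation $\min(\Re z,r)$ rather than $\Re z$), so that the factor $e^{\beta\rho(z)}$ coming from the exponential weight stays harmless. Away from the origin, where $\Gamma_{\sigma,r}$ has comparable width at all scales, the argument is nothing more than the standard interior Cauchy estimate already carried out in Proposition~\ref{propanalytic2}.
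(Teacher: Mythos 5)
Your proof is correct and follows exactly the route the paper intends: it is the Cauchy estimate from Proposition~\ref{propanalytic2}, combined with the observation that for $z\in\Gamma_{\sigma',r}$ the admissible Cauchy radius is $\gtrsim(\sigma-\sigma')\min(\Re z,r)\asymp(\sigma-\sigma')|\varphi(z)|$, which is precisely the bound $d(z,\partial\Gamma_{\sigma,r})\ge C(\sigma-\sigma')|\varphi(z)|$ the paper invokes. You have simply made explicit the elementary half-plane computation and the comparison between $\min(\Re z,r)$ and $|\varphi(z)|$ that the paper leaves to the reader.
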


\begin{proof}
The proof is similar to that of Proposition \ref{propanalytic2}, upon noting that for $z \in \Gamma_{\sigma',r}$, we have 
$$
d(z,\partial \Gamma_{\sigma,r}) \ge C (\sigma - \sigma') |\varphi(z)|,
$$
which ends the proof.
\end{proof}


\subsection{In two space dimensions}


Let us now define analytic spaces for functions which are defined on $\rit \times \rit_+$. We define the domain
$\Gamma_{\rho,\sigma,r}$ by
$$
\Gamma_{\rho,\sigma,r} = \Bigl \{ (x,y), \quad  | \Im x | \le \rho, \quad | \Im y | \le \min(\sigma \Re y,\sigma r), \Re y \ge 0 \Bigr\} .
$$
We then define ${\cal A}^{\rho,\sigma}$ as the space of holomorphic functions, defined on $\Gamma_{\rho,\sigma,r}$, together with the norm
$$
\| f \|_{\rho,\sigma} = \sup_{z \in \Gamma_{\rho,\sigma,r}} | f(z) | e^{\beta \Re y}
$$
where $\beta > 0$ is a fixed constant. Note that $\rho$ measures the regularity in the $x$ variable, and $\sigma$
in the $y$ variable.
Propositions similar to \ref{propanalytic1} and \ref{propanalytic2} hold true, namely
$$
\| f g \|_{\rho,\sigma} \le \| f \|_{\rho,\sigma}\| g \|_{\rho,\sigma},
$$
$$
\| \partial_x f \|_{\rho',\sigma} \le {C \over \rho - \rho'} \| f \|_{\rho,\sigma} , \qquad \| \varphi(z) \partial_z f \|_{\rho,\sigma'} \le {C \over \sigma - \sigma'} \| f \|_{\rho,\sigma}
.
$$


\section{Generator functions}


We now introduce other classes of analytic functions, which turn be more adapted to Navier Stokes equations.


\subsection{In one space dimension}


Another way to say that a function $f$ is holomorphic, is to say that its successive derivatives $\partial_z^n f$
grow at most like $n ! M^n$ for some constant $M$. This leads to the following definitions.

Let $\| . \|_n$ be a family of norms, which satisfies, for any $0 \le p \le n$
$$
\| f g \|_n \le C_0 \| f\|_p \| g \|_{n-p} 
$$
for a given constant $C_0$ independent on $f$, $g$, $n$ and $p$. In the sequel we will take the supremum norm or a weighted norm,
specially designed for boundary layers.

Let $f(z)$ be a given function of a real variable $z \in \rit$ or $z \in \rit_+$. We define its generator function $Gen(f)$ by
$$
Gen(f)(z) = \sum_{n \ge 0}  \| \partial_z^n f \|_n {z^n \over n !} . 
$$
Note that $Gen(f)$ is defined on an interval of the form $(-R,R)$, with $R > 0$ if $f$ is analytic.

Note that $Gen(f)$ is a non negative, nondecreasing, convex function. All its derivatives, at any order, are non negative,
nondecreasing and convex.

\begin{prop}
Let $f$ and $g$ be two given functions, and let $N > 0$. Then
\beq \label{Gen1}
Gen(fg) \le C_0 Gen(f) Gen(g).
\eeq
\end{prop}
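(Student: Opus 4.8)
The plan is to prove the submultiplicativity bound \eqref{Gen1} by a direct computation, expanding both sides as power series in $z$ and comparing coefficients, using the Leibniz rule together with the assumed submultiplicativity of the family of norms $\|\cdot\|_n$. First I would write out the $n$-th derivative of the product via the Leibniz formula, $\partial_z^n(fg) = \sum_{p=0}^n \binom{n}{p} \partial_z^p f \, \partial_z^{n-p} g$, and apply the triangle inequality followed by the hypothesis $\|\partial_z^p f \, \partial_z^{n-p} g\|_n \le C_0 \|\partial_z^p f\|_p \|\partial_z^{n-p} g\|_{n-p}$ to get $\|\partial_z^n(fg)\|_n \le C_0 \sum_{p=0}^n \binom{n}{p} \|\partial_z^p f\|_p \|\partial_z^{n-p} g\|_{n-p}$.

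Next I would substitute this into the definition $Gen(fg)(z) = \sum_{n\ge 0} \|\partial_z^n(fg)\|_n \frac{z^n}{n!}$ and observe that the binomial coefficient combines with the factorials exactly as in the Cauchy product: $\binom{n}{p}\frac{1}{n!} = \frac{1}{p!(n-p)!}$. Thus, for $z \ge 0$ (where all terms are nonnegative so rearrangement is legitimate), one obtains $Gen(fg)(z) \le C_0 \sum_{n\ge 0} \sum_{p=0}^n \|\partial_z^p f\|_p \frac{z^p}{p!} \|\partial_z^{n-p} g\|_{n-p} \frac{z^{n-p}}{(n-p)!} = C_0 \big(\sum_{p\ge 0} \|\partial_z^p f\|_p \frac{z^p}{p!}\big)\big(\sum_{q\ge 0} \|\partial_z^q g\|_q \frac{z^q}{q!}\big) = C_0 \, Gen(f)(z)\, Gen(g)(z)$, which is the claimed inequality. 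Since $Gen(f)$, $Gen(g)$ are by construction even functions (or since we only care about the power series with nonnegative coefficients), the inequality as functions on $(-R,R)$ follows from the inequality on the coefficient level.

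There is essentially no serious obstacle here — the only point requiring a word of care is the interchange of the double summation, which is justified because every term is nonnegative, so Tonelli/Fubini for series applies unconditionally and both sides are simply being compared as (possibly infinite) nonnegative quantities; in particular the inequality holds trivially wherever the right-hand side diverges, and where it converges the rearrangement is valid. I would also remark that the stray hypothesis "$N>0$" in the statement plays no role in this argument and the bound holds for all $z$ in the common domain of convergence. The proof is thus a two-line manipulation once the Leibniz rule and the norm hypothesis are in place.
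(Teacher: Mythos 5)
Your proof is correct and is essentially identical to the paper's: Leibniz rule, the norm hypothesis $\|fg\|_n \le C_0\|f\|_p\|g\|_{n-p}$, and recombination as a Cauchy product (the paper does not even bother to remark on the interchange of sums). Your observation that the hypothesis $N>0$ is vacuous is accurate; it appears to be a leftover from a different version of the statement.
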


\begin{proof}
We have
$$
\partial_z^n (fg) = \sum_{0 \le k \le n} {n ! \over k ! (n-k)! } \partial_z^k f \partial_z^{n-k} g,
$$
hence
$$
\| \partial_z^n (fg) \|_n  \le C_0 \sum_{0 \le k \le n} {n ! \over k ! (n-k)! } \| \partial_z^k f \|_k 
\| \partial_z^{n-k} g \|_{n-k} .
$$
Therefore
$$
\| \partial_z^n (fg) \|_n {z^n \over n!} \le
C_0 \sum_{0 \le k \le n}  \| \partial_z^k f \|_k {z^k \over k !}
\| \partial_z^{n-k} g \|_{n-k} {z^{n-k} \over (n-k) !}.
$$
Summing with respect to $n$
we get (\ref{Gen1}).
\end{proof}


\subsection{On the half plane}


In two space dimensions, we will 
use the Fourier transform in the $x$ variable. Let $\alpha$ be the dual variable. For any smooth function $f(x,y)$
we introduce $f_\alpha(y)$, its Fourier transform in $x$, which satisfies
$$
f(x,y) = \sum_\alpha e^{i \alpha x} f_\alpha(y) .
$$
We then introduce the generator function
$$
Gen(f)(z_1,z_2) = \sum_{k,\alpha}  e^{| \alpha | z_1} \|  \partial_y^k f_\alpha \|_k  {z_2 ^k \over k !} .
$$ 
Generator functions have very nice properties, since the generator of a product is dominated by the product
of the generators, and the generator of the  $x$ derivative is bounded by the $x$ derivative of the generator,
two strong and very useful properties!

\begin{prop} \label{productGen}
For any functions $f$ and $g$, there hold 
$$
Gen(fg) \le C_0 Gen(f) Gen(g), \qquad Gen(\partial_x f) \le \partial_{z_1} Gen(f).
$$
\end{prop}

\begin{proof}
We have
$$
\begin{aligned}
Gen(fg) &= \sum_{k,\alpha, k' \le k, \alpha'} 
e^{| \alpha | z_1} \| \partial_y^{k'} f_\alpha' \partial_y^{k - k'} f_{\alpha - \alpha'} \|_k {z_2^k \over k' ! (k - k') ! }
\\
&\le C_0  \sum_{k,\alpha, k' \le k, \alpha'} e^{| \alpha' | z_1}  \| \partial_y^{k'} f_\alpha'  \|_{k'}
e^{ | \alpha - \alpha'| z_1 } \| \partial_y^{k - k'} f_{\alpha - \alpha'} \|_{k - k'} {z_2^k \over k' ! (k - k') ! }
\\&
\le C_0 Gen(f) Gen(g).
\end{aligned}
$$
The equality involving $Gen(\partial_x f)$ is straightforward. Note that we do not have a similar expression
for $Gen(\partial_y f)$ since $\partial_y f$ may become large near $0$. 
\end{proof}


\subsection{Boundary layer norms}


We will now focus on the case where there is a boundary layer in the second variable $y$. More precisely, if we assume that 
$f$ has a boundary layer of size $\delta$ near $y = 0$ then we expect, for any $k$ and $\ell$, 
$\partial_x^k f$ and $y^\ell \partial_y^\ell f$ are bounded. For this reason, 
we introduce analytic boundary layer norms: for $z_1,z_2\ge 0$ we define
\beq \label{Genbl}
\begin{aligned}
Gen_0(f)(z_1,z_2) &= \sum_{\alpha \in \ZZ} \sum_{\ell \ge 0}  
e^{z_1 |\alpha|}  \|  \partial_y^\ell f_\alpha \|_{\ell,0}{z_2^\ell \over \ell ! } ,
\\Gen_\delta(f)(z_1,z_2) &= \sum_{\alpha \in \ZZ}\sum_{\ell \ge 0}  
e^{z_1 |\alpha|} \| \partial_y^\ell f_\alpha \|_{\ell,\delta} {z_2^\ell \over \ell ! } ,
\end{aligned}\eeq
in which $f_\alpha(y)$ denotes the Fourier transform of $f(x,y)$ with respect to the $x$ variable. In these sums, 
$$
\begin{aligned}
\| f_\alpha \|_{\ell,0} &= \sup_{y} \varphi(y)^\ell| f_\alpha(y) | ,
\\\| f_\alpha \|_{\ell,\delta} &= \sup_{y} \varphi(y)^{\ell}| f_\alpha(y) | \Bigl( \delta^{- 1} e^{-y / \delta} + 1 \Bigr)^{-1} ,
\end{aligned}$$
where 
$$
\varphi(y) = \frac{y}{1+y}
$$ 
and where the boundary layer thickness $\delta$ is equal to
$$
\delta = \gamma_0 \nu^{1/4}
$$
for some sufficiently large $\gamma_0>0$. 

In practice, these generator functions $Gen_0(\cdot)$ and $Gen_\delta(\cdot)$ will respectively control the velocity and the vorticity of the solutions
of Navier Stokes equations, and the constant $\gamma_0 $ will be chosen so that $\gamma_0^{-1} \le \sqrt{\Re \lambda_0/2}$, 
where $\lambda_0$ is the maximal unstable eigenvalue of the linearized Euler equations around $U$.  

For convenience, we introduce the following generator functions of one-dimensional functions $f = f(y)$:
\beq \label{Genbl-a}
\begin{aligned}
Gen_{0,\alpha}(f)(z_2) &= \sum_{\ell \ge 0}  \|  \partial_y^\ell f \|_{\ell,0}{z_2^\ell \over \ell ! } ,
\\Gen_{\delta,\alpha}(f)(z_2) &= \sum_{\ell \ge 0}   \| \partial_y^\ell f \|_{\ell,\delta} {z_2^\ell \over \ell ! } .
\end{aligned}\eeq
Of course, it follows that 
$$
Gen_0(f) = \sum_{\alpha\in \ZZ} e^{z_1|\alpha|} Gen_{0,\alpha} (f_\alpha)
$$ 
for functions of two variables $f = f(x,y)$, and similarly for $Gen_\delta$.

We note that $Gen_0$, $Gen_\delta$ and all their derivatives are non negative for positive $z_1$ and $z_2$. It follows easily that for any $\ell, \ell'\ge 0$, we have
\begin{equation}\label{alg}
\begin{aligned}
\| f\|_{\ell, \delta} \le \|f\|_{\ell, 0},& \qquad \| f\|_{\ell + 1, \delta} \le \|f\|_{\ell, \delta}, \\
\| fg\|_{\ell,\delta} &\le \| f \|_{\ell',0}\|g\|_{\ell-\ell',\delta }.
\end{aligned}\end{equation}
Next, we have the following Proposition 

\begin{prop}\label{prop-Gen}
Let $f$ and $g$ be two functions. For non negative $z_1$ and $z_2$,
there hold
$$
Gen_\delta(f g) \le  Gen_0(f) Gen_\delta(g) ,
$$
$$
Gen_\delta(\partial_x f) = \partial_{z_1} Gen_\delta(f), 
\qquad
Gen_\delta(\partial_x^2 f) = \partial_{z_1}^2 Gen_\delta(f), 
$$
$$Gen_\delta(\varphi \partial_y f) \le C_0 \partial_{z_2} Gen_\delta(f),$$
for some universal constant $C_0$, provided $| z_2 |$ is small enough. 
\end{prop}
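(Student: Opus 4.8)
The plan is to reduce every statement to the one-variable generator functions via $Gen_\delta(f) = \sum_{\alpha} e^{z_1|\alpha|} Gen_{\delta,\alpha}(f_\alpha)$, and then to combine the Leibniz rule with the elementary inequalities of \eqref{alg} and rearrange the resulting sums, all of whose terms are nonnegative. The one recurring point about the Fourier convolution in $x$ is the subadditivity $|\alpha|\le|\alpha'|+|\alpha-\alpha'|$, which together with $z_1\ge0$ gives $e^{z_1|\alpha|}\le e^{z_1|\alpha'|}e^{z_1|\alpha-\alpha'|}$.

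For the product estimate I would write $(fg)_\alpha = \sum_{\alpha'} f_{\alpha'} g_{\alpha-\alpha'}$ and apply the Leibniz formula to $\partial_y^\ell$. The algebra inequality $\|uv\|_{\ell,\delta}\le\|u\|_{\ell',0}\|v\|_{\ell-\ell',\delta}$ of \eqref{alg} bounds each summand, the binomial coefficient $\binom{\ell}{\ell'}$ combines with $z_2^\ell/\ell!$ to factor as $\frac{z_2^{\ell'}}{\ell'!}\cdot\frac{z_2^{\ell-\ell'}}{(\ell-\ell')!}$, and summing over $\ell,\ell',\alpha,\alpha'$ (using the exponential subadditivity) yields $Gen_\delta(fg)\le Gen_0(f)Gen_\delta(g)$, with no loss of constant since \eqref{alg} has none. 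The two $\partial_x$-identities are immediate: $(\partial_x f)_\alpha = i\alpha f_\alpha$ gives $\|\partial_y^\ell(\partial_x f)_\alpha\|_{\ell,\delta}=|\alpha|\,\|\partial_y^\ell f_\alpha\|_{\ell,\delta}$, and since $\partial_{z_1}e^{z_1|\alpha|}=|\alpha|e^{z_1|\alpha|}$, differentiating the absolutely convergent nonnegative series term by term gives $Gen_\delta(\partial_x f)=\partial_{z_1}Gen_\delta(f)$; iterating once more gives the $\partial_x^2$ identity.

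The substantive estimate is the one for $\varphi\partial_y f$. Since $(\varphi\partial_y f)_\alpha=\varphi\,\partial_y f_\alpha$, it suffices to show $Gen_{\delta,\alpha}(\varphi\partial_y h)\le C_0\,\partial_{z_2}Gen_{\delta,\alpha}(h)$ for a one-variable $h$. Expanding $\partial_y^\ell(\varphi\partial_y h)=\sum_{j=0}^\ell\binom{\ell}{j}(\partial_y^j\varphi)(\partial_y^{\ell-j+1}h)$, the term $j=0$ equals $\varphi\,\partial_y^{\ell+1}h$, and the identity $\|\varphi u\|_{\ell,\delta}=\|u\|_{\ell+1,\delta}$ — immediate from the definition of the weighted norms since $\varphi^\ell\varphi=\varphi^{\ell+1}$ — shows that the total contribution of these terms is exactly $\sum_{\ell\ge0}\|\partial_y^{\ell+1}h\|_{\ell+1,\delta}\frac{z_2^\ell}{\ell!}=\partial_{z_2}Gen_{\delta,\alpha}(h)$. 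For $j\ge1$, write $\varphi=1-(1+y)^{-1}$, so $\partial_y^j\varphi=(-1)^{j+1}j!\,(1+y)^{-j-1}$, whence $\|\partial_y^j\varphi\|_{j-1,0}=j!\sup_{y\ge0}\frac{y^{j-1}}{(1+y)^{2j}}\le j!$ (the supremum is attained at $y=(j-1)/(j+1)<1$, where the numerator is $\le1$ and the denominator is $\ge1$). The algebra inequality then gives $\|(\partial_y^j\varphi)(\partial_y^{\ell-j+1}h)\|_{\ell,\delta}\le j!\,\|\partial_y^{\ell-j+1}h\|_{\ell-j+1,\delta}$, and this $j!$ cancels the $j!$ in $\binom{\ell}{j}/\ell!=1/(j!(\ell-j)!)$. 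Re-indexing by $m=\ell-j\ge0$ with $j\ge1$ free, the $j\ge1$ part of $Gen_{\delta,\alpha}(\varphi\partial_y h)$ is bounded by $\bigl(\sum_{j\ge1}z_2^j\bigr)\bigl(\sum_{m\ge0}\frac{z_2^m}{m!}\|\partial_y^{m+1}h\|_{m+1,\delta}\bigr)=\frac{z_2}{1-z_2}\,\partial_{z_2}Gen_{\delta,\alpha}(h)$. Adding the two contributions gives $\frac{1}{1-z_2}\partial_{z_2}Gen_{\delta,\alpha}(h)\le C_0\,\partial_{z_2}Gen_{\delta,\alpha}(h)$ provided $|z_2|$ is small (e.g. $C_0=2$ for $|z_2|\le 1/2$); multiplying by $e^{z_1|\alpha|}$ and summing over $\alpha$ finishes the argument.

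The main obstacle is exactly this last estimate: one needs the clean factorial bound $\|\partial_y^j\varphi\|_{j-1,0}\le j!$ in the \emph{weighted} norm, because it is this $j!$ that neutralizes the multinomial coefficient and leaves a geometric series in $z_2$ — and the hypothesis ``$|z_2|$ small'' is precisely the convergence of that series. Everything else is bookkeeping: all sums involved are nonnegative, so the rearrangements and term-by-term differentiations are legitimate wherever the right-hand sides are finite.
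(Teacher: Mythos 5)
Your proof is correct and takes essentially the same route as the paper: Leibniz plus the algebra inequality \eqref{alg} for the product bound, direct Fourier computation for the $\partial_x$ identities, and for $\varphi\partial_y$ a split between the $j=0$ term (which is exactly $\partial_{z_2}Gen_{\delta,\alpha}$, via $\|\varphi u\|_{\ell,\delta}=\|u\|_{\ell+1,\delta}$) and the $j\ge1$ terms whose $j!$ from $\partial_y^j\varphi$ cancels the multinomial and leaves the geometric series $\sum_{j\ge1}z_2^j$. The only cosmetic difference is that you keep the natural weight $\|\partial_y^j\varphi\|_{j-1,0}$ and verify $\le j!$ directly, while the paper relaxes to $\|\partial_y^j\varphi\|_{0,0}=j!$ and invokes analyticity of $\varphi$; both give the same constant and the same smallness requirement on $|z_2|$.
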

\begin{proof} First, note that 
$$ (fg)_\alpha = \sum_{\alpha'\in \ZZ} f_{\alpha'} g_{\alpha - \alpha'} ,$$
and
$$
 \partial_y^\beta (f g)_\alpha =
\sum_{\alpha'\in \ZZ} \sum_{0 \le \beta' \le \beta} 
{\beta ! \over \beta' ! (\beta - \beta') !}  \partial_y^{\beta'} f_{\alpha'} 
\partial_y^{\beta - \beta'} g_{\alpha-\alpha'} .
$$
Thus, 
$$
\begin{aligned}
&Gen_\delta(fg)(z_1,z_2) 
\\&= \sum_{\alpha \in \ZZ}\sum_{\beta \ge 0} e^{z_1|\alpha|}\| \partial_y^\beta (fg)_\alpha \|_{\beta,\delta} {z_2^\beta \over \beta ! } 
\\
&\le \sum_{\alpha \in \ZZ}\sum_{\beta \ge 0} \sum_{\alpha'\in \ZZ}\sum_{0 \le \beta' \le \beta} e^{z_1|\alpha|} \|\partial_y^{\beta'} f_{\alpha'}\|_{\beta',0}
\| \partial_y^{\beta - \beta'} g_{\alpha-\alpha'} \|_{\beta-\beta',\delta}  
{z_2^\beta \over \beta' ! (\beta - \beta') !}  
\\
&\le \sum_{\alpha,\alpha'\in \ZZ}\sum_{ \beta \ge 0} \sum_{\beta \ge \beta' } e^{z_1|\alpha'|} e^{z_1|\alpha-\alpha'|} \|\partial_y^{\beta'} f_{\alpha'}\|_{\beta',0}
\| \partial_y^{\beta - \beta'} g_{\alpha - \alpha'} \|_{\beta-\beta',\delta} 
{z_2^{\beta'}z_2^{\beta - \beta'} \over \beta' ! (\beta - \beta') !}  
\\
&\le Gen_0(f)(z_1,z_2) Gen_\delta (g)(z_1,z_2).
\end{aligned}$$
Next, we write 
$$
\begin{aligned} Gen_\delta(\partial_xf) &= \sum_{\alpha\in \ZZ }\sum_{ \ell \ge 0} e^{z_1|\alpha|}\| \alpha \partial_y^\ell f_\alpha \|_{\ell,\delta} {z_2^\ell \over \ell ! } 
\\&= \partial_{z_1}
\sum_{\alpha\in \ZZ }\sum_{ \ell \ge 0} e^{z_1|\alpha|}\|  \partial_y^\ell f_\alpha \|_{\ell,\delta} {z_2^\ell \over \ell ! } 
= 
\partial_{z_1} Gen_\delta (f),
\end{aligned} $$
and similarly for $Gen_\delta(\partial_x^2 f)$.
Finally, we compute 
$$
\begin{aligned} Gen_\delta(\varphi \partial_yf) 
&= \sum_{\alpha\in \ZZ }\sum_{ \ell \ge 0} e^{z_1|\alpha|}\|  \partial_y^\ell (\varphi \partial_yf_\alpha) \|_{\ell ,\delta} {z_2^\ell \over \ell ! } 
\\
&\le \sum_{\alpha\in \ZZ } \sum_{ \ell \ge 0} 
\sum_{0\le \ell'\le \ell}e^{z_1|\alpha|}\|  \partial_y^{\ell'} \varphi \partial^{\ell - \ell' + 1}_yf_\alpha \|_{\ell ,\delta} {z_2^\ell \over \ell' ! (\ell - \ell')! } 
\\
&\le \Bigl( 1 + \sum_{\ell'\ge 0} \|\partial_y^{\ell'} \varphi\|_{0,0} {z_2^{\ell'} \over \ell' ! }\Bigr)  
 \sum_{\alpha\in \ZZ }\sum_{ \ell -\ell'\ge 0} 
  e^{z_1|\alpha|}\|  \partial^{\ell - \ell' + 1}_yf_\alpha \|_{\ell - \ell'+1,\delta} {z_2^{\ell -\ell'}\over (\ell - \ell')! } 
\\&\le C_0 \partial_{z_2} Gen_\delta (f),
\end{aligned} $$
where we distinguished the cases $\ell' = 0$ and $\ell' > 0$.
As $\varphi$ is analytic,
$\sum_{\ell'\ge 0} \|\partial_y^{\ell'} \varphi\|_{0,0} {z_2^{\ell'} / \ell' ! }$ converges provided $z_2$ is small enough.
The Proposition follows. 
\end{proof}


\section{Laplace equation}


In this section, we study the Laplace equation and the generator functions of solutions to the Laplace equation. 
In the latter chapters, we shall apply a similar analysis to the more complex Orr Sommerfeld equations, 
which is the resolvent equation for linearized Navier-Stokes equations around a boundary layer profile. 


\subsection{In one space dimension}


As an exercice we now investigate the analytic regularity of the solution of the classical Laplace equation with damping.
Let $\alpha > 0$ be fixed and let us study the following equation
\beq \label{Lap11}
\partial_y^2 \phi - \alpha^2 \phi = f
\eeq
on the half line $z \ge 0$, with boundary condition 
\beq \label{Lap21}
\phi(0) = 0 .
\eeq
The Green function of $\partial_z^2 - \alpha^2$ is
$$
G(x,z) = - {1 \over  2 \alpha } \Bigl( e^{- \alpha | z - x |} - e^{- \alpha | z + x |} \Bigr) 
=  \left\{ \begin{aligned}  & G_-(x,z)  \qquad 0\le x\le z \\
& G_+(x,z), \qquad 0\le z\le x.
 \end{aligned}\right. $$
 Or, equivalently, using 
 $$
 \sinh(x) = \frac{1}{2} (e^x - e^{-x}),
 $$ 
  we have
 $$
 G_-(x,z) = - {1 \over \alpha} e^{- \alpha z} \sinh(\alpha x),
\qquad  G_+(x,z) = - {1 \over \alpha} e^{- \alpha x} \sinh(\alpha z) .
 $$
For real values of $z$, the solution $\phi$ of (\ref{Lap11}) is explicitly given by
\begin{equation}\label{laplacephi11}
\phi(z) =  \int_0^z G_- (x,z) f(x) \; dx + \int_z^\infty G_+(x,z) f(x)\; dx.
\end{equation}
This equality defines the solution for real values of $z$. It can be extended to any complex value of $z$ provided
we replace the integral from $0$ to $z$ by the integral over the segment $[0,z]$ and the integral
from $z$ to $+ \infty$ by the integral on the half line
$z + \rit_+$.

 We will prove the following classical proposition, which asserts that the inversion of the Laplace
 operator leads to a gain of two derivatives. We recall that 
 $$
 \|f \|_{0,0} = \sup_{y \ge 0}|f(y) |.
 $$ 
Let us first recall the following classical result:

\begin{prop}\label{proplaplace1} ($L^\infty$ bounds). \\
Let $\phi$ solve the one-dimensional Laplace problem \eqref{Lap11}, with Dirichlet boundary condition. There holds
\beq \label{Lap31}
\alpha^2 \| \phi \|_{0,0} + | \alpha | \,  \| \partial_y \phi \|_{0,0}
+ \| \partial_y^2 \phi \|_{0,0}  \le C \| f \|_{0,0},
\eeq
where the constant $C$ is independent of the integer $\alpha \ne 0$.
\end{prop}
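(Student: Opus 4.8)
The plan is to work directly from the explicit representation formula \eqref{laplacephi11} together with the explicit form of the Green kernels $G_\pm$, and to estimate each of the three quantities $\alpha^2\|\phi\|_{0,0}$, $|\alpha|\,\|\partial_y\phi\|_{0,0}$, $\|\partial_y^2\phi\|_{0,0}$ in turn. First I would bound $\|\phi\|_{0,0}$: fixing $y \ge 0$, I split the representation as $\phi(y) = \int_0^y G_-(x,y)f(x)\,dx + \int_y^\infty G_+(x,y)f(x)\,dx$, pull $\|f\|_{0,0}$ out of both integrals, and use the elementary pointwise bounds $|G_-(x,y)| = \tfrac1\alpha e^{-\alpha y}\sinh(\alpha x) \le \tfrac1{2\alpha} e^{-\alpha(y-x)}$ for $0 \le x \le y$ and $|G_+(x,y)| \le \tfrac1{2\alpha} e^{-\alpha(x-y)}$ for $x \ge y$. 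Each integral then evaluates (or is dominated) by $\tfrac1{2\alpha}\int_0^\infty e^{-\alpha s}\,ds = \tfrac1{2\alpha^2}$, giving $\|\phi\|_{0,0} \le C\alpha^{-2}\|f\|_{0,0}$. This is the estimate on the first term.

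Next, for $\partial_y\phi$, I would differentiate \eqref{laplacephi11} under the integral sign; the boundary terms from differentiating the variable endpoint cancel because $G_-(y,y) = G_+(y,y)$ (the Green function is continuous across the diagonal), leaving $\partial_y\phi(y) = \int_0^y \partial_y G_-(x,y)f(x)\,dx + \int_y^\infty \partial_y G_+(x,y)f(x)\,dx$. Since $\partial_y G_-$ and $\partial_y G_+$ each carry exactly one factor of $\alpha$ relative to $G_\pm$ — explicitly $\partial_y G_-(x,y) = -e^{-\alpha y}\sinh(\alpha x)$ and $\partial_y G_+(x,y) = -\cosh(\alpha y)e^{-\alpha x}\cdot(\text{sign adjustments})$, both bounded by $\tfrac12 e^{-\alpha|x-y|}$ — the same computation as above now produces one fewer power of $\alpha^{-1}$, i.e. $\|\partial_y\phi\|_{0,0} \le C\alpha^{-1}\|f\|_{0,0}$.

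Finally, rather than differentiating twice (where the jump in $\partial_x G$ across the diagonal would have to be tracked carefully), I would simply read off $\partial_y^2\phi$ from the equation itself: \eqref{Lap11} gives $\partial_y^2\phi = f + \alpha^2\phi$, so $\|\partial_y^2\phi\|_{0,0} \le \|f\|_{0,0} + \alpha^2\|\phi\|_{0,0} \le C\|f\|_{0,0}$ by the first estimate. Summing the three bounds yields \eqref{Lap31} with a constant independent of $\alpha$. The only mildly delicate point — the "main obstacle," such as it is — is the bookkeeping in differentiating the representation formula with its $y$-dependent limits of integration: one must check that the endpoint contributions genuinely cancel (they do, by continuity of $G$ at $x=y$), so that no spurious $\alpha$-independent or $\alpha$-growing term is introduced; everything else is a routine exponential integral. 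Extending the formula to complex $z$ along the segment $[0,z]$ and the ray $z+\RR_+$, as noted after \eqref{laplacephi11}, is not needed for this real-variable $L^\infty$ statement but would be handled the same way, using that $\Re\sqrt{\cdot}>0$ keeps the exponentials decaying.
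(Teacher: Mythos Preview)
Your proposal is correct and follows essentially the same route as the paper's proof: bound the Green kernel by $\alpha^{-1}e^{-\alpha|x-y|}$, integrate to get $\alpha^2\|\phi\|_{0,0}\le C\|f\|_{0,0}$, differentiate the split representation for the first-derivative bound, and read off $\partial_y^2\phi$ from the equation. Your explicit remark on the cancellation of the endpoint terms when differentiating (via $G_-(y,y)=G_+(y,y)$) is a detail the paper leaves implicit, and there are a couple of harmless sign/constant slips in your formulas for $\partial_y G_\pm$ (e.g.\ $|\partial_y G_+|\le e^{-\alpha|x-y|}$ rather than $\tfrac12 e^{-\alpha|x-y|}$), but none of this affects the argument.
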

\begin{proof} 
We will only consider the case $\alpha > 0$, the opposite case being similar.
The Green function of $\partial_y^2 - \alpha^2$ is
$$
G(x,y) =  -{1 \over 2\alpha } \Bigl( e^{- \alpha | x-y |} - e^{- \alpha | x+y |} \Bigr) 
$$
and its absolute value is bounded by $\alpha^{-1} e^{-\alpha|x-y|}$. The solution $\phi$ of (\ref{Lap11}) is explicitly given by
\begin{equation}\label{laplacephi1}
\phi(y) =\int_0^\infty G(x,y) f(x) dx .
\end{equation}
A direct bound leads to
$$
|\phi(y)|\le \alpha^{-1}\|f\|_{0,0}\int_0^\infty e^{-\alpha |x-y|}\; dx \le  C \alpha^{-2} \| f \|_{0,0}
$$
in which the extra $\alpha^{-1}$ factor  is due to the $x$-integration. 
Splitting the integral formula (\ref{laplacephi11}) in $x < y$ and $x > y$ and differentiating it, we get
$$
 \|\partial_y \phi \|_{0,0} \le C \alpha^{-1} \| f \|_{0,0} .
$$
We then use the equation to bound $\partial_y^2 \phi$, which ends the proof of (\ref{Lap31}).
\end{proof}

Next, in the case when $f$ has a boundary layer behavior, we obtain the following result:
 
\begin{prop} \label{proplaplace3a} (Boundary layers norms)\\
Let $\phi$ solve the one-dimensional Laplacian problem \eqref{Lap11} with Dirichlet boundary condition. 
Provided 
\beq \label{condLap4}
| \delta \alpha^2 | \le 1
\eeq
there holds
\beq \label{Lap4}
 \| \nabla_\alpha \phi \|_{0,0}   \le C \| f \|_{0,\delta}
\eeq
and
\beq \label{Lap4bis1}
| \alpha |^2 \, \| \phi \|_{0,0} 
+  \| \partial_y^2 \phi \|_{0,\delta}   \le C \| f \|_{0,\delta}
\eeq
where the constant $C$ is independent of the integer $\alpha$.
\end{prop}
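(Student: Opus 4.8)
The plan is to run the explicit Green's function representation \eqref{laplacephi11}, exactly as in the proof of Proposition \ref{proplaplace1}, but to exploit the extra vanishing of the kernel near $x=0$ in order to absorb the singular weight $\delta^{-1}e^{-x/\delta}$ hidden in the norm $\|f\|_{0,\delta}$. First I would record, from $G_-(x,y)=-\alpha^{-1}e^{-\alpha y}\sinh(\alpha x)$ and $G_+(x,y)=-\alpha^{-1}e^{-\alpha x}\sinh(\alpha y)$ (treating $\alpha>0$; the case $\alpha<0$ is identical with $|\alpha|$), the pointwise kernel bounds
$$|G(x,y)|\le\min\Big(x,\tfrac{1}{2\alpha}\Big)e^{-\alpha|x-y|},\qquad |\partial_yG(x,y)|\le e^{-\alpha|x-y|},$$
together with the sharper estimate $|\partial_yG(x,y)|\le \alpha x\,e^{-\alpha|x-y|}$ valid on the region $x<y$; these all follow from $\sinh t\le\min(te^{t},\tfrac12 e^{t})$ and $\cosh t\le e^t$ for $t\ge0$. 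I would also note that differentiating \eqref{laplacephi11} in $y$ produces no boundary contribution, since $G_-(y,y)=G_+(y,y)$, so that $\partial_y\phi(y)=\int_0^y\partial_yG_-(x,y)f(x)\,dx+\int_y^\infty\partial_yG_+(x,y)f(x)\,dx$.

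Next, using the definition of the boundary-layer norm, $|f(x)|\le\|f\|_{0,\delta}\big(1+\delta^{-1}e^{-x/\delta}\big)$, and I would split each of the integrals for $\phi$ and $\partial_y\phi$ into a ``bounded'' part and a ``boundary-layer'' part. The bounded part is controlled precisely as in Proposition \ref{proplaplace1}: $\int_0^\infty|G(x,y)|\,dx\le C\alpha^{-2}$ and $\int_0^\infty|\partial_yG(x,y)|\,dx\le C\alpha^{-1}$. For the boundary-layer part I would use the vanishing of the kernel at the wall: from $|G(x,y)|\le x$,
$$\delta^{-1}\int_0^\infty x\,e^{-x/\delta}\,dx=\delta\le\alpha^{-2}$$
by the hypothesis \eqref{condLap4}; from $|\partial_yG_-(x,y)|\le\alpha x$ on $\{x<y\}$, $\delta^{-1}\int_0^y\alpha x\,e^{-x/\delta}\,dx\le\alpha\delta=\alpha^{-1}(\alpha^2\delta)\le\alpha^{-1}\le1$; and from $|\partial_yG_+(x,y)|\le1$ on $\{x>y\}$, $\delta^{-1}\int_y^\infty e^{-x/\delta}\,dx=e^{-y/\delta}\le1$. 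Adding these contributions yields $\alpha^2\|\phi\|_{0,0}\le C\|f\|_{0,\delta}$ and $\|\partial_y\phi\|_{0,0}\le C\|f\|_{0,\delta}$; since $|\alpha|\ge1$ this also gives $|\alpha|\,\|\phi\|_{0,0}\le C\|f\|_{0,\delta}$, which is exactly the bound \eqref{Lap4} for $\nabla_\alpha\phi$ (whose components are $\alpha\phi$ and $\partial_y\phi$).

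For \eqref{Lap4bis1}, the estimate $\alpha^2\|\phi\|_{0,0}\le C\|f\|_{0,\delta}$ is already in hand, and the second derivative is read directly off the equation, $\partial_y^2\phi=f+\alpha^2\phi$. Using the monotonicity $\|g\|_{0,\delta}\le\|g\|_{0,0}$ from \eqref{alg},
$$\|\partial_y^2\phi\|_{0,\delta}\le\|f\|_{0,\delta}+\|\alpha^2\phi\|_{0,\delta}\le\|f\|_{0,\delta}+\alpha^2\|\phi\|_{0,0}\le C\|f\|_{0,\delta},$$
which completes the proof.

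The result is classical and I do not expect a genuine obstacle; the point is rather to see \emph{why} one must settle for less than in Proposition \ref{proplaplace1}. Because the source concentrates in a layer of width $\delta$, one can neither gain a full power of $\alpha$ on $\partial_y\phi$ nor control $\partial_y^2\phi$ in the unweighted norm, and the $\delta$-localized integrals stay $O(1)$ only thanks to the $O(x)$ and $O(\alpha x)$ vanishing of $G$ and $\partial_yG$ at the wall combined with the constraint $\delta\alpha^2\le1$. The only care required is the bookkeeping: keeping track of which of the several bounds on $G$ and $\partial_yG$ is applied on $\{x<y\}$ versus $\{x>y\}$ and in the bounded versus boundary-layer pieces.
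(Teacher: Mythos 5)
Your proof is correct and follows essentially the same route as the paper's: Green's function representation, the pointwise kernel bound $|G(x,y)|\le\min(x,\,C\alpha^{-1})e^{-\alpha|x-y|}$ with the vanishing at the wall used to absorb the $\delta^{-1}e^{-x/\delta}$ weight under the constraint $|\delta\alpha^2|\le1$, and then reading off $\partial_y^2\phi=f+\alpha^2\phi$ from the equation. The only difference is that you supply the explicit bounds on $\partial_yG$ on the two regions $\{x<y\}$ and $\{x>y\}$, which the paper leaves implicit in the phrase ``obtained by differentiating,'' so yours is a more detailed rendering of the same argument.
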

Note that in the case of boundary layer norms, we only gain "one" derivative in supremum norm, but the usual two derivatives
in boundary layer norm. 
\begin{proof}
Using (\ref{laplacephi11}), we estimate
$$
| \phi (y) | \le \alpha^{-1} \| f \|_{0,\delta} \int_0^\infty
e^{- \alpha |   y -   x |} 
\Bigl( 1 + \delta^{-1} e^{-x/\delta}\Bigr) d  x
$$
$$
\le  \alpha^{-1} \| f \|_{0,\delta} 
\Bigl( \alpha^{-1} + \delta^{-1} \int_0^\infty e^{-x/\delta}d   x \Bigr) 
$$
which yields the claimed bound for $\alpha \phi$. The bound on $\partial_y \phi$ is obtained by differentiating  (\ref{laplacephi11}).

Let us turn to (\ref{Lap4bis1}). Note that  $| \partial_x G(x,y) |\le 1$. As
$G(0,y) = 0$ this gives $|G(x,y)| \le | x |$. 
Therefore
$$
| G(x,y) | \le \min(\alpha^{-1}e^{-\alpha | x - y|}, | x|) ,
$$
and hence
$$
| \phi(y) | \le \| f \|_{0,\delta} \int_0^\infty \min(| x | , \alpha^{-1} e^{- \alpha  |x - y|} ) \Bigl( \delta^{-1} e^{- x / \delta} + 1 \Bigr) dx
$$
$$
\le  C \| f \|_{0, \delta} \Bigl(\delta +  \alpha^{-2} \Bigr)
$$
which gives the desired bound when $| \delta \alpha^2 | \le 1$. We then use the equation to get the bound on 
$\| \partial^2_y \phi \|_{0,\delta}$.
\end{proof}



\subsection{Laplace equation and generator functions}


In this section, we will study the generator functions of solutions to the Laplace equation 
$\Delta\phi = \omega$. In the sequel, it is important to keep in mind that, in the application to Prandtl boundary layer stability,
$\omega$ will  have a boundary layer behavior, namely will behave like
$\delta^{-1} e^{- C y / \delta}$, whereas the stream function $\phi$ will be bounded in the limit. 

\begin{proposition}\label{prop-elliptic}  
Let 
$$
\Delta_\alpha \phi_\alpha = \omega_\alpha
$$ 
on $\RR_+$ 
with the Dirichlet boundary condition ${\phi_\alpha}_{\vert_{y=0}} = 0$. For $| \delta \alpha^2 | \le 1$,
 there are positive constants $C_0,\theta_0$ so that 
\beq \label{prop-e}
Gen_{\delta,\alpha}(\nabla_\alpha^2 \phi_\alpha) 
+ Gen_{0,\alpha}(\nabla\phi_\alpha) \le C_0 Gen_{\delta,\alpha}(\omega_\alpha),
\eeq
for all $z_2$ so that $| z_2 | \le \theta_0$. \\
Moreover if $\phi = \Delta^{-1} \omega$ and if $\omega_\alpha = 0$
 for all $\alpha$ such that $| \delta \alpha^2 | \ge 1$, then
\beq \label{prop-e1}
Gen_0(\nabla \phi) \le C Gen_\delta(\omega),
\eeq
\beq \label{prop-e2}
\partial_{z_1} Gen_0(\nabla \phi) \le C \partial_{z_1} Gen_\delta(\omega),
\eeq
\beq \label{prop-e3}
\partial_{z_2} Gen_0(\nabla \phi) \le C \partial_{z_2} Gen_\delta(\omega) + Gen_\delta(\omega).
\eeq
\end{proposition}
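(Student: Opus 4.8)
The idea is to reduce everything to the one-dimensional estimates of Propositions~\ref{proplaplace1} and \ref{proplaplace3a}, applied mode by mode in $\alpha$, and then reassemble the generator functions. Fix $\alpha$ with $|\delta\alpha^2|\le 1$. For the equation $\Delta_\alpha\phi_\alpha=\omega_\alpha$ we may differentiate in $y$: writing $\psi=\partial_y^\ell\phi_\alpha$, a direct computation shows $\Delta_\alpha\psi = \partial_y^\ell\omega_\alpha$, but only for $\ell\ge 0$ if we are careful about the boundary condition. The first step is therefore to establish, for each $\ell$, the pointwise inequalities
\begin{equation}\label{plan-1d}
\|\partial_y^{\ell}\nabla_\alpha^2\phi_\alpha\|_{\ell,\delta} + \|\partial_y^{\ell}\nabla_\alpha\phi_\alpha\|_{\ell,0} \le C_0 \sum_{\ell'\le \ell}\|\partial_y^{\ell'}\omega_\alpha\|_{\ell',\delta},
\end{equation}
i.e. a weighted, differentiated version of \eqref{Lap4}--\eqref{Lap4bis1}. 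For $\ell=0$ this is exactly Proposition~\ref{proplaplace3a}. For $\ell\ge 1$ one applies Proposition~\ref{proplaplace3a} (or \ref{proplaplace1}) to the equation satisfied by $\partial_y^\ell\phi_\alpha$, using the integral representation \eqref{laplacephi11} and the weight bookkeeping encoded in \eqref{alg}; the $\varphi^\ell$ weights behave well because $\varphi$ is bounded and $\varphi(y)^\ell\le\varphi(y)^{\ell'}$ for $\ell\ge\ell'$.

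\textbf{Assembling the generator functions.} Once \eqref{plan-1d} holds, multiply by $z_2^\ell/\ell!$ and sum over $\ell\ge 0$. On the left this produces $Gen_{\delta,\alpha}(\nabla_\alpha^2\phi_\alpha)+Gen_{0,\alpha}(\nabla\phi_\alpha)$. On the right the double sum $\sum_\ell \frac{z_2^\ell}{\ell!}\sum_{\ell'\le\ell}\|\partial_y^{\ell'}\omega_\alpha\|_{\ell',\delta}$ must be dominated by $C_0\,Gen_{\delta,\alpha}(\omega_\alpha)$; this is the Cauchy-product manipulation already used in Proposition~\ref{prop-Gen}, namely $\sum_{\ell}\frac{z_2^\ell}{\ell!}\sum_{\ell'\le\ell}a_{\ell'} = \bigl(\sum_{\ell'}a_{\ell'}\frac{z_2^{\ell'}}{\ell'!}\bigr)\bigl(\sum_{j\ge0}\frac{z_2^j}{j!}\bigr) = e^{z_2}\sum_{\ell'}a_{\ell'}\frac{z_2^{\ell'}}{\ell'!}$, so for $|z_2|\le\theta_0$ bounded the extra factor $e^{z_2}$ is harmless. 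This proves \eqref{prop-e}. The restriction $|z_2|\le\theta_0$ comes precisely from needing the auxiliary series (here $e^{z_2}$, and in the $\varphi\partial_y$ step the Taylor series of $\varphi$) to converge, exactly as in Proposition~\ref{prop-Gen}.

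\textbf{The two-variable statements.} For \eqref{prop-e1}, use $Gen_0(\nabla\phi)=\sum_\alpha e^{z_1|\alpha|}Gen_{0,\alpha}(\nabla\phi_\alpha)$, bound each summand by $C\,Gen_{\delta,\alpha}(\omega_\alpha)$ via \eqref{prop-e} (legitimate since $\omega_\alpha=0$ when $|\delta\alpha^2|\ge1$, so only the admissible modes contribute), and sum against $e^{z_1|\alpha|}$ to get $C\,Gen_\delta(\omega)$. For \eqref{prop-e2}, differentiate in $z_1$: since $\partial_{z_1}$ brings down a factor $|\alpha|$ inside each sum and commutes with the mode decomposition, $\partial_{z_1}Gen_0(\nabla\phi)=\sum_\alpha |\alpha|e^{z_1|\alpha|}Gen_{0,\alpha}(\nabla\phi_\alpha) \le C\sum_\alpha|\alpha|e^{z_1|\alpha|}Gen_{\delta,\alpha}(\omega_\alpha)=C\,\partial_{z_1}Gen_\delta(\omega)$. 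For \eqref{prop-e3}, differentiate in $z_2$: here $\partial_{z_2}$ acts on the $z_2^\ell/\ell!$ factors, shifting $\ell\mapsto\ell-1$; applying \eqref{plan-1d} to control $\partial_y^{\ell+1}\nabla_\alpha\phi_\alpha$ by $\sum_{\ell'\le\ell+1}\|\partial_y^{\ell'}\omega_\alpha\|_{\ell',\delta}$ and resumming produces $\partial_{z_2}Gen_\delta(\omega)$ \emph{plus} the $\ell'=0$ leftover term, which is exactly the extra $Gen_\delta(\omega)$ on the right-hand side of \eqref{prop-e3}.

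\textbf{Main obstacle.} The routine part is the generating-function bookkeeping; the delicate point is \eqref{plan-1d} for $\ell\ge1$, i.e. verifying that differentiating the Laplace equation in $y$ and carrying the $\varphi^\ell$ boundary-layer weights does not lose powers of $\delta$ or $\alpha$ beyond what \eqref{condLap4} allows — in particular checking that the boundary condition $\phi_\alpha(0)=0$ propagates correctly to the $y$-derivatives (one does not have $\partial_y^\ell\phi_\alpha(0)=0$ in general, so one must use the equation, $\partial_y^2\phi_\alpha=\alpha^2\phi_\alpha+\omega_\alpha$, to trade $y$-derivatives for $\alpha^2$ plus $\omega$-derivatives, and the condition $|\delta\alpha^2|\le1$ is what keeps this exchange bounded). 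Handling the weighted norm $\|\cdot\|_{\ell,\delta}$ with its $(\delta^{-1}e^{-y/\delta}+1)^{-1}$ factor inside the Green-function integrals, as in the proof of Proposition~\ref{proplaplace3a}, is where the real care is needed.
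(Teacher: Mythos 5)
Your high-level strategy (reduce to one-dimensional estimates mode by mode, then assemble the generator functions) is aligned with the paper, and your treatment of \eqref{prop-e1}--\eqref{prop-e3} given \eqref{prop-e} is essentially correct. But the central step, the inequality you call \eqref{plan-1d}, is precisely where the proof is, and you have identified the obstruction without producing the idea that removes it.

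The difficulty you name is real: $\partial_y^\ell\phi_\alpha$ does not satisfy the Dirichlet condition at $y=0$, so Propositions~\ref{proplaplace1} and~\ref{proplaplace3a} (whose Green function is built specifically for the Dirichlet problem) cannot be applied to $\partial_y^\ell\phi_\alpha$ directly. Your proposed fix -- use the equation $\partial_y^2\phi_\alpha=\alpha^2\phi_\alpha+\omega_\alpha$ to trade normal derivatives for powers of $\alpha^2$ plus $\omega$-derivatives -- does not actually restore the boundary condition: it rewrites the boundary data ($\partial_y\phi_\alpha(0)\ne 0$, $\omega_\alpha(0)\ne 0$, etc.) but does not make them vanish, so the integral representation still produces boundary terms that your \eqref{plan-1d} does not account for, and the bookkeeping for higher $\ell$ (odd derivatives, nested $\omega$-derivatives, factorial growth) is left entirely open.

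The paper's key device is different and cleaner: instead of differentiating the equation and then trying to repair the boundary condition, one commutes the weight through the operator first. Writing the equation for $\varphi^n\partial_y^n\phi_\alpha$,
$$
\Delta_\alpha(\varphi^n\partial_y^n\phi_\alpha)=\varphi^n\partial_y^n\omega_\alpha+2\,\partial_y(\varphi^n)\,\partial_y^{n+1}\phi_\alpha+\partial_y^2(\varphi^n)\,\partial_y^n\phi_\alpha,
$$
one observes that $\varphi(0)=0$ forces $\varphi^n\partial_y^n\phi_\alpha$ to satisfy the Dirichlet condition automatically for every $n\ge 1$, so Proposition~\ref{proplaplace3a} applies to this conjugated quantity with no boundary corrections. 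The price is that the commutator produces lower-order $\phi$-terms with combinatorial factors $n$ and $n(n-1)$, so one does not get a closed bound like your \eqref{plan-1d} directly; instead one gets a recursion in $n$ (the inequality~\eqref{expand} for $A_n$), and the factors $n$ and $n(n-1)$ are precisely what turn into the absorbable factors $z_2$ and $z_2^2$ after multiplying by $z_2^n/n!$ and summing. The smallness of $\theta_0$ comes from this absorption, not from convergence of the auxiliary series $e^{z_2}$ as you suggest. So while the generating-function bookkeeping in your proposal is sound, the recursive structure on the right-hand side and, above all, the $\varphi^n$-conjugation that preserves the Dirichlet condition are both missing, and these are exactly the content of the proposition.
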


\begin{proof} 
For $n\ge 1$, from the elliptic equation $\Delta_\alpha \phi_\alpha = \omega_\alpha$, we compute  
$$
 \Delta_\alpha ( \varphi^n \partial_y^n \phi_\alpha)  
= \varphi^n \partial_y^n \omega_\alpha + 2 \partial_y (\varphi^n) \partial_y^{n+1} \phi_\alpha
 + \partial_y^2 (\varphi^n) \partial_y^n \phi_\alpha 
.$$
Note that
$$
\partial_y (\varphi^n) \partial_y^{n+1} \phi_\alpha
= n \varphi'  \varphi^{n-1}  \partial_y^{n+1} \phi_\alpha
,$$
and hence the $\| . \|_{0,\delta}$ norm of this term is bounded by $n \| \varphi^{n-1} \partial_y^{n+1} \phi_\alpha \|_{0,\delta}$.
Moreover, 
$$
 \partial_y^2 (\varphi^n) \partial_y^n \phi_\alpha = 
 \Bigl( n (n-1) \varphi'^2 \varphi^{n-2}  + n \varphi'' \varphi^{n-1} \Bigr) \partial_y^n \phi_\alpha
 $$
whose $\| . \|_{0,\delta}$ norm is bounded by $n (n-1) \| \varphi^{n-2} \partial_y^{n} \phi_\alpha \|_{0,\delta}$.
 Using Proposition \ref{proplaplace3a}, we get
 $$
\begin{aligned}
&|\alpha|^2 \| \varphi^n \partial_y^n \phi_\alpha\|_{0,0} +  \| \partial_y^2 (\varphi^n \partial_y^n \phi_\alpha) \|_{0,\delta}  
 + \| \nabla_\alpha (\varphi^n \partial_y^n \phi_\alpha)  \|_{0,0}
\\& \le C \| \varphi^n \partial_y^n \omega_\alpha \|_{0,\delta} 
 + C n \| \varphi^{n-1} \partial_y^{n+1} \phi_\alpha \|_{0,\delta}
 + C n (n-1) \| \varphi^{n-2} \partial_y^{n} \phi_\alpha \|_{0,\delta}.
\end{aligned} $$
 Expanding the left hand side, we get
 \beq \label{expand}
\begin{aligned}
&
|\alpha|^2 \|  \varphi^n \partial_y^n \phi_\alpha\|_{0,0} +  \| \varphi^n \partial_y^{n+2} \phi_\alpha \|_{0,\delta}  
 + \| \varphi^n \partial_y^n \nabla_\alpha \phi_\alpha  \|_{0,0}
\\& \le C_0 \| \varphi^n \partial_y^n \omega_\alpha \|_{0,\delta} 
 + C_0n \| \varphi^{n-1} \partial_y^{n+1} \phi_\alpha \|_{0,\delta}
 \\& + C_0n (n-1) \| \varphi^{n-2} \partial_y^{n} \phi_\alpha \|_{0,\delta} 
 + C_0 n \| \varphi^{n-1} \partial_y^n \phi \|_{0,0}.
\end{aligned} 
\eeq
 Let
 $$
 A_n = 
 |\alpha|^2 \|  \varphi^n \partial_y^n \phi_\alpha\|_{0,0} +  \| \varphi^n \partial_y^{n+2} \phi_\alpha \|_{0,\delta}  
 + \| \varphi^n \partial_y^n \nabla_\alpha \phi_\alpha  \|_{0,0}
. $$
Multiplying by $z_2^n / n!$ and summing over $n$, we get
$$
\begin{aligned}\sum_{n \ge 0}A_n {z_2^n \over n ! }
&\le C_0\sum_{n \ge 0}\| \varphi^n \partial_y^{n} \omega_\alpha \|_{0,\delta} {z_2^n \over n ! }
+ C_0 \sum_{n \ge 1}  A_{n-1} {z_2^n \over (n-1) ! }
\\&\quad + C_0 \sum_{n \ge 2}  A_{n-2} {z_2^n \over (n-2) ! }
\\ & \le C_0\sum_{n \ge 0}\| \varphi^n \partial_y^{n} \omega_\alpha \|_{0,\delta} {z_2^n \over n ! }
+ C_0 (z + z^2) \sum_{n \ge 0}A_n {z_2^n \over n ! },
\end{aligned}$$
hence
$$
\sum_{n \ge 0}A_n {z_2^n \over n ! } \le C_0' \sum_{n \ge 0}\| \varphi^n \partial_y^{n} \omega_\alpha \|_{0,\delta} {z_2^n \over n ! }
$$
provided $| z_2 |$ is small enough, which ends the proof of (\ref{prop-e}).

Next (\ref{prop-e1}) is a direct consequence of (\ref{prop-e}), just summing in $\alpha$. 
If we multiply (\ref{prop-e}) by $| \alpha |$ before summing it, this gives (\ref{prop-e2}).
Now we multiply (\ref{expand}) by $z_2^{n-1} / (n-1)!$ instead of $z_2^n / n!$. This gives
$$
\begin{aligned}\sum_{n \ge 1}A_n {z_2^{n-1} \over (n-1) ! }
&\le C_0\sum_{n \ge 1}\| \varphi^n \partial_y^{n} \omega_\alpha \|_{0,\delta} {z_2^{n-1} \over (n-1) ! }
+ C_0 \sum_{n \ge 1}  A_{n-1} n {z_2^{n-1} \over (n-1) ! }
\\&\quad + C_0 \sum_{n \ge 2}  n (n-1) A_{n-2} {z_2^{n-1} \over (n-1) ! }.
\end{aligned}$$
The terms in the right hand side may be absorbed by the left hand side provided $z_2$ is small enough, 
except
$C_0 A_0$, which is bounded by $Gen_{\delta,\alpha} (\omega_\alpha)$. This ends the proof of the Proposition.
\end{proof}


\section{Divergence free vector fields}



\subsection{Generator function and divergence free condition}


Note that for any functions $u$ and $g$, Proposition \ref{prop-Gen} yields 
\begin{equation}\label{bd-udxg}
Gen_\delta(u \partial_x g) \le Gen_0(u) \partial_{z_1} Gen_\delta(g) .
\end{equation}
This is not true for $Gen_\delta(v \partial_y g)$, due to the boundary layer weight. We will investigate $Gen_\delta(v \partial_y g)$ 
when $(u,v)$ satisfies the divergence free condition, namely 
$$
\partial_x u + \partial_y v = 0 .
$$
Precisely, we will prove the following Proposition. 
\begin{prop}\label{prop-Gendy} For $| z_2 | \le 1$, there holds
$$
Gen_\delta(v \partial_y g) \le C \Bigl(Gen_0(v) + \partial_{z_1} Gen_0(u) \Bigr) \partial_{z_2} Gen_\delta(g) .
$$
\end{prop}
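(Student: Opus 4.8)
The plan is to reduce everything to the one-dimensional (Fourier-in-$x$) estimate and then reconstruct the two-dimensional generator function by summing in $\alpha$. Fix a Fourier mode $\alpha$. Writing $(v\partial_y g)_\alpha = \sum_{\alpha'} v_{\alpha'}\,\partial_y g_{\alpha-\alpha'}$ and applying Leibniz, the core quantity to bound is $\|\partial_y^\ell (v_{\alpha'}\,\partial_y g_{\alpha-\alpha'})\|_{\ell,\delta}$. The point where the naive product bound of Proposition~\ref{prop-Gen} fails is that the "bad" factor $\partial_y g$ carries the boundary-layer weight, and one cannot simply put $\partial_y g$ in the $\|\cdot\|_{\ell,\delta}$ slot because $\partial_y$ does not gain a power of $\varphi$. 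The idea is to trade this $y$-derivative for an $x$-derivative using the divergence-free condition $\partial_y v = -\partial_x u$; this is exactly why the right-hand side involves $\partial_{z_1}Gen_0(u)$ rather than a $Gen$ of $\partial_y v$.

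First I would treat the term where no derivative lands on $v$, i.e. $v_{\alpha'}\,\partial_y^{k+1}g_{\alpha-\alpha'}$ with $k$ derivatives total distributed on $g$. Here one writes $\|\varphi^{\ell}\partial_y^{k}(v_{\alpha'}\partial_y^{\ell-k+1}g_{\alpha-\alpha'})\|$ and uses the algebra inequality \eqref{alg} $\|fg\|_{\ell,\delta}\le\|f\|_{\ell',0}\|g\|_{\ell-\ell',\delta}$ to split $v$ into a $\|\cdot\|_{\cdot,0}$ factor and $\partial_y^{\,\cdot}g$ into a $\|\cdot\|_{\cdot,\delta}$ factor; the one missing power of $\varphi$ on $\partial_y^{\ell-k+1}g$ (relative to $\partial_y^{\ell-k}g$) is absorbed exactly as in the proof of the last line of Proposition~\ref{prop-Gen}, by the combinatorial shift $\ell!/(\ell-1)!\sim\ell$ against $z_2^\ell/\ell!$, which is what produces the $\partial_{z_2}Gen_\delta(g)$ on the right and costs the constant $C_0$ (valid for $|z_2|\le 1$). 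This contributes the $Gen_0(v)\,\partial_{z_2}Gen_\delta(g)$ piece.

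Next I would treat the terms where at least one derivative falls on $v$, i.e. factors $\partial_y^{j}v_{\alpha'}=\partial_y^{j-1}(\partial_y v_{\alpha'})=-\partial_y^{j-1}(\partial_x u)_{\alpha'}=-i\alpha'\,\partial_y^{j-1}u_{\alpha'}$ for $j\ge1$. This is the key manipulation: one $y$-derivative of $v$ becomes an $x$-derivative of $u$, i.e. a factor $|\alpha'|$ together with one fewer $y$-derivative on $u$. Now $u$ is differentiated at most $\ell-1$ times in $y$, so there is a spare power of $\varphi$ available to feed the weight, and the factor $|\alpha'|$ is exactly the extra $\partial_{z_1}$ hitting $Gen_0(u)$. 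Reassembling via \eqref{alg} and summing $\sum_{\alpha,\alpha'}e^{z_1|\alpha'|}e^{z_1|\alpha-\alpha'|}$, $\sum_\ell z_2^\ell/\ell!$ with the appropriate Cauchy-product splitting (as in Proposition~\ref{prop-Gen}) yields the $\partial_{z_1}Gen_0(u)\,\partial_{z_2}Gen_\delta(g)$ piece, again for $|z_2|\le1$ because of the geometric series coming from the derivatives of $\varphi$.

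The main obstacle is purely bookkeeping: one must carefully distribute the $\ell$ total $y$-derivatives among the three objects $v$ (or $u$), $g$, and the implicit powers of $\varphi$ so that every factor of $\partial_y^m g$ is matched by $\varphi^{m}$ up to one unit, the single deficit being paid by the $\ell\!\mapsto\!\ell-1$ index shift that produces $\partial_{z_2}$; and one must check that the $j\ge1$ (derivative-on-$v$) terms never need more than $\ell-1$ weights on $u$, so that the substitution $\partial_y v=-\partial_x u$ genuinely closes the estimate. Once the splitting is set up, the summations are identical in structure to those already carried out in the proof of Proposition~\ref{prop-Gen}, and the smallness condition $|z_2|\le1$ guarantees convergence of the $\varphi$-series and absorption of the lower-order shifts.
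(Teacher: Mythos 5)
Your treatment of the $\beta'\ge 1$ terms (where at least one $y$-derivative falls on $v$) is essentially the paper's argument: substitute $\partial_y v_{\alpha'} = -i\alpha' u_{\alpha'}$ from the divergence-free condition, so the differentiated factor becomes $\alpha'\partial_y^{\beta'-1}u_{\alpha'}$ which pairs cleanly with the weight $\varphi^{\beta'-1}$ and, after summing, produces $\partial_{z_1}Gen_0(u)\,\partial_{z_2}Gen_\delta(g)$.

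The gap is in the $\beta'=0$ term, $v_{\alpha'}\partial_y^{\beta+1}g_{\alpha-\alpha'}$. You claim the missing power of $\varphi$ (i.e.\ the fact that $\|\partial_y^{\beta+1}g\|_{\beta,\delta}$ has one fewer $\varphi$ than $\|\partial_y^{\beta+1}g\|_{\beta+1,\delta}$) can be ``absorbed exactly as in the proof of the last line of Proposition~\ref{prop-Gen}, by the combinatorial shift $\ell!/(\ell-1)!$'' to produce $\partial_{z_2}Gen_\delta(g)$. This is a misreading of that proof: in $Gen_\delta(\varphi\partial_y f)\le C_0\,\partial_{z_2}Gen_\delta(f)$, the extra power of $\varphi$ is supplied by the \emph{explicit} prefactor $\varphi$ in $\varphi\partial_y f$, so $\|\varphi\,\partial_y^{\ell+1}f\|_{\ell,\delta}=\|\partial_y^{\ell+1}f\|_{\ell+1,\delta}$ holds identically; the index shift then merely handles the factorials. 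In your term there is no such prefactor: $\|\partial_y^{\beta+1}g\|_{\beta,\delta}\ge\|\partial_y^{\beta+1}g\|_{\beta+1,\delta}$ (since $\varphi\le 1$, cf.\ \eqref{alg}), and the missing $\varphi$ cannot be recreated by relabelling factorials---$\|\partial_y^{\beta+1}g\|_{\beta,\delta}$ is simply not controlled by $Gen_\delta(g)$ or its derivatives.

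The paper closes this by instead bounding $\|v_{\alpha'}\partial_y^{\beta+1}g\|_{\beta,\delta}\le\|\varphi^{-1}v_{\alpha'}\|_{0,0}\|\partial_y^{\beta+1}g\|_{\beta+1,\delta}$, and then estimating $\|\varphi^{-1}v_{\alpha'}\|_{0,0}$ via a Hardy-type argument: for $y\ge 1$, $\varphi\ge 1/2$ gives $2\|v_{\alpha'}\|_{0,0}$; for $y\le 1$, one uses the divergence-free condition again in the form $v_{\alpha'}(y)=-i\alpha'\int_0^y u_{\alpha'}$, so $\varphi(y)^{-1}|v_{\alpha'}(y)|\le|\alpha'|\,\|u_{\alpha'}\|_{0,0}$. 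This yields $\|\varphi^{-1}v_{\alpha'}\|_{0,0}\le 2\|v_{\alpha'}\|_{0,0}+\|\alpha' u_{\alpha'}\|_{0,0}$, and it is precisely this step that makes the $\beta'=0$ term contribute $\bigl(Gen_0(v)+\partial_{z_1}Gen_0(u)\bigr)\partial_{z_2}Gen_\delta(g)$ rather than just $Gen_0(v)\partial_{z_2}Gen_\delta(g)$. Without this, the $\beta'=0$ term simply does not close, and your proposed bound does not follow.
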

Note that we "loose" one derivative: our bound involves $\partial_x u$. 
\begin{proof}
We compute 
$$
Gen_\delta(v \partial_y g) = \sum_{\alpha\in\ZZ}\sum_{\beta\ge 0} e^{z_1|\alpha|}  
\| \partial_y^\beta ( v \partial_y g)_\alpha \|_{\beta,\delta} 
{z_2^\beta \over \beta ! } ,
$$
in which $$
\partial_y^\beta  ( v \partial_y g)_\alpha
= \sum_{\alpha'\in\ZZ} \sum_{0 \le \beta' \le \beta} 
{\beta ! \over \beta'! (\beta - \beta')!} \partial_y^{\beta'} v_{\alpha'}
 \partial_y^{\beta - \beta' + 1} g_{\alpha-\alpha'}.
$$
For $\beta'>0$, using the divergence-free condition $\partial_yv_\alpha = -i\alpha u_\alpha $, we estimate  
$$\begin{aligned}
\|
\partial_y^{\beta'} v_{\alpha'}\partial_y^{\beta - \beta' + 1} g_{\alpha-\alpha'}\|_{\beta,\delta}
\le \| \alpha' \partial_y^{\beta'-1} u_{\alpha'} \|_{\beta'-1,0}
\| \partial_y^{\beta - \beta' + 1} g_{\alpha-\alpha'}\|_{\beta - \beta' + 1,\delta} .
 \end{aligned}$$
On the other hand, for $\beta' =0$, we estimate 
$$\begin{aligned}
\|
v_{\alpha'}
\partial_y^{\beta  + 1} g_{\alpha-\alpha'}\|_{\beta,\delta}
\le 
\| \varphi^{-1} v_{\alpha'} \|_{0,0}
\|\partial_y^{\beta + 1} g_{\alpha-\alpha'}\|_{\beta  + 1,\delta}
. \end{aligned}$$
We note that for $y\ge 1$, $\varphi(y) \ge 1/2$ and hence 
$$
\| \chi_{\{y\ge 1\}}\varphi^{-1} v_{\alpha'}\|_{0,0} \le 2 \|v_{\alpha'} \|_{0,0}. 
$$ 
When $y\le 1$, using again the divergence-free condition, 
we write $$
v_{\alpha'}(y) = - i\alpha' \int_0^y  u_{\alpha'}(y') dy'  = - i\alpha'  y \int_0^1 u_{\alpha'}(x,\theta y) d\theta.
$$
Therefore, 
$$
\varphi(y)^{-1}|v_{\alpha'} (y)| \le \sup_y |\alpha' u_{\alpha'} (y)|
$$
for $y\le 1$. This proves that 
$$
 \| \varphi^{-1} v_{\alpha'} \|_{0,0} \le 2 \| v_{\alpha'} \|_{0,0} + \| \alpha' u_{\alpha'} \|_{0,0}.
 $$
Combining these inequalities for any $\alpha\in \ZZ$ and $\beta\ge 0$, we obtain 
$$
\begin{aligned}
& \| \partial_y^\beta ( v \partial_y g)_\alpha \|_{\beta,\delta} 
\le \sum_{\alpha'\in \ZZ}
(2 \| v_{\alpha'} \|_{0,0} + \| \alpha' u_{\alpha'} \|_{0,0} )\| \partial_y^{\beta +1} g_{\alpha-\alpha'} \|_{\beta+1,\delta}  
\\&\quad + \sum_{\alpha'\in \ZZ} \sum_{1 \le \beta' \le \beta} \| \alpha'\partial_y^{\beta'-1} u_{\alpha'} \|_{\beta'-1,0}
\| \partial_y^{\beta - \beta' + 1} g_{\alpha-\alpha'}\|_{\beta - \beta' + 1,\delta} {\beta ! \over \beta' ! (\beta - \beta')!}
.\end{aligned}$$
It remains to multiply by $e^{z_1 | \alpha|} z_2^\beta / \beta !$ and to sum all the terms over $\alpha$, $\alpha'$, $\beta$ and
$\beta'$. The second term in the right hand side is bounded by the product of
$$
\sum_\alpha \sum_\beta e^{| \alpha | z_1}  \| \alpha \partial_y^\beta u_{\alpha} \|_{\beta,0} {z_2^{\beta + 1} \over (\beta +1) !},
$$
which is bounded by $Gen_0(\partial_x u)$ provided $| z_2  | \le 1$
and of
$$
\sum_\alpha \sum_\beta e^{| \alpha | z_1} \| \partial_y^{\beta+1} g_\alpha \|_{\beta +1} {z_2^\beta \over \beta !}
,$$
which equals $\partial_{z_2} Gen_\delta(g)$. The first term is similar, which ends the proof.
\end{proof}


\subsection{Bilinear estimates}


Let us now  bound derivatives of the transport term $u \partial_x g + v \partial_y g$.
\begin{prop}\label{prop-Gendy} 
Let
$$
{\cal A} = \Bigl(Id + \partial_{z_1} + \partial_{z_2}\Big) Gen_\delta
$$
and $$
{\cal B} = Gen_0(u) + Gen_0(v) + \partial_{z_1} Gen_0(u) + {\cal A}(g).
$$
Then
$$
{\cal A} (u \partial_x g + v \partial_y g) \le C {\cal B} \partial_{z_1} {\cal B} + C {\cal B} \partial_{z_2} {\cal B} .
$$
\end{prop}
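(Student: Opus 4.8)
The plan is to bound $\mathcal{A}$ applied separately to $u\partial_x g$ and $v\partial_y g$ by reusing the bilinear estimates already proved, and to absorb the single extra $z_1$- or $z_2$-derivative carried by $\mathcal{A}$ via the Leibniz rule. The first point to exploit is that the estimates of Proposition~\ref{prop-Gen}, the bound \eqref{bd-udxg} for $u\partial_x g$, and the estimate $Gen_\delta(v\partial_y g)\le C\big(Gen_0(v)+\partial_{z_1}Gen_0(u)\big)\partial_{z_2}Gen_\delta(g)$ of the preceding proposition are in fact \emph{coefficient-wise} inequalities between the $(z_1,z_2)$-Taylor series of non-negative power series: each proof dominates $\|\partial_y^\beta(\cdot)_\alpha\|_{\beta,\delta}$ term by term and uses $e^{z_1|\alpha|}\le e^{z_1|\alpha'|}e^{z_1|\alpha-\alpha'|}$, which is itself coefficient-wise in $z_1$. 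For series with non-negative coefficients, coefficient-wise domination is preserved by $\partial_{z_1}$ and $\partial_{z_2}$ and is compatible with products, so these basic estimates may be freely differentiated and multiplied.

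By the triangle inequality in each norm $\|\cdot\|_{\beta,\delta}$ one has $Gen_\delta(a+b)\le Gen_\delta(a)+Gen_\delta(b)$, hence $\mathcal{A}(u\partial_x g+v\partial_y g)\le \mathcal{A}(u\partial_x g)+\mathcal{A}(v\partial_y g)$, and it suffices to treat the two pieces. For the first, \eqref{bd-udxg} gives $\mathcal{A}(u\partial_x g)\le (\mathrm{Id}+\partial_{z_1}+\partial_{z_2})\big(Gen_0(u)\,\partial_{z_1}Gen_\delta(g)\big)$; for the second the preceding proposition gives $\mathcal{A}(v\partial_y g)\le C(\mathrm{Id}+\partial_{z_1}+\partial_{z_2})\big((Gen_0(v)+\partial_{z_1}Gen_0(u))\,\partial_{z_2}Gen_\delta(g)\big)$. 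Expanding the right-hand sides by the Leibniz rule produces, in each case, a finite sum of products of a ``velocity factor'' obtained from one of $Gen_0(u),Gen_0(v),\partial_{z_1}Gen_0(u)$ by applying at most one of $\partial_{z_1},\partial_{z_2}$, and a ``$g$-factor'' obtained from $Gen_\delta(g)$ by applying one or two of $\partial_{z_1},\partial_{z_2}$.

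The core of the argument is then an elementary bookkeeping step. Each of the six quantities $Gen_0(u),\ Gen_0(v),\ \partial_{z_1}Gen_0(u),\ Gen_\delta(g),\ \partial_{z_1}Gen_\delta(g),\ \partial_{z_2}Gen_\delta(g)$ is a summand of $\mathcal{B}$ (the last three through $\mathcal{A}(g)$), hence is $\le\mathcal{B}$ coefficient-wise, and consequently $\partial_{z_i}$ applied to any of them is $\le\partial_{z_i}\mathcal{B}$. For every product appearing in the Leibniz expansion I would redistribute its (at most two) derivatives so that one factor remains a bare summand of $\mathcal{B}$, hence $\le\mathcal{B}$, and the other is a single $\partial_{z_i}$ of a summand of $\mathcal{B}$, hence $\le\partial_{z_i}\mathcal{B}$: when the $g$-factor carries two derivatives, peel one off to land inside the summand $\partial_{z_1}Gen_\delta(g)$ or $\partial_{z_2}Gen_\delta(g)$; when the velocity factor carries two $z_1$-derivatives, peel one off to land inside the summand $\partial_{z_1}Gen_0(u)$; in a mixed case such as $\partial_{z_1}\partial_{z_2}Gen_0(u)$, write it as $\partial_{z_2}\big(\partial_{z_1}Gen_0(u)\big)\le\partial_{z_2}\mathcal{B}$ and pair it with the (one-derivative, but still atomic) $g$-factor. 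In every case the term is bounded by $\mathcal{B}\,\partial_{z_1}\mathcal{B}$ or $\mathcal{B}\,\partial_{z_2}\mathcal{B}$; summing the finitely many terms yields the stated inequality with a universal constant $C$, valid for the same smallness of $z_2$ required by its ingredients.

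The only real obstacle is precisely this redistribution, which is why $\mathcal{B}$ is defined the way it is. A naive Leibniz expansion puts two derivatives on a single factor — e.g. $Gen_0(u)\,\partial_{z_1}\partial_{z_2}Gen_\delta(g)$, $\partial_{z_1}^2Gen_0(u)\,\partial_{z_2}Gen_\delta(g)$, or $\partial_{z_2}Gen_0(u)\,\partial_{z_1}Gen_\delta(g)$ — and a careless bound of such a product gives $\partial_{z_i}\mathcal{B}\cdot\partial_{z_j}\mathcal{B}$ or $\mathcal{B}^2$, neither of which is of the required form. The redistribution succeeds because $\mathcal{B}$ contains, besides $Gen_\delta(g)$ and $Gen_0(u),Gen_0(v)$, the critical extra atoms $\partial_{z_1}Gen_0(u)$ and $\partial_{z_1}Gen_\delta(g),\partial_{z_2}Gen_\delta(g)$: one derivative can always be ``stored'' inside one of these atoms while the remaining one is applied to another atom. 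One should also verify that the divergence-free hypothesis $\partial_x u+\partial_y v=0$ enters only through the preceding proposition on $v\partial_y g$, which is exactly the reason $\mathcal{B}$ carries $\partial_{z_1}Gen_0(u)$ and not $\partial_{z_1}Gen_0(v)$.
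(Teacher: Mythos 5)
Your ``redistribution'' observation --- that $\cal B$ is engineered so every atom and its single $\partial_{z_1}$ or $\partial_{z_2}$ derivative can be absorbed into $\cal B$ or $\partial_{z_i}\cal B$, and that the awkward cases ($\partial_{z_1}\partial_{z_2}Gen_\delta(g)$, $\partial_{z_1}^2Gen_0(u)$, etc.) can always be peeled so one derivative ``stores'' inside an atom --- is exactly the combinatorial content of the proposition, and you have identified it correctly. But the step you rely on to generate the products to be redistributed, namely \emph{that the bounds \eqref{bd-udxg} and $Gen_\delta(v\partial_y g)\le C(Gen_0(v)+\partial_{z_1}Gen_0(u))\partial_{z_2}Gen_\delta(g)$ are coefficient-wise in $(z_1,z_2)$ and may therefore be differentiated freely}, is false. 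In the proof of the preceding proposition, the velocity factor that comes out of the $\beta'\ge1$ part of the Leibniz sum is
$$
\sum_{\alpha'}\sum_{\beta'\ge 0}e^{z_1|\alpha'|}\,\|\alpha'\partial_y^{\beta'}u_{\alpha'}\|_{\beta',0}\,\frac{z_2^{\beta'+1}}{(\beta'+1)!},
$$
and this is bounded by $\partial_{z_1}Gen_0(u)$ \emph{only} via the manipulation $z_2^{\beta'+1}/(\beta'+1)!\le z_2^{\beta'}/\beta'!$, valid for $|z_2|\le 1$. This is a comparison across different $\beta$-indices, not a coefficient-wise domination; the coefficient of $z_2^m$ on the left involves $\|\partial_y^{m-1}u_{\alpha'}\|_{m-1,0}$ and on the right $\|\partial_y^m u_\alpha\|_{m,0}$, and these are not comparable term by term. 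A pointwise inequality $A(z_2)\le B(z_2)$ on $[0,1]$ cannot be differentiated, so the Leibniz expansion of the right-hand side you write down is not a legitimate upper bound for $\partial_{z_2}Gen_\delta(v\partial_y g)$. The same caveat applies to the constant $C_0$ in $Gen_\delta(\varphi\partial_y f)\le C_0\,\partial_{z_2}Gen_\delta(f)$, which bounds the convergent series $\sum_{\ell'}\|\partial_y^{\ell'}\varphi\|_{0,0}z_2^{\ell'}/\ell'!$ by a constant and is again not coefficient-wise.

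The paper avoids this entirely. For the $\partial_{z_1}$ part of $\cal A$ it uses the exact algebraic identity $\partial_{z_1}Gen_\delta(h)=Gen_\delta(\partial_x h)$, expands $\partial_x(u\partial_x g)=\partial_x u\,\partial_x g+u\,\partial_x^2 g$ and $\partial_x(v\partial_y g)=\partial_x v\,\partial_y g+v\,\partial_y\partial_x g$, and re-applies the bilinear lemmas to each term. There is no analogue of this identity for $\partial_{z_2}$, so $\partial_{z_2}Gen_\delta(v\partial_y g)$ is estimated by going back to the Fourier coefficients $\|\varphi^{n+1}\partial_y^{n+1}(v_{\alpha'}\partial_y g_{\alpha-\alpha'})\|_{0,\delta}$ and redoing the Leibniz-in-$y$ computation from scratch. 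The terms that come out, $\partial_{z_1}Gen_0(u)\,\partial_{z_2}Gen_\delta(g)$ and $(Gen_0(v)+\partial_{z_1}Gen_0(u))\,\partial_{z_2}^2 Gen_\delta(g)$, are not the terms a naive Leibniz rule on the final estimate would produce (that would give $\partial_{z_2}Gen_0(v)\,\partial_{z_2}Gen_\delta(g)$ and $\partial_{z_1}\partial_{z_2}Gen_0(u)\,\partial_{z_2}Gen_\delta(g)$ instead of $\partial_{z_1}Gen_0(u)\,\partial_{z_2}Gen_\delta(g)$); they happen to be absorbable by $\cal B\,\partial_{z_i}\cal B$ in the same way, but that coincidence is not a justification. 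To make your route rigorous you would either have to track the $z_2$-shifted factor explicitly (and absorb it only at the end by $|z_2|\le1$), or do what the paper does; either way a genuine recomputation, not a formal differentiation of the stated inequalities, is required.
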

Note that all the terms in ${\cal A}$ are non negative, since all the derivatives of generator functions are non negative.
\begin{proof}
Let us successively bound all the terms appearing in ${\cal A} (u \partial_x g + v \partial_y g)$. First, $Gen_\delta(u \partial_x g + v \partial_y g)$
has been bounded in \eqref{bd-udxg} and in the previous proposition. Next we compute 
\beq \label{Gendx}
\begin{aligned}
\partial_{z_1} Gen_\delta (u \partial_x g) 
&= Gen_\delta(\partial_x (u \partial_x g)) 
= Gen_\delta(\partial_x u \partial_x g + u \partial_x^2 g) 
\\&\le \partial_{z_1} Gen_0(u) \partial_{z_1} Gen_\delta(g) 
+ Gen_0(u) \partial_{z_1}^2 Gen_\delta(g).
\end{aligned}\eeq
Moreover, using Proposition \ref{prop-Gendy}, 
$$
\begin{aligned}
\partial_{z_1} Gen_\delta (v \partial_y g) 
&= Gen_\delta( \partial_x( v \partial_y g))
= Gen_\delta( \partial_x v \partial_y g + v \partial_y \partial_x g) 
\\
&\le C (\partial_{z_1} Gen_0(v) + \partial_{z_1}^2 Gen_0(u)) \partial_{z_2} Gen_\delta(g)
\\&\quad+ C (Gen_0(v) + \partial_{z_1} Gen_0(u)) \partial_{z_2} \partial_{z_1} Gen_\delta(g) .
\end{aligned}$$
Let us now bound the term $\partial_{z_2} Gen_\delta(v \partial_y g)$.
Precisely, we have to bound 
$$
\begin{aligned}
&{z_2^n \over n !} \| \varphi^{n+1} \partial_y^{n+1} (v_{\alpha'} \partial_y g_{\alpha - \alpha'})  \|_{0,\delta}
= {z_2^n \over n !} \| \varphi^{n+1} 
 \partial_y^n (\partial_y v_{\alpha'} \partial_y g_{\alpha - \alpha'}
 + v_{\alpha'} \partial_y^2 g_{\alpha - \alpha'})   \|_{0,\delta} 
\\&\le \sum_{0 \le k \le n} {z_2^n \over k ! (n-k) !}
\| \varphi^{n+1} \partial_y^{k+1} v_{\alpha'} \partial_y^{n+1-k} g_{\alpha - \alpha'} 
+ \varphi^{n+1} \partial_y^k v_{\alpha'} \partial_y^{n+2-k} g_{\alpha - \alpha'} \|_{0,\delta}.
\end{aligned}$$
Let us split this sum in two. The first sum equals, using the divergence free condition,
$$
\begin{aligned}
&\sum_{0 \le k \le n} {z_2^n \over k ! (n-k) !} \| \varphi^k \partial_y^k \partial_x u_{\alpha'} \, \,
\varphi^{n+1-k} \partial_y^{n+1-k} g_{\alpha - \alpha'}  \|_{0,\delta}
\\&\le \sum_{0 \le k \le n} {z_2^k \over k ! } \| \varphi^k \partial_y^k \partial_x u_{\alpha'} \|_{0,0}
{z_2^{n-k} \over (n-k)! } \|\varphi^{n+1-k} \partial_y^{n+1-k} g_{\alpha - \alpha'}  \|_{0,\delta}.
\end{aligned}$$
Multiplying by $e^{| \alpha | z_1}$ and summing over $\alpha$ and $\alpha'$, the sum  is bounded by
$$
Gen_0(\partial_x u) \partial_{z_2} Gen_\delta(g) = \partial_{z_1} Gen_0(u) \partial_{z_2} Gen_\delta(g).
$$
On the other hand, the second sum equals to 
\begin{equation}\label{sum-0}
\sum_{0 \le k \le n} {z_2^n \over k ! (n-k) !} \| \varphi^{n+1} \partial_y^k v_{\alpha'} \, \,
 \partial_y^{n+2-k} g_{\alpha - \alpha'}  \|_{0,\delta}.
\end{equation}
We follow the proof of the previous Proposition. First, for $k > 0$, this sum equals to
$$
\sum_{1 \le k \le n} {z_2^n \over k ! (n-k) !} \| \varphi^{k-1} \partial_y^{k-1} \partial_x u_{\alpha'} \, \,
 \varphi^{n+2 - k}\partial_y^{n+2-k} g_{\alpha - \alpha'}  \|_{0,\delta}.
$$
Multiplying by $e^{| \alpha | z_1}$, the corresponding sum is bounded by
$$
\partial_{z_1} Gen_0(u) \partial_{z_2}^2 Gen_\delta(g),
$$
provided that $| z_2 | \le 1$. It remains to bound the term $k=0$ in \eqref{sum-0}:
$$
{z_2^n \over n !} \| \varphi^{n+1}  v_{\alpha'} \partial_y^{n+2} g_{\alpha - \alpha'}  \|_{0,\delta}
\le \Bigl( 2 \| v_{\alpha'} \|_{0,0} + \| \alpha' u_{\alpha'} \|_{0,0} \Bigr) 
{z_2^n \over n!} \| \varphi^{n+2} \partial_y^{n+2} g_{\alpha - \alpha'} \|_{0,\delta}.
 $$
Multiplying by $e^{| \alpha | z_1}$, the corresponding sum is bounded by
$$
(Gen_0(v) + \partial_{z_1} Gen_0(u) ) \partial_{z_2}^2 Gen_\delta(g) .
$$
This leads to 
$$
\begin{aligned}
\partial_{z_2} Gen_\delta (v \partial_y g) &\le \partial_{z_1} Gen_0(u) \partial_{z_2} Gen_\delta(g) 
\\&\quad + C_0 (Gen_0(v) + \partial_{z_1} Gen_0(u)) \partial_{z_2}^2 Gen_\delta(g). 
\end{aligned}$$
The bound on $\partial_{z_2} Gen_\delta (u \partial_x g)$ is similar
which ends the proof of this Proposition.
\end{proof}


\section{Applications of generator functions to instability}


In this section, we introduce a framework to use the notion of generator functions in proving instability results. 
We first apply it to a toy model equation and then carry out the analysis for Euler equations. Later on, 
we shall use this approach to prove the instability of boundary layers.

\subsection{A toy model equation}


We will now use generator functions to prove an instability result for a toy model equation. More precisely, let us consider the following classical
Hopf equation
\beq \label{Hopf1}
\partial_t u + u  \partial_z u = \alpha u
\eeq
in the periodic setting, where $\alpha > 0$, with initial data $u(0,z) = u_1(z)$.
Assume that $u_1$ is analytic, its generator $Gen(u_1)$ is defined on some interval $[-z_0,z_0]$.
We will look for an instability solution of the form
\beq \label{Hopf2}
u = \sum_{n \ge 1} e^{n \alpha t} u_n .
\eeq
Putting (\ref{Hopf2}) into (\ref{Hopf1}), we get the recurrence relation
$$
(n - 1) \alpha u_n = - \sum_{1 \le k \le n-1} u_k  \partial_z u_{n-k} .
$$
We will prove that the series (\ref{Hopf2}) is convergent provided $e^{\alpha t}$ is small enough. The instability of \eqref{Hopf1} thus follows from the growth $e^{\alpha t}$ encoded in the series  (\ref{Hopf2}). Precisely, we have 
 
\begin{theo}
There exists a positive $T_0$ such that the series (\ref{Hopf2}) converges for every $t \le T_0$.
\end{theo}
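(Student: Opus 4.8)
The plan is to apply the generator function $Gen(\cdot)$ to the recurrence for $(u_n)$, turning the quadratic recursion into a single differential inequality for the full generator $W(t,z):=\sum_{n\ge 1}e^{n\alpha t}Gen(u_n)(z)$, viewed as a power series in $Y:=e^{\alpha t}$ with nonnegative, $z$-analytic coefficients; and then to dominate $W$ coefficient by coefficient by the solution of an honest Burgers-type equation, whose short-time analytic solvability (Cauchy--Kovalevskaya) supplies the threshold $T_0$. First I would fix the family $\|\cdot\|_n$ to be the supremum norm, so that $Gen(\partial_z f)=\partial_z Gen(f)$, and note that the submultiplicativity bound \eqref{Gen1} is in fact coefficient-wise: the $z^n$-coefficient of $Gen(fg)$ is at most $C_0$ times the $z^n$-coefficient of $Gen(f)Gen(g)$. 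Applying $Gen$ to $(n-1)\alpha u_n=-\sum_{k=1}^{n-1}u_k\partial_z u_{n-k}$, using subadditivity, gives with $w_n:=Gen(u_n)$ and $\le_{\mathrm c}$ denoting domination of Taylor coefficients in $z$,
$$(n-1)\alpha\, w_n \;\le_{\mathrm c}\; C_0\sum_{k=1}^{n-1} w_k\,\partial_z w_{n-k},\qquad n\ge 2 .$$
Summing against $Y^n=e^{n\alpha t}$ this reads $\partial_t W-\alpha W\le_{\mathrm c}C_0\,W\,\partial_z W$ coefficient-wise in $Y$, with $W=\sum_{n\ge1}w_n Y^n$ and $w_1=Gen(u_1)$ analytic on $|z|<z_0$.

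Next I would introduce the comparison equation $\partial_t\overline W=\alpha\overline W+C_0\overline W\partial_z\overline W$. The substitution $\overline W=e^{\alpha t}\Phi$ together with the time change $\tau=\tfrac{C_0}{\alpha}e^{\alpha t}$ reduces it to the inviscid Burgers equation $\partial_\tau\Phi=\Phi\,\partial_z\Phi$ with data $\Phi|_{\tau=0}=\overline w_1:=Gen(u_1)$. Since this data is analytic on $|z|<z_0$ with nonnegative Taylor coefficients, Cauchy--Kovalevskaya (or characteristics) yields a unique analytic solution on $\{|z|<z_1,\ 0\le \tau<\tau_0\}$ for some $z_1,\tau_0>0$; moreover the Picard iteration defining it uses only $\partial_z$, products, and $\int_0^\tau$, each of which preserves nonnegativity of $z$-Taylor coefficients, so $\Phi$, hence $\overline W=\sum_{n\ge1}\overline w_n(z)Y^n$, has nonnegative $z$-analytic coefficients, and these satisfy the equality recursion $(n-1)\alpha\,\overline w_n=C_0\sum_{k=1}^{n-1}\overline w_k\,\partial_z\overline w_{n-k}$ with $\overline w_1=Gen(u_1)$.

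Finally I would prove $w_n\le_{\mathrm c}\overline w_n$ for all $n\ge1$ by strong induction. The case $n=1$ is an equality. If $w_k\le_{\mathrm c}\overline w_k$ for all $k<n$, then $\partial_z w_{n-k}\le_{\mathrm c}\partial_z\overline w_{n-k}$ (differentiation multiplies the $z^m$-coefficient by $m$, preserving $\le_{\mathrm c}$), products of nonnegative-coefficient series preserve $\le_{\mathrm c}$, and dividing by $(n-1)\alpha>0$ is harmless, so the recursion inequality for $w_n$ together with the recursion equality for $\overline w_n$ gives $w_n\le_{\mathrm c}\overline w_n$. Hence $W(t,z)\le\overline W(t,z)$ for $|z|<z_1$; evaluating at $z=0$ gives $\sum_{n\ge1}e^{n\alpha t}\|u_n\|_{L^\infty}=\sum_{n\ge1}e^{n\alpha t}w_n(0)\le\overline W(t,0)<\infty$ as long as $\tau=\tfrac{C_0}{\alpha}e^{\alpha t}<\tau_0$, i.e. for all $t\le T_0$ with $e^{\alpha T_0}:=\tfrac{\alpha}{2C_0}\tau_0$. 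Thus the series \eqref{Hopf2} converges absolutely in $L^\infty$ (and, keeping $z\ne0$, in the analytic class), which is the assertion.

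The main obstacle is the apparent loss of one $z$-derivative in the recursion: a naive induction tracking only $w_n$ and finitely many of its derivatives does not close, since $w_n\le_{\mathrm c}\overline w_n$ does not by itself control $\partial_z w_n$. The device is to carry the entire $z$-analytic generating function and to absorb the derivative loss inside the Cauchy--Kovalevskaya solvability of the Burgers comparison equation; the auxiliary verification that this comparison solution has nonnegative Taylor coefficients, so that coefficient-wise domination is legitimate at every step, is the other point that needs care.
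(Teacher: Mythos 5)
Your argument is correct, and it departs from the book exactly where the real difficulty lies: passing from the Hopf-type differential inequality for the generating series to actual convergence. Both proofs reach the same recursive inequality for $w_n = Gen(u_n)$, hence for $G(\tau,z) = \sum_k w_k \tau^{k-1}$ (the book's auxiliary ``$t$'' in $G(t,z)$ plays the role of your $\tau = e^{\alpha t}$). From there the book truncates to the polynomial $G_N$, obtains the same Hopf inequality for it, and handles the two obstructions it names explicitly --- possible divergence of $G$, and characteristics reaching $z=0$ in vanishing time because $G$ blows up at the edge of analyticity --- by the contracting change of variables $H_N(t,z) = G_N(t,\phi(t)z)$, which renders the characteristics of the resulting inequality outgoing on $[0,\eta_0]$; it then propagates the bound $H_N \le M_0$ along characteristics and lets $N \to\infty$. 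You instead exploit the sharper observation, true but not stated in the text, that $Gen(fg) \le C_0\,Gen(f)Gen(g)$ and $Gen(\partial_z f) = \partial_z Gen(f)$ hold coefficient-by-coefficient in $z$; this converts the Hopf inequality into a formal-power-series majorization, and convergence follows by comparison with the genuine analytic solution of the Burgers equation $\partial_\tau\Phi = \Phi\,\partial_z\Phi$ furnished by Cauchy--Kovalevskaya. Both routes hinge on the same structural fact --- nonnegativity of all Taylor coefficients of the generator and of all its derivatives --- but deploy it differently: the book uses it to make the characteristic flow monotone so the sup of $G_N$ cannot grow, you use it to legitimize term-by-term domination by $\overline W$. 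Your route avoids the truncation and the characteristic bookkeeping and is arguably cleaner; the book's version is self-contained, yields an explicit quantitative bound on $G_N$, and is precisely the template re-used verbatim for the $2$D Euler theorem in the following subsection. One small caveat, shared with the book's own proof as written: the $T_0$ produced via $e^{\alpha T_0} \sim \tau_0$ is not a priori positive; to literally obtain $T_0>0$ one should equip $u_1$ with a small amplitude parameter, which shrinks $Gen(u_1)$ and enlarges $\tau_0$.
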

\begin{proof}
For $n \ge 2$, we compute 
$$
 (n-1) \alpha Gen(u_n) \le \sum_{1 \le k \le n-1} Gen(u_k) \partial_z Gen(u_{n-k}) .
$$
Therefore, for $n \ge 2$, multiplying by $t^{n-2}$, we get 
$$
 (n-1) \alpha Gen(u_n) t^{n-2} \le \sum_{1 \le k \le n-1} Gen(u_k) t^{k-1} \partial_z Gen(u_{n-k}) t^{n-k-1} .
$$
Summing over $n$, we thus obtain
$$
\alpha \sum_{n \ge 2} (n-1) Gen(u_n) t^{n-2} \le \Bigl( \sum_{k \ge 1} Gen(u_k) t^{k-1} \Bigr)
\partial_z \Bigl( \sum_{k \ge 1} Gen(u_k) t^{k-1} \Bigr).
$$
Let 
$$
G(t,z) =  \sum_{k \ge 1} Gen(u_k) t^{k-1}.
$$
Then
\beq \label{eqGenerator}
\alpha \partial_t G - G  \partial_z G \le 0.
\eeq
Therefore, $G$ satisfies an Hopf-type differential inequality. 
 Note that $G$ is convex, and more precisely, all its derivatives, at all orders, are non negative for $t > 0$.
We would like to deduce from this inequality that $G$ is defined on some non vanishing
time interval. However two difficulties arise. First we do not know whether $G$ converges, and second as $G$ goes rapidly to 
$+ \infty$, the existence time for the Hopf differential equation could be zero, since the characteristics coming from the right can reach $x = 0$ in a
vanishing time.

To fix the first point, we first truncate $Gen$  and define $G_N$ to be
$$
G_N(t,z) = \sum_{k \le N} Gen_{N-k}(u_k)(z) t^{k-1} 
$$
and derive a similar Hopf inequality for $G_N$. For $k = 1,\cdots,n-1$, we note that $N-k \ge N-n$ and $N-n+k \ge N-n +1$. Hence, 
$$
 (n-1) \alpha Gen_{N-n}(u_n) \le \sum_{1 \le k \le n-1} Gen_{N-k}(u_k) \partial_z Gen_{N-n+k}(u_{n-k}) .
$$
Summing over $n$ we get
$$
\alpha \sum_{n \ge 2} (n-1) Gen_{N-n}(u_n) t^{n-2} \le \Bigl( \sum_k Gen_{N-k}(u_k) t^{k-1} \Bigr)
\partial_z \Bigl( \sum_k Gen_{N-k}(u_k) t^{k-1} \Bigr),
$$
which yields the Hopf differential inequality 
\beq \label{eqGenerator1}
\alpha \partial_t G_N - G_N  \partial_z G_N \le 0.
\eeq
Here, we know that $G_N(t,z)$ is defined for every $t$ and every $z$. Hence (\ref{eqGenerator1}) holds true for any $t \ge 0$ and
any $z \ge 0$.

It remains to fix the second issue, namely to transform the inwards characteristics into outwards ones. For this we introduce
$$
H_N(t,z) = G_N(t, \phi(t) z)
, \qquad H(t,z) = G(t,\phi(t) z) .
$$
A direct computation yields 
\beq \label{equationHN}
\begin{aligned}
\alpha \partial_t H_N  &= \alpha \partial_t G_N + \alpha z \phi'(t) \partial_z G_N 
\\&\le \Bigl( \alpha z \phi'(t) + G_N \Bigr) \partial_z G_N 
\\
&\le \Bigl(H_N + \alpha z \phi'(t) \Bigr) \phi^{-1}(t) \partial_z H_N
\end{aligned}
\eeq
We will choose $\phi$ such that $\phi(0) = 1$. Then
$$
H_N(0,z) \le H(0,z) = G(0,z) = Gen(u_1)(z).
$$ 
As $u_1$ is holomorphic, $Gen(u_1)$ is defined near $0$ on $[0,\eta_0]$ for some positive $\eta_0$.
Let 
$$
M_0 = \sup_{0 \le z \le \eta_0} Gen(u_1) (z) .
$$
We will study (\ref{equationHN}) on $[0,\eta_0]$, and focus on an interval $[0,T_N]$ such that 
$$
\sup_{0 \le z \le \eta_0, 0 \le t \le T_N} G_N(t,z) \le 2 M_0.
$$
We define $\phi(t)$ such that
$$
2 M_0 + \alpha \eta_0 \phi'(t) = - M_0,
$$
or equivalently, 
$$
\phi(t) = 1 - {3 M_0 \over \alpha \eta_0} t .
$$
We also take $ 
T \le {\alpha \eta_0 / 6 M_0}
$
so that $\phi(t) \ge 1/2$.

We now introduce the characteristics defined by
$$
X_N(0,z) = 0,
$$
$$
\partial_t X_N(t,z) = - \phi^{-1}(t) H_N(t,X_N(t,z)) -  \alpha z \phi^{-1}(t) \phi'(t) .
$$
Note that on $0$ and $\eta_0$, the characteristics of (\ref{equationHN}) are outgoing. There is no need to prescribe a boundary condition
on $H_N$. Moreover, the characteristics are defined for all $t \le T$ and do not cross since both $H_N$ and $\partial_z H_N$
are bounded for $t \le T_N$ and $0\le z \le \eta_0$ (by some constant which may depend on $N$).
We may therefore make a change of variables and define
$$
K_N(t,z) = H_N(t,X_N(t,z)).
$$
Then, by definition of $K_N$,
$$
\partial_t K_N \le 0 .
$$
In particular, as $K_N$ is positive,
$$
K_N \le \sup_{0 \le z \le \eta_0} H(0,z) = M_0.
$$
Therefore $H_n(t,z)$ is bounded by $M_0$ itself for $0 \le t \le T$ and $z \le \eta_0$.
We can let $N$ pass to the limit. This gives that $H$ exists and is well defined for $z$ and $t$ small enough.
Hence the serie (\ref{Hopf2}) converges provided $e^{\alpha t}$ is small enough.
\end{proof}


\subsection{Application to Euler equations}


We now extend the previous theorem to Euler equations on $\TT^2$. Let $U_s = (U,0)^t$ be a given shear flow. We look for solutions
of Euler equations of the form $U_s + u$, which leads to 
\beq \label{gener1}
\partial_t \omega + {\cal L} \omega = Q(u, \omega) 
\eeq
where 
$$
{\cal L} \omega = U_s \cdot \nabla \omega + u \cdot \nabla \Omega_s , \qquad Q(u, \omega) = - (u \cdot \nabla) \omega .
$$ 
Here and in what follows, velocity $u$ is computed by vorticity $\omega$ through the Biot-Savart law $u = \nabla^\perp \Delta^{-1} \omega$. We assume that the linearized operator ${\cal L}$ has an unstable eigenmode, of the form $\omega_1 e^{\alpha t}$, and
that
$$
(H1) \qquad \qquad Gen \Bigl( (\lambda + {\cal L})^{-1} f \Bigr) \le {C \over \lambda} Gen(f)  \qquad \forall \lambda, \qquad
\Re \lambda >  {3 \over 2} \Re \alpha 
$$
for some universal constant $C$. We start with
$$
\omega_1 = \Re ( \omega_1 e^{\alpha t})
$$
and iteratively construct $\omega_n$ by
\begin{equation}\label{def-Ewnnn}
(\alpha n + {\cal L}) \omega_n = \sum_{1 \le j \le n-1} Q(u_j, \omega_{n-j}) ,
\end{equation}
where $u_n = \nabla^\perp \Delta^{-1}\omega_n$ is the velocity associated to vorticity $\omega_n$.

 \begin{theo}
 Under Assumption (H1), there exists a positive time $T_0$ such that, for $t \le T_0$, the series
 $$
 \sum_{n \ge 0} e^{n \alpha t} \omega_n
 $$
 converges and is a solution of (\ref{gener1}). 
\end{theo}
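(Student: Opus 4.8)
The argument mirrors the toy-model (Hopf equation) theorem of the previous subsection, with the scalar generator replaced by the generator function $Gen$ (and its product and derivative rules from Proposition \ref{productGen}) and with the explicit ODE resolvent bound replaced by Assumption (H1). The plan is: apply $Gen$ to the recurrence \eqref{def-Ewnnn}; use (H1) together with the elliptic and bilinear estimates of the previous sections to turn it into a Hopf-type differential inequality for the generating series $G(t,\cdot)=\sum_{n\ge 1}Gen(\omega_n)\,t^{n-1}$; and then repeat verbatim the truncation-and-characteristics argument of the toy model to show that $G$ stays finite on a short interval $[0,T_0]$, which is precisely the statement that $\sum_{n\ge 0}e^{n\alpha t}\omega_n$ converges there. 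That the limit solves \eqref{gener1} is then automatic from the construction of the $\omega_n$.

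First I would estimate the $\omega_n$ one at a time. For $n=1$, $\omega_1$ is the prescribed unstable eigenmode, so $Gen(\omega_1)$ is finite on some interval $[0,\eta_0]$ (in the relevant spatial variable), with $M_0:=\sup_{[0,\eta_0]}Gen(\omega_1)$. For $n\ge 2$ we write $\omega_n=(\alpha n+\mathcal{L})^{-1}\sum_{1\le j\le n-1}Q(u_j,\omega_{n-j})$; since $\Re(\alpha n)=n\,\Re\alpha>\tfrac32\,\Re\alpha$, Assumption (H1) applies with $\lambda=\alpha n$ and gives
\[
Gen(\omega_n)\;\le\;\frac{C}{n\,\Re\alpha}\sum_{1\le j\le n-1}Gen\bigl(Q(u_j,\omega_{n-j})\bigr).
\]
The factor $1/n$ produced here is the analogue of the $(n-1)\alpha$ prefactor in the Hopf recurrence, and it is what makes the resulting series summable.

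Next I would bound the nonlinear term. Since $Q(u,\omega)=-(u\cdot\nabla)\omega$ and $u_j=\nabla^\perp\Delta^{-1}\omega_j$, the elliptic estimate for the Biot--Savart law (the analogue of Proposition \ref{prop-elliptic} in the periodic setting) controls the generator of $u_j$ and of its first derivatives by that of $\omega_j$, and the bilinear transport estimates (the analogue of Proposition \ref{prop-Gendy}) --- in particular the divergence-free identity $\partial_y v=-\partial_x u$, used to absorb the a priori loss of one $y$-derivative in the term $v\,\partial_y\omega$ --- bound $Gen\bigl(Q(u_j,\omega_{n-j})\bigr)$ by a bilinear expression in the generators of $\omega_j$ and $\omega_{n-j}$ and their first $z$-derivatives. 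Multiplying the bound for $Gen(\omega_n)$ by $t^{n-1}$ and summing over $n$, the Cauchy-product structure on the right collapses and $G$ satisfies, schematically, a Hopf-type inequality $\alpha\,\partial_t G\le C\,G\,\partial_z G$ with $G(0,\cdot)\le M_0$, exactly as in \eqref{eqGenerator1}.

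From this point the proof is identical to the toy model. I would truncate the generators and the sum at level $N$ to obtain a manifestly finite $G_N$ satisfying the same inequality; introduce $H_N(t,z)=G_N(t,\phi(t)z)$ with the linearly contracting factor $\phi(t)=1-\tfrac{3M_0}{\alpha\eta_0}t$ (so $\phi(0)=1$ and $\phi\ge\tfrac12$ on a short interval), which converts the inward characteristics of the Hopf inequality into outward ones on $[0,\eta_0]$; run the characteristics for \eqref{equationHN} to conclude that $K_N(t,z)=H_N(t,X_N(t,z))$ is nonincreasing in $t$, hence $H_N\le M_0$ on $[0,T_0]\times[0,\eta_0]$ uniformly in $N$; and finally let $N\to\infty$. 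Convergence of $\sum_{n\ge 0}e^{n\alpha t}\omega_n$ for $t\le T_0$ follows. The main obstacle is getting the bilinear estimate for $Gen(Q(u_j,\omega_{n-j}))$ in a form compatible with this Hopf bookkeeping: one must pass from $\omega$ to the velocity $u$ without losing generator control, and handle $v\,\partial_y\omega$ despite its apparent derivative loss (this is where Propositions \ref{prop-elliptic} and \ref{prop-Gendy} are essential), and one must check that the $C/\Re\lambda$ degradation in (H1) genuinely beats the combinatorics, which is exactly the role of the $1/n$ coming from $\lambda=\alpha n$. The finiteness of the existence time $T_0$ is not an artifact: it is produced by the characteristics argument with the contracting factor $\phi$ and reflects the blow-up of $G$ in $t$.
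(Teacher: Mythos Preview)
Your proposal is correct and follows essentially the same approach as the paper: apply (H1) to the recurrence \eqref{def-Ewnnn} to extract the $1/n$ factor, use the elliptic estimate $Gen(u_n)\le C\,Gen(\omega_n)$ together with the product/derivative rules for $Gen$ to bound the transport term, sum to a Hopf-type inequality for the generating series, and then invoke the truncation-plus-characteristics argument of the toy model. The only cosmetic discrepancy is that the paper works with the two-variable generator and obtains $\partial_\tau G_N \le C\,G_N\,\partial_{z_1}G_N + C\,G_N\,\partial_{z_2}G_N$, whereas you write the inequality schematically with a single $\partial_z$; and the paper, being on $\TT^2$, does not actually need the boundary-layer machinery of Propositions \ref{prop-elliptic} and \ref{prop-Gendy} that you cite, only the simpler product rule of Proposition \ref{productGen} and the periodic Biot--Savart bound.
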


As a consequence, $U_s$ is nonlinearly unstable in $L^\infty$. The proof of assumption (H1) relies
on a careful of Rayleigh equation, which will be detailed later.

\begin{proof} The proof follows a similar line as done for the toy model equation in the previous section. Indeed, by construction \eqref{def-Ewnnn} and the Assumption (H1), for $n\ge 1$, we 
have
$$
\begin{aligned}
 Gen (\omega_n) 
 &\le \frac{C}{n \alpha}  \sum_{1 \le j \le n-1} Gen (u_j\cdot \nabla \omega_{n-j} )  
 \\
 &\le \frac{C}{n \alpha}  \sum_{1 \le j \le n -1} \Big[ Gen (\omega_j) \partial_{z_1} + Gen (\omega_j) \partial_{z_2} \Big] Gen(\omega_{n-j}) ,
 \end{aligned} $$
upon recalling the elliptic estimates 
$$Gen (u_n) \le C Gen (\omega_n).$$ 
For convenience, set $G^n(z_1,z_2) = Gen (\omega_n)(z_1,z_2)$. The functions $G^n(z_1,z_2)$ are well-defined for sufficiently small $z_1,z_2$, and in addition, 
there exists some universal constant $C_0$ so that 
\beq \label{boundGn}
G^n \le {C_0 \over n} \sum_{1 \le j \le n-1} \Big( G^j \partial_{z_1} G^{n-j} + G^j \partial_{z_2} G^{n-j} \Big) .
\eeq
As in the previous section, for $N\ge 1$, we introduce the partial sum 
$$
G_N(\tau,z_1,z_2)  := \sum_{n=1}^N G^n(z_1,z_2) \tau^{n-1} ,
$$
for $\tau,z_1,z_2\ge 0$. 
Note that $G_N$ is a polynomial in $\tau$, and thus well-defined for all times $\tau\ge 0$. 
We also note that all the coefficients $G^n(z_1,z_2)$ are positive. 
In particular, $G_N(\tau, z_1,z_2)$ is positive, and so are all its time derivatives (when $z_1 > 0$ and $z_2 > 0$). 
Moreover, $G_N(\tau,z_1,z_2)$, and all its derivatives, are increasing in $N$. We also observe that, at $\tau =0$,
$$
G_N(0,z_1,z_2) = G^1(z_1,z_2),
$$
for all $N\ge 1$, and hence, 
$$
G(0,z_1,z_2) = \lim_{N\to \infty} G_N(0,z_1,z_2) = G^1(z_1,z_2).
$$ 
Next, multiplying (\ref{boundGn}) by $\tau^{n-2}$ and summing up the result, we obtain 
the following partial differential inequality
$$
\partial_\tau G_N \le C G_N \partial_{z_1} G_N + C G_N \partial_{z_2} G_N ,
$$
for all $N\ge 1$. That is, the generator function satisfies an Hopf-type equation, or more precisely an Hopf differential inequality. Thus, the theorem follows from the same argument as done in the previous section. 
\end{proof}

\part{Linear analysis}

\chapter{Rayleigh equation}\label{chapter-Rayleigh}

\def\Ray{\mathrm{Ray}}




In this chapter, we study the linearized Euler equations around a fixed, time independent shear flow 
$$
U = \left( \begin{array}{c} 
U(z) \cr
0 \cr 
\end{array} \right).
$$
 The linearization reads
$$
\partial_t v + (U \cdot \nabla v) + (v \cdot \nabla ) U + \nabla p = 0,
$$
$$
\nabla \cdot v = 0 ,
$$
with the boundary condition $v_2 = 0$ at $z = 0$. Taking advantage of the divergence-free condition, we introduce the stream function
$\psi$, in such a way that
$$
v = \nabla^\perp \psi.
$$ 
 Then, the vorticity 
 $$
 \omega = \partial_z v_1 - \partial_x v_2
 $$ 
 satisfies 
$$ 
(\partial_t + U\partial_x) \omega - U'' \partial_x \psi= 0, \qquad \Delta \psi = \omega
$$
with $\psi_{\vert_{z=0}} =0$. 

Our objective is to study the spectrum of the linearized Euler operator and to derive bounds on the corresponding semigroup.
 To proceed, we take the Fourier transform in the tangential variable, with wave number $\alpha$, and search for 
solutions of the form
$$
\psi = e^{i \alpha x} \phi(t,z) , \qquad \omega = e^{i \alpha x} \omega(t,z).
$$
It follows that the vorticity $\omega$ satisfies 
\begin{equation}\label{Euler-vort}
\partial_t \omega = i \alpha ( U '' \phi - U \omega) , \qquad \Delta_\alpha \phi = \omega
\end{equation}
 in which we denote 
$$
\Delta_\alpha = \partial_z^2 - \alpha^2.
$$
The corresponding resolvent equation is 
$$
i \alpha ( U '' \phi - U \omega) = \lambda \omega,
$$
and, setting 
$$
\lambda = -i\alpha c
$$ 
to follow the classical literature, we get 
\begin{equation}\label{Rayleigh-prb}
\left \{ 
\begin{aligned}
(U-c) \Delta_\alpha \phi - U'' \phi &= 0
\\
\phi_{\vert_{z=0}}  = 0, \qquad \lim_{z\to \infty} \phi(z) &=0 .
\end{aligned}
\right.
\end{equation} 
This is the classical Rayleigh problem. 
This chapter is devoted to the study of its spectrum and of its resolvent. 

By symmetry, it is sufficient to study the problem for $\alpha>0$. We often refer to $c$ as 
an eigenvalue of the Rayleigh problem. It is unstable if $\Im c>0$. 

Although some results do not require analyticity, we will consider analytic functions, defined on a "pencil" like domain
$$
\Gamma_{\sigma, r} = \Bigl\{ z \in \cit \quad | \quad | \Im z | \le \min( r \Re z, \sigma) \Bigr\} .
$$
We will assume that $U$ converges exponentially fast to some constant $U_+$ as $z$ goes to $+ \infty$. More precisely for $\eta > 0$
we introduce the space ${\cal A}^\eta$ of smooth functions such that the following norm
$$
\| f \|_\eta = \sup_{z \in \Gamma_{\sigma,r}} | f(z) | e^{\eta z} 
$$
is finite. We will assume that 
\begin{equation}\label{U-Aeta0}
U - U_+ \in {\cal A}^{\eta_0}
\end{equation}
for some positive $\eta_0$.
 
To begin with, we will recall classical criteria on the existence of stable and unstable modes.
Then we will study the resolvent of the Rayleigh equation. 
For the sake of convenience, we introduce the Rayleigh operator, defined by
\begin{equation}\label{def-Ray}
\Ray_\alpha(\phi): = (U-c) \Delta_\alpha \phi - U'' \phi.
\end{equation}


\section{Some inviscid stability criteria}


In the sequel we try to  determine whether there is a pair of solutions 
$(c,\phi)$ to \eqref{Rayleigh-prb} so that the complex number $c$ has a positive imaginary part, 
which corresponds to the existence of a growing solution of the linearized Euler problem.
Let us recall a few well-known instability criteria for the basic velocity profiles $U(z)$. 

\begin{lemma}[Rayleigh's inflexion-point criterium, 1880]  A necessary condition for instability is that the basic
velocity profile must have an inflection point.
\end{lemma}

\begin{proof} Indeed, assume that there is a triple $(c, \alpha,\phi)$ solving \eqref{Rayleigh-prb} with $\Im c>0$.  
 Multiplying by $\bar \phi/(U-c)$  the Rayleigh equation \eqref{Rayleigh-prb} and  integrating by parts
 yield 
 \begin{equation}\label{id-Ray}
 \int_0^\infty (|\partial_z \phi|^2 + \alpha^2 |\phi|^2) \;dz+ \int_0^\infty \frac{U''}{U-c}|\phi|^2 \;dz=0,
 \end{equation}
whose imaginary part reads 
\begin{equation}\label{id-Ray1}
\Im \Bigl( ~c \int_0^\infty \frac{U''}{|U-c|^2}|\phi|^2 \;dz \Bigr) =0.
\end{equation}
Thus, the instability condition $\Im c>0$ must imply that $U''$ changes its sign. 
This gives the Rayleigh criterium. 
Consequently, boundary layer profiles $U(z)$ with no inflection point are spectrally stable for the linearized Euler equations. 
\end{proof}

More precisely,

\begin{lemma}[Fjortoft criterium, 1950] 
A necessary condition for instability is that $U'' (U - U(z_c))<0$ somewhere in the flow, where $z_c$ is a point at which $U''(z_c) =0$.
\end{lemma}

\begin{proof} 
Indeed, assume that there is a triple $(c, \alpha,\phi)$ solving \eqref{Rayleigh-prb} with $\Im c>0$. 
Let $z_c$ be an inflection point and consider the real part of the identity \eqref{id-Ray}: 
$$
\int_0^\infty (|\partial_z \phi|^2 + \alpha^2 |\phi|^2) \;dz+ \int_0^\infty \frac{U'' (U-\Re c)}{|U-c|^2}|\phi|^2 \;dz=0.
$$
Adding to this the identity \eqref{id-Ray1}:
$$ (\Re c - U(z_c))\int_0^\infty \frac{U''}{|U-c|^2}|\phi|^2 \;dz =0,$$
we obtain 
$$ \int_0^\infty \frac{U'' (U-U(z_c))}{|U-c|^2}|\phi|^2 \;dz=-\int_0^\infty (|\partial_z\phi|^2 + \alpha^2 |\phi|^2) \;dz <0,$$
 from which the Fjortoft criterium follows. \end{proof}

There are no general sufficient conditions for instability of the Rayleigh problem. All what we know is that there are at most finitely many unstable eigenvalues and they are confined in a semicircle. Namely, we have the following

\begin{lemma}[Howard's semicircle theorem, 1961] 
Let $M = \sup_{\RR_+} U$ and $m = \inf_{\RR_+} U$. Unstable eigenvalues $c$ must satisfy 
\begin{equation}\label{Howard-semicircle}
 \Big( \Re c - \frac12 (M+m)\Big)^2 + |\Im c|^2 \le \frac14 (M-m)^2.
 \end{equation}
\end{lemma}
\begin{proof} 
Let $(c,\phi)$ be a solution to the Rayleigh problem \eqref{Rayleigh-prb}, with $\Im c>0$. 
Since $U-c$ never vanishes, the adjoint solution $\phi^* = \phi/(U-c)$ is regular and satisfies 
$$ 
\partial_z [(U-c)^2 \partial_z \phi^*] - \alpha^2 (U-c)^2 \phi^* = 0.
$$
Multiplying this equation by $\bar{\phi}^*$ and integrating the result, we obtain 
\begin{equation}\label{Howard-id} \int_0^\infty (U-c)^2 |\nabla_\alpha \phi^*|^2 \; dz = 0.\end{equation}
Taking the imaginary part of \eqref{Howard-id}, together with $\Im c>0$, yields 
$$ 
\int_0^\infty (U-\Re c) |\nabla_\alpha \phi^*|^2 = 0, 
 $$ 
which in particular asserts that $\Re c \in \mathrm{Range}(U)$. In addition, the real part of \eqref{Howard-id} reads
$$
\begin{aligned} 0 &= \int_0^\infty \Big((U-\Re c)^2 - |\Im c|^2\Big) |\nabla_\alpha\phi^*|^2 \; dz
\\
&= \int_0^\infty \Big(U^2 - |\Re c|^2 - |\Im c|^2\Big) |\nabla_\alpha\phi^*|^2 \; dz
\\
&= \int_0^\infty \Big((U-m)(U-M) + (m+M)\Re c- mM -|c|^2\Big) |\nabla_\alpha\phi^*|^2 \; dz.
\end{aligned}$$
Since $(U-m)(U-M) \le 0$, the above proves that $ (m+M)\Re c- mM -|c|^2 \ge 0$, which implies \eqref{Howard-semicircle}. 
\end{proof}

\begin{lemma}\label{lem-spectrumRayleigh} For any $\delta>0$, there are at most finitely many $c$  
so that the Rayleigh problem \eqref{Rayleigh-prb} has a nontrivial solution with $ \Im c \ge \delta$. 
 \end{lemma}

\begin{proof} Let $\Im c > 0$.
Classical arguments on ordinary differential equations indicate
 that the Rayleigh equation has two independent solutions, 
 one decaying and the other growing at an exponential rate $e^{\pm \alpha z}$ as $z$ tends to infinity. 
 Denote the decaying solution to be $\phi(z;\alpha,c)$. 
 A nontrivial solution to the Rayleigh problem exists if and only if the function 
$$
E(\alpha,c): =\phi(0;\alpha,c)
$$ 
vanishes. It follows that $\phi(z;\alpha,c)$ is analytic in the complex half plane $\Im c>0$, 
and hence there are finitely many zeros $c$ lying in  Howard's semicircle, 
with $\Im c\ge \delta>0$, of $E(\alpha,c)$ for each positive $\alpha$. This yields the Lemma. 
\end{proof}


\section{Rayleigh problem: away from critical layers}\label{sec-Rayleigh}


In this section we shall construct an exact inverse for the Rayleigh equation and thus find the complete solution to 
\begin{equation}\label{eq-Raya}
\Ray_\alpha(\phi) = f, \qquad z\ge 0,
\end{equation}
for each fixed $\alpha, c$, under the assumption that
\begin{equation}\label{assump-U} \alpha \ge \theta_0, \qquad d_c: = |\Range(U) - c|\ge \frac{\theta_1}{1+\alpha} 
\end{equation}
for arbitrary fixed positive constants $\theta_0,\theta_1$. We will also assume that $\Im c > 0$.
Recall that the temporal eigenvalue is written as $\lambda = -i\alpha c$. 
Thus, such an assumption asserts that $\lambda$ is away from the continuous spectrum of Euler equations. 
We underline the fact that in this section $c$ is not real, hence $U-c$ does not vanish and remains bounded away from $0$.
As a consequence, we may divide (\ref{eq-Raya}) by $U-c$, which leads to
\beq \label{eq-Raya2}
\Delta_\alpha  \phi - {U'' \over U - c} \phi = {f \over U - c} .
\eeq
Note that if $f$ is holomorphic on some domain $\Gamma_{\sigma,r}$, 
then $\phi$ is also holomorphic in the same domain,
by classical results on complex ordinary differential equations.
Of course, the solution to (\ref{eq-Raya2}) is not unique and defined up to an homogeneous solution.

\subsection{Homogenous solutions}


In this paragraph we detail the construction of two independent solutions $\phi_{\alpha,\pm}$ to the Rayleigh operator. We will prove the following lemma.
\begin{lemma}\label{prop-Rayleigh} 
There exist two independent solutions $\phi_{\alpha,\pm}$ to the homogenous equation 
$\Ray_\alpha(\phi) = 0$ such that 
\beq\label{prop-Rayleigh1}
 \phi_{\alpha,\pm}(z) =e^{\pm \alpha z} \Big[ 1 + \psi_{\alpha,\pm}\Big],
 \eeq
where $\psi_{\alpha,\pm} \in {\cal A}^{\eta}$, for some positive $\eta<\min\{\alpha,\eta_0\}$, with $\eta_0$ as in \eqref{U-Aeta0}, and
$$
\| \psi_{\alpha,\pm } \|_{\eta} \le C.
$$
Here, ${\cal A}^\eta$ denotes the weighted $L^\infty$ function spaces defined in section \ref{defAeta}. 
\end{lemma}

The proof of this Lemma is classical and we will now detail it.

\begin{proof} 
Let us denote 
$$
W = (\phi, \partial_z\phi)^{tr}.
$$ 
The homogenous equation  becomes 
\begin{equation}\label{eqs-W} 
W' = A W, \qquad A = \begin{pmatrix} 0&1 \\ \alpha^2 + \frac{U''}{U-c} &0\end{pmatrix} .
\end{equation}
We denote by $A_+$ the limit of $A(z)$ as $z\to \infty$. Note that
$$
 A_+= \begin{pmatrix} 0&1 \\ \alpha^2 &0\end{pmatrix} .
$$
 Eigenvalues of $A_+$ are $\mu_\pm = \pm \alpha$. Let $V_\pm= (1,\pm \alpha)^{tr}$ be the associated eigenvectors. 
 We now want to construct two independent solutions of \eqref{eqs-W} of the form 
$$ W_\pm = e^{\mu_\pm z} V(z) , \qquad \lim_{z\to \infty}V(z) = V_\pm.$$
Let us first consider the $(-)$ case: $\mu_-=-\alpha$. Plugging the above Ansatz into \eqref{eqs-W}, $V(z)$ solves 
$$ V' = (A_+ - \mu_-) V + (A - A_+) V.$$
Since the eigenvalues of $A_+ - \mu_-$ are $0$ and $2\alpha$, a direct calculation leads to 
$$
\|e^{(A_+ - \mu_-)z} \|\le C_0 \alpha e^{- 2\alpha |z|}, \qquad \forall ~z\le 0.
$$
Here, $\|\cdot \|$ denotes the standard matrix norm. Thanks to the decay assumption on the boundary layer profile $U(z)$, 
there holds 
$$
\|A (z)- A_+\| \le C e^{-\eta_0 |z|}.
$$
The Duhamel formula yields 
$$ 
V(z) = V_- - \int_z^\infty e^{(A_+ - \mu_-) (z-y)}  (A(y) - A_+) V(y) \; dy ,
$$
the integral being taken over the half line $z+ \rit_+$. Denote by $TV(z)$ the right hand side of the above identity. 
Let $M > 0$ and let ${\cal X}^{\eta}$ be the space of functions defined like in \ref{defAeta} by its norm
$$
\| f \|_{\cal X}  = \sup_{z, \Re z > M, | \Im z | \le \min(\sigma \Re z, \sigma r)} |f(z)| e^{\eta \Re z} .
$$
We shall show that $T$ is a contractive map 
from ${\cal X}^{\eta}$ into itself, provided $M$ is  sufficiently large and for some $\eta<\min\{ \alpha,\eta_0\}$.  
Indeed, for $V_1,V_2 \in {\cal X}^{\eta}$, we get 
$$\begin{aligned}
 |TV_1(z) - TV_2(z)| 
 &\le \int_z^\infty \|e^{(A_+ - \mu_-) (z-y)} \| \|(A(y) - A_+)\| \|V_1- V_2\|_{\eta}  e^{-\eta y} \; dy 
 \\
 &\le C_0 \alpha \|V_1- V_2\|_{\eta} \int_z^\infty e^{ - 2\alpha|y-z|}e^{- (\eta+ \eta_0) y}  \; dy 
 \\
 &\le C_0 \alpha e^{- (\eta+ \eta_0) z} \|V_1- V_2\|_{\eta} \int_z^\infty e^{ - 2\alpha|y-z|}  \; dy 
 \\
 &\le C e^{-\eta z}  e^{-\eta_0 M} \|V_1- V_2\|_{\eta}
  \end{aligned}$$
for all $z\ge M$. Taking $M$ large enough so that $C e^{-\eta_0 M} <1/2$, the map $T$ is contractive. 
Hence, there exists a solution $V\in {\cal X}^{\eta}$ such that $V = TV$, or equivalently $e^{-\alpha z}V(z)$ 
solves \eqref{eqs-W} on $[M,\infty)$. By the standard ODE theory, the solution $V$ on $[M,\infty)$ 
can be extended to be a global solution on $[0,\infty)$, since the right-hand side of \eqref{eqs-W} is uniformly bounded.  

Similarly, with the Ansatz solution $W_+ = e^{\mu_+ z} V(z)$, there holds 
$$ V' = (A_+ - \mu_+) V + (A - A_+) V,$$
in which the matrix $A_+ - \mu_+$ has only nonpositive eigenvalues. It follows that 
$\|e^{(A_+ - \mu_+)z} \|\le C_0\alpha e^{-2\alpha|z|}$ for $z\ge 0$. 
The solution is then constructed via the Duhamel formula: 
$$ 
V(z) = V_+ + \int_M^z e^{(A_+ - \mu_+) (z-y)}  (A(y) - A_+) V(y) \; dy .
$$
A similar argument as above shows that such a solution exists and satisfies the claimed bound. 
\end{proof}


\subsection{Exact Rayleigh solver} 


In this section we explicitly construct an inverse $Ray^{-1}$ of the Rayleigh operator. Note that at this point
we do not take care of the boundary condition at $z = 0$.

Let us introduce the (inviscid) Evans function
\begin{equation}\label{def-Evans-inviscid} 
E(\alpha, c): = \alpha^{-1} W[\phi_{\alpha,-},\phi_{\alpha,+}](0) .
\end{equation}
For real arguments $x$ and $z$, a Green kernel $G_{\alpha}(x,z)$ of the $\Ray_\alpha$ operator can be defined by 
$$
G_{\alpha}(x,z) = \frac{1}{ \alpha E(\alpha,c) (U(x)-c) }\left\{ \begin{array}{rrr}  \phi_{\alpha,-}(z) \phi_{\alpha,+}(x), 
\quad \mbox{if}\quad z>x,\\
 \phi_{\alpha,-}(x) \phi_{\alpha,+}(z), \quad \mbox{if}\quad z<x.\end{array}\right.
$$ 
We denote by $G_\alpha^-$ the expression of $G_\alpha$ for $x < z$ and by $G_\alpha^+$ the expression of $G_\alpha$ for
$x > z$.
The exact Rayleigh solver can then be defined by 
\begin{equation}\label{def-RaySa}
\begin{aligned}
Ray^{-1}_\alpha(f) (z)  &: =  \int_0^\infty G_{\alpha}(x,z) f(x) dx.
\end{aligned}
\end{equation}
The following lemma gives bounds on this solution.

\begin{lemma}\label{lem-Raysolver}
For any positive constant $\eta <\alpha$ and for each $f \in {\cal A}^\eta$, there holds on $z \ge 0$:
\begin{equation}\label{est-Ray1} 
|\partial_z^{k} Ray^{-1}_\alpha(f) (z) |\le C \alpha^{k-2} d_c^{-1} E(\alpha,c)^{-1}  \| f\|_{\eta} e^{-\eta z},
\end{equation}
for some positive constant $C$, and $k = 0, 1$, and $d_c$ defined as in \eqref{assump-U}. In addition, we have 
\begin{equation}\label{est-Ray2}
|\Delta_\alpha Ray^{-1}_\alpha(f) (z) | \le  C \Big( 1+ \alpha^{-2} d_c^{-1} E(\alpha,c)^{-1} \Big) d_c^{-1} \| f\|_{\eta} e^{-\eta z} .
\end{equation}
\end{lemma}

\begin{remark}{\em Note that we "gain" bounds on two derivatives, 
which is natural since Rayleigh equation is a second order elliptic equation. 
We also gain $\alpha^{-2}$ for the Rayleigh solution, when $\alpha$ is sufficiently large. 
}
\end{remark}

\begin{proof} By Lemma \ref{prop-Rayleigh} and \eqref{assump-U}, we have
\begin{equation}\label{RGGG}
|\partial_z^k G_{\alpha}(x,z)| \le C \alpha^{k-1} d_c^{-1}E(\alpha,c)^{-1} e^{-\alpha|x-z|},
\end{equation}
for all $x,z\ge 0$, and for $k = 0$ or $k = 1$. This yields 
$$
\begin{aligned}
 \Big| \partial_z^k \int_0^\infty G_{\alpha}(x,z) f(x) dx  \Big| 
& \le C \alpha^{k-1} d_c^{-1} E(\alpha,c)^{-1}  \|f\|_{\eta} \int_0^{\infty} 
e^{-\alpha |x-z|}e^{-\eta x}\; dx
\\& \le C \alpha^{k-2} d_c^{-1} E(\alpha,c)^{-1}  \|f\|_{\eta} e^{-\eta z}.
\end{aligned}$$ 
This proves the first estimate stated in the lemma. 
For the second estimate, using the equation $\Ray_\alpha (\phi) = 0$, we write 
$$
\Delta_\alpha Ray^{-1}_\alpha(f) (z) = \frac{f}{U-c} +  \frac{U''}{U-c}  Ray^{-1}_\alpha(f) (z) .
$$
This proves the second estimate in the lemma.   
\end{proof}

%


\subsection{Behavior for large $\alpha$}


\begin{lemma}\label{lem-Ray-la1} 
For sufficiently large $\alpha$, there are no nontrivial bounded solutions $(\alpha, c, \phi)$ 
to the Rayleigh problem when $\Im c \gg e^{- |\alpha|}$.
\end{lemma}
\begin{proof}
We study the Rayleigh equation when $\alpha \gg 1$. We introduce the change of variables
$$ \tilde z = \alpha z, \qquad \phi(z) = \alpha^{-2}\tilde \phi (\alpha z).$$
Then, $\tilde \phi$ solves the scaled Rayleigh equation
$$ \Ray_{sc}(\tilde \phi): = (U-c) (\partial_{\tilde z}^2 - 1) \tilde \phi - \alpha^{-2}U'' \tilde \phi =0$$
in which $U = U(\alpha^{-1}\tilde z)$ and $U'' = U''(\alpha^{-1}\tilde z)$. 
The inverse of $\Ray_{sc}(\cdot)$ is constructed by induction, starting from the inverse of $(U-c) (\partial_{\tilde z}^2 - 1)$. 
The iteration operator is defined by  
$$ 
T \tilde \phi : = \alpha^{-2} \Big[ (U-c) (\partial_{\tilde z}^2 - 1)\Big]^{-1} \circ U'' \tilde \phi .
$$ 
It suffices to show that $T$ is contractive with respect to the $e^{\tilde z}$ or $e^{-\tilde z}$ weighted sup norm, 
denoted by $\|\cdot \|_{L^\infty_\pm}$, respectively for decaying or growing solutions. 
Indeed, recalling that 
$$
(\partial_{\tilde z}^2 - 1)^{-1} f= e^{-|\tilde z|} \star f,
$$ 
we check that  
$$\begin{aligned}
 |T \tilde \phi  - T\tilde \psi|(z) 
 &\le  \alpha^{-2}  \int_0^\infty  \Big| \frac{U''}{U-c}\Big| e^{-|\tilde x - \tilde z|} e^{\pm|\tilde x|} \| \tilde \phi  - \tilde \psi\|_{L^\infty_\pm}\; d\tilde x
\\
& \le  \alpha^{-2} e^{\pm|\tilde z|}  \| \tilde \phi  - \tilde \psi\|_{L^\infty_\pm}
 \int_0^\infty  \Big| \frac{U''(\alpha^{-1}\tilde x)}{U(\alpha^{-1}\tilde x)-c}\Big| \; d\tilde x
\\
& \le  \alpha^{-1} e^{\pm|\tilde z|}  \| \tilde \phi  - \tilde \psi\|_{L^\infty_\pm} \int_0^\infty  \Big| \frac{U''(x)}{U(x)-c}\Big| \; dx
 \end{aligned}$$ 
in which the triangle inequality was used to deduce $e^{-|\tilde x - \tilde z|} e^{\pm|\tilde x|} \le e^{\pm|\tilde z|} $. 
As for the integral term, when $c$ is away from the range of $U$, it follows that 
$$
 \int_0^\infty  \Big| \frac{U''(x)}{U(x)-c}\Big| \; dx \le C_0 \int_0^\infty  |U''|\; dx \le C.
$$ 
Therefore, if $\alpha$ is small enough, $T$ is contractive.

Note that if $c$ is close to the range of $U$, then 
$$
 \int_0^\infty  \Big| \frac{U''(x)}{U(x)-c}\Big| \; dx \le C + C | \log \Im c | .
 $$
 Therefore, for small $c$,
$$\begin{aligned}
 |T \tilde \phi  - T\tilde \psi|(z) 
& \le C \alpha^{-1} |\log \Im c| e^{\pm|\tilde z|}  \| \tilde \phi  - \tilde \psi\|_{L^\infty_\pm}.
 \end{aligned}$$ 
Hence, $T$ is contractive as long as $\alpha^{-1}\log \Im c$ is sufficiently small. 
Similarly, the same bound holds for derivatives, since the derivative of the Green kernel of $\partial_{\tilde z}^2 - 1$ 
satisfies the same bound. This yields two independent Rayleigh solutions of the form 
\begin{equation}\label{phi0-la} \partial_{\tilde z}^k\tilde \phi^0_{s,\pm} (\tilde z) 
= (-1)^k e^{\pm \tilde z} \Big( 1 + \cO(\alpha^{-1}\log \Im c) \Big), \qquad k=0,1.\end{equation}
In particular, this shows that there are no non-trivial bounded solutions to the Rayleigh problem with the zero boundary condition, 
as long as $\alpha^{-1}\log \Im c\ll1$, since $\tilde \phi_{s,-}^0(0)\not =0$. This yields the lemma. \end{proof}


\section{Rayleigh problem: near critical layers}\label{sec-Rayleigh-cr}


In this part, we construct an exact inverse for the Rayleigh operator $\Ray_\alpha$ 
and thus solve the inhomogenous Rayleigh problem 
\begin{equation}\label{Ray-smalla}
\Ray_\alpha(\phi) = f, \qquad z\ge 0,
\end{equation}
in the presence of critical layers. A critical layer is an area in $z$ where $U - c$ almost vanishes. 
More precisely, it is by definition a small
neighborhood of $z_c$ where $U(z_c) = c$. Note that $z_c$ may be a complex number.
Precisely, we study in this section the case when $$\Im c\ll1.$$
We assume that there exists a complex number $z_c$, with
small imaginary part, such that
$$
U(z_c) = c.
$$
Note that $z_c$ may not be unique. In this case, we have to repeat the following construction near each critical layer.

For $\Im c$ small enough, $z_c \in \Gamma_{\sigma,r}$. The Rayleigh solutions are no longer smooth in $\Gamma_{\sigma,r}$ and 
have a singularity at $z = z_c$.

We shall consider three cases: $\alpha\ll 1$, $\alpha \approx 1$, and $\alpha \gg1$. 
In what follows, we shall first focus on the first case, leaving the last two cases to the last two subsections of this section. 
For the first case, we construct the inverse of the Rayleigh operator 
as a perturbation of the inverse of the Rayleigh operator $\Ray_0$ when $\alpha =0$. This is constructed 
via an explicit Green function. 
We then use this inverse to construct an approximate inverse to the $\Ray_\alpha$ operator 
through the construction  of an approximate Green function. 
Finally, the construction of the exact inverse of $\Ray_\alpha$ follows by  an iterative procedure.


\subsection{Functional spaces}


We shall use the function spaces $X_{\eta,p}$, for $p\ge 0, \eta > 0$, 
to denote the spaces consisting of functions $f = f(z)$ that are $C^p$ smooth and satisfy 
$$
|\partial_z^k f(z)| \le C e^{-\eta z} (1 + |z - z_c |^{-k}) 
$$ 
for all $z\ge 0$ and all $k = 0,\cdots,p$. 
The best constant that satisfies the above inequality defines a norm, denoted by $\|\cdot\|_{\eta,p}$, 
on the Banach space $X_{\eta,p}$. We denote 
$X_\eta, \|\cdot \|_\eta$ in place of $X_{\eta,0}, \|\cdot \|_{\eta,0}$, respectively.

Next,  the function space $(Y_{\eta,p},\|\cdot \|_{Y_{\eta,p}})$ 
consists of all functions $f$ that decay exponentially at the rate $e^{-\eta z}$ and satisfy
$$
|f(z)| \le C, \quad | \partial_z f(z) | \le C (1 + | \log (z - z_c) | ) , \quad  | \partial_z^k f(z) | \le C (1 + | z - z_c |^{1 - k} )
$$
for $z$ near $z_c$ and for $0\le k\le p$.


\subsection{Rayleigh equation: $\alpha = 0$}


As mentioned, we begin with the Rayleigh operator $\Ray_0$ when
$\alpha = 0$ and construct the inverse of $\Ray_0$. More precisely, we will construct the Green function of $\Ray_0$ and solve  
\begin{equation}\label{Ray0} 
\Ray_0 (\phi) = (U-c) \partial_z^2 \phi - U'' \phi = f.
\end{equation}
We first prove the following lemma.

\begin{lemma}\label{lem-defphi012} 
Assume that $\Im c \not =0$. There are two independent solutions $\phi_{1,0}$ and
$\phi_{2,0}$ of $\Ray_0(\phi) =0$ with unit Wronskian determinant 
$$ 
W(\phi_{1,0}, \phi_{2,0}) := \partial_z \phi_{2,0} \phi_{1,0} - \phi_{2,0} \partial_z \phi_{1,0} = 1.
$$
Furthermore, there are holomorphic functions $P_1(z), P_2(z), Q(z)$ 
with $P_1(z_c) = P_2(z_c) = 1$ and $Q(z_c)\not=0$ so that the asymptotic descriptions 
\begin{equation}\label{asy-phi012} 
\phi_{1,0}(z) = (z-z_c) P_1(z) ,\qquad \phi_{2,0}(z) = P_2(z) + Q(z) (z-z_c) \log (z-z_c)
\end{equation}
hold for $z$ near $z_c$, and  \begin{equation}\label{decay-phi012} 
 \phi_{1,0}(z) - V_+ \in {\cal A}^{\eta_0} , \qquad \partial_z \phi_{2,0}(z)  - \frac{1}{V_+}   \in  {\cal A}^{\eta_1}
\end{equation}
for some positive constants $C,\eta_0$, for $V_+ = U_+ - c$, and for any $\eta_1<\eta_0$.
 \end{lemma}
\begin{proof} 
First, we observe that 
$$ 
\phi_{1,0}(z) = U(z)-c
$$
 is an exact solution of $\Ray_0(\phi) =0$, defined and holomorphic on $\Gamma_{\sigma,r}$. 
 In addition, the claimed asymptotic expansion for $\phi_{1,0}$ clearly holds for $z$ near $z_c$ since $U(z_c) = c$. 
 
 We then construct a second particular solution $\phi_{2,0}$, imposing the Wronskian determinant to be one
$$
W[\phi_{1,0},\phi_{2,0}] =  \partial_z \phi_{2,0} \phi_{1,0} - \phi_{2,0} \partial_z \phi_{1,0}  = 1.
$$
From this, the variation-of-constant method $\phi_{2,0} (z)= C(z) \phi_{1,0}(z)$  yields
$$
  \phi_{1,0} C \partial_z  \phi_{1,0} 
+ \ \phi_{1,0}^2 \partial_z C -\partial_z  \phi_{1,0} C  \phi_{1,0}  = 1.
$$
This gives 
$$
\partial_z C(z) = {1 \over \phi^2_{1,0}(z)} = {1 \over (U(z) - c)^2} .
$$ 
Note that this equation is singular at $z = z_c$, with a second order pole.
More precisely,
$$
{1 \over (U(c) - U(z_c))^2}
= {1 \over ( U'(z_c) (z - z_c) + U''(z_c) (z - z_c)^2 / 2 + ...)^2}
$$
$$
= {1 \over U'(z_c)^2 (z-z_c)^2} 
- {U''(z_c) \over U'(z_c)^3} {1 \over z - z_c} + holomorphic .
$$
Hence,
\begin{equation} \label{defiphi2bis}
\phi_{2,0} = - {U(z) - c \over U'(z_c)^2 (z - z_c)}
- {U''(z_c) \over U'(z_c)^3} (U(z) -  c)\log (z - z_c)   + holomorphic .
\end{equation}
The Lemma then follows from the explicit expression (\ref{defiphi2bis}) of $\phi_{2,0}$.
\end{proof}

To go back to $\Gamma_{\sigma,r}$ we have to make
a choice of the logarithm. We choose to define it on $\mathbb{C} - \{ z_c + \mathbb{R}^-  \}$.
Then $\phi_{2,0}$ is holomorphic on 
$$
\tilde \Gamma = \Gamma_{\sigma,r} \cap \Bigl(  \mathbb{C} - \{ z_c + \mathbb{R}^-  \} \Bigr) .
$$
Note that because of the singularity at $z_c$, $\phi_{2,0}$ can not be made holomorphic on 
$\Gamma_{\sigma,r}$.

In particular if $\Im z_c = 0$, $\phi_{2,0}$ is holomorphic in $z$ excepted on the half line $z_c + \mathbb{R}^- $.
For $z \in \mathbb{R}$, $\phi_{2,0}$ is holomorphic as a function of $c$ excepted if $z - z_c$ is real and negative,
namely excepted if $z < z_c$. 
For a fixed $z$, $\phi_{2,0}$ is an holomorphic function of $c$ provided $z_c$ does not cross
$\mathbb{R}^+$, and provided $z - z_c$ does not cross $\mathbb{R}^-$.

Let $\phi_{1,0},\phi_{2,0}$ be constructed as in Lemma \ref{lem-defphi012}. Then 
for real arguments $x$ and $z$, the Green function $G_{R,0}(x,z)$ of the $\Ray_0$ operator can be explicitly defined by 
$$
G_{R,0}(x,z) = \left\{ \begin{array}{rrr} (U(x)-c)^{-1} \phi_{1,0}(z) \phi_{2,0}(x), 
\quad \mbox{if}\quad z>x,\\
(U(x)-c)^{-1} \phi_{1,0}(x) \phi_{2,0}(z), \quad \mbox{if}\quad z<x.\end{array}\right.
$$ 
We will denote by $G_{R,0}^\pm$ the expressions of $G_{R,0}$ for $x > z$ and $x < z$. 
Here we note that $c$ is complex with $\Im c \not=0$ and so the Green function $G_{R,0}(x,z)$ is a well-defined function in $(x,z)$,
 continuous across $x=z$, and its first derivative has a jump across $x=z$. Let us now introduce the inverse of $\Ray_0$ as 
\begin{equation}\label{def-RayS0}
\begin{aligned}
RaySolver_0(f) (z)  &: =  \int_0^{+\infty} G_{R,0}(x,z) f(x) dx.
\end{aligned}
\end{equation}
As $G_{R,0}^\pm$ are holomorphic,
$$
RaySolver_0(f) (z)   =  \int_{\gamma^-} G_{R,0}^-(x,z) f(x) dx +  \int_{\gamma^+} G_{R,0}^+(x,z) f(x) dx
$$
where $\gamma^-$ is a smooth path between $0$ and $z$ and $\gamma_+$ is a smooth path between $z$ and $+ \infty$.
Using such an expression, $RaySolver_0(f)$ can be extended to $\tilde \Gamma$ is $f$ is holomorphic on $\Gamma_{\sigma,r}$
or holomorphic on $\tilde \Gamma$.

The following Lemma asserts that the operator $RaySolver_0(\cdot)$ is in fact well-defined from $X_0^\eta$ to $Y^0_2$, 
which in particular shows that $RaySolver_0(\cdot)$ gains two derivatives, but losses the fast decay at infinity.  

More precisely, if $f$ is bounded near $z_c$,  then, assuming that $\phi$ is bounded, we obtain that
$\partial_z^2 \phi$ behaves like $(z - z_c)^{-1}$ near $z_c$. Hence $\partial_z \phi$
behaves like $\log ( z -z_c)$ near $z_c$, and $\phi$ is bounded. Hence if $f \in X_0^\eta$ then 
we expect $u$ to be in $Y_2^0$. Note that if $f$ decays exponentially, then $\partial_z^2 u$ also decays exponentially,
but this does not imply that $u$ goes to $0$ exponentially. In fact it may converge exponentially fast to some
non vanishing constant. We therefore loose the exponential decay.

%


%

\begin{lemma}\label{lem-RayS0} 
Assume that $\Im c \not =0$. For any $f\in {X_0^\eta}$,  the function $RaySolver_0(f)$ is a solution to the Rayleigh problem \eqref{Ray0}, defined on $\tilde \Gamma$. In addition, $RaySolver_0(f) \in Y^0_2$, and there holds  
$$
\| RaySolver_0(f)\|_{Y^0_2} \le C (1+|\log \Im c|) \|f\|_{{X_0^\eta}},
$$ 
for some universal constant $C$. 
\end{lemma}
\begin{proof} As long as it is well-defined, the function $RaySolver_0(f)(z)$ solves the equation \eqref{Ray0} at once by a direct calculation, upon noting that 
$$ \Ray_0 (G_{R,0}(x,z) ) = \delta_x(z),$$
for each fixed $x$. 

Next, by scaling, we assume that $ \| f\|_{X_0^\eta} = 1$. By Lemma \ref{lem-defphi012}, it is clear that $\phi_{1,0}(z)$ and $\phi_{2,0}(z)/(1+z)$
  are uniformly bounded. Thus, by direct computations, we have 
\begin{equation}\label{est-Gr0}|G_{R,0}(x,z)| \le  C \max\{ (1+x), |x-z_c|^{-1} \}.\end{equation}
That is, $G_{R,0}(x,z)$ grows linearly in $x$ for large $x$ and has a singularity of order $|x-z_c|^{-1}$ 
when $x$ is near $z_c$, for arbitrary $z \ge 0$.  Since $|f(z)|\le e^{-\eta z}$, the integral \eqref{def-RayS0} is well-defined and satisfies 
$$
|RaySolver_0(f) (z)| \le  C \int_0^\infty e^{-\eta x} \max\{ (1+x), |x-z_c|^{-1} \}  \; dx
$$
$$
\le C (1+|\log \Im c|),
$$
in which we used the fact that $\Im z_c \approx \Im c$. 
  
Finally,  for derivatives, we need to check the order of singularities for $z$ near $z_c$. 
We note that $|\partial_z \phi_{2,0}| \le C (1+|\log(z-z_c)|)$, and hence 
  $$|\partial_zG_{R,0}(x,z)| \le  C \max\{ (1+x), |x-z_c|^{-1} \} (1+|\log(z-z_c)|).$$
Thus, $\partial_z RaySolver_0(f)(z)$ behaves as $1+|\log(z-z_c)|$ near the critical layer. In addition, from the $\Ray_0$ equation, 
we have 
\begin{equation}\label{identity-R0f} 
\partial_z^2 (RaySolver_0(f)) = \frac{U''}{U-c} RaySolver_0(f) + \frac{f}{U-c}.
\end{equation}
This proves that $RaySolver_0(f) \in Y_2^0$ by definition of the function space $Y_2^0$. \end{proof}

\begin{lemma}\label{lem-derRayS0}Assume that $\Im c \not =0$. Let $p$ be in $\{0,1,2\}$. For any $f \in X_p^\eta$, we have 
$$
\begin{aligned}
 \| RaySolver_0(f) \|_{Y_{p+2}^0} \le C \|f\|_{X_p^\eta}(1+|\log (\Im c)| )
 \end{aligned}$$ 
\end{lemma}
\begin{proof} This is Lemma \ref{lem-RayS0} when $p=0$. When $p=1$ or $2$, the lemma follows directly from the identity \eqref{identity-R0f}.
\end{proof}


\subsection{Approximate Green function for Rayleigh: $\alpha \ll1$}


Let $\phi_{1,0}$ and $\phi_{2,0}$ be the two solutions of $\Ray_0(\phi) = 0$ 
that are constructed above in Lemma \ref{lem-defphi012}. We note that  solutions of $\Ray_0(\phi) = f$ tend to a constant
value as $z \to + \infty$ since $\phi_{1,0} \to U_+-c$. 
We now construct  solutions to the Rayleigh equation \eqref{Ray-smalla} with $\alpha \not=0$.
By looking at the spatially asymptotic limit of the Rayleigh equation, 
we observe that there are two normal mode solutions of \eqref{Ray-smalla}
 whose behaviors are as $e^{\pm\alpha z}$ at infinity. 
 In order to study the mode which behaves like $e^{-\alpha z}$ we introduce
\begin{equation}\label{def-phia12}
\phi_{1,\alpha } = \phi_{1,0} e^{-\alpha z} ,\qquad \phi_{2,\alpha} = \phi_{2,0} e^{-\alpha z}.
\end{equation}
A direct calculation shows that the Wronskian determinant 
$$
W[\phi_{1,\alpha},\phi_{2,\alpha}] =  \partial_z \phi_{2,\alpha} \phi_{1,\alpha} - \phi_{2,\alpha} \partial_z \phi_{1,\alpha}  = e^{-2\alpha z}
$$ is  non zero. In addition, we can check that 
\begin{equation}\label{Ray-phia12}
\Ray_\alpha(\phi_{j,\alpha}) = - 2 \alpha (U-c) \partial_z \phi_{j,0} e^{-\alpha z} 
\end{equation}
We are then led to introduce an approximate Green function $G_{R,\alpha}(x,z)$ defined by 
$$
G_{R,\alpha}(x,z) = \left\{ \begin{array}{rrr} (U(x)-c)^{-1} e^{-\alpha (z-x)}  \phi_{1,0}(z) \phi_{2,0}(x), \quad \mbox{if}\quad z>x\\
(U(x)-c)^{-1} e^{-\alpha (z-x)}  \phi_{1,0}(x) \phi_{2,0}(z), \quad \mbox{if}\quad z< x.\end{array}\right.
$$
Again, like $G_{R,0}(x,z)$, the Green function $G_{R,\alpha}(x,z)$ is ``singular'' near $z = z_c$ with two sources 
of singularities: one arising from $1/ (U(x) - c)$ for $x$ near $ z_c$ and the other coming from the $(z - z_c) \log (z- z_c)$, singularity
of $\phi_{2,0}(z)$. By a view of \eqref{Ray-phia12}, it is clear that 
\begin{equation}\label{id-Gxz}
\Ray_\alpha (G_{R,\alpha}(x,z)) = \delta_{x} -2\alpha (U- c) E_{R,\alpha}(x,z),
\end{equation}
for each fixed $x$. Here the error term $E_{R,\alpha}(x,z)$ is defined by  
$$
E_{R,\alpha}(x,z) = 
 \left\{ \begin{array}{rrr}
 (U(x)-c)^{-1} e^{-\alpha (z-x)} \partial_z \phi_{1,0}(z) \phi_{2,0}(x), \quad \mbox{if}\quad z>x\\
 (U(x)-c)^{-1}e^{-\alpha (z-x)}\  \phi_{1,0}(x) \partial_z \phi_{2,0}(z), \quad \mbox{if}\quad z< x.\end{array}\right.   .
$$
We then introduce an approximate inverse of the operator $\Ray_\alpha$, defined by
\begin{equation}\label{def-RaySa}
RaySolver_\alpha(f)(z) 
:= \int_0^{+\infty} G_{R,\alpha}(x,z) f(x) dx
\end{equation}
and the error remainder 
\begin{equation}\label{def-ErrR}
Err_{R,\alpha}(f)(z) := 2\alpha (U(z) - c) \int_0^{+\infty} E_{R,\alpha}(x,z) f(x) dx
\end{equation}

%

\begin{lemma}\label{lem-RaySa} Assume that $\Im c \not =0$, and let $p$ be $0,1,$ or $2$. For any $f\in {X_p^\eta}$,  with $\alpha<\eta$, the function $RaySolver_\alpha(f)$ is well-defined in $Y^\alpha_{p+2}$, satisfying 
$$ \Ray_\alpha(RaySolver_\alpha(f)) = f + Err_{R,\alpha}(f).$$
Furthermore, there hold  
\begin{equation}\label{est-RaySa}
\| RaySolver_\alpha(f)\|_{Y^\alpha_{p+2}} \le C (1+|\log \Im c|) \|f\|_{{X_p^\eta}},
\end{equation}
and 
\begin{equation}\label{est-ErrRa} 
\|Err_{R,\alpha}(f)\|_{Y_{p}^\eta} \le C\alpha  (1+|\log (\Im c)|)  \|f\|_{X_p^\eta} ,
\end{equation}
for some universal constant $C$. 
\end{lemma}
\begin{proof}  The proof follows similarly to that of Lemmas \ref{lem-RayS0} and \ref{lem-derRayS0}. In fact, the proof of the right order of singularities near the critical layer follows identically from that of Lemmas \ref{lem-RayS0} and \ref{lem-derRayS0}.  

Let us check the right behavior at infinity. Consider the case $p=0$ and assume $\|f \|_{X_0^\eta} =1$. Similarly to the estimate \eqref{est-Gr0}, Lemma \ref{lem-defphi012} and the definition of $G_{R,\alpha}$ yield
  $$|G_{R,\alpha}(x,z)| \le  C e^{-\alpha|x-z|} \max\{ (1+x), |x-z_c|^{-1} \}.$$
Hence, by definition, 
$$ |RaySolver_\alpha (f)(z) |\le C e^{-\alpha z} \int_0^\infty e^{\alpha x} e^{-\eta x}\max\{ (1+x), |x-z_c|^{-1} \}\; dx $$ 
which is clearly bounded by $C(1+|\log \Im c|) e^{-\alpha z}$.  This proves the right exponential decay of $RaySolver_\alpha (f)(z)$ at infinity, for all $f \in X_0^\eta$.

Next, by definition, we have 
$$\begin{aligned}
Err_{R,\alpha}(f)(z) &= -2\alpha (U(z) - c)  \partial_z \phi_{2,0}(z)   \int_z^\infty  e^{-\alpha (z-x)} \phi_{1,0}(x)\frac{f(x)}{U(x)-c}\; dx  
\\ & \quad - 2\alpha (U(z) - c)  \partial_z \phi_{1,0}(z) \int_0^ze^{-\alpha (z-x)} \phi_{2,0}(x) {f(x) \over U(x) - c} dx .
\end{aligned}$$
Since $f(z), \partial_z \phi_{1,0}(z)$ decay exponentially at infinity, the exponential decay of $Err_{R,\alpha}(f)(z)$ follows directly from the above integral representation. It remains to check the order of singularity near the critical layer. Clearly, for bounded $z$, we have 
$$ |E_{R,\alpha}(x,z) | \le C (1+ |\log (z-z_c)| ) e^{\alpha x} \max \{ 1, |x-z|^{-1}\} . $$
The lemma then follows at once, using the extra factor of $U-c$ in the front of the integral \eqref{def-ErrR} to bound the $\log (z-z_c)$ factor. The estimates for derivatives follow similarly. \end{proof}

%
%
%

\subsection{The exact solver for Rayleigh: $\alpha \ll1$}


We are ready to construct the exact solver for the Rayleigh problem. Precisely, we obtain the following.

\begin{proposition}\label{prop-exactRayS}
 Let $p$ be in $\{0,1,2\}$ and $\eta>0$. Assume that $\Im c \not =0$ and $\alpha |\log \Im c|$ is sufficiently small. Then, there exists an operator $RaySolver_{\alpha,\infty} (\cdot) $ from $X_p^\eta$ to $Y^\alpha_{p+2}$ (defined by \eqref{def-exactRayS}) so that 
\begin{equation}\label{eqs-RaySolver}
\begin{aligned}
 \Ray_\alpha (RaySolver_{\alpha,\infty} (f)) &= f.
\end{aligned} \end{equation} 
In addition, there holds 
$$\| RaySolver_{\alpha,\infty}(f)\|_{Y^\alpha_{p+2}} \le C \|f\|_{X_p^\eta}(1+|\log (\Im c)|) ,$$
for all $f \in X_p^\eta$.
\end{proposition}

\begin{proof} The proof follows by iteration. Let us denote 
$$ S_0(z) : = RaySolver_\alpha(f)(z),\qquad E_0(z): = Err_{R,\alpha}(f)(z).$$
It then follows that $\Ray_\alpha (S_0) (z) = f(z) + E_0(z)$. Inductively, we define
$$ S_n(z): = - RaySolver_\alpha(E_{n-1})(z), \qquad E_n(z): = - Err_{R,\alpha}(E_{n-1})(z) ,$$
for $n\ge 1$. It is then clear that for all $n\ge 1$, 
\begin{equation}\label{eqs-appSn} \Ray_\alpha  \Big( \sum_{k=0}^n S_k(z)\Big) = f(z) + E_n(z).\end{equation}
This leads us to introduce the exact solver for Rayleigh defined by 
\begin{equation}\label{def-exactRayS} RaySolver_{\alpha,\infty} (f) := RaySolver_\alpha(f)(z) - \sum_{n\ge 0} (-1)^n RaySolver_{\alpha} (E_n)(z).\end{equation}
By a view of \eqref{est-ErrRa}, we have 
$$\| E_n \|_\eta = \| (Err_{R,\alpha})^n (f)\|_\eta \le C^n\alpha^n  (1+|\log (\Im c)|)^n  \|f\|_\eta,$$
which implies that $E_n\to 0$ in $X_\eta$ as $n \to \infty$ as long as $\alpha \log \Im c$ is sufficiently small. In addition, by a view of  \eqref{est-RaySa},
$$\|RaySolver_{\alpha} (E_n)\|_{Y^\alpha_2} \le C C^n\alpha^n  (1+|\log (\Im c)|)^n  \|f\|_\eta .$$
This shows that the series
$$\sum_{n\ge 0} (-1)^n RaySolver_{\alpha} (E_n)(z)$$
converges in $Y_2^\alpha$, assuming that $\alpha \log \Im c$ is small.

Next, by taking the limit of $n\to \infty$ in \eqref{eqs-appSn}, the equation \eqref{eqs-RaySolver} holds by definition at least in the distributional sense. The estimates when $z$ is near $z_c$ follow directly from the similar estimates on $RaySolver_\alpha(\cdot)$; see Lemma \ref{lem-RaySa}. The proof of Proposition \ref{prop-exactRayS} is thus complete. 
\end{proof}

\subsection{Exact Rayleigh solutions: $\alpha  \ll1$}\label{sec-exactRayleigh}
We shall construct two independent exact Rayleigh solutions by iteration, starting from the approximate Rayleigh solutions $\phi_{j,\alpha}$ defined as in \eqref{def-phia12}. 

\begin{lemma}\label{lem-exactphija} For $\alpha$ small enough so that $\alpha |\log \Im c| \ll 1$, 
there exist two independent functions $\phi_{Ray,\pm} \in e^{\pm \alpha z}L^\infty$ such that
$$
\Ray_\alpha ( \phi_{Ray,\pm} ) = 0, \qquad W[\phi_{Ray,+},\phi_{Ray,-}](z) = \alpha.
$$
Furthermore, we have the following expansions in $L^\infty$: 
$$
\begin{aligned}
\phi_{Ray,-} (z)&=  e^{-\alpha z} \Big (U-c + O(\alpha )\Big).
\\
\phi_{Ray,+} (z)&=  e^{\alpha z} \mathcal{O}(1),
\end{aligned}$$
as $z\to \infty$. At $z = 0$, there hold
$$ 
\begin{aligned}
\phi_{Ray,-}(0) &= U_0 - c + \alpha (U_+-U_0) ^2  \phi_{2,0}(0) + \mathcal{O}(\alpha(\alpha + |z_c|))
\\
\phi_{Ray,+}(0) &= \alpha  \phi_{2,0}(0) +  \mathcal{O}(\alpha^2)
\end{aligned}$$
with 
$$
\phi_{2,0}(0) =  {1 \over {U'_c}}  -  {2U''_c\over {U'_c}^2 } z_c \log z_c + \mathcal{O}(z_c).
$$ 
 \end{lemma}
\begin{proof} 
Let us start with the decaying solution $\phi_{Ray,-}$, which is now constructed by induction. Let us introduce 
$$ \psi_{0} =  e^{-\alpha z} (U-c), \qquad e_{0} = - 2\alpha (U-c) U' e^{-\alpha z},$$
and inductively for $k \ge 1$, 
$$ \psi_{k} = - RaySolver_\alpha (e_{k-1}), \qquad e_{k} = - Err_{R,\alpha} (e_{k-1}).$$
We also introduce 
$$ \phi_{N} = \sum_{k=0}^N \psi_{k} .$$
By definition, it follows that 
$$ \Ray_\alpha (\phi_{N}) = e_{N}, \qquad \forall ~N\ge 1.$$
%
We observe that $\|e_{0}\|_{\eta+\alpha} \le C \alpha$ and $\|\psi_{0}\|_\alpha \le C$. Inductively for $k\ge 1$, by the estimate \eqref{est-ErrRa}, we have 
$$\| e_{k}\|_{\eta + \alpha}  \le C\alpha  (1+|\log (\Im c)|)  \|e_{k-1}\|_{\eta+\alpha} \le C \alpha(C\alpha  (1+|\log (\Im c)|))^{k-1} ,
$$
and by Lemma \ref{lem-RaySa}, 
$$\| \psi_{k} \|_\alpha \le C(1+|\log (\Im c)|)  \| e_{k-1}\|_{\eta + \alpha} \le (C\alpha  (1+|\log (\Im c)|))^{k} .$$
Thus, for sufficiently small $\alpha$, the series $\phi_{N}$ converges in $X_\alpha$ and the error term $e_{N}\to 0$ in $X_{\eta+\alpha}$. This proves the existence of the exact decaying Rayleigh solution $\phi_{Ray,-}$ in $X_\alpha$, or in $e^{-\alpha z}L^\infty$.

As for the growing solution, we simply define 
$$ \phi_{Ray,+}  =\alpha \phi_{Ray,-}(z) \int_{1/2} ^ z \frac{1}{\phi^2_{Ray,-} (y) }\; dy.$$
By definition, $\phi_{Ray,+} $ solves the Rayleigh equation identically. Next, since $\phi_{Ray,-}(z)$ tends to $e^{-\alpha z} (U_+ - c + \mathcal{O}(\alpha))$, $\phi_{Ray,+} $ is of order $e^{\alpha z}$ as $z \to \infty$.

Finally, at $z=0$, we have 
$$
\begin{aligned}
\psi_1(0) &= - RaySolver_\alpha(e_0) (0) = - \phi_{2,\alpha}(0) \int_0^{+\infty} e^{2\alpha x}\phi_{1,\alpha}(x) {e_0(x) \over U(x) - c} dx 
\\
&=  2 \alpha \phi_{2,0}(0) \int_0^{+\infty}  U' (U-c)dz = \alpha (U_+-U_0) (U_+ + U_0 - 2c) \phi_{2,0}(0)
\\
&  = \alpha (U_+-U_0)^2  (U_+ + U_0 - 2c) \phi_{2,0}(0) +  2\alpha (U_+-U_0) (U_0 - c) \phi_{2,0}(0).
\end{aligned}
$$
From the definition, we have $\phi_{Ray,-}(0) = U_0 -c + \psi_1(0) + \mathcal{O}(\alpha^2)$. 
This proves the lemma, upon using that $U_0 - c = \mathcal{O}(z_c)$. 
\end{proof}

\chapter{A survey of Orr Sommerfeld equation}\label{chapter-OS-intro}

In this chapter, we study the Orr-Sommerfeld equation

%
\begin{equation}\label{OS}
\left \{ 
\begin{aligned}
(U-c) \Delta_\alpha \phi - U'' \phi &= \epsilon \Delta_\alpha^2 \phi , \qquad \epsilon = \frac{1}{i\alpha R} 
\\
\phi_{\vert_{z=0}} = \phi'_{\vert_{z=0}} &= 0, \qquad \lim_{z\to \infty} \phi(z) =0
\end{aligned}
\right.
\end{equation} with $\Delta_\alpha = \partial_z^2 - \alpha^2$. 
%
%
%
 When $\epsilon \gg 1$ (that is, in the low Reynolds number regime), a simple energy estimate shows that 
 there is no such an unstable solution: 
 $U(z)$ is spectrally stable. All the flows at a sufficiently low Reynolds number are stable. 
 
  We are interested in the singular perturbation  $\epsilon \to 0$ (or equivalently, large Reynolds number limit 
  $R = 1 /  \nu \to \infty$).  Note that as $\epsilon \to 0$, Orr Sommerfeld equations degenerate
  into Rayleigh equations, up to the boundary conditions.
  
 Two cases appear, depending on whether  $U$ is  stable or unstable to the Rayleigh problem. We will
 discuss these two cases, without any proof, giving a flavor of the next chapters.


\section{Unstable case: formal analysis}


If the profile is unstable for the Rayleigh equation, then there exist a wave number $\alpha_\infty$ and 
an eigenvalue $c_\infty$ with
$\I c_\infty > 0$, with a corresponding eigenfunction $\phi _\infty$, solution to the Rayleigh problem. 
This unstable mode for Rayleigh equation will be the starting point of
 a perturbative analysis to construct an eigenmode $\phi _R$ of
the Orr-Sommerfeld equation with an eigenvalue $\I c_R >0$ for any large enough $R$.
In this paragraph we briefly present on overview of this construction, without going into the
details, and without any justification.

The main point is that $\partial_z \phi _R$ vanishes on the boundary whereas $\partial_z\phi _\infty$
does not necessarily vanish. We therefore need to add a boundary layer to
correct $\phi _\infty$. This boundary layer comes from the balance between the terms
$\partial_z^4 \phi  / \alpha R$ (diffusion) and $(U-c) \partial_z^2 \phi $ (convection) 
in the equation (\ref{OS}), and is therefore of size
$$
\sqrt{ U_0 -c_\infty \over \alpha_\infty R} = \cO( R^{-1/2}) = \cO(\nu^{1/2}).
$$
The existence and study of the viscous sublayer is a classical issue in fluid mechanics.

When $\phi _R$ is constructed and corrected by this sublayer, it satisfies the Orr-Sommerfeld boundary conditions 
exactly and solves the Orr-Sommerfeld equation
up to an error with size of order $\cO(R^{-1})$. Perturbative arguments allow to construct an unstable
mode with 
\beq \label{perturb1}
c_R = c_\infty + \cO(R^{-1}) .
\eeq
Next, starting from $\phi _R$, we can  construct an unstable mode for the linearized Navier
Stokes equations, and further obtain instability results in strong Sobolev norms for the nonlinear Navier
Stokes equations. In particular, in $L^\infty$ norm, 
the nonlinear unstable solution can reach to order $\nu^{1/2}$ in its amplitude. 
This has been carried out in detail by E. Grenier in \cite{Grenier00CPAM}.

The appearance of the blayer of size $\nu^{1/2}$ prevents the previous analysis (\cite{Grenier00CPAM})
 to conclude the instability of order one in $L^\infty$ norm and limits this approach to $O(\nu^{1/2})$. 
 Indeed, the energy of a perturbation of the unstable solution is governed by 
$$
J = \exp \int_0^t \| \nabla u_R \|_{L^\infty} 
$$
in which $u_R = \nabla^\perp \phi_R$. Due to the presence of the $\nu^{1/2}$ layer,
$$
\|\nabla u_R \|_{L^\infty} \sim 1 + \nu^{-1/2} \nu^N \exp( \nu^{-1/2} t) 
$$
for arbitrary size of order $\nu^N$ of initial perturbations.  In particular $J$ is unbounded before the size of the perturbation
reaches a size of $O(1)$. Energy methods fail to prove the $L^\infty$ instability up to $O(1)$.


\section{Stable case: formal analysis}


Some profiles are stable for the Rayleigh equation; in particular, shear profiles
without inflection points as a direction consequence of the Rayleigh's inflexion-point criterium
(see Chapter \ref{chapter-Rayleigh}). For stable profiles, all the spectrum of the Rayleigh equation
is imbedded on the axis: $\I c = 0$ (or precisely, on the range of $U$).
At a first glance, we may believe that (\ref{perturb1}) still holds true, which would mean
that any eigenvalue of the Orr-Sommerfeld problem would have an imaginary part $\I c_R$ of
order $\cO(R^{-1}) = \cO(\nu)$. However, this is not the case,and in fact, in the stable case, 
$\I c_R$ appears to be much larger than that of \eqref{perturb1}.

\medskip

 Let us detail  now this point. Orr Sommerfeld is a fourth order ordinary differential equation. It therefore
 admits for independent solutions, two of which going to $0$ at infinity, and two of which going to infinity
 in module as $z$ goes to $+ \infty$. Looking at the equation at infinity, we see that it admits two
 solutions which a "fast" behavior, and two solutions with a "slow behavior".
 
 Let $\phi_{s,\pm}$ be the "slow behavior" solutions, which go to $0$ ($\phi_{s,-}$) or infinity ($\phi_{s,+}$)
 at infinity. Let $\phi_{f,\pm}$ be the corresponding "fast" solutions.
 
 An eigenmode of Orr Sommerfeld is a combination of these four solutions which goes to $0$ at infinity and
 which satisfies the right boundary conditions at $z = 0$. It is therefore a combination of the decaying modes
 $\phi_{s,-}$ and $\phi_{s,f}$ of the form
 $$
 \phi = A \phi_{s,-} + B \phi_{f,-}.
 $$
 Note that both $\phi(0)$ and $\partial_z \phi(0)$ must vanish, hence the determinant
 $$
 \left| \begin{array}{cc} 
 \phi_{s,-}(0)  & \phi_{f,-}(0) \cr
 \partial_z \phi_{s,-}(0) & \partial_z \phi_{f,-}(0) \cr
 \end{array} \right|
 $$
 must vanish.
 
 It turns out that $\phi_{s,-}$ is close to the Rayleigh solution $\phi_{\alpha,-}$. The fast solution $\phi_{f,-}$
 is located in a boundary layer, where the highest order terms balance. 

Near the boundary,  $\partial_z^4\phi  / i \alpha R$ must balance with $U'(z_c) (z - z_c) \partial_z^2 \phi $.
This leads  to the introduction of another boundary layer of size $(\alpha R)^{1/3}$, near $z_c$,
 satisfying the equation
\beq \label{Airy}
\partial_Y^2 \Phi_R = Y \Phi_R, \qquad \Phi_R := \partial_z^2 \phi _R
\eeq
where
$$
Y := {z - z_c \over (\alpha R U'(z_c))^{1/3}}. 
$$
This layer is called {\it critical layer}.
Note that (\ref{Airy}) is simply the classical Airy equation.Therefore $\phi_{f,-}$ may be expressed
in terms of the Airy function. As a consequence, $(\alpha R)^{1/3}$ is an important parameter,
and similarly to the unstable case, we could prove
\beq \label{exp2}
c_R = c_\infty + \cO( (\alpha R)^{-1/3}) + \cO(R^{-1}) .
\eeq
Hence, the situation is very delicate. It has been intensively studied in the period
$1940 - 1960$ by many physicists, including Heisenberg, C.C. Lin, Tollmien, Schlichting, among others.
Their main objective was to compute the critical Reynolds number of shear layer flows,
namely the Reynolds number $R_c$ such that for $R > R_c$ there exists an unstable
growing mode for the Orr-Sommerfeld equation. Their analysis requires a careful study
of the critical layer.

\medskip

From their analysis, it turns out that there exists some $R_c$ (depening on the profile)
such that for $R > R_c$ there are solutions $\alpha(R)$, $c(R)$ and $\phi _R$ to the Orr-Sommerfeld equations with
$\I c(R) > 0$. Their formal analysis has been compared with modern numerical experiments
and also with experiments, with very good agreement. Note that physicists
are interested in the computation of the critical Reynolds number, since
any shear flow is unstable if the Reynolds number is larger than this critical
Reynolds number. 
{\em In this program, we are interested in the high Reynolds limit,
which is a different question.}
This limit is not a physical one, since any flow has a finite Reynolds
number, and not in any physical case can we let the Reynolds go to very very
high values. Physical Reynolds numbers may be large (of several millions
or billions), much larger than the critical Reynolds number, but despite their
large values, they are too small to enter the mathematical limit $R \to + \infty$ that
we are considering. Fluids would enter the mathematical asymptotic regime
if $R^{1/7}$ or $R^{1/11}$ (see below) are large numbers, which 
leads Reynolds numbers to be of order of billions of billions, much larger than
any physical Reynolds number! 

\medskip

It is thus important to keep in mind that the mathematical limit is not physically pertinent.
Physically, the most important phenomena are: the existence of a critical Reynolds
number (above which the shear flow is unstable), the transition from laminar
to turbulent boundary layers, the separation of the boundary layer from the boundary.
All of these occur near the critical Reynolds number, which is large, but not in the
asymptotic regime which we will now consider.

\medskip

The problem is now to study rigorously the asymptotic behavior of $\alpha$ and $c$ as
$R \to \infty$.
Let us present now some classical physical results. These results can be found, for example, in the book of Drazin and Reid \cite{Reid} or of H. Schlichting \cite{Sch}.

\medskip

For $R$ large enough there exists an interval $[\alpha_1(R),\alpha_2(R)]$
such that for every $\alpha$ in this interval there exists an unstable mode
with $\I c(R) > 0$. These values of $\alpha_1(R)$ and $\alpha_2(R)$ are so called lower and upper marginal stability curves. 
The asymptotic behavior of $\alpha_1$ and $\alpha_2$ depends
on the shear profile.

\medskip

\begin{itemize}

\item For plane Poiseuille flow: $U(z) = 1- z^2 $ on the bounded channel $z\in [-1,1]$, we have
$$
\alpha_1(R) \sim C_1 R^{-1 / 7}, \qquad
\alpha_2(R) \sim C_2 R^{-1/11} .
$$
\item For boundary layer profiles:
$$
\alpha_1(R) \sim C_1 R^{-1 / 4}, \qquad
\alpha_2(R) \sim C_2 R^{-1/6} .
$$
\item For the Blasius (a particular boundary layer) profile:
$$
\alpha_1(R) \sim C_1 R^{-1 / 4}, \qquad
\alpha_2(R) \sim C_2 R^{-1/10} .
$$

\end{itemize}

\begin{figure}[t]
\centering
\includegraphics[scale=.4]{marginalcurves}
\put(-20,1){$R^{1/5}$}
\put(-210,117){$\alpha^2$}
\put(-205,30){$\alpha_1\approx R^{-1/4}$}
\put(-70,70){$\alpha_2\approx R^{-1/6}$}
\caption{\em Illustrated are the marginal stability curves.}
\label{fig-shear}
\end{figure}
More precisely, in the $\alpha, R$ plane, the area where unstable modes exist is shown on Figure \ref{fig-shear}. 
For small $R$, all the $\alpha$ are stable. Above some critical Reynolds number, there 
is a range $[\alpha_1(R),\alpha_2(R)]$ where instabilities occur. 
Associated with the range of $\alpha(R)$, we have to determine the behavior of the eigenvalues
 $c(R,\alpha)$, or more precisely the imaginary part of $c(R,\alpha)$. 
 The complete mathematical justification of the construction of unstable modes will be detailled later.
  Here we just want to present a quick and as simple as possible  construction of the unstable Orr 
 Sommerfeld modes. We will skip all difficulties and only focus on the backbone of the instability.

 
 \subsection{The lower branch $\beta = 1/4$}
 
 
 We recall that there is no mathematically rigorous arguments of this section. We just show the main
 ingredients of the instability, keeping under silence any other term. We assume that our profile
 $U(z)$ is stable for Rayleigh equation. We focus on the lower
 marginal stability curve. In this case, we have 
 $$
 \alpha \sim A R^{-1/4} 
 $$
 and so the size of critical layers is 
 $$
 \delta \sim (\alpha R)^{-1/3} \sim A^{-1/3} R^{-1/4} .
 $$
 Let us assume that $R$ is very large, and so $\alpha$ very small. For small $\alpha$, the Rayleigh
 equation is very close to
 \beq \label{RayT}
  (U - c) \partial_z^2 \phi = U'' \phi
  \eeq
  which has an obvious solution
  $$
  \phi_1 = U - c.
  $$
There exists another independent particular solution to (\ref{RayT}),  but it turns out that this second
 solution grows linearly as $z$ increases, and may therefore be discarded.  Note that
 $\phi_1$ is a smooth function and an approximate solution of the Orr Sommerfeld problem.
 
 We next focus on Airy equation (\ref{Airy}). It has two particular fast decaying and growing solutions $\Phi_R = Ai (Y)$ and $Bi(Y)$, respectively.
 Only $Ai(Y)$ goes to $0$ as $Y$ goes to infinity, hence $Bi(Y)$ may be discarded.
 Let us denote by $Ai(1,Y)$ the primitive of $Ai(Y)$ and $Ai(2,Y)$ the primitive of $Ai(1,Y)$.
 Then
 $$
 \phi_3 = Ai(2,Y) = Ai(2, \delta^{-1} (z - z_c) ) 
 $$
 is a particular solution of the Airy equations $\epsilon \partial_z^4 \phi - U'(z_c)(z-z_c)\partial_z^2 \phi =0$, and hence is an approximate solution of the Orr Sommerfeld equation, recalling the critical layer thickness $\delta = \epsilon^{1/3}/(U'(z_c))^{1/3}$.
 Now we look for an eigenmode of the Orr Sommerfeld problem which is a combination of $\phi_1$ and
 $\phi_3$ of the form:
 $$
 \phi = A \phi_1 + B \phi_3 .
 $$
  It has the good, decaying behavior as $z$ goes to infinity. It remains to know whether we can find $A$ and $B$
  such that the boundary conditions hold: $\phi(0) = \partial_z \phi(0) = 0$. This happens if the dispersion relation
  $$
  \phi_1(0) \partial_z \phi_3(- \delta^{-1} z_c) = \partial_z \phi_1(0) \phi_3(- \delta^{-1} z_c)
  $$
holds,  or equivalently 
  \beq \label{disper}
  {\phi_1(0) \over \partial_z \phi_1(0) } 
  =
  \delta {Ai(2,- \delta^{-1} z_c) \over Ai(1,- \delta^{-1} z_c)} .
  \eeq
  The left hand side of (\ref{disper}) is 
  $$
    {\phi_1(0) \over \partial_z \phi_1(0) }  = {U_0 - c \over U_0'}
    $$
   and its imaginary part is simply $- \Im c / U_0'$, with $U_0' >0$. Here, $U_0 = U(0)$ and $U'_0 = U'(0)$.
   Now the ratio $Ai(2,Y) / Ai(1,Y)$ is precisely the classical Tietjens function 
$$
T(Y)= {Ai(2,Y) \over Ai(1,Y) }.
$$
The main point is that $\Im T(Y)$ changes sign as $Y$ goes from zero to infinity. It is positive for small $Y$ and
negative for large $Y$. As a consequence $\Im c$ changes sign as $z_c / \delta$ increases.
{\em This change of sign leads to the existence of unstable modes.}

It remains to link the size of $z_c / \delta$ with $R$ and to prove that $z_c / \delta$ goes to infinity as
$R$ increases. For this we first have to refine $\phi_1$. Namely $\phi_1$ does not go to $0$ as
$z$ goes to infinity. For small $\alpha $, we may construct a solution $\phi_{1,\alpha}$ of the Rayleigh
equation which is a perturbation of $\phi_1$ and decreases like $\exp(-\alpha z)$.
A classical perturbative analysis leads to
\begin{equation}\label{OSRay-expansion}
\phi_{1,\alpha}(0) = U - c + \alpha { (U(\infty) - U_0)^2 \over U_0'} + \cO(\alpha^2\log \alpha).
\end{equation}
Moreover as $Y$ goes to infinity,
$$
T(Y) \sim C Y^{-1/2} .
$$
Hence the dispersion relation now takes the form
\beq \label{disper2}
{U_0 - c \over U_0'} + \alpha { (U(\infty) - U_0)^2 \over {U_0'}^2} + \cO(\alpha^2\log\alpha)
\quad \sim\quad \delta (1 + | z_c  / \delta | )^{-1/2} .
\eeq
Assuming that $\alpha$ is much larger than the right hand side and that the right hand side is not disturbed by the 
$\cO(\alpha^2\log\alpha)$ term, which is the case if $A$ is
large enough but remains finite, this gives that
$| U_0 - c |$  is of order $\alpha$, and hence $z_c$, defined by $U(z_c) = c$ is also of order $\alpha$. Hence
$$
z_c / \delta \sim A^{4/3} .
$$
Therefore, provided as $A$ increases, $\Im c$ changes from negative to positive values: there exists 
an threshold $A_{1c}$ such that $\Im c > 0$, whenever $A$ is finite and $A > A_{1c}$.
This ends our overview of the lower marginal curve.

 
 \subsection{The upper branch $\beta = 1/6$}
 

 The instability remains as long as $|z_c/\delta|$ is sufficiently large and the $\cO(\alpha^2\log \alpha)$ term appearing on the left hand side of the dispersion relation \eqref{disper2} remains neglected. We note that in all cases, $z_c \approx \alpha$. The computation of the upper stability branch thus follows from the approximation that 
 $$ \delta (1+ |\alpha/\delta|)^{-1/2} \approx \alpha^2 $$
 which yields $\delta \approx \alpha^{5/3}$ or equivalently, $\alpha \approx R^{-1/6}$. 
 
%
%

To show that $\alpha \approx R^{-1/6}$ is indeed the upper branch of marginal stability is much more delicate. Roughly speaking, beyond this range of $\alpha$, the effect of the critical layers is neglected and the slow dynamics of the Rayleigh solutions becomes dominant. One expects to recover the stability of Orr-Sommerfeld equations from that of the Rayleigh problem. 


\subsection{Blasius boundary layer}

In the case of the classical Blasius boundary layer, we have additional information: $U''(0) = U'''(0) = 0$. Hence, $U''(z_c) = \cO(z_c^2)$, and so by a view of \eqref{OSRay-expansion}, the expansion for the Rayleigh solution now becomes
$$
\phi_{1,\alpha}(0) = U - c + \alpha { (U(\infty) - U_0)^2 \over U_0'} + \cO(\alpha^4\log \alpha).
$$
That is, the $\cO(\alpha^2 \log \alpha)$ term in the dispersion relation \eqref{disper2} is in fact much smaller, of order $\cO(\alpha^4 \log \alpha)$, which does not disturb the right hand side of \eqref{disper2}, as long as 
 $$ \delta (1+ |\alpha/\delta|)^{-1/2} \approx \alpha^4. $$
This yields the larger upper stability branch $\alpha \sim R^{-1/10}$ in the case of the Blasius boundary layer.


\chapter{Orr Sommerfeld equations for unstable profiles}\label{chapter-OS-unstable}

 \def\Airy{\mathrm{Airy}}
 \def\OS{\mathrm{OS}}
 \def\Ray{\mathrm{Ray}}
 \def\Iter{\mathrm{Iter}}


\section{Introduction and main result}


In this chapter, we shall derive pointwise bounds on the Green function of the Orr-Sommerfeld problem introduced
 in the previous chapter. For convenience, we introduce
\begin{equation}\label{def-OS}
\OS_{\alpha,\lambda}(\phi): = (\lambda + i\alpha U) \Delta_\alpha \phi - i\alpha U'' \phi  -  \nu \Delta_\alpha^2 \phi 
\end{equation}
where 
$$
 \Delta_\alpha: = \partial_z^2 - \alpha^2 .
 $$
For each fixed $\alpha \in \NN^*$ and $\lambda\in \CC$, we denote by $G_{\alpha,\lambda}(x,z)$ the corresponding Green kernel 
of the Orr-Sommerfeld problem. 
By definition, for each $x\in \RR_+$, $G_{\alpha,\lambda}(x,z)$ solves 
$$ 
\OS_{\alpha,\lambda}(G_{\alpha,\lambda} (x,\cdot)) = \delta_x (\cdot)
$$
on $z\ge 0$, together with the boundary conditions:
\begin{equation}\label{G-noslip}
G_{\alpha,\lambda}(x,0) = \partial_z G_{\alpha,\lambda} (x,0) =0, \qquad \lim_{z\to \infty} G_{\alpha,\lambda}(x,z) =0.
\end{equation}
To construct the Green function, let us first note that as $z \to +\infty$ the homogenous Orr-Sommerfeld equation "converges" 
to  the following constant-coefficient equation
\begin{equation}\label{OS-plus} 
\OS_+(\phi) = (\lambda + i \alpha U_+ ) \Delta_\alpha \phi  -  \nu \Delta_\alpha^2 \phi =0,
\end{equation}
where $U_+ = \lim_{z\to \infty}U(z)$. This constant-coefficient equation has four independent solutions $e^{\pm\mu_s z}$ and $e^{\pm \mu_f^+ z}$, with 
\begin{equation}\label{def-sfrate}
\mu_s = | \alpha |, \quad
 \mu_f (z)=  \nu^{-1/2} \sqrt{ \lambda + \nu \alpha^2+i \alpha U(z) },\quad
  \mu_f^+  = \lim_{z\to \infty}\mu_f(z),
 \end{equation}
in which we take the positive real part of the square root. 

As will be proved later, there exist four solutions to the homogenous Orr-Sommerfeld equation $\OS_{\alpha,\lambda} (\phi) =0$ which have either
a ``slow behavior'' $e^{\pm\mu_s z}$ 
or a ``fast behavior'' $e^{\pm \mu_f^+ z}$ as $z \to + \infty$. The two slow modes appear to be perturbations of solutions
of the Rayleigh equation 
$$
Ray_{\alpha,\lambda}(\phi) = (\lambda + i \alpha U)\Delta_\alpha\phi - i\alpha U''\phi =0,
$$
whereas the two fast modes are linked to the Airy type equation
$$(\lambda + i\alpha U- \nu \Delta_\alpha
)\Delta_\alpha \phi =0,$$
or recalling $\mu_f$ introduced in \eqref{def-sfrate}, 
\beq \label{Air}
\nu (\partial_z^2 - \mu_f^2)\Delta_\alpha \phi =0.
\eeq
Let us first consider  the Rayleigh equation $Ray_{\alpha,\lambda}(\phi) = 0$. 
As $z$ goes to $+\infty$, this equation "converges"
to $\Delta_\alpha \phi = 0$, hence $Ray_{\alpha,\lambda}(\phi) = 0$ has two solutions $\phi_{\alpha,\pm}$, with respective
behaviors $e^{\pm |\alpha| z}$ at infinity. We define the Evans function $E(\alpha,\lambda)$ by
\begin{equation}\label{def-EvansE}
E(\alpha,\lambda) = \phi_{\alpha,-}(0) .
\end{equation}
Note that the Rayleigh equation degenerates at points where $\lambda + i\alpha U (z)$ vanishes. 
In this paper, we restrict ourselves to the case when $\lambda$ is away from the range of $-i\alpha U$. 
Precisely, throughout the paper, letting $\epsilon_0$ be an arbitrarily small, but fixed, positive constant, 
we shall consider the range of $(\alpha,\lambda)$ in $\RR\setminus\{0\}\times \CC$ so that 
\begin{equation}\label{range-c} 
d(\alpha,\lambda) =\inf_{z\in \RR_+}| \lambda + i \alpha U(z)| \ge \epsilon_0.
\end{equation}
Note that $d(\alpha,\lambda) = \Re \lambda$ if $\Im \lambda \in - \alpha \mathrm{Range}(U)$. In any case,
\beq \label{range-c1}
d(\alpha,\lambda) \ge | \Re \lambda | .
\eeq
We are mainly interested in getting bounds on the Green function when $\lambda$ has a small positive real part. 
In this case, the condition \eqref{range-c} implies
\begin{equation}\label{comp-musf} \Re \mu_f(z) = \nu^{-1/2} \Re \sqrt{\lambda + \nu \alpha^2 + i\alpha U(z)} \ge \nu^{-1/2} \sqrt{\epsilon_0/2} \gg \mu_s\end{equation}
for sufficiently small $\nu$ and for $| \alpha | \le \nu^{-\zeta}$
for some $\zeta < 1/2$. We may also use these Green function bounds in order to obtain bounds on the solutions of linearized
Navier Stokes equations, through contour integrations. 


Our main result in this chapter is the following. 

\begin{theorem}\label{theo-GreenOS-unstable}
For each $\alpha,\lambda$, let by $G_{\alpha,\lambda}(x,z)$ be
the Green kernel of the Orr-Sommerfeld equation, with source term in $x$, and let 
\begin{equation}\label{def-mMf}
\mu_s = | \alpha| , \qquad   \mu_f(z) =  \nu^{-1/2} \sqrt{\lambda + \nu \alpha^2 + i \alpha U(z)}, 
\end{equation}
where we take the square root with positive real part. Let $0 < \theta_0 < 1$ and $\zeta < 1/2$. Let $\sigma_0 > 0$ be arbitrarily small.
Then, there exists $C_0 > 0$ so that 
\begin{equation}\label{est-GrOS}
 |G_{\alpha,\lambda}(x,z)| 
  \le \frac{C_0}{  \mu_s  d(\alpha,\lambda) }e^{-\theta_0 \mu_s |x-z|} 
  +    \frac{C_0}{ |\mu_f(x) |  d(\alpha,\lambda)}  e^{- \theta_0 | \int_x^z \Re \mu_f \; dy|} 
\eeq
uniformly for all $x,z\ge 0$ and $0 < \nu \le 1$, and uniformly in $(\alpha,\lambda)\in \RR\setminus\{0\}\times \CC$ so that $| \alpha | \le\nu^{-\zeta}$, \eqref{range-c} holds, and
$$
| E(\alpha,\lambda)| > \sigma_0 .
$$
In particular, for $\Re \lambda > 0$, we have
\begin{equation}\label{est-GrOSsimply}
 |G_{\alpha,\lambda}(x,z)| 
  \le \frac{C_0}{  \mu_s  | \Re \lambda |}e^{-\theta_0 \mu_s |x-z|} 
  +    \frac{C_0}{ |\mu_f(x) |  | \Re \lambda |}  e^{- \theta_0 | \int_x^z \Re \mu_f \; dy|} .
\eeq
In addition, there hold the following derivative bounds
\begin{equation}\label{est-GrOS-d}
 | \partial_x^k \partial_z^\ell G_{\alpha,\lambda}(x,z)|  
 \le \frac{C_0\mu_s^{k+\ell}}{   \mu_s  d(\alpha,\lambda)}  e^{-\theta_0 \mu_s |x-z|} 
 +    \frac{C_0 | \mu_f(z) |^{k+\ell}}{| \mu_f(x)| d(\alpha,\lambda) }  e^{- \theta_0 | \int_x^z \Re \mu_f \; dy|}  
\eeq
for all $x,z\ge 0$ and $k, \ell \ge 0$, in which $M_f = \sup_z \Re  \mu_f(z) $. 
Moreover, 
\begin{equation}\label{est-GrOS-delta}
 | \Delta_\alpha G_{\alpha,\lambda}(x,z)|  
 \le \frac{C_0}{   d(\alpha,\lambda)^2 } e^{-\theta_0 \mu_s |x-z|} 
 +    \frac{C_0 | \mu_f(z)|^2}{| \mu_f(x)| d(\alpha,\lambda) }   e^{- \theta_0 | \int_x^z \Re \mu_f \; dy|}  
\eeq
where we "gain" a factor $\mu_s$ in the first term on the right hand side.
\end{theorem}

We believe that the $\theta_0$ factor is purely technical, and that this Theorem holds true for $\theta_0 = 1$. 
In addition, we note that on the slow modes we have mainly to invert $\partial^2 - \alpha^2$ which
leads to a gain of $\mu_s  = | \alpha |$, while the inversion of the fast modes leads to a gain of  $\mu_f$.
Moreover, $\Delta_\alpha G_{\alpha,\lambda}$ enjoys better bounds since $\Delta_\alpha e^{\pm |\alpha| z}= 0$ 
and $\Delta_\alpha \phi_{s} \approx i\alpha \phi_{s} / (\lambda + i\alpha U)$ for slow modes $\phi_s$, 
which gains a prefactor $\alpha/d(\alpha,\lambda)$ in the claimed estimate. 
We also note that in view of \eqref{comp-musf}, the fast modes are exponentially localized in the sense 
\begin{equation}\label{fast-local}
e^{- \theta_0 | \int_x^z \Re \mu_f \; dy|}  \le e^{-\theta_1 |x-z|/\sqrt \nu} .
\eeq
To prove this Theorem we first construct approximate solutions to the Orr Sommerfeld equation, and then construct
an approximate Green function. An iteration argument yields the exact Green function together with the stated bounds. 
Our construction of the Green function for the Orr-Sommerfeld problem  was inspired by the pointwise Green function 
approach introduced by Zumbrun-Howard \cite{ZumbrunHoward} and Zumbrun \cite{Z1,Z2}. 

We are also interested in the construction of a pseudo inverse of Orr Sommerfeld operator near a simple eigenvalue, 
a construction which is detailed in Section \ref{sectionpseudo}.


\section{Approximate solutions of Orr-Sommerfeld}


In this section, we  construct  four independent approximate solutions to the Orr Sommerfeld equations $\OS_{\alpha,\lambda}(\phi) =0$, 
two with a "fast" behavior and two with a "slow" one.

The fast modes are constructed using geometrical optics methods, namely following the BKW method.

For the slow modes we will distinguish between three regimes:

\begin{itemize}

\item bounded $| \alpha |$. In this case the  slow modes are perturbations of the eigenmodes of Rayleigh equations.

\item $ 1 \ll | \alpha | \le \nu^{-1/4}$ (or any small negative power of $\nu$). 
We use the fact that Rayleigh equation is a perturbation of $\Delta_\alpha$.
The slow modes are perturbations of the eigenmodes of $\partial^2_z - \alpha^2$, namely
$e^{\pm |\alpha| z}$.

\item $ \nu^{-1/4} \le | \alpha | \le \nu^{-\zeta}$, for $\zeta <1/2$.  
In this case $e^{\pm |\alpha| z}$ is a sufficient approximation.

\end{itemize}
Solutions will be constructed in the function spaces $L^\infty_\eta$, for $\eta>0$, 
that consist of smooth functions $f$ so that the norm 
$$
\|f\|_\eta: = \sup_{z\ge 0} e^{\eta |z|} |f(z)| 
$$ 
is finite.


\subsection{Fast modes}\label{sec-fast}


The two "fast" solutions come from the Airy equation \eqref{Air}. 
This equation degenerates when $\lambda + \alpha^2 \nu + i\alpha U$ gets small.
Points $z_c$ such that 
$$
\alpha U(z_c) = - \Im \lambda
$$ 
are called "critical layers". The behavior of Airy equation changes
as we approach these points, and in this paper we only study this equation away from these critical layers.
Let us quantify this notion. The Airy's equation has a typical length scale
$$
\delta(z)  = \frac{1}{\mu_f(z)}= \sqrt{\nu \over \lambda + \nu \alpha^2 + i \alpha U(z)} .
$$
The BKW method fails when $\delta(z)$ 
varies within a length $\delta(z)$, namely when $\delta'(z) \delta(z) \sim \delta(z)$.
We therefore restrict our constructions to the case when $\delta' \ll 1$, namely
\beq \label{singu}
\delta'(z) = {-i\sqrt \nu \alpha U'(z) \over 2 (\lambda + \nu \alpha^2 + i \alpha U(z))^{3/2} } \ll 1.
\eeq
 or equivalently 
to $\alpha \ll \nu^{-1/2}$.

If $\alpha \sim \nu^{-1/2}$  then the nature of the construction changes (see \eqref{singu} for more details).
In this paper we restrict to the case $| \alpha | \ll \nu^{-1/2}$ or more precisely on $| \alpha | \le \nu^{-\zeta}$
for some $\zeta < 1/2$. We leave the case $\alpha \sim \nu^{-1/2}$ open, since it will not be necessary
in the construction of linear and nonlinear instabilities.

\begin{prop}\label{prop-fastOrrapp} 
Let $N > 0$ be arbitrarily large. Then for sufficiently small $\nu$ and for $|\alpha|\le \nu^{-\zeta}$ with $\zeta <1/2$, 
there exist two approximate  solutions $\phi_{f,\pm}^{app}(z)$ which solve Orr-Sommerfeld equations up to a small error term
$$
\OS_{\alpha,\lambda}(\phi_{f,\pm}^{app}) = O(\nu^N | \phi_{f,\pm}^{app}|),
$$
with $\phi_{f,\pm}^{app}(0) = 1$ and
\begin{equation}\label{fast-mode1} 
\phi_{f,\pm}^{app} (z) =  e^{\pm \int_0^z \mu_{f}(y) \; dy  } \Big( 1 + \phi_{\pm}(z)\Big) ,
\end{equation}
where $\phi_{\pm}$ and their derivatives are uniformly bounded in $\alpha$, $\nu$ and $z$, 
and converge exponentially fast to $0$ at $z=+ \infty$.
\end{prop}

\begin{proof}
Following a semi classical approach, we look for $\phi_{f,\pm}^{app}$ under the form
$$
\phi_{f,\pm}^{app} = \exp \Bigl({\theta_{\pm}^{app} \over \sqrt{\nu}} \Bigr) .
$$
Let $\theta = \theta_{\pm}^{app}$ to simplify the notations. We compute 
$$
\partial_z^2 \phi_{f,\pm}^{app} = \Bigl( {\theta'^2 \over \nu} + {\theta'' \over \sqrt{\nu}} \Bigr) \phi_{f,\pm}^{app} 
$$
and
$$
\nu \partial_z^4 \phi_{f,\pm}^{app} 
= \Bigl( {\theta'^4 \over \nu} + 6 {\theta'^2 \theta'' \over \sqrt\nu} 
+ 4 \theta' \theta''' + 3 \theta''^2 + \sqrt\nu \theta'''' \Bigr) \phi_{f,\pm}^{app} .
$$
We now expand $\theta$ in powers of $\sqrt\nu$; namely, 
$$
\theta = \sum_{i = 0}^N \theta_j \nu^{j/2},
$$
where the functions $\theta_j$ will themselves depend on $\alpha$ and $\lambda$. Putting the Ansatz into the Orr-Sommerfeld equations, at leading order, we obtain
$$
(\lambda + i\alpha U ) (\theta_0'^2 - \nu \alpha^2) - \Bigl( \theta_0'^4 - 2  \nu \alpha^2 \theta_0'^2 + \nu^2 \alpha^4  \Bigr) = 0.
$$
Factorizing by $\theta_0'^2 - \nu \alpha^2$ we get
$$
\theta_0'^2 =  \lambda + \nu \alpha^2 + i \alpha U = \nu \mu_f^2(z),
$$
which gives
$$
\theta_0' = \pm \sqrt{\nu}  \mu_f(z).
$$
Note that $\theta_0'$ converges exponentially fast to $\pm \sqrt{\nu} \mu_f^+$ and $\theta_0''$ converges exponentially fast
to $0$. To obtain $\theta_1$ we equate the powers in $\sqrt{\nu}^{-1}$ and get
$$
- 4  \theta_0'^3 \theta_1' + 4  \nu \alpha^2 \theta_0' \theta_1' + 2 ( \lambda + i\alpha U ) \theta_0' \theta_1' = S,
$$ 
where the source term $S = 6 \theta_0'^2 \theta_0''$ only depends on $\theta_0'$ and its derivatives.
This leads to
$$
\theta_1' =   {S  \over (- 4 \theta_0'^2  +4  \nu \alpha^2  + 2 (\lambda + i\alpha U ) ) \theta_0'}
= -{S  \over 2 (\lambda + i\alpha U ) \theta_0'} .
$$
As $\theta_0'$ is bounded away from $0$, $\theta_1'$ is correctly defined. Moreover, 
$\theta_1$ converges exponentially at infinity, as well as all its derivatives, and as 
$\theta_0'' = O(\alpha)$, $\theta_1 = O(\alpha)$. This leads to
\beq \label{bornealpha}
\theta^{app}_\pm = \theta_0 + O(\alpha \nu^{-1/2}).
\eeq
We then obtain equations and similar estimates on the remaining $\theta_j$ by equaling successive powers of $\nu$.
 The Proposition follows.
\end{proof}


\subsection{Slow modes}


\begin{prop} \label{slow1}
There exist
two solutions $\phi_{s,\pm}^{app}$ which approximately solve the Orr Sommerfeld equations: precisely, for any $N$, 
$$
| \OS_{\alpha,\lambda}(\phi_{s,\pm}^{app}) | \le C_N \nu^N e^{\pm | \alpha | z - \eta z} 
$$
and behave like $e^{\pm | \alpha | z}$ as $z$ goes to $+ \infty$: for any $n$, 
$$
| \partial_z^n \phi_{s,\pm}^{app} (z) | \le C_n e^{\pm | \alpha | z} . 
$$
\end{prop}
For the proof of Proposition \ref{slow1}, we shall distinguish three cases: bounded $\alpha$, moderate $\alpha$, 
and large $\alpha$, that will be detailed in the next
sections. We restrict ourselves to $\alpha > 0$, the opposite case being similar.


\subsubsection{Approximate slow modes for bounded $\alpha$ and $\lambda$}


As $z$ goes to $+ \infty$, the Rayleigh equation "converges" to $\Delta_\alpha \phi$. 
Therefore the Rayleigh equation admits two particular equations, called $\phi_{\alpha,\pm}$ which behave like
$e^{\pm | \alpha | z}$ as $z \to + \infty$. Moreover $|\partial^n_z \phi_{\alpha,\pm}(z) | \le C_n e^{\pm | \alpha | z}$ for every
positive $n$. Note that
$$
\OS_{\alpha,\lambda}(\phi_{\alpha,\pm}) = - \nu \Delta_\alpha^2 \phi_{\alpha,\pm} .
$$
Using the Rayleigh equation, we compute 
$$
\Delta_\alpha \phi_{\alpha,\pm} = {i\alpha U'' \phi_{\alpha,\pm} \over \lambda + i\alpha U},
$$
which gives
$$
\OS_{\alpha,\lambda}(\phi_{\alpha,\pm}) =  - \nu \Delta_\alpha \Bigl( {i\alpha U'' \phi_{\alpha,\pm} \over \lambda + i\alpha U} \Bigr) 
$$
$$
 = - \nu  \Bigl( {i\alpha U''  \over \lambda + i\alpha U} \Bigr)^2 \phi_{\alpha,\pm}
 - 2  \nu  \partial_z \phi_{\alpha,\pm} \partial_z \Bigl( {i\alpha U'' \over \lambda + i\alpha U} \Bigr) 
- \nu \phi_{\alpha,\pm} \partial_z^2  \Bigl( {i\alpha U'' \over \lambda + i\alpha U} \Bigr) .
$$
Note that $\lambda + i\alpha U$ is bounded away from $0$, therefore 
$$
| \OS_{\alpha,\lambda}(\phi_{\alpha,\pm}) | \le C \nu e^{\pm | \alpha | z - \eta z} ,
$$
and similarly for all its derivatives.

We now look for approximate solutions of Orr Sommerfeld solutions $\phi_{s,\pm}^{app}$ of the form
$$
\phi_{s,\pm}^{app} = \sum_{j=0}^N \phi_{\alpha,\pm}^j
$$
for arbitrarily large $N$, starting with $\phi_{\alpha,\pm}^0 = \phi_{\alpha,\pm}$.
We have
$$
Ray_\alpha(\phi_{\alpha,\pm}^{j+1}) = - \OS_{\alpha,\lambda}(\phi_{\alpha,\pm}^j) .
$$
Note that 
\begin{equation}
\OS_{\alpha,\lambda}(\phi_{s,\pm}^{app}) = - \nu \Delta_\alpha^2 \phi_{\alpha,\pm}^N .
\end{equation}
We will focus on the construction of $\phi_{s,-}^{app}$, the construction of $\phi_{s,+}^{app}$ being similar.
To end the proof of Proposition \ref{slow1} we need to bound the various $\phi_{\alpha,-}^i$,
which is done through the iterative use of the following Proposition.

\begin{prop} 
There exist constants $C_n$ such that the following assertion is true.
For any $\beta>0$ and any smooth function $\psi$ there exists a smooth solution $\phi$ of $Ray_\alpha(\phi) = \psi$
such that 
$$
\sup_{k \le n} \| \partial_z^k \phi \|_{\alpha} 
+ \sup_{k \le n} \| \partial_z^n \Delta_\alpha \phi \|_{\alpha + \beta}
\le {C_n \over E(\alpha,\lambda)} \sup_{k \le n} \| \partial_z^k \psi \|_{\alpha+\beta}
$$
where $\|\phi\|_\eta = \sup_{z\ge 0} e^{\eta |z|}|\phi(z)|$. \end{prop}

\begin{proof}
We first construct the Green function of the Rayleigh operator. Let
$$
\widetilde \phi_{\alpha,+}(z) = \phi_{\alpha,-}(0) \phi_{\alpha,+}(z)
- \phi_{\alpha,+}(0) \phi_{\alpha,-}(z)
$$
Then $\widetilde \phi_{\alpha,+}(0) = 0$ and the Wronskian of $\widetilde \phi_{\alpha,+}$ and 
$\phi_{\alpha,-}$ equals
$$
W(\widetilde \phi_{\alpha,+}, \phi_{\alpha,-}) = \phi_{\alpha,-}(0) W(\phi_{\alpha,+},\phi_{\alpha,-})
= 2 \alpha \phi_{\alpha,-}(0)
$$
evaluating this latest Wronskian at infinity. The Green function of the Rayleigh operator is therefore
$$
G(x,z) = {1 \over 2 \alpha \phi_{\alpha,-}(0)} \phi_{\alpha,-}(x) \widetilde \phi_{\alpha,+}(z) \qquad
\hbox{if} \qquad z < x
$$
$$
G(x,z) = {1 \over 2 \alpha \phi_{\alpha,-}(0)} \widetilde \phi_{\alpha,+}(x) \phi_{\alpha,-}(z) \qquad
\hbox{if} \qquad z  > x.
$$
We then have
$$
\phi(z) = \int_0^{+ \infty} G(x,z) \psi(x) dx .
$$
Using the asymptotic behavior of $\phi_{\alpha,\pm}$ we get the claimed bounds on $\| \partial_z^n \phi \|_{\alpha}$
with $n = 0$ and $n = 1$ by a direct computation. Higher derivatives are obtained by differentiating
$$
\partial_y^2 \phi = \alpha^2 \phi + {i\alpha U'' \over \lambda + i\alpha U} \phi + \psi,
$$
keeping in mind that $\alpha$ is bounded and $\lambda$ is away from the range of $-i\alpha U$. 
Next, we write 
$$
\Delta_\alpha \phi = {i\alpha U'' \over \lambda + i\alpha U} \phi + \psi
$$
which gives the desired bounds on $\Delta_\alpha \phi$.
\end{proof}


\subsubsection{Approximate slow modes for $1 \ll |\alpha| \le \nu^{-1/4}$ or large $\lambda / \alpha$}


For large $\alpha$, or for large $\lambda / \alpha$,
 the Rayleigh operator is a small perturbation of $\partial_z^2 - \alpha^2$
 and we can construct approximate eigenmodes $\phi_{s,\pm}^{app}$ using a perturbative construction.
Namely, the Rayleigh equation may be rewritten as 
$$ 
\Delta_\alpha \phi = { i\alpha U'' \phi \over  \lambda + i \alpha U} .
$$
Note that $\alpha^{-1} e^{-\alpha | x - z|}$ is a Green function for $\Delta_\alpha$.
We therefore define the following operator $\mathcal{T}$ by
$$ 
\mathcal{T}[\phi](z) := \int_0^\infty \alpha^{-1} e^{-\alpha |x-z|}  {i \alpha U'' \phi(x) \over \lambda + i \alpha U } \; dx .
$$
We shall prove that for sufficiently large $\alpha$, the map $\mathcal{T}$ is well-defined and contractive 
from $L^\infty_{\alpha +\eta}$ to itself. 
Indeed, for $\phi \in L^\infty_{\alpha + \eta}$, as $\lambda + i\alpha U$ is bounded away from $0$, we have 
$$
 | \mathcal{T}[\phi](z)| 
 \le C_0 \int_0^\infty  e^{-\alpha |x-z|} e^{-\eta x - \alpha x}   \| \phi\|_{\alpha + \eta} \; dx 
\le C_0 \alpha^{-1} \| \phi\|_{\alpha + \eta}  e^{-\eta z - \alpha z}. 
$$
This proves that $\mathcal{T}[\phi] \in L^\infty_{\alpha + \eta}$. 
If $\alpha$ is large enough then $\mathcal{T}$ is a contraction in this space. On the other hand, if $\lambda / \alpha$ is large enough we rewrite
$$
 {i\alpha U'' \phi(x) \over  \lambda + i\alpha U} =  { U'' \phi(x) \over  U- i \alpha^{-1} \lambda }
 $$
 which is bounded by $C / (\alpha^{-1} \lambda)$. Hence $\mathcal{T}$ is a contraction if $\lambda / \alpha$ is large enough.
 
We now construct two independent solutions of the Rayleigh equation, which behaves like
$e^{\pm \alpha z}$ for large $z$. Let us detail the "-" case. We look for $\phi_{s,-}$ under the form
$$
\phi_{s,-} = \sum_{n \ge 0} \phi_{-}^n
$$
with $\phi_-^0 = e^{- \alpha z}$ and 
$\phi_{-}^{n+1} = \mathcal{T}[\phi_{-}^n]$. As $\mathcal{T}$ is contractive, the previous sum converges in $L^{\infty}_{\alpha + \eta}$.
Note that in particular
$$
\phi_{\alpha,-} = e^{- \alpha z}  ( 1 + O(\alpha^{-1})_{L^{\infty}_{\alpha+\eta}}),
$$ 
and similarly for its derivatives. The construction of $\phi_{\alpha,+}$ is similar.

The construction of approximate solutions of Orr Sommerfeld is similar to that of the previous section. We start with
$\phi_{s,-}$ and note that
$$
 \nu \| \Delta_\alpha^2 \phi_{s,-} \|_{\alpha + \eta} \le C \nu | \alpha |^2 \lesssim \nu^{1/2}.
$$
We then introduce $\phi_{s,-}^1$, defined by
$$
Ray_\alpha(\phi_{s,-}^1) = - \nu \Delta_\alpha^2 \phi_{s,-} ,
$$
which can be bounded using the $\mathcal{T}$ operator.  To  end the proof of Proposition \ref{slow1},
we iterate the construction as in the previous section.


\subsubsection{Approximate slow modes for $\nu^{-1/4} \le |\alpha|  \ll \nu^{-1/2}$}


We look for eigenmodes of the form
$$
\phi_{s,\pm}^{app} = \exp ( \alpha \theta_{\pm}^{app} )
$$
where $\theta_\pm^{app}$ may be expanded in powers of $\alpha^{-1}$.
As in Section \ref{sec-fast}, we get
$$
- \nu \alpha^4 \theta_0'^4 +2  \nu \alpha^4 \theta_0'^2 - \nu \alpha^4  
+ (\lambda + i\alpha U) (\alpha^2 \theta_0'^2 - \alpha^2) = 0,
$$
This time we choose $\theta_0 = \pm 1$ and iterate as in Section \ref{sec-fast} to prove Proposition \ref{slow1}.
Note again that the leading order of $\Delta_\alpha \phi_{s,\pm}^{app}$ vanishes.


\section{Approximate Green function}\label{sec-Greenapp}


We now construct an approximate Green function $H^{app}$ using the approximate solutions
$\phi_{s,\pm}^{app}$ and $\phi_{f,\pm}^{app}$.
We will decompose this Green function into two components
$$
H^{app} = G^{app} + \hat G^{app}
$$
where $G^{app}$ does not take into account the boundary conditions and focus on the discontinuity at $y = x$,
and where $\hat G^{app}$ restores the proper boundary conditions.

Hence, first forgetting the boundary condition, we look for $G^{app}(x,y)$ of the form
\begin{equation}\label{def-GappX}
\begin{aligned}
G^{app}(x,y) &= a_+(x)  {\phi_{s,+}^{app}(y) \over c_2}
+ {b_+(x) }  {\phi_{f,+}^{app}(y) \over \phi_{f,+}^{app}(x)} 
\quad \hbox{for} \quad y < x,
\\
G^{app}(x,y) &= a_-(x)  {\phi_{s,-}^{app}(y) \over c_1} 
+ {b_-(x)} {\phi_{f,-}^{app}(y) \over \phi_{f,-}^{app}(x)} 
\quad \hbox{for} \quad y  > x,
\end{aligned}\end{equation}
where the normalization constants $c_1$ and $c_2$ will be fixed later.
Let 
\begin{equation}\label{def-vvvapp}
v(x) = (- a_-(x), a_+(x), - b_-(x), b_+(x) ) .
\end{equation}
By definition, $G^{app}$, $\partial_y G^{app}$, $\sqrt{\nu} \partial_y^2 G^{app}$ are continuous at $x = y$
and $\nu \partial_y^3 G^{app}$ has a jump at $x = y$, of magnitude $1$. 
Let
\beq \label{matriceM}
M = \left( \begin{array}{cccc} 
\phi_{s,-} / c_1& \phi_{s,+} / c_2 & \phi_{f,-}  & \phi_{f,+}  \cr
\partial_y \phi_{s,-} / c_1 \mu_f & \partial_y\phi_{s,+} / c_2 \mu_f
& \partial_y\phi_{f,-} /  \mu_f & \partial_y\phi_{f,+} / \mu_f  \cr
\partial_y^2 \phi_{s,-} / c_1 \mu_f^2 &\partial_y^2 \phi_{s,+} / c_2 \mu_f^2 
& \partial_y^2 \phi_{f,-} /  \mu_f^2  & \partial_y^2 \phi_{f,+} /  \mu_f^2 \cr
 \partial_y^3 \phi_{s,-} / c_1 \mu_f^3 &  \partial_y^3 \phi_{s,+} / c_2  \mu_f^3
& \partial_y^3 \phi_{f,-}  /  \mu_f^3 &  \partial_y^3 \phi_{f,+} /  \mu_f^3 \cr 
\end{array} \right) ,
\eeq
where the functions $\phi_{s,\pm} = \phi_{s,\pm}^{app}$ and $\phi_{f,\pm} = \phi_{f,\pm}^{app}$ and their derivatives are evaluated at $y=x$.
Then 
\beq \label{Mv}
M v = (0,0,0,1/ \nu \mu_f^3) .
\eeq
In the following sections we will bound the solution $v$ of (\ref{Mv}).
Let us define the four two by two matrices $A$, $B$, $C$ and $D$  by
$$
M = \left( \begin{array}{cc} 
A & B \cr
C & D \cr \end{array} \right) .
$$
Note that, using (\ref{bornealpha}),
$$
D = \left( \begin{array}{cc}
1  & 1 \cr
- 1 & 1 \cr 
\end{array} \right) + O(\alpha \mu_f^{-1}).
$$
 Hence the matrix $D$ is bounded and invertible, upon recalling that $\alpha \ll \mu_f$ in the range of $\alpha$ that we consider
 (see \eqref{comp-musf}). Moreover its inverse is bounded and equals
$$
D^{-1} = {1 \over 2}  \left( \begin{array}{cc}
1  & - 1 \cr 
1 & 1 \cr 
\end{array} \right) + O(\alpha \mu_f^{-1}).
$$
We shall consider two cases: bounded $\alpha$ and unbounded $\alpha$. 


\subsection{Case 1: 
bounded $\alpha$}\label{sec-bda}


We take $c_1 = c_2 = 1$. Note that $A = A_1 A_2$
where
$$
A_1 = \left( \begin{array}{cc}  
1 & 0 \cr
0 & \mu_f^{-1} \cr
\end{array} \right),
\quad 
A_2 = \left( \begin{array}{cc}  
\phi_{s,-} & \phi_{s,+} \cr
\partial_y \phi_{s,-} & \partial_y \phi_{s,+} \cr
\end{array} \right) .
$$
The determinant $E^{app}(\alpha,\lambda)$ of $A_2$ is a perturbation of the Evans function $E(\alpha,\lambda)$
in the sense
$$
E^{app}(\alpha,\lambda) = E(\alpha, \lambda) + O(\nu^\sigma),
$$
for some positive $\sigma$.
Hence if $E(\alpha, \lambda) \ne 0$, then $A_2$ and $A$ 
are invertible provided $\nu$ is small enough, and $A_2^{-1}$ is bounded. 
Moreover the matrix $M$ has an approximate inverse
$$
\widetilde M = \left( \begin{array}{cc}
A^{-1} & - A^{-1} B D^{-1}  \cr
0 & D^{-1} \cr 
\end{array} \right) 
$$
in the sense that $M  \widetilde M = Id + N$ where
$$
N =  \left( \begin{array}{cc}
0 & 0 \cr 
 C A^{-1} &  - C A^{-1} B D^{-1} \cr 
\end{array} \right) .
$$
Note that $C$ is of order $O(\mu_f^{-2})$ since $\alpha$ is bounded, that $B$ is bounded 
and that $A^{-1} = A_2^{-1} A_1^{-1}$ is of order $O(\mu_f)$.
Hence we have  $N = O(\mu_f^{-1})$. Therefore $(Id + N)^{-1}$
is well defined and uniformly bounded for $\nu$ small enough provided $E(\alpha,\lambda) \ne 0$.
As a consequence, 
$$
M^{-1} = \widetilde M (Id + N)^{-1} = \widetilde M \sum_n N^n.
$$
Note that the two first lines of $N^n$ vanish. Therefore
$$
(Id + N)^{-1} (0,0,0, 1 / \nu \mu_f^3) = \Bigl( 0, 0, O(1 / \nu \mu_f^4), 1 / \nu \mu_f^3  + O(1 / \nu \mu_f^4) \Bigr) .
$$
As $D^{-1}$ is bounded and $A^{-1} B D^{-1}$ is of order $O(\mu_f)$, we obtain that 
$a_\pm$ and $b_\pm$ are respectively of order $O(1 / \nu \mu_f^2)$ and $O(1/ \nu \mu_f^3)$.
Note that $\alpha$ is bounded in this case, which give the desired bounds since
$$
\nu \mu_f^2 = \lambda + \nu \alpha^2 + i  \alpha U 
$$
hence
$$
| \nu \mu_f^2 | \ge d(\alpha,\lambda),
$$
which ends this first case.


\subsection{Case $2$: 
large $\alpha$}


We take $c_1 = \phi_{s,+}^{app}(x)$ and $c_2 = \phi_{s,-}^{app}(x)$.
In this case $A$ is of the form
$$
A = \left( \begin{array}{cc}
1 & 1 \cr
- \alpha \mu_f^{-1} & \alpha \mu_f^{-1} \cr \end{array} \right) (1 + o(1)).
$$
Its inverse $A^{-1}$ equals
$$
A^{-1} = {1 \over 2} \left( \begin{array}{cc}
1 & - \alpha^{-1} \mu_f \cr
1 &  \alpha^{-1} \mu_f \cr \end{array} \right) (1 + o(1)).
$$
Note that $D^{-1}$ and $B$ are bounded and $A^{-1}$ is order $O(\mu_f / \alpha)$. 
As $C$ is of order $O(\alpha^2 / \mu_f^{2})$, $N$ (defined in the previous section) is of order $O(\alpha / \mu_f)$.
Hence, as $| \alpha |\ll \mu_f$ in view of \eqref{comp-musf}, we have 
$$
(Id + N)^{-1} = \sum_n (-1)^n N^n.
$$
This leads to
\beq \label{IdN1}
(Id + N)^{-1}(0,0,0,1/\nu \mu_f^4) =  \Bigl( 0,0,O(\alpha / \nu \mu_f^4), O(1 / \nu \mu_f^3) \Bigr) .
\eeq
It remains to evaluate the image of this vector by $\widetilde M$. As $D^{-1}$ is bounded, we obtain that
$b_{\pm}$ are of order $O(1 / \nu \mu_f^3) = O(1 / \mu_f d(\alpha,\lambda))$.

Moreover, we compute 
$$
D^{-1}(0, O(1 / \nu \mu_f^3)) = \Bigl[ (-1,1) + O(\alpha \mu_f^{-1}) \Bigr]O(1 / \nu \mu_f^3).
$$
As
$$
B = \left( \begin{array}{cc}
1 & 1 \cr
-1 & 1 \cr
\end{array} \right) (1 + O(\mu_f^{-1})),
$$
we obtain
$$
B D^{-1}(0, O(1 / \nu \mu_f^3)) =  \Bigl[ (0,1) + O(\alpha \mu_f^{-1}) \Bigr]O(1 / \nu \mu_f^3).
$$
As a consequence, we obtain 
$$
A^{-1} B D^{-1} (0, O(1 / \nu \mu_f^3)) = O(1 / \alpha \nu \mu_f^2).
$$
It remains to bound the images of the $O(\alpha / \nu \mu_f^4)$ term in the equation (\ref{IdN1}). We have 
$$
D^{-1}(O(\alpha / \nu \mu_f^4),0) = \Bigl[ (1,1) + O(\alpha \mu_f^{-1}) \Bigr]O(\alpha / \nu \mu_f^4).
$$
Hence
$$
B D^{-1} (O(\alpha / \nu \mu_f^4),0)  =  \Bigl[ (1,0) + O(\alpha \mu_f^{-1}) \Bigr]O(\alpha / \nu \mu_f^4)
$$
and $A^{-1} B D^{-1} (O(\alpha / \nu \mu_f^4),0) = O(\alpha / \nu \mu_f^4)$. Using again $\alpha \ll \mu_f$, we obtain
that $a_{\pm}$ are of order $O(1 / \nu \mu_f^2 \alpha) = O(1 / \alpha d(\alpha,\lambda))$.


\subsection{Boundary condition}


We now add to $G^{app}$ another approximate Green function $\hat G^{app}$ to handle the boundary conditions.
We look for $\hat G^{app}$ under the form
$$
\hat G^{app}(y) = d_s {\phi_{s,-}(y) \over d_1} + d_f {\phi_{f,-}(y)  \over \phi_{f,-}(0)}
$$
where the normalization constant $d_1$ will be fixed later, 
and look for $d_s$ and $d_f$ such that
\beq \label{Greenb1}
G^{app}(x,0) + \hat G^{app}(0) = 
\partial_y G^{app}(x,0) + \partial_y \hat G^{app}(0) =  0.
\eeq
Let
$$
\hat M = \left( \begin{array}{cc} \phi_{s,-} / d_1 & \phi_{f,-} / \phi_{f,-}(0) \cr
\partial_y \phi_{s,-} / d_1 & \partial_y \phi_{f,-} / \phi_{f,-}(0) \cr \end{array} \right) ,
$$
the functions being evaluated at $y = 0$. Then (\ref{Greenb1}) can be rewritten as
$$
\hat M d = - (G^{app}(x,0), \partial_y G^{app}(x,0)) 
$$
where $d = (d_s,d_f)$. Note that
$$
(G^{app}(x,0), \partial_y G^{app}(x,0))  = Q (a_+,b_+)
$$
where
$$
Q = \left( \begin{array}{cc}
 \phi_{s,+}(0) /c_2  & 1 \cr
 \partial_y \phi_{s,+}(0) / c_2 & \partial_y \phi_{f,+}(0) / \phi_{f,+}(0) \cr
\end{array} \right) .
$$
By construction
\beq \label{defid}
d = -  \hat M^{-1}  Q (a_+,b_+) .
\eeq
Let us first consider bounded $\alpha$. We take $d_1 = 1$. This leads to
$$
\hat M = \left( \begin{array}{cc} \phi_{s,-}(0) & 1 \cr
\partial_y \phi_{s,-}(0) & - \mu_f + O(1) \cr \end{array} \right) .
$$
Note that  $\hat M = M_1 M_2$ with
$$
M_1 = \left( \begin{array}{cc} 1 & 0 \cr 0 &  \mu_f \cr \end{array} \right), \qquad
M_2 = \left( \begin{array}{cc} \phi_{s,-}(0) & 1 \cr
\partial_y \phi_{s,-}(0) / \mu_f & -1 + O( 1 / \mu_f) \end{array} \right) .
$$
The determinant of $M_2$ equals to $-E(\alpha,\lambda) = -\phi_{s,-}(0)$, up to a small term of order $\mu_f^{-1}\sim \sqrt \nu$, recalling that $\alpha$ is bounded.
Hence $M_2$ is invertible, and $M_2^{-1}$ is bounded if $E(\alpha,\lambda) \ne 0$, provided $ \nu$ is small enough.
Then
$$
\hat M^{-1} Q = M_2^{-1} M_1^{-1} Q.
$$
Note that $(a_+,b_+) = (O(1/\nu \mu_f^2), O(1 / \nu \mu_f^3))$. Hence $Q (a_+,b_+) = O(1 / \nu \mu_f^2)$.
Therefore $M_1^{-1} Q (a_+,b_+) = (O(1/ \nu \mu_f^2), O(1/ \nu \mu_f^3))$. Hence, as the second term of the first
column of $M_2$ is of order $O(1 / \mu_f)$ we get, as desired, that 
\beq \label{boundofd}
d = (O(1 / \alpha \nu \mu_f^2), O(1 / \nu \mu_f^3)),
\eeq
keeping in mind that $\alpha$ is bounded.

For large $\alpha$ we choose $d_1 = \phi_{s,-}(0)$. Then
$$
Q = \left(\begin{array}{cc} 1 & 1 \cr
\alpha + O(1) & \mu_f + O(1) \cr
\end{array} \right),
$$
$$
\hat M = \left( \begin{array}{cc} 1 & 1  \cr  - \alpha + O(1)  & - \mu_f + O(1) \cr \end{array} \right)
$$
and
$$
\hat M^{-1} = {1 \over \mu_f - \alpha + O(1)} \left( \begin{array}{cc} \mu_f  + O(1) & 1 \cr
- \alpha + O(1) & -1 \cr \end{array} \right) .
$$
In this case 
$$
(a_+,b_+) = (O(1 / \alpha  \nu \mu_f^2),O(1/ \nu \mu_f^3)).
$$
A direct computation of  $\hat M^{-1} Q (a_+,b_+)$ again gives (\ref{boundofd}).
Combining all the previous estimates ends the proof.


\section{Exact Green function}\label{sec-exactGreen}


Let 
$$
H^{app} = G^{app} + \hat G^{app}
$$ 
be the complete approximate Green function. By construction, $H^{app}$ satisfies the zero boundary conditions \eqref{G-noslip}. 
We now construct the exact Green function $G(x,z)$ as an infinite sum
\beq \label{defG}
G(x,z) = \sum_{n \ge 0} G_n(x,z),
\eeq
where $G_0 = H^{app}$, 
$$
G_1 = - H^{app} \star (\OS_{\alpha,\lambda}(H^{app}) - \delta_{y=x}),
$$ 
and $G_n$ is defined by iteration through
$$
G_{n+1} = - H^{app} \star \OS_{\alpha,\lambda}(G_n).
$$
Hence, it suffices to prove that the series \eqref{defG} converges in a suitable function space, which follows immediately from the following lemma. The stated bounds for $G(x,z)$ in Theorem \ref{theo-GreenOS-unstable} then follow from those on $H^{app}(x,z)$. 

\begin{lem}
For each $x$, assume that
$$
| f^x(y) | \le e^{- \alpha'  |x-y | } 
$$
for some $\alpha'$ such that $\alpha' < | \alpha |$ and $\alpha' < \Re \mu_f$.
Then
$$
| \OS_{\alpha,\lambda}(G^{app} \star f^x) (y)| \le C \nu^{N-2} e^{- \alpha' |x-y| } .
$$
\end{lem}
\begin{proof}
Note that
$$
\OS_{\alpha,\lambda}(G^{app} \star f^x)(y)  = \int \OS_{\alpha,\lambda}(G^{app})(z,y) f^x(z) dz .
$$
However we recall that $\phi_{s,\pm}^{app}$ satisfy
$$
| \OS_{\alpha,\lambda}(\phi_{s,\pm}^{app})  | \le C \nu^N e^{\pm |\alpha|  z },
$$
$$
| \OS_{\alpha,\lambda}(\phi_{f,\pm}^{app}) |\le C \nu^N |\phi_{f,\pm}^{app} | .
$$
Using the bounds on the coefficients on $G^{app}(z,y)$, this leads to
$$
| \OS_{\alpha,\lambda}(G^{app}(z,y)) | \le C \nu^{N-2}  e^{- \alpha | y - z | }.
$$
The Lemma follows by convolution.
\end{proof}


\section{Construction of a pseudo inverse \label{sectionpseudo}}


We now focus on the case when $\lambda$ is close to a simple eigenvalue $\lambda_0$.

\begin{theo} \label{theopseudo}
Let $\alpha$ be fixed.
Let $\lambda_0$ be a simple eigenvalue of $Orr_{\alpha,\lambda}$ with corresponding eigenmode
$\phi_{\alpha,\lambda_0}$. Then there exists a bounded family of linear forms $l^\nu$
and a family of pseudoinverse operators $Orr^{-1}_{\alpha,\lambda}$ such that for any stream function $\phi$,
$$
Orr_{\alpha,\lambda}\Bigl(Orr^{-1}_{\alpha,\lambda}(\phi) \Bigr)  = \phi - l^\nu(\phi) \phi_{\alpha,\lambda_0} 
$$
for $\lambda$ near $\lambda_0$. Moreover, the pseudoinverse $Orr_{\alpha,\lambda}^{-1}$ may be defined through a Green function $\widetilde G_{\alpha,\lambda}(x,z)$ which satisfies the same bounds in (\ref{est-GrOS}).
\end{theo}


\subsection{Principle of the construction}


Let us sketch the principle of the proof on a simplified case. Let $A_0$ be a $N \times N$ matrice of rank $N-1$
(which is a toy model for the Rayleigh operator when $\lambda$ is a simple eigenvalue), and let $A(\eps)$
be a bounded family of $N \times N$ matrices (toy model for Orr Sommerfeld equation). We want to construct an inverse for 
$$
A^\eps = A_0 + \eps A(\eps).
$$
Let us first invert $A_0$. Let $v$ be a unit vertor, orthogonal to the image of $A_0$. Let $P$ be the orthogonal projector
on the image of $A$, namely
$$
P v =  f - (f. v) v.
$$
Let $B$ be a pseudo inverse of $A_0$, namely a matrix such that, on the image of $A_0$, $A_0 B =Id$. 
Then  $u = B P f$ solves 
$$
A_0 u = f - (f.v) v .
$$
We now fulfill a similar construction for $A^\eps$ for small $\eps$. 
Let  $u_0 = B  P f$. Then 
$$
A^\eps u_0 = f - (f.v) v  + \eps A(\eps) u_0.
$$
We know define $u_1 = -  B P A(\eps) u_0$. Then $u_0 + u_1$ solves
$$
A^\eps (u_0 + \eps u_1) = f - (f.u_0) v + \eps (A(\eps) u_0. v) v - \eps^2 A(\eps) B P A(\eps) u_0 
$$
and the construction follows by iteration.


\subsection{Rayleigh equation}


In this section we fix $\alpha$ and investigate the Rayleigh operator $Ray_{\alpha,\lambda}$ 
when $\lambda$ is near a simple eigenvalue $\lambda_0$ of $Ray_{\alpha,\lambda}$. 
We will also assume that $Ker(Ray_{\alpha,\lambda_0}^2) = \cit \phi_{\alpha,\lambda_0,\pm}$.
At $\lambda = \lambda_0$, $\phi_{\alpha,\lambda_0,\pm}$ are colinear (that is, the Jacobian of $\phi_{\alpha,\lambda_0,\pm}$ vanishes). 
Up to a renormalisation we may assume that
$\phi_{\alpha,\lambda_0,+} = \phi_{\alpha,\lambda_0,-}$.  
For $\lambda \ne \lambda_0$ the solution of $Ray_{\alpha,\lambda}(\phi) = \psi$ is explicitely given by
\begin{equation}\label{def-psRphi}
\phi(z) = \phi_{\alpha,\lambda,+}(z) \int_z^{+ \infty} {\phi_{\alpha,\lambda,-}(x) \over Jac(x)} \psi(x) dx
+ \phi_{\alpha,\lambda,-}(z) \int_0^z {\phi_{\alpha,\lambda,+}(x) \over Jac(x)} \psi(x) dx
\end{equation}
where
$$
Jac(x) := \phi_{\alpha,\lambda,-}(x) \partial_x \phi_{\alpha,\lambda,+}(x) 
- \phi_{\alpha,\lambda,+}(x) \partial_x \phi_{\alpha,\lambda,-}(x) 
$$
is the Jacobian of $\phi_{\alpha,\lambda,\pm}$. 
Note that, as $\lambda_0$ is a simple eigenvalue, $Jac(\lambda_0) = 0$ and that for $\lambda$ near $\lambda_0$,
$$
Jac(\lambda) = (\lambda - \lambda_0) \widetilde  Jac(\lambda)
$$
where $\widetilde  Jac(\lambda)$ is a smooth function with $\widetilde  Jac(\lambda_0)\not =0$ since $\lambda_0$
is a simple eigenvalue. Let us also define
$$
\widetilde  \phi_{\alpha,\lambda,\pm} = {\phi_{\alpha,\lambda,\pm} - \phi_{\alpha,\lambda_0,\pm} \over \lambda - \lambda_0} .
$$
Then it follows from \eqref{def-psRphi} that 
$$
\phi(z) = {\phi_{\alpha,\lambda_0,+}(z) \over \lambda - \lambda_0} 
\int_0^{+ \infty} {\phi_{\alpha,\lambda_0,+}(x) \over \widetilde  Jac(x)} \psi(x) dx + \widetilde  \phi(z) 
$$
where 
\begin{equation}\label{def-psphi}
\widetilde  \phi(z) = \int_0^{+ \infty} \widetilde  G(x,z) \psi(x) dx 
\end{equation}
with $$
\widetilde  G(x,z) = {\widetilde  \phi_{\alpha,\lambda,+}(z) \phi_{\alpha,\lambda,-}(x) + \phi_{\alpha,\lambda,+} (z) \widetilde  \phi_{\alpha,\lambda,-}(x) 
+ (\lambda - \lambda_0) \widetilde  \phi_{\alpha,\lambda,+}(z) \widetilde  \phi_{\alpha,\lambda,-}(x) \over \widetilde  Jac(x)}
$$
if $x > z$, and a similar expression if $x < z$.
This computation may be rewritten as follows.
 Let $l$ be the linear form defined by
$$
l(\psi) =  \int_0^{+ \infty} {\phi_{\alpha,\lambda_0,+}(x) \over \widetilde  Jac(x)} \psi(x) dx.
$$
Then, for any $\psi$,  if $l(\psi) = 0$ then $\widetilde  \phi$ solves
$Ray_{\alpha,\lambda}(\widetilde  \phi) = \psi$. 
In particular, as the image of the Rayleigh operator $Im(Ray_{\alpha,\lambda_0})$ is of codimension $1$, 
$Ker(l) = Im(Ray_{\alpha,\lambda_0})$. 
Note that, as $\lambda_0$ is a simple eigenvalue,
$\phi_{\alpha,\lambda_0,+}$ is not in $Im(Ray_{\alpha,\lambda_0})$. Therefore, $l(\phi_{\alpha,\lambda_0,+} ) \ne 0$.
As a consequence
$$
\tilde \psi = \psi - {l(\psi) \over l(\phi_{\alpha,\lambda_0,+} )}  \phi_{\alpha,\lambda_0,+} \in Im(Ray_{\alpha,\lambda})
$$ 
since the image by $l$ of this function vanishes.
We then have
\beq \label{decomp1}
Ray_{\alpha,\lambda}(\widetilde  \phi) = \psi -  {l(\psi) \over l(\phi_{\alpha,\lambda_0,+} )}   \phi_{\alpha,\lambda_0,+},
\eeq
where
$$
\tilde \phi(z) = \int_0^{+ \infty} \tilde G(x,z) \tilde \psi(x) dx .
$$
That is, $\widetilde \phi$ defines the pseudoinverse $Ray_{\alpha,\lambda}^{-1}$ of $Ray_{\alpha,\lambda}$ for $\lambda$ near $\lambda_0$. We shall now fulfill a similar analysis for the $Orr_{\alpha,\lambda}$ operator.


\subsection{Orr Sommerfeld equation}


Let us now prove Theorem \ref{theopseudo}. 
We follow the analysis in the previous section to construct the Green function $\widetilde  G_{\alpha,\lambda}(x,z)$ 
for the pseudoinverse of $Orr_{\alpha,\lambda}$. Let $\lambda_0^{app}$ be a simple eigenvalue of the approximate
Evans function $E^{app}$. $Ray_{\alpha,\lambda}$ operator. To simplify the notation we drop the "app" and set
$\lambda_0 = \lambda_0^{app}$.
At $\lambda = \lambda_0$, the matrix $M$, defined by (\ref{matriceM}), is singular since its first two columns are colinear.
Up to the multiplication by a constant of $\phi_{s,-}$, we may assume that $\phi_{s,\pm}$ coincide at $\lambda = \lambda_0$.
To desingularize it we introduce
$$
\Lambda = \left( \begin{array}{cccc} 
(\lambda - \lambda_0)^{-1} & 1 & 0 & 0 \cr
- (\lambda - \lambda_0)^{-1} & 1 & 0 & 0 \cr
0 & 0 & 1 & 0 \cr
0 & 0 & 0 & 1 \cr
\end{array} \right) .
$$
Then, recalling \eqref{def-vvvapp} and defining $\widetilde  M = M \Lambda$, with the notation of (\ref{Mv}), we have
\begin{equation}\label{def-psv}
v = \Lambda \widetilde  M^{-1} (0,0,0,1/ \nu \mu_f^3) ,
\end{equation}
when $\lambda \ne \lambda_0$.
The arguments applied to the matrix $M$ in Section $3$ may now be applied to $\widetilde  M$ since the corresponding matrix
$$
\widetilde  A_2 = A_2 \Lambda
= \left( \begin{array}{cc} (\phi_{s,-} - \phi_{s,+}) / (\lambda - \lambda_0) & \phi_{s,+} \cr
(\partial_y \phi_{s,-} - \partial_y \phi_{s,+}) / (\lambda - \lambda_0) & \partial_y \phi_{s,+} \cr
\end{array} \right)
$$  
is non singular near $\lambda = \lambda_0$, keeping in mind that $\lambda_0$ is a simple eigenvalue. 

Let $l_4 = (l_{4,1},...,l_{4,4})$ 
be the fourth line of the inverse of $\widetilde  M$. 
It  follows from \eqref{def-psv} that 
$$
v = \Lambda l_4(x) / \nu \mu_f^3.
$$
The singular part $v^s$ of $v$, namely the terms involving $(\lambda - \lambda_0)^{-1}$, is 
$$
v^s ={1 \over \nu \mu_f^3} l_{4,1}(x) (1 , -1, 0, 0).
$$ 
Let us now compute $l_{4,1}(x)$. We have to evaluate $\Lambda^{-1} A_2^{-1} A_1^{-1} B D^{-1}(0,1)$ (see Section \ref{sec-bda}).
But, up to higher order terms, $A_1^{-1} B D^{-1} \sim (0,\mu_f)$.  Note that
$$
A_2^{-1} = {1 \over E^{app}(\alpha,\lambda)} \left( \begin{array}{cc}
\partial_y \phi_{s,+} & - \phi_{s,+} \cr
- \partial_y \phi_{s,-} & \phi_{s,-} \cr \end{array} \right).
$$
Hence when $\lambda$ is close to $\lambda_0$, 
$$
A_2^{-1} A_1^{-1} B D^{-1}(0,1) \sim {\mu_f \over E^{app}(\alpha,\lambda)} \phi_{s,+} (-1,1)
$$ 
namely like $C (-\mu_f,  \mu_f) \phi_{s,+} / (\lambda - \lambda_0)$.
At leading order, the computation is exactly the same as in the previous section.
Let 
$$
L(\psi) = -  \int_0^{+ \infty} l_{4,1}(x) \psi(x) dx .
$$
Then, at leading order, $L = l$. Moreover, the regular part $v^r$ of $v = v^r + v^s$ is
$$
v^r =  {1 \over \nu \mu_f^3}  \Bigl( l_{4,2},l_{4,2},l_{4,3},l_{4,4} \Bigr) .
$$
We now define $\widetilde  G^{app}(x,z)$ to be the approximate Green kernel that corresponds to the regular part $v^r$, 
recalling the Green function construction in \eqref{def-GappX}-\eqref{def-vvvapp}.
Setting 
$$
\widetilde  \psi =  \psi - {L(\psi) \over L(\phi_{\alpha,\lambda_0,+})}  \phi_{\alpha,\lambda_0,+}  ,
$$
we have $L(\widetilde  \psi) = 0$ and so 
$$
Orr_{\alpha,\lambda}(\widetilde  G^{app} \star \psi) = \widetilde  \psi .
$$
The exact Green function $\widetilde G_{\alpha,\lambda}(x,z)$ then follows by iteration
as in Section \ref{sec-exactGreen}.

%
%
%
%
%
%
%
%
%

\chapter{Orr Sommerfeld solutions for stable profiles}\label{chapter-OS-stable}

In this chapter, we turn to the most delicate case, namely to the study of classical Orr-Sommerfeld solutions 
near critical layers. We recall 
\beq \label{OS-long}
\OS_{\alpha,c}(\phi_\alpha) := 
- \eps \Delta_\alpha^2 \phi_\alpha + (U - c) \Delta_\alpha \phi_\alpha - U'' \phi_\alpha = 0
\eeq
where
$$
\eps = {\nu \over i \alpha},
$$
together with the boundary conditions
\beq \label{OS2}
{\phi_\alpha}_{\vert_{z=0}} = \partial_z{ \phi_\alpha}_{\vert_{z=0}} = 0, \qquad \lim_{z\to \infty}\phi_\alpha(z) =0.
\eeq
The aim of this chapter is to give bounds on the Green function of the Orr Sommerfeld equation 
when $\alpha$ is of order  $\nu^{1/4}$ and $c$ is of the same order, which corresponds to
an instability area. 
As it turns out, this restricted study appears to be sufficient to construct linear and nonlinear instabilities 
for the full nonlinear Navier Stokes equations.  

To construct the Green function we first construct 
two approximate solutions $\phi_{s,\pm}$ with a "slow behavior", and two
approximate solutions $\phi_{f,\pm}$ with a "fast behavior" (the "-" solutions going to $0$ as $z$ goes to $+ \infty$). The slow approximate solutions will be solutions of the Rayleigh equation
\beq \label{Rayleigh}
(U - c) \Delta_\alpha \phi - U'' \phi = 0
\eeq
with boundary condition $\phi(0) = 0$. They will be constructed by perturbation of the case
$\alpha = 0$ where the Rayleigh equation degenerates in
\beq \label{limitRay}
Ray_0(\phi) = (U - c) \partial_z^2 \phi - U'' \phi .
\eeq
The main observation is that $\phi_{1,0} = U - c$ is a particular solution of (\ref{limitRay}). 
Let $\phi_{2,0}$ be the other solution of this equation such that the Wronskian $W[\phi_{1,0},\phi_{2,0}]$ equals $1$. 
We will construct approximate solutions starting from $\phi_{1,0}$ and prove that
\beq \label{defiEvans1}
\phi_{s,-}(0) = U(0) - c + \alpha(U_+ - U(0))^2 \phi_{2,0}(0) + O(\alpha^2), 
\eeq
\beq \label{defiEvans2}
\partial_z \phi_{s,-}(0) = U'(0) + O(\alpha).
\eeq
The "fast approximate solutions" will emerge in the balance between $- \eps \Delta_\alpha^2 \phi$
and $(U - c) \Delta_\alpha \phi$. Keeping in mind that $\alpha$ is small, they will be constructed starting
from solutions of the simplified equation
$$
- \eps \partial_z^4 \phi + (U - c) \partial_z^2 \phi = 0.
$$
As $c$ is small, and as $U'(0) \ne 0$, there exists a unique $z_c \in \cit$ near $0$ such that
\beq \label{critic1}
U(z_c) = c .
\eeq
Such a $z_c$ is called a "critical layer" in the physics literature.
It turns out that all the instability is driven by what happens near this critical layer.
Equation (\ref{critic1}) is a perturbation of the Airy equation
\beq \label{Airy}
- \eps \partial_z^2 \psi + U'(0) (z - z_c) \psi = 0
\eeq
posed on $\psi = \partial_z^2 \phi$. The fast approximate solutions are thus constructed as perturbations
of second primitives of classical Airy functions.

The  main result is as follows. 

\begin{theorem}\label{theo-GreenOS-stable} 
Let $\sigma_0$ be arbitrarily small.
Let $\alpha = O(\nu^{1/4})$, and let $c = O(\nu^{1/4})$, with $| \Im c | \ge \sigma_0 \nu^{1/4}$, 
such that
\beq \label{disp}
{| W[ \phi_{s,-}, \phi_{f,-} ]| \over | \phi_{f,-}(0) | } \ge \sigma_0 \nu^{1/4}.
\eeq
Let $G_{\alpha,c}(x,z)$ be  the Green function of the Orr-Sommerfeld problem. 
Then, there exists a smooth function $P(x)$ and there are universal positive constants $\theta_0, C_0$ so that 
\begin{equation}\label{est-GrOS-stable}
\begin{aligned}
 |G_{\alpha,c}(x,z) - \nu^{-1/4} P(x) \phi_{s,-}(z) |  &\le 
  C_0 \Big( e^{-\theta_0  |x-z|} + 
 {1 \over | \mu_f(x) |} e^{ - | \int_x^z \Re \mu_f(y) dy | } \Big)
\end{aligned}
\end{equation}
uniformly for all $x,z\ge 0$. 
Similarly, 
\begin{equation}\label{est-GrOS-stable2}
\begin{aligned}
 | \partial_z G_{\alpha,c}(x,z)  - \nu^{-1/4} P(x) \partial_z \phi_{s,-}(z) |  &\le 
  C_0 \Big( e^{-\theta_0  |x-z|} + 
 {| \mu_f(z) | \over | \mu_f(x) |} e^{ - | \int_x^z \Re \mu_f(y) dy | } \Big)
\end{aligned}
\end{equation}
where $$
\mu_f(z) = \sqrt{ { U(z) - c \over \eps}},
$$
taking the square root with a positive real part.
 \end{theorem}

Let us comment (\ref{disp}). We have
$$
W[ \phi_{s,-}^{app}, \phi_{f,-}^{app} ]
= \gamma \psi_{s,0}^{app}(0) Ti(- \gamma z_c)  \phi_{f,-}^{app}(0)   - \partial_z \phi_{s,-}^{app}(0)  \phi_{f,-}^{app}(0) 
$$
$$
= - \Bigl( \gamma c Ti(- \gamma z_c) + U'(0) \Bigr) Ai(2,-\gamma z_c) + O(\nu^{1/4})
$$
Note that both terms under the brackets are of order $O(1)$, since $\gamma c$ is of order $O(1)$.
The Wronskian vanishes if there exists a linear combination of
$\phi_{s,-}^{app}$ and $\phi_{f,-}^{app}$ which satisfies the boundary conditions, namely if there exists an approximate
eigenmode of $\OS_{\alpha,c}$ (recalling that $\phi_{s,-}^{app}$ and $\phi_{f,-}^{app}$ are only approximate solutions of $\OS_{\alpha,c}$).
We have to remain away from such approximate modes, since nearby there exists true eigenmodes where $\OS_{\alpha,c}$
is no longer invertible. Note that $\sigma_1$ may be taken arbitrarily small.

Note that in this Theorem we are at a distance $O(\nu^{1/4})$ from a simple eigenmode $\psi_0$. It is therefore expected that
$\OS_{\alpha,c}$ is of order $O(\nu^{-1/4})$ and that
\beq \label{inverseOrr}
\OS^{-1}_{\alpha,c}(\psi) = \nu^{-1/4} \Bigl( \int_0^{ +\infty} P(z) \psi(z) dz \Bigr) \psi_0 + O(1) .
\eeq
As $\psi_0 = \phi_{s,-} + O(\nu^{1/4})$, $G_{\alpha,c}$ is only bounded
by $O(\nu^{-1/4})$, and its main component is $ \nu^{-1/4} P \phi_{s,-}$.


\section{The Airy operator \label{sectionAiry}}


In this section, we construct two approximate solutions of Orr Sommerfeld equation, called
$\phi_{f,\pm} = \phi_{f,\pm}^{app}$, with fast increasing or decreasing behaviors. For these approximate solutions, it turns out
that the zeroth order term $U'' \phi_{f,\pm}$ may be neglected. 
Moreover, as $\alpha$ is small, $\alpha^2$ terms may also be neglected.
This simplifies the Orr Sommerfeld operator in the so called modified Airy operator defined by
\begin{equation}\label{Airy-de}
\Airy = \cA \partial_z^2 ,
\end{equation}
where
\begin{equation}\label{def-cA}
\mathcal{A}: = - \eps \partial_z^2 + (U-c) .
\end{equation}
Note that
\beq \label{Airy-de-2}
\OS_{\alpha,c} = \Airy + \hbox{OrrAiry} 
\eeq
where
$$
\hbox{OrrAiry} = 2 \eps \alpha^2 \partial_z^2 - \eps \alpha^4  - \alpha^2 (U-c) - U'' .
$$
Note also that $U-c$ behaves like $U'(z_c) ( z - z_c)$ for $z$ near $z_c$, hence $\cA$ is very similar to the classical Airy
operator $\partial_z^2 - z$ when $z$ is close to $z_c$. The main difficulty lies in the fact that the "phase" $U(z) - c$ almost
vanishes when $z$ is close to $\Re z_c$, hence we have to distinguish between two cases: $z \le \sigma_1$ and
$z \ge \sigma_1$ for some small $\sigma_1$. 
The first case is handled through a Langer transformation, which reduces (\ref{Airy-de}) to the classical
Airy equation. The second case may be treated using a classical WKB expansion.

We will prove the following proposition. 

\begin{proposition} \label{prop-maphif}
There exist two smooth functions $\phi^{app}_{\pm}(z)$ such that
\beq \label{fastapp1}
|{\cal A} \partial_z^2 \phi^{app}_{\pm} | \le C \nu^N | \phi^{app}_{\pm} |,
\eeq
\beq  \label{fastapp1b}
| \OS_{\alpha,c}(\phi_{\pm}^{app}) | \le C | \phi^{app}_{\pm} |,
\eeq
for arbitrarily large $N$.
Moreover for $z \gg \nu^{1/4}$ and for $k =1,2,3$, as $\nu \to 0$,
\beq \label{fastapp1c}
{\partial_z^k \phi^{app}_-(z) \over \phi^{app}_-}(z) \sim (-1)^k \mu_f^k(z),
\eeq
and similarly for $\phi^{app}_+$ without the $(-1)^k$ factor.
For $k = 1, 2, 3$ and  any $x_1 < x_2$,
\beq \label{fastapp2}
\Bigl| {\phi^{app}_{+}(x_2) \over \phi^{app}_{+}(x_1)} \Bigr| \le C \exp \Bigl(
\int_{x_1}^{x_2} \Re \mu_f(y) dy \Bigr) 
\eeq
and similarly for $\phi^{app}_-$.
\end{proposition}

To prove this proposition we construct $\psi^{app}_\pm = \partial_z^2 \phi^{app}_\pm$ for 
$z < z_c$ in Section \ref{Airypoints} using the Langer's transformation
introduced in (\ref{sec-Langer}) and for $z > z_c$ in Section \ref{sec-WKB} using the classical WKB method. We then match these
two constructions in Section \ref{sec-match}, integrate them twice in Section \ref{AA}
and detail the Green function of Airy operator in Section \ref{GreenAiry}.


\subsection{A primer on Langer's transformation}\label{sec-Langer}


The first step is to construct approximate solutions to ${\cal A} \psi = 0$, 
starting from solutions of the genuine Airy equation
$\eps \psi'' = y \psi$, thanks to the so called Langer's transformation that we will now detail.
Let $B(x)$ and $C(x)$ be two smooth functions. 
In $1931$,  Langer introduced the following method to build approximate solutions to the varying coefficient
Airy type equation
\beq \label{nonlinC}
- \eps \phi'' + C(x) \phi = 0
\eeq
starting from solutions to the similar Airy type equation
\beq \label{nonlinB}
- \eps \psi'' + B(x) \psi = 0.
\eeq
We assume that  both $B$ and $C$ vanish at some point $x_0$, and that their derivatives at $x_0$ does not vanish.
Let $\psi$ be any solution to (\ref{nonlinB}). Let $f$ and $g$ be two
smooth functions, to be chosen later. Then 
$$
\phi(x) = f(x) \psi(g(x))
$$ 
satisfies
$$
- \eps \phi'' + C(x) \phi = - \eps f'' \psi -  2 \eps f' \psi' g' - B(g(x))  (g')^2 f \psi - \eps f \psi' g'' +  C(x) f\psi.
$$
Note that $f$ may be seen as a modulation of amplitude and $g$ as a change of phase.
If we choose $g$ such that 
\beq \label{eqg}
B(g(x)) (g')^2 = C(x) 
\eeq
and $f$ such that
\beq \label{eqf}
2 f' g' + f g'' = 0,
\eeq
we have
$$
- \eps \phi'' + C(x) \phi = - \eps f'' \psi.
$$
Hence $\phi$ may be considered as an approximate solution to $- \eps \phi'' + C(x) \phi = 0$.

Note that (\ref{eqf}) may be solved, yielding
\beq \label{eqf1}
f(x) = {1 \over \sqrt{g'(x)}} .
\eeq
Now let $B_1$ be the primitive of $\sqrt{B}$ which vanishes at $x_0$
and let $C_1$ be the primitive of $\sqrt{C}$ which vanishes at $x_0$.
Then (\ref{eqg}) may be rewritten as
\beq \label{eqg2}
B_1(g(x))  = C_1(x).
\eeq
Note that both $B_1$ and $C_1$ behave like $C_0(x - x_0)^{3/2}$ near $x_0$. Hence (\ref{eqg2}) may be solved for
$x$ near $x_0$. This defines a smooth function $g$ which satisfies $g(x_0) = x_0$. Moreover if $B'(x_0) = C'(x_0)$
then $g'(x_0) = 1$.



\subsection{Airy critical points \label{Airypoints}}


In this section we  use Langer's transformation to construct approximate solutions to ${\cal A} \psi = 0$ starting
from solutions of the genuine Airy equation.

Let $c$ be of order $\nu^{1/4}$. Then there exists an unique $z_c \in \cit$ near $0$ such that $U(z_c) = c$. 
Note that $z_c$ is also of order
$\nu^{1/4}$ since $U'(0) \ne 0$. Expanding $U$ near $z_c$ at first order we get the approximate equation
\beq \label{order2l}
- \eps \partial_z^2 \psi + U'(z_c) (z - z_c) \psi = 0 
\eeq
which is the classical Airy equation. Let us assume that $\Re U'(z_c) > 0$, the opposite case being similar.
A first solution is given by  
$$
A(z) = Ai ( \gamma (z - z_c) )
$$
where $Ai$ is the classical Airy function, solution of $Ai'' = x Ai$, and where
$\eps \gamma^3 =  U'(z_c)$, namely
$$
\gamma = \Bigl(  {i \alpha U'(z_c) \over \nu} \Bigr)^{1/3} .
$$
Note that since $\alpha$ is of order $\nu^{1/4}$, $\gamma$ is of  order $\nu^{-1/4}$ and that 
$$
\arg(\gamma) =  + \pi / 6 + O(\nu^{-1/4}).
$$
Moreover, as $x$ goes to $\pm \infty$, with argument $i \pi / 6$,
$$
Ai(x) \sim {1 \over 2 \sqrt{\pi}}  { e^{- 2 x^{3/2}  / 3} \over x^{1/4}} .
$$
In particular, $Ai'(x) / Ai(x) \sim - x^{1/2}$ for large $x$. Hence, as $\gamma(z - z_c)$ goes to infinity, $A(z)$ goes to $0$ and 
\beq \label{asymplog}
{A'(z) \over A(z)} \sim - \gamma^{3/2} (z - z_c)^{1/2} = 
-  \Bigl(  {i \alpha U'(z_c) \over \nu} \Bigr)^{1/2} (z - z_c)^{1/2} \sim - \sqrt{B(z)},
\eeq
with 
$$
B(z) = \eps^{-1} U'(z_c) (z - z_c).
$$ 
More precisely, we get 
$$
{A'(z) \over A(z)}  = - \sqrt{B(z)} (1 + O(\nu^{1/4}))
$$
for $z \gg \nu^{1/4}$ (on which $\gamma (z-z_c) \gg1$). 

An independent solution is given by $Ci (\gamma (z - z_c))$ where
$$
Ci = - i \pi (Ai + i Bi),
$$
with $Bi(\cdot)$ being the other classical Airy function. In this case $| Ci (\gamma (z - z_c)) |$ goes to $+ \infty$ as $z - z_c$ goes to $+ \infty$,
with a plus instead of the minus in the corresponding formula (\ref{asymplog}).

We now use Langer's transformation introduced in the previous section. As $U(z)$ and $U'(z_c) (z - z_c)$ vanish at the same point
with the same derivative at that point, we use Langer's transformation with
$$
C(z) = \eps^{-1} (U(z) - c)
$$
and
$$
B(z) = \eps^{-1} U'(z_c) (z - z_c).
$$
Then, $g(z)$ is locally well defined, for $0 \le z \le \sigma_1$ for some positive $\sigma_1$. 
Moreover $g( z_c) = z_c$ and $g'( z_c) = 1$. Now
$$
\tilde Ai (z) = {1 \over \sqrt{g'(z)}} Ai \Bigl(\gamma g(z) \Bigr) 
$$
and
$$
\tilde Ci(z) = {1 \over \sqrt{g'(z)}} Ci \Bigl(\gamma g(z) \Bigr) 
$$
are two approximate solutions of $\cA \phi = 0$ in the sense that
$$
\cA \tilde Ai = - \eps f'' Ai(\gamma g(z)) , \qquad \cA \tilde Ci = - \eps f'' Ci(\gamma g(z)).
$$
Note that the error term is of order $\eps \sim \nu^{3/4}$. 
Note also that at first order, for $z$ of order $\nu^{1/4}$,
$\tilde Ai(z)$ equals $Ai(\gamma (z - z_c))$ since $g'(z_c) = 1$. 

Moreover, for $z \gg \nu^{1/4}$, using (\ref{eqg}), we get
\beq \label{polarloin}
{\partial_z \tilde Ai(z) \over \tilde Ai(z)} \sim  g'(z) {A'( g(z)) \over A( g(z))} \sim - g'(z) \sqrt{B(g(z))} 
\sim  - \sqrt{C(z)} \sim - \mu_f(z),
\eeq
and more precisely
$$
{\partial_z \tilde Ai(z) \over \tilde Ai(z)} \sim - \mu_f(z) (1 + O(\nu^{1/4})) .
$$
Similarly for $z \gg \nu^{1/4}$, we get
\beq \label{polarloin2}
{\partial_z^k \tilde Ai(z) \over \tilde Ai(z)} \sim (-1)^k \mu_f^k(z) .
\eeq


\subsection{Away form the critical layer \label{sec-WKB}}


If $z - z_c$ is small then $g$ is well defined, precisely on $[0,\sigma_1]$ for some small $\sigma_1$ as in the previous section. However, if $z > \sigma_1$, then Langer's transformation is no longer useful, and we may directly use a WKB expansion.
We look for solutions $\psi$ of the form
$$
\psi(z) = e^{ \theta(z) / \eps^{1/2}} 
$$
to the equation $\cA\psi = \eps \partial_z^2 \psi - (U - c) \psi = 0$.
Note that
$$
\eps \partial_z^2 \psi= \Bigl( \theta'^2 + \eps^{1/2} \theta'' \Bigr) \psi.
$$
Hence we look for $\theta$ such that
\beq \label{theta2}
\theta'^2 + \eps^{1/2} \theta'' = (U - c) .
\eeq
As we are only interested in approximate solutions, we solve (\ref{theta2}) in an approximate way, and look
for $\theta$ of the form
$$
\theta = \sum_{i = 0}^M \eps^{i/2} \theta_i 
$$
for some arbitrarily large $M$. The $\theta_i$ may be constructed by iteration, starting from
$$
\theta_0' = \pm \sqrt{U (z) - c} .
$$
If we keep the positive real part to the square root, the $-$ choice leads to a solution going to $0$ at $+ \infty$
and the $+$ choice to a solution going to $+ \infty$ at $+ \infty$.
This construction gives a solution $\psi^{app}_{f,\pm}$ such that
$$
| \cA\psi^{app}_{f,\pm} | \le \nu^N | \psi^{app}_{f,\pm} | ,
$$
where $N$ can be chosen arbitrarily large provided $M$ is sufficiently large.
Note that
\beq \label{pola2}
\partial_z \psi^{app}_{f,\pm}(\sigma_1) = \pm  \mu_f(\sigma_1) (1 + O(\nu^{1/4}) )
\psi^{app}_{f,\pm}(\sigma_1) .
\eeq
More generally,
\beq \label{pola2b}
\partial_z^k \psi^{app}_{f,-}(z) = (-1)^k \mu_f^k(z) \psi^{app}_{f,\pm}(z) (1 + O(\nu^{1/4}))
\eeq
for any $z \ge \sigma_1$ and any $k$, and similarly for $\psi^{app}_{f,+}$.


\subsection{Matching at $z = z_c$ \label{sec-match}}


It remains to match at $z = z_c$ the solutions constructed with the WKB method for $z \ge \sigma_1$ 
with the solutions construct thanks to Langer's transformation for $z \le \sigma_1$. 
We look for constants $a$ and $b$ such that
$$
a {\tilde Ai(z) \over \tilde Ai(\sigma_1)} + b {\tilde Ci(z) \over \tilde Ci(\sigma_1)}
$$
and $\psi^{app}_{f,-} /  \psi^{app}_{f,-}(\sigma_1)$ and their first derivatives match at $z = \sigma_1$, which leads to
$$
a + b  = 1
$$
$$
a {\partial_z \tilde Ai(\sigma_1) \over \tilde Ai(\sigma_1)} + b {\partial_z \tilde Ci (\sigma_1)\over \tilde Ci(\sigma_1)}
= {\partial_z \psi^{app}_{f,-} (\sigma_1) \over  \psi^{app}_{f,-}(\sigma_1)}
$$
We now use (\ref{asymplog}) and (\ref{pola2}) to get $a \sim 1$ and $b = O(\mu_f(\sigma_1)^{-1})$.
We then multiply $a$ and $b$ by $\psi^{app}_{f,-}(\sigma_1)$ to get an extension of $\psi^{app}_{f,-}$
from $z > \sigma_1$ to the whole line.
The construction is similar to extend $\psi^{app}_{f,+}$.


\subsection{From ${\cal A}$ to Airy \label{AA}}


We have now constructed global approximate solutions, that we again call $\psi_{f,\pm}^{app}$.
It remains to solve
\beq \label{doubleprimitive}
\partial_z^2   \phi_{f,\pm}^{app}(z) = \psi_{f,\pm}^{app}(z).
\eeq
Let us focus on the $-$ case, the other being similar.
For $z \ge \sigma_1$, we look for solutions $\phi_{f,\pm}^{app}$ of the form
$$
\phi_{f,\pm}^{app} = h(x)\psi_{f,\pm}^{app} =  h(x) e^{\theta(x) / \eps^{1/2}}
$$
which leads to
$$
h'' + 2 h' \theta'(x) \eps^{-1/2} + h \theta''(x) \eps^{-1/2} + h \theta'^2(x) \eps^{-1} = 1.
$$ 
Hence $h$ may be expanded as a series in $\eps^{1/2}$; namely, 
$$ h(x) = \sum_{i=0}^M \epsilon^{i/2} h_i(x) $$
for some arbitrarily large $M$. The leading term $h_0(x)$ is defined by $$
h_0(x) = {\eps \over \theta'^2(x) },
$$
while the other terms are computed similarly. We may thus write a complete WKB expansion for $\phi_{f,\pm}^{app}$.
In particular
$$
{\phi_{f,-}^{app}(y) \over \phi_{f,-}^{app}(x) } \le e^{- \int_x^y \Re \mu_f(z) dz} 
$$
provided $y > x \ge \sigma_1$. 

For $z < \sigma_1$, we integrate once (\ref{doubleprimitive}) which gives
$$
\partial_z\phi_{f,-}^{app}(z) = \partial_z\phi_{f,-}^{app}(\sigma_1) - \int_z^{\sigma_1} \psi_{f,-}^{app}(t) dt .
$$ 
Now $\psi_{f,-}^{app}$ is a combination of $\tilde Ai$ and $\tilde Ci$ for $z < \sigma_1$.
Let us focus on the $\tilde Ai$ term. We have to study
$$
\int_z^{\sigma_1} \tilde Ai(t) dt =  \int_z^{\sigma_1}
{1 \over \sqrt{g'(t)}} Ai(\gamma g(t)) dt.
$$
Let $s = \gamma g(t)$. Then $ds = \gamma g'(t) dt$, hence
$$
 \int_z^{\sigma_1}
{1 \over \sqrt{g'(t)}} Ai(\gamma g(t)) dt = \gamma^{-1}  \int_{\gamma g(z)}^{\gamma g(\sigma_1)} {1 \over g'(t)^{3/2}} Ai(s) ds.
$$
As $\gamma$ is large, the integral term is equivalent to
$$
{\gamma^{-1} \over g'(z)^{3/2}}   \int_{\gamma g(z)}^{\gamma g(\sigma_1)} Ai(s) ds
\sim {\gamma^{-1}\over g'(z)^{3/2}} \Bigl[ Ai(1,\gamma g(\sigma_1)) - Ai(1,\gamma g(z)) \Bigr]
$$
where we introduced the primitive $Ai(1,x)$ of $Ai$. This leads to
\beq \label{dzphi10}
\partial_z \phi_{f,-}^{app}(z) \sim {\gamma^{-1}\over g'(z)^{3/2}}  Ai(1,\gamma g(z)) .
\eeq
We integrate one again $\partial_z \phi_{f,-}^{app}$ and introduce $Ai(2,x)$, the second primitive of
$Ai$ and obtain
\beq \label{phi10}
\phi_{f,-}^{app}(z) \sim {\gamma^{-2}\over g'(z)^{5/2}}  Ai(2,\gamma g(z)) .
\eeq
The study of $\phi_{f,+}$ is similar.
As the asymptotic expansion of $Ai(x)$ is known, we can compute the asymptotic expansions of $Ai(1,x)$ and
$Ai(2,x)$.


\subsection{End of proof of Proposition \ref{prop-maphif}}


We now multiply $\phi_{f,\pm}^{app}$ by $\gamma^2$, such that, after this normalization, we have
(\ref{valuephi}) and (\ref{valuephid}).
In particular
\beq \label{normalfast}
\phi_{f,\pm}^{app}(0) = O(1) .
\eeq
Using (\ref{asymplog}) and (\ref{pola2b}) we get that
$$
{\partial_z \phi_{f,+}^{app}(z) \over \phi_{f,+}^{app}(z)} = \mu_f(z) (1 + O(\nu^{1/4}))
$$
as soon as $z \gg \nu^{1/4}$. As $\mu_f(z)$ is of order $O(\nu^{-1/4})$ for $z$ of order $\nu^{1/4}$, 
we obtain for any $0 \le z \le z'$,
\beq \label{boundsphif}
\Bigl|{\phi_{f,+}^{app} (z') \over \phi_{f,+}^{app}(z)} \Bigr| \le C \exp \Bigl| \int_z^{z'} \Re \mu_f(s) ds \Bigr| 
\eeq
for some constant $C$, and similarly for $\phi_{f,-}$, which gives (\ref{fastapp2}).

Moreover (\ref{fastapp1}) and (\ref{fastapp1c}) have already been proven.
As $\partial_z \phi_{f,+}^{app}(z)$ is bounded by $C \nu^{-1/4} \phi_{f,+}^{app}(z)$, (\ref{fastapp1}) combined with
(\ref{Airy-de-2}) gives (\ref{fastapp1b}), which ends the proof of Proposition \ref{prop-maphif}.


\subsection{Green function for Airy \label{GreenAiry}}


We will now construct an approximate Green function for the $\Airy$ operator.
We first construct an approximate Green function for ${\cal A}$. Let 
$$
G^{Ai}(x,y) = {1 \over \eps W^{Ai}(x)}  \left\{ \begin{aligned} 
{\psi^{app}_+(y) \over \psi^{app}_+(x)} \quad \hbox{if} \quad y < x,
\\
 {\psi^{app}_-(y) \over \psi^{app}_-(x)} \quad \hbox{if} \quad y > x,
 \end{aligned}\right.
$$
where $W^{Ai}$ is the Wronskian of $\psi^{app}_\pm(x)$. Note that this Wronskian is independent of $x$ and of order
$$
W^{Ai}(x) \sim \gamma = O(\nu^{-1/4}).
$$ 
In particular, we have
$$
G^{Ai}(x,y) = O(\nu^{-1/2})  \exp \Bigl( - C  \Bigl| \int_x^y \Re \mu_f(z) dz \Bigr| \Bigr),
$$
therefore $G^{Ai}$ is rapidly decreasing in $y$ on both sides of $x$, within scales of order $\nu^{1/4}$.
By construction, 
$$
{\cal A} G^{Ai}(x,y) = \delta_x + O(\nu^{3/4}) G^{Ai}(x,y) .
$$
We then integrate twice $G^{Ai}$ in $y$ to get an approximate Green function for the $\Airy$ operator.
More precisely, let
$$
G^{Ai,1}(x,y) = \int_{y}^{+\infty} G^{Ai}(x,z) dz
$$
and similarly for $G^{Airy} = G^{Ai,2}$, the primitive of $G^{Ai,1}$, so that $\pa_y^2 G^{Ai,2}(x,y) = G^{Ai}(x,y)$. We have
$$
G^{Ai,1}(x,y) = O(\nu^{-1/4})  \exp \Bigl( - C  \Bigl| \int_x^y \Re \mu_f(z) dz \Bigr| \Bigr) + O(\nu^{-1/4}) 1_{y < x}
$$
and similarly for $G^{Ai,2}$
 $$
G^{Ai,2}(x,y) = O(1)  \exp \Bigl( -  C \Bigl| \int_x^y \Re \mu_f(z) dz \Bigr| \Bigr) + O(\nu^{-1/4}) 1_{y < x}  x.
$$
Note that, taking into account the fast decay of $G^{Ai}$ near $x$,
\beq \label{GreenAiry2}
\begin{aligned}
\Airy (G^{Ai,2}) &= \delta_x + O(\nu^{3/4}) G^{Ai}(x,y) 
\\&= \delta_x + O(\nu^{1/4} ) \exp \Bigl( -  C \Bigl| \int_x^y \Re \mu_f(z) dz \Bigr| \Bigr) 
\\&= \delta_x + O(\nu^{1/4}).
\end{aligned}\eeq
We define the $AirySolve$ operator by
\beq \label{AirySolve}
AirySolve(f) (y) = \int_0^{+ \infty} G^{Ai,2}(x,y) f(x) dx
\eeq
and the associated error term
\beq \label{ErrorAiry}
\begin{aligned}
ErrorAiry(f) (y) 
& = \int_0^{+ \infty}  O(\nu^{3/4}) G^{Ai}(x,y)  f(x) dx
\end{aligned}
\eeq
the $\Airy$ operator acting on the $y$ variable.
These operators will be used in Section \ref{sec35}.



\section{Rayleigh solutions near critical layers}\label{sec-Rayleigh}


In this section, we construct two approximate solutions $\phi_{s,\pm}^{app}$ to the Orr Sommerfeld equation,
whose modules  respectively go to $+ \infty$ and $0$ as $z \to + \infty$. More precisely, we prove the following Lemma

\begin{lemma}\label{lem-exactphija} For $\nu$ small enough
there exist two independent functions $\phi_{s,\pm}^{app}$ such that
$$
W[\phi_{s,+}^{app},\phi_{s,-}^{app}](z) = 1 +  o(1),
$$
$$
\OS_{\alpha,c}(\phi_{s,\pm}^{app}) = O(\nu^{1/2}).
$$
Furthermore, we have the following expansions in $L^\infty$
$$
\begin{aligned}
\phi_{s,-}^{app} (z)&=  e^{-\alpha z} \Big (U-c + O(\nu^{1/4} )\Big).
\\
\phi_{s,+}^{app} (z)&=  \alpha^{-1} e^{\alpha z} O(1),
\end{aligned}$$
as $z\to \infty$. At $z = 0$, there hold
$$ 
\begin{aligned}
\phi_{s,-}^{app}(0) &=  - c + \alpha {U_+^2 \over U'(0)}   + O(\nu^{1/2})
\\
\phi_{s,+}^{app}(0) &=   - {1 \over U'(0)} +  O(\nu^{1/2}).
\end{aligned}$$
with 
 \end{lemma}
The construction of approximate solutions for Orr Sommerfeld equation starts with the construction of approximate solutions
for the Rayleigh operator.

For small $\alpha$, the construction of solutions to the Rayleigh equation is
a perturbation of the construction for $\alpha = 0$, which is explicit. We will now
detail the construction of an inverse of $Ray_0$ and then of an approximate inverse of $Ray_\alpha$ for small $\alpha$


\subsection{Function spaces}\label{sec-space}


In the next sections we will denote
$$
X^\eta = L_\eta^{\infty} = \Bigl\{ f \quad | \quad \sup_{z \ge 0} | f(z) | e^{\eta z} < + \infty \Bigr\} .
$$
The highest derivative of the Rayleigh equation vanishes at $z=z_c$, since $U(z_c) = c$. 
To handle functions which have large derivatives when $z$ is close to $\Re z_c$, we introduce the space
$Y^\eta$ defined as follows. Note that in our analysis, $z_c$ is never real, so $z - z_c$ never vanishes. 
We are close to a singularity but never reach it.

We say that a function $f$ lies in $Y^\eta$ if for any $z \ge 1$,
$$
|f(z)| + |\partial_z f(z) | + | \partial_z^2 f (z) |  \le C e^{-\eta z}
$$
and if for $z \le 1$,
$$
| \partial_z f(z) | \le C (1 + | \log (z - z_c) |  ) , 
$$
and
$$| \partial_z^2 f(z) | \le C (1 + | z - z_c |^{-1} ).
$$
The best constant $C$ in the previous bounds defines the norm $\| f \|_{Y^{\eta}}$.


\subsection{Rayleigh equation when $\alpha = 0$}


In this section, we  study the Rayleigh operator $\Ray_0$. More precisely, we solve  
\begin{equation}\label{Ray0} 
\Ray_0 (\phi) = (U-c) \partial_z^2 \phi - U'' \phi = f.
\end{equation}
The main observation is that
$$
\Ray_0(U - c) = 0.
$$
Therefore 
$$
\phi_{1,0} = U-c
$$ 
is a first explicit solution. The second one is obtained through the Wronskian equation
$$
W[\phi_{1,0},\phi_{2,0}] = 1 .
$$ 
This leads to  the following Lemma whose proof is given in \cite[Lemma 3.2]{GGN3}
\begin{lemma}[\cite{GGN2,GGN3}] \label{lem-defphi012} 
Assume that $\Im c \not =0$. There exist two independent solutions $\phi_{1,0} = U - c$ and $\phi_{2,0}$ of $\Ray_0(\phi) =0$ 
with unit Wronskian determinant 
$$ 
W(\phi_{1,0}, \phi_{2,0}) := \partial_z \phi_{2,0} \phi_{1,0} - \phi_{2,0} \partial_z \phi_{1,0} = 1.
$$
Furthermore, there exist smooth functions $P(z)$ and $Q(z)$  
with $P(z_c) \ne 0$ and $Q(z_c)\not=0$, so that, near $z = z_c$,
\begin{equation}\label{asy-phi012} 
\phi_{2,0}(z) = P(z) + Q(z) (z-z_c) \log (z-z_c) .
\end{equation}
Moreover
$$
\phi_{2,0}(0) = - {1 \over U'(0)} 
$$
and
\begin{equation}\label{decay-phi012} 
 \partial_z \phi_{2,0}(z)  + \frac{1}{U_+}   \in  Y^{\eta_1}
\end{equation}
for some $\eta_1 > 0$.
 \end{lemma}
Let $\phi_{1,0},\phi_{2,0}$ be constructed as in Lemma \ref{lem-defphi012}. 
Then the Green function $G_{R,0}(x,z)$ 
of the $\Ray_0$ operator can be explicitly defined by 
$$
G_{R,0}(x,z) = \left\{ \begin{array}{rrr} - (U(x)-c)^{-1} \phi_{1,0}(z) \phi_{2,0}(x), 
\quad \mbox{if}\quad z>x,\\
- (U(x)-c)^{-1} \phi_{1,0}(x) \phi_{2,0}(z), \quad \mbox{if}\quad z<x.\end{array}\right.
$$ 
The inverse of $\Ray_0$ is explicitly given by
\begin{equation}\label{def-RayS0}
\begin{aligned}
RaySolver_0(f) (z)  &: =  \int_0^{+\infty} G_{R,0}(x,z) f(x) dx.
\end{aligned}
\end{equation}
Note that the Green kernel $G_{R,0}$ is singular at $z_c$.
The following lemma asserts that the operator $RaySolver_0(\cdot)$ 
is in fact well-defined from $X^{\eta}$ to $Y^{0}$, 
which in particular shows that $RaySolver_0(\cdot)$ gains two derivatives, but losses the fast decay at infinity.  
It transforms a bounded function into a function which behaves like $(z-z_c) \log(z-z_c)$ near $z_c$.

\begin{lemma}\label{lem-RayS0} 
Assume that $\Im c \not =0$. For any $f\in {X^{\eta}}$,   $RaySolver_0(f)$ 
is a solution to the Rayleigh problem \eqref{Ray0}. In addition, $RaySolver_0(f) \in Y^{0}$, and there holds  
$$
\| RaySolver_0(f)\|_{Y^{0}} \le C (1+|\log \Im c|) \|f\|_{{X^{\eta}}},
$$ 
for some constant $C$. 
\end{lemma}
\begin{proof} 
Using \eqref{decay-phi012}, it is clear that $\phi_{1,0}(z)$ and $\phi_{2,0}(z)/(1+z)$
  are uniformly bounded. Thus, considering the cases $x < 1$ and $x > 1$, we obtain 
\begin{equation}\label{est-Gr0}
|G_{R,0}(x,z)| \le  C \max\{ (1+x), |x-z_c|^{-1} \}.
\end{equation}
That is, $G_{R,0}(x,z)$ grows linearly in $x$ for large $x$ and has a singularity of order $|x-z_c|^{-1}$ when $x$ is near $z_c$.
As $|f(z)|\le e^{-\eta z} \| f\|_{X^{\eta}}$, the integral \eqref{def-RayS0} is well-defined and we have 
$$
|RaySolver_0(f) (z)| \le  C\| f\|_{X^{\eta}} \int_0^\infty e^{-\eta x} \max\{ (1+x), |x-z_c|^{-1} \}  \; dx
$$
$$
\le C (1+|\log \Im c|) \| f\|_{X^{\eta}},
$$
in which we used the fact that $\Im z_c \approx \Im c$. 
  
To bound the derivatives, we need to check the order of the singularity for $z$ near $z_c$. 
We note that 
$$
|\partial_z \phi_{2,0}| \le C (1+|\log(z-z_c)|),
$$ 
and hence 
$$
|\partial_zG_{R,0}(x,z)| \le  C \max\{ (1+x), |x-z_c|^{-1} \} (1+|\log(z-z_c)|).
$$
Thus, $\partial_z RaySolver_0(f)(z)$ behaves as $1+|\log(z-z_c)|$ near the critical layer. 
In addition, from the $\Ray_0$ equation, we have 
\begin{equation}\label{identity-R0f} 
\partial_z^2 (RaySolver_0(f)) = \frac{U''}{U-c} RaySolver_0(f) + \frac{f}{U-c}.
\end{equation}
This proves that $RaySolver_0(f) \in Y^{0}$ and gives the desired bound. 
\end{proof}


\subsection{Approximate Green function when $\alpha \ll1$}


Let $\phi_{1,0}$ and $\phi_{2,0}$ be the two solutions of $\Ray_0(\phi) = 0$ that are constructed above, in Lemma \ref{lem-defphi012}. 
We now construct an approximate Green function to the Rayleigh equation for $\alpha > 0$.
To proceed, let us introduce
\begin{equation}\label{def-phia12}
\phi_{1,\alpha } = \phi_{1,0} e^{-\alpha z} ,\qquad \phi_{2,\alpha} = \phi_{2,0} e^{-\alpha z}.
\end{equation}
A direct computation shows that their Wronskian determinant equals
$$
W[\phi_{1,\alpha},\phi_{2,\alpha}] =  \partial_z \phi_{2,\alpha} \phi_{1,\alpha} - \phi_{2,\alpha} \partial_z \phi_{1,\alpha}  = e^{-2\alpha z}.
$$ 
Note that the Wronskian vanishes at infinity since both functions have the same behavior at infinity. 
In addition, 
\begin{equation}\label{Ray-phia12}
\Ray_\alpha(\phi_{j,\alpha}) = - 2 \alpha (U-c) \partial_z \phi_{j,0} e^{-\alpha z} 
\end{equation}
We are then led to introduce an approximate Green function $G_{R,\alpha}(x,z)$, defined by 
$$
G_{R,\alpha}(x,z) = \left\{ \begin{array}{rrr} (U(x)-c)^{-1} e^{-\alpha (z-x)}  \phi_{1,0}(z) \phi_{2,0}(x), \quad \mbox{if}\quad z>x\\
(U(x)-c)^{-1} e^{-\alpha (z-x)}  \phi_{1,0}(x) \phi_{2,0}(z), \quad \mbox{if}\quad z< x.\end{array}\right.
$$
Again, like $G_{R,0}(x,z)$, the Green function $G_{R,\alpha}(x,z)$ is ``singular'' near $z_c$.
By a view of \eqref{Ray-phia12}, 
\begin{equation}\label{id-Gxz}
\Ray_\alpha (G_{R,\alpha}(x,z)) = \delta_{x}  + E_{R,\alpha}(x,z),
\end{equation}
for each fixed $x$, where the error kernel $E_{R,\alpha}(x,z)$ is defined by  
$$
E_{R,\alpha}(x,z) = 
 \left\{ \begin{array}{rrr}
- 2 \alpha (U(z) - c) (U(x)-c)^{-1} e^{-\alpha (z-x)} \partial_z \phi_{1,0}(z) \phi_{2,0}(x), \quad \mbox{if}\quad z>x\\
 - 2 \alpha (U(z) - c) (U(x)-c)^{-1}e^{-\alpha (z-x)}\  \phi_{1,0}(x) \partial_z \phi_{2,0}(z), \quad \mbox{if}\quad z< x.\end{array}\right.
$$
We then introduce an approximate inverse of the operator $\Ray_\alpha$ defined by
\begin{equation}\label{def-RaySa}
RaySolver_\alpha(f)(z) 
:= \int_0^{+\infty} G_{R,\alpha}(x,z) f(x) dx
\end{equation}
and the related error operator 
\begin{equation}\label{def-ErrR}
Err_{R,\alpha}(f)(z) := 2\alpha (U(z) - c) \int_0^{+\infty} E_{R,\alpha}(x,z) f(x) dx
\end{equation}

\begin{lemma}\label{lem-RaySa} 
Assume that $\Im c > 0$. For any $f\in {X^{\eta}}$,  with $\alpha<\eta$, 
the function $RaySolver_\alpha(f)$ is well-defined in $Y^{\alpha}$, and satisfies 
$$ 
\Ray_\alpha(RaySolver_\alpha(f)) = f + Err_{R,\alpha}(f).
$$
Furthermore, there hold  
\begin{equation}\label{est-RaySa}
\| RaySolver_\alpha(f)\|_{Y^{\alpha}} \le C (1+|\log \Im c|) \|f\|_{{X^{\eta}}},
\end{equation}
and 
\begin{equation}\label{est-ErrRa} 
\|Err_{R,\alpha}(f)\|_{Y^{\eta}} \le C | \alpha |   (1+|\log (\Im c)|)  \|f\|_{X^{\eta}} ,
\end{equation} 
for some universal constant $C$. 
\end{lemma}
\begin{proof}  
The proof follows that of Lemma \ref{lem-RayS0}.
 Indeed, since 
 $$
 G_{R,\alpha}(x,z)  = e^{-\alpha (z-x)} G_{R,0}(x,z),
 $$ 
 the behavior near the critical layer $z=z_c$ is the same for these two Green functions, 
 and hence the proof of \eqref{est-RaySa} and \eqref{est-ErrRa} near the critical layer identically follows  
 from that of Lemma \ref{lem-RayS0}.  

Let us check  the behavior at infinity. 
Consider the case $p=0$ and assume $\|f \|_{X^{\eta}} =1$. 
Using \eqref{est-Gr0}, we get
$$
|G_{R,\alpha}(x,z)| \le  C e^{-\alpha (z - x)} \max\{ (1+x), |x-z_c|^{-1} \}.
$$
Hence, by definition, 
$$
 |RaySolver_\alpha (f)(z) |\le C e^{-\alpha z} \int_0^\infty e^{\alpha x} e^{-\eta x}\max\{ (1+x), |x-z_c|^{-1} \}\; dx 
 $$ 
which is  bounded by $C(1+|\log \Im c|) e^{-\alpha z}$, upon recalling that $\alpha<\eta$.  
This proves the right exponential decay of $RaySolver_\alpha (f)(z)$ at infinity, for all $f \in X^{\eta}$. 

The estimates on $Err_{R,\alpha}$ are the same, once we notice that 
$(U(z) - c) \partial_z \phi_{2,0}$ has the same bound as that for $\phi_{2,0}$, and similarly for $\phi_{1,0}$. 
\end{proof}

\begin{remark}\label{rem-Ray} For $f(z) = (U-c) g(z)$ with $g\in {X^{\eta}}$, the same proof as done for Lemma  \ref{lem-RaySa} yields 
\begin{equation}\label{rem-RaySa}
\begin{aligned}
\| RaySolver_\alpha(f)\|_{Y^{\alpha}} &\le C \|g\|_{{X^{\eta}}},
\\
\|Err_{R,\alpha}(f)\|_{Y^{\eta}} &\le C | \alpha |   \|g\|_{X^{\eta}} 
\end{aligned}\end{equation} 
which are slightly better estimates as compared to \eqref{est-RaySa} and \eqref{est-ErrRa}. 
\end{remark}

\subsection{Construction of $\phi^{app}_{s,-}$ }\label{sec-exactRayleigh}


Let us start with the decaying solution $\phi_{s,-}$. We note that 
$$ 
\psi_{0} =  e^{-\alpha z} (U-c)
$$
is only a $O(\alpha)$ smooth approximate solution to Rayleigh equation since
$$
e_{0} =  Ray_\alpha (\psi_0) = - 2\alpha (U-c) U' e^{-\alpha z}.
$$
Similarly, a direct computation shows that 
$$
\OS_{\alpha,c}(\psi_0) = O(\alpha) = O(\nu^{1/4}).
$$ 
This is not sufficient for our purposes, and we have to go to the next
order. We therefore introduce
$$
\psi_1 = - RaySolver_\alpha(e_0).
$$
Note that $\psi_1$ is of order $O(\alpha)$ in $Y^\eta$, and behaves like $\alpha (z - z_c) \log (z - z_c)$ near $z_c$.
It particular $\psi_1$ is not a smooth function near $z_c$. Its fourth order derivative behaves like
$\alpha / (z - z_c)^3$ in the critical layer. We have 
$$
\OS_{\alpha,c}(\psi_1) = \eps (\partial_z^2 - \alpha^2)^2 \psi_1 + Ray_\alpha(\psi_1) .
$$
hence 
\beq \label{errr1}
\OS_{\alpha,c}(\psi_0 + \psi_1) = \eps (\partial_z^2 - \alpha^2)^2 \psi_1 + Err_{R,\alpha}(e_0) .
\eeq
Note that
\beq \label{Raypsi1}
 Err_{R,\alpha}(e_0) =  O ( \alpha^2 | \log(\alpha) |)_{Y^\eta} .
\eeq
Moreover, using Rayleigh equation,
$$
(\partial_z^2 - \alpha^2) \psi_1 = {Ray_\alpha(\psi_1) - U'' \psi_1 \over U -c },
$$
hence
\beq \label{errr}
E := \eps (\partial_z^2 - \alpha^2)^2 \psi_1 
= \eps (\partial_z^2 - \alpha^2)   \Bigl\{ {Ray(\psi_1) - U'' \psi_1 \over U -c } \Bigr\} .
\eeq
In view of Remark \ref{rem-Ray}, $Ray_\alpha(\psi_1)$ and $U'' \psi_1$ are of order $O(\alpha)$ in $X^\eta$. We thus have
$$
\eps \alpha^2 \Bigl| {Ray(\psi_1) - U'' \psi_1 \over U -c } \Bigr|
\le C {\eps \alpha^2 \over | z - z_c | } 
\le C {\eps \alpha^2 \over | \Im c | } \le C \eps \alpha = O(\nu)_{X^\eta}.
$$
Next we expand $\partial_z^2$ in (\ref{errr}) which gives three terms. The first one is
$$
\eps {\partial_z^2 Ray(\psi_1) - \partial_z^2(U'' \psi_1) \over U - c} .
$$
As $Ray_\alpha(\psi_1)$ and $\psi_1$ are of order $O(\alpha)$ in $Y^\eta$, this quantity is bounded by
\beq \label{bound1}
C \eps \Bigl( 1 + {\alpha | \log \Im c | \over |z - z_c|}+ {\alpha  \over |z - z_c|^2} \Bigr) \le C {\eps \alpha  \over | \Im c|^2} = O(\alpha^2 ). 
\eeq
The third term in the expansion of (\ref{errr}) is
$$
\eps \Bigl[  Ray(\psi_1) - U'' \psi_1 \Bigr] (z - z_c)^{-3}
$$
which is bounded by $O(\alpha)$. 
Thus, we can write the error term as 
$$ E = E_1 + E_2, \qquad E_1 = O(\alpha^2), \qquad E_2\le C \eps \alpha | z - z_c |^{-3} .
$$
This error term $E_2$ is therefore too large for our purposes. However, it is located near $z = z_c$, namely in the critical
layer.  We therefore correct $\psi_0 + \psi_1$ by $\psi_2$ by approximately inverting the Airy operator in this layer. 
More precisely, let
$$
\psi_2 = - AirySolve(E_2) ,
$$
which will create an error term
$$
\begin{aligned}
E_3 &=  \OS_{\alpha,c} (\psi_2) + E_2 
\\& = \Airy (\psi_2) + \hbox{OrrAiry} (\psi_2) + E_2
\\&=  \hbox{OrrAiry} (\psi_2) + ErrorAiry(E_2).
\end{aligned}$$
Let us now bound $\psi_2$. Using \eqref{AirySolve}, we have
$$
| \psi_2 (y) |\le C \eps \alpha \int_0^{+ \infty} | x - z_c |^{-3}
 \Bigl(e^{ - | \int_x^y \Re \mu_f(z)dz |}   + O(\nu^{-1/4}) 1_{y < x} x \Bigr) dx .
$$
Writing $1_{y < x} x = 1_{y < x} (x - z_c) + 1_{y < x} z_c$, we thus have 
$$
\begin{aligned}
| \psi_2 (y) | &\le C \eps \alpha \int_0^{+ \infty} \Bigl ( | x - z_c |^{-3} + \nu^{-1/4} |x-z_c|^{-2}\Bigr )
 dx 
 \\
 &\le C \eps \alpha \Bigl( |\Im c|^{-2}+ \nu^{-1/4} |\Im c|^{-1}\Bigr )  = O( \alpha^2)
 .\end{aligned}
$$
This together with \eqref{Airy-de-2} yields $ \hbox{OrrAiry} (\psi_2) = O( \alpha^2)$.  Similarly, using (\ref{GreenAiry2}), we get 
$$
ErrorAiry(E_2)(z) \le C \eps \alpha\int_0^{+ \infty} | x - z_c |^{-3} O(\nu^{1/4}) dx = O(\alpha^3).
$$
Therefore, we have
$$
\OS_{\alpha,c}(\psi_0 + \psi_1 + \psi_2) = O(\alpha^2) .
$$
We define
$$
\phi_{s,-}^{app} = \psi_0 + \psi_1 + \psi_2 .
$$
To end this section we compute $\psi(0)$. By definition,
$$
\begin{aligned}
\psi_1(0) &= - RaySolver_\alpha(e_0) (0)
 = - \phi_{2,\alpha}(0) \int_0^{+\infty} e^{2\alpha x}\phi_{1,\alpha}(x) {e_0(x) \over U(x) - c} dx 
\\&=  -  2 \alpha \phi_{2,0}(0) \int_0^{+\infty}  U' (U-c)dz =
 \alpha  \phi_{2,0}(0) \Bigl[ (U - c)^2 \Bigr]_0^{+ \infty}
\\&= - \alpha \phi_{2,0}(0) \Bigl[ (U_+ - c)^2 - c^2 \Bigr]  
=   \alpha {U_+  \over U'(0)} (U_+ - 2 c) .
\end{aligned}$$
From the definition, we have 
$$
\phi_{s,-}(0) = U_0 -c + \psi_1(0) + O(\alpha^2).
$$
This proves the lemma,  using that $U_0 - c = O(z_c)$.


\subsection{Construction of $\phi^{app}_{s,+}$ \label{sec35}}


We first start with $\phi_{2,\alpha} = \phi_{2,0} e^{- \alpha z}$, which is an approximate solution of Rayleigh equation,
up to a $O(\alpha)$ error term.
Let 
$$
e_1 = Ray_\alpha (\phi_{2,\alpha}) = O(\alpha).
$$
We introduce
$$
\phi_3 = - RaySolver_\alpha(e_1) .
$$
Then 
$$
Ray_\alpha( \phi_{2,\alpha} + \phi_3) = -Err_{R,\alpha}(e_1) = O(\alpha^2).
$$
Let
$$
\phi_{s,+} = \phi_{2,\alpha} + \phi_3.
$$
Note that $\phi_{s,+}$ is bounded in $Y^\eta$, and thus behaves like $(z - z_c) \log (z - z_c)$ near $z_c$


We have
$$
\OS_{\alpha,c}(\phi_{s,+}) = - \eps (\partial_z^2 - \alpha^2)^2 \phi_{s,+},
$$
but, using Rayleigh equation,
$$
(\partial_z^2- \alpha^2) \phi_{s,+} = {U'' \over U - c} \phi_{s,+},
$$
hence
$$
(\partial_z^2- \alpha^2)^2 \phi_{s,+} = (\partial_z^2- \alpha^2) \Bigl( {U'' \over U - c} \phi_{s,+} \Bigr).
$$
The worst term in the right hand side is
$$
\Bigl[ \partial_z^2 \Bigl( { U'' \over U - c} \Bigr) \Bigr] \phi_{s,+} 
$$
which is of order $(\Im z_c)^{-3}$ for $\phi_{s,-}$. Hence $\OS_{\alpha,c}(\phi_{s,+})$ is of order
$$
{\eps \over (\Im z_c)^3}
\sim {\nu \over \alpha} {1 \over \nu^{3/4}} \sim 1 
$$
near $z = z_c$, which is $\alpha^{-1}$ larger than for $\phi_{s,-}^{app}$. 
As a consequence we loose a factor $\alpha^{-1}$ in the end of the construction with respect to $\phi_{s,-}^{app}$.
The construction is similar, up to a factor $\alpha^{-1}$.


\section{Green function for Orr-Sommerfeld equations}


We now construct an approximate Green function $G^{app}$ using the approximate solutions
$\phi_{s,\pm}^{app}$ and $\phi_{f,\pm}^{app}$. We will decompose this Green function into two components
$$
G^{app} = G_i^{app} + G_b^{app}
$$
where $G_i^{app}$ takes care of the source term $\delta_x$ and where $G_b^{app}$ takes care of the boundary conditions.


\subsection{Interior approximate Green function}


We look for $G_i^{app}(x,y)$ of the form
$$
G_i^{app}(x,y) = a_+(x)  \phi_{s,+}^{app}(y) 
+ {b_+(x) }  {\phi_{f,+}^{app}(y) \over \phi_{f,+}^{app}(x)} 
\quad \hbox{for} \quad y < x,
$$
$$
G_i^{app}(x,y) = a_-(x)  \phi_{s,-}^{app}(y)
+ {b_-(x)} {\phi_{f,-}^{app}(y) \over \phi_{f,-}^{app}(x)} 
\quad \hbox{for} \quad y  > x,
$$
where $\phi_{f,\pm}^{app}(x)$ play the role of normalization constants. Let
$$
F_\pm = \phi^{app}_{f,\pm}(x) 
$$
and let 
$$
v(x) = (- a_-(x), a_+(x), - b_-(x), b_+(x) ) .
$$
By definition of a Green function, $G^{app}$, $\partial_y G^{app}$ and $\partial_y^2 G^{app}$ are continuous at $x = y$,
whereas $- \eps \partial_y^3 G^{app}$ has a unit jump at $x = y$. 
Let
\beq \label{matriceM}
M = \left( \begin{array}{cccc} 
\phi_{s,-} & \phi_{s,+} & \phi_{f,-} / F_-  & \phi_{f,+} /F_+  \cr
\partial_y \phi_{s,-}  / \mu_f & \partial_y\phi_{s,+} /   \mu_f
& \partial_y\phi_{f,-} /  F_- \mu_f & \partial_y\phi_{f,+} / F_+ \mu_f  \cr
\partial_y^2 \phi_{s,-} / \mu_f^2 &\partial_y^2 \phi_{s,+} /   \mu_f^2 
& \partial_y^2 \phi_{f,-} /  F_- \mu_f^2  & \partial_y^2 \phi_{f,+} /  F_+ \mu_f^2 \cr
 \partial_y^3 \phi_{s,-} /  \mu_f^3 &  \partial_y^3 \phi_{s,+} /   \mu_f^3
& \partial_y^3 \phi_{f,-}  /  F_- \mu_f^3 &  \partial_y^3 \phi_{f,+} /  F_+ \mu_f^3 \cr 
\end{array} \right) ,
\eeq
where the functions $\phi_{s,\pm} = \phi_{s,\pm}^{app}$ and $\phi_{f,\pm} = \phi_{f,\pm}^{app}$ 
and their derivatives are evaluated at $y=x$, and where the various factors $\mu_f$ are introduced to renormalize the lines of $M$.
Then 
\beq \label{Mv}
M v = (0,0,0,- 1/ \eps \mu_f^3) .
\eeq
We will  evaluate $M^{-1}$ using the following block structure. 
Let $A$, $B$, $C$ and $D$ be the two by two matrices defined by
$$
M = \left( \begin{array}{cc} 
A & B \cr
C & D \cr \end{array} \right) .
$$
We will prove that $C$ is small, that $D$ is invertible and that $A$ is related to Rayleigh equations.
This will allow the construction of an explicit approximate inverse, and by iteration, of the inverse of $M$. 
Let us detail these points.

Let us first study $D$. Following  (\ref{fastapp1c}), for $z \gg \nu^{1/4}$,
$$
D = \left( \begin{array}{cc}
1 & 1 \cr
-1 & 1 \cr 
\end{array} \right) + o(1),
$$
hence $D$ is invertible and
$$
D^{-1} =  \left( \begin{array}{cc}
1 & -1 \cr
1 & 1 \cr 
\end{array} \right) + o(1).
$$
For $z$ of order $\nu^{1/4}$, we note that $F_+$ and $F_-$ are of order $O(1)$,
$$
\partial_y^2 \phi_{f,-} = \gamma^2 {1 \over (g')^2(x)} {Ai ( \gamma g(x) ) \over Ai(2, \gamma g(x)) } + O(\gamma)
$$
and similarly for $\partial_y \phi_{f,-}$ and $\partial_y \phi_{f,+}$. Note that $\gamma^2 / \mu_f^2$, $\gamma^3 / \mu_f^3$,
$Ai(2,\gamma g(x))$ and $Ci(2,\gamma g(x))$ are of order $O(1)$. As $g'(z_c) = 1$, up to normalization of
lines and columns, $D$ is close to
$$
\left( \begin{array}{cc}
Ai & Ci \cr
Ai' & Ci' \cr 
\end{array} \right) 
$$
which is invertible by definition of the special Airy functions $Ai$ and $Ci$.

Let us turn to $C$. The worst term in $C$ is those involving $\phi_{s,+}$ because of its logarithmic singularity.
More precisely, $\partial_y^k \phi_{s,+}$ behaves like $(z-z_c)^{k-1}$ and
is bounded by $| \Im c |^{k-1} \sim \nu^{(1 - k) / 4}$ for $k = 2$, $3$.
Hence, as $\mu_f^{-1} = O(\nu^{1/4})$, 
$$
C =  \left( \begin{array}{cc}
O(\nu^{1/2 }) & O(\nu^{1/2 }  (z-z_c)^{-1})  \cr
O(\nu^{3/4 }) & O(\nu^{3/4 } (z - z_c)^{-2}) \cr
\end{array} \right) 
$$
Note that $A = A_1 A_2$ with
$$
A_1 = \left( \begin{array}{cc}  
1 & 0 \cr
0 & \mu_f^{-1} \cr
\end{array} \right),
\quad 
A_2 = \left( \begin{array}{cc}  
\phi_{s,-}^{app} & \phi_{s,+}^{app} \cr
\partial_y \phi_{s,-}^{app}  & \partial_y \phi_{s,+}^{app}  \cr
\end{array} \right) .
$$
We have
$$
A_2^{-1} = {1 \over \det(A_2)} \left( \begin{array}{cc}  
\partial_y \phi_{s,+}^{app}  & - \phi_{s,+}^{app} \cr
- \partial_y \phi_{s,-}^{app}  & \phi_{s,-}^{app}  \cr
\end{array} \right) .
$$
The determinant  $A_2$ is the Wronskian of $\phi_{s,\pm}^{app}$ and hence a perturbation
of the Wronskian of $\phi_{1,\alpha}$ and $\phi_{2,\alpha}$ which equals to $e^{- \alpha x}$.
We distinguish between $x < \alpha^{1/2}$ and $x > \alpha^{1/2}$. In the second case, $\OS_{c,\alpha}$
is a small perturbation of a constant coefficient fourth order operator. The Green function may therefore be explicitly computed.
We will not detail the computations here and focus on the case where $x < \alpha^{1/2}$. In this case the Wronskian is of order $O(1)$.
As a consequence 
$$
A_2^{-1} = \left( \begin{array}{cc}  
O(\log | z - z_c | )  & O(1) \cr
O(1)  & O(z - z_c)  \cr
\end{array} \right) 
$$
and
$$
A^{-1} = \left( \begin{array}{cc}  
O(\log | z - z_c |)  & O(\mu_f) \cr
O(1)  & O(\mu_f (z - z_c))  \cr
\end{array} \right) 
$$
We now observe that the matrix $M$ has an approximate inverse
$$
\widetilde M = \left( \begin{array}{cc}
A^{-1} & - A^{-1} B D^{-1}  \cr
0 & D^{-1} \cr 
\end{array} \right) 
$$
in the sense that $M  \widetilde M = Id + N$ where
$$
N =  \left( \begin{array}{cc}
0 & 0 \cr 
 C A^{-1} &  - C A^{-1} B D^{-1} \cr 
\end{array} \right) .
$$
Now a direct calculation shows that 
$$
C A^{-1} = O(\nu^{1/4})
$$ 
since $\Im z_c = O(\nu^{1/4})$. 
As $D^{-1}$ and $B$ are uniformly bounded,
$N = O(\nu^{1/4})$. In particular, $(Id + N)^{-1}$
is well defined and 
$$
M^{-1} = \widetilde M (Id + N)^{-1} = \widetilde M \sum_n N^n.
$$
Note that the two first lines of $N^n$ vanish. The other lines are at most of order $O(\nu^{1/4})$.  Therefore
$$
(Id + N)^{-1} (0,0,0, 1 / \nu \mu_f^3) = \Bigl( 0, 0, O(1 / \nu \mu_f^4), 1 / \nu \mu_f^3 \Bigr) .
$$
As $D^{-1}$ is bounded and $A^{-1} B D^{-1}$ is of order $O(\mu_f)$, we obtain that 
$a_\pm$ and $b_\pm$ are respectively bounded by $C / \nu \mu_f^2$ and $C / \nu \mu_f^3$.


\subsection{Boundary approximate Green function}


We now add to $G_i^{app}$ another Green function $G_b^{app}$ to handle the boundary conditions.
We look for $G_b^{app}$ under the form
$$
G_b^{app}(y) = d_s \phi_{s,-}(y)  + d_f {\phi_{f,-}(y)  \over \phi_{f,-}(0)},
$$
where $\phi_{f,-}(0)$ in the denominator is a normalization constant, 
and look for $d_s$ and $d_f$ such that
\beq \label{Greenb1}
G_i^{app}(x,0) + G_b^{app}(0) = 
\partial_y G_i^{app}(x,0) + \partial_y G_b^{app}(0) =  0.
\eeq
Let
$$
M = \left( \begin{array}{cc} \phi_{s,-} & \phi_{f,-} / \phi_{f,-}(0) \cr
\partial_y \phi_{s,-} & \partial_y \phi_{f,-} / \phi_{f,-}(0) \cr \end{array} \right) ,
$$
the functions being evaluated at $y = 0$. Then (\ref{Greenb1}) can be rewritten as
$$
M d = - (G_i^{app}(x,0), \partial_y G_i^{app}(x,0)) 
$$
where $d = (d_s,d_f)$. Note that
$$
(G_i^{app}(x,0), \partial_y G_i^{app}(x,0))  = Q (a_+,b_+)
$$
where
$$
Q =  \left( \begin{array}{cc}
 \phi_{s,+}(0)  & 1 \cr
 \partial_y \phi_{s,+}(0)   & \partial_y \phi_{f,+}(0) / \phi_{f,+}(0) \cr
\end{array} \right) 
= \left( \begin{array}{cc}
 O(1)  & 1 \cr
 O( \log( \nu))   & O(\nu^{-1/4}) \cr
\end{array} \right) .
$$
By construction
\beq \label{defid}
d = -   M^{-1} Q (a_+,b_+) .
\eeq
We have
$$
M^{-1} = {1 \over \det(M)}  \left( \begin{array}{cc} \partial_y \phi_{f,-}(0) / \phi_{f,-}(0) & - 1 \cr
- \partial_y \phi_{s,-}(0)& \phi_{s,-}(0)  \end{array} \right) .
$$
The determinant of $M$ equals
$$
\det M = {W[\phi_{s,-},\phi_{f,-}](0) \over \phi_{f,-}(0)} 
$$
and does not vanish by assumption. Therefore
$$
M^{-1} = \left( \begin{array}{cc} O(\nu^{-1/4}) & - 1 \cr
O(1) & O(\nu^{1/4}) \end{array} \right) .
$$
As a consequence,
$$
M^{-1} Q =
\left( \begin{array}{cc} O(\nu^{-1/4}) & O(\nu^{-1/4}) \cr
O(1) & O(1) \end{array} \right).
$$


\subsection{Exact Green function}


Once we have an approximate Green function, we obtain the exact Green function by iteration, following the strategy developed in the previous part.

%
%
%

\chapter{Semi group of Navier-Stokes: unstable profiles}\label{chapter-green}





In this chapter, we study the linearized Navier-Stokes problem around a stationary boundary layer profile $U_s = (U(z),0)^{tr}$. More precisely,  
define the linear operator 
\beq \label{linearizedNS}
L  \omega = - U \partial_x \omega - U'' v_2 + \nu \Delta \omega,
\eeq
where $\omega = \partial_z v_1 - \partial_x v_2$ and $\nabla \cdot v = 0$, together with the boundary condition $v = 0$ on $z = 0$. Taking advantage of the divergence-free condition, we can recover velocity from vorticity through the Biot-Savart law: $v = \nabla^\perp \phi$ with $\Delta \phi =\omega$. The no-slip boundary conditions become 
\begin{equation}\label{BC-phi} \partial_x \phi_{\vert_{z=0}} = \partial_z \phi_{\vert_{z=0}} = 0. \end{equation} 
We will consider the periodic setting in the horizontal variable $x$. The linearized Navier Stokes equation near $U_s$ then reads
$$
\partial_t \omega - L \omega = 0
$$
with initial data $\omega_{\vert_{t=0}} = \omega^0(z)$. We are interested in deriving sharp bounds on the semigroup $e^{Lt}$ that are uniform in the inviscid limit. 

As the boundary layer profile $U = U(z)$ is independent of $x$, it is then convenient to introduce the Fourier transform in the $x$ variable.
Solutions to the linearized problem will be constructed in term of Fourier series
\beq \label{Fourier-w}
\omega(t,x,z) = \sum_{\alpha\in \ZZ} e^{i\alpha x} \hat \omega_\alpha(t,z)
\eeq
where the Fourier coefficients $\hat \omega_\alpha(t,z)$ solve the linear problem 
\begin{equation}\label{eqs-vorticitya} 
\partial_t \hat \omega_\alpha - L_\alpha \hat \omega_\alpha = 0, 
\end{equation}
with 
\begin{equation}\label{def-aLhat}
L_\alpha \hat \omega_\alpha 
: = - i\alpha U\hat \omega_\alpha + i\alpha \hat \phi_\alpha U'' +  \nu  \Delta_\alpha \hat \omega_\alpha, \qquad \Delta_\alpha \hat \phi_\alpha = \hat \omega_\alpha,
\end{equation}
recalling that $
\Delta_\alpha = \partial_z^2 - \alpha^2,
$ together with the boundary conditions
\begin{equation}\label{BC-ahat}
\alpha \hat \phi_\alpha = \partial_z\hat  \phi_\alpha =0, \qquad \mbox{on}\quad z=0.
\end{equation}
Observe that at $\alpha=0$, the linear problem \eqref{eqs-vorticitya}-\eqref{BC-ahat} becomes 
$$ 
\partial_t \hat \omega_0 -  \nu \partial_z^2 \hat \omega_0 = 0
$$
whose semigroup can be explicitly solved. In particular, $\hat \omega_0(t,z) =0$ for all positive times, if it is initially zero. In what follows, we shall thus focus on the case when $\alpha >0$; the case $\alpha <0$ is similar.

\section{Boundary layer spaces and main result}

In the inviscid case $\nu =0$, the boundary condition \eqref{BC-ahat} is reduced to $\alpha \hat \phi_\alpha =0$ on $z=0$. As a consequence of this discrepancy in boundary conditions, boundary layers arise in the inviscid limit. We thus need to introduce function spaces that capture boundary layer behavior of vorticity near the boundary. Precisely, we denote by ${\cal B}^{\beta,\gamma}$ the boundary layer function spaces with finite norm 
$$
\| f \|_{\beta,\gamma}  = \sup_{z \ge 0} | f(z) | e^{\beta  z} 
\Bigl( 1 + \delta^{-1} \phi_{P} (\delta^{-1} z)  \Bigr)^{-1}
$$
in which the boundary layer thickness $$\delta = \gamma \sqrt \nu$$ and the boundary layer weight function $$ \phi_P(z) = \frac{1}{1+z^P} $$
for some fixed constant $P>1$. We expect that the vorticity function $\hat \omega_\alpha(t,z)$, for each fixed $t,\alpha$, will be in ${\cal B}^{\beta,\gamma}$, precisely describing the behavior near the boundary and near infinity. However, the derivative of vorticity will not be in the function space: $\partial_z \omega \notin {\cal B}^{\beta,\gamma}$. Therefore, for $p\ge 1$, we are led to introduce the following one-dimensional boundary layer function spaces ${\cal B}^{\beta,\gamma,p}$, together with their finite corresponding norms
\begin{equation}\label{def-blnorm}
\| f \|_{\beta,\gamma, p}  = \sup_{z\ge 0} | f(z) | e^{\beta z} 
\Bigl( 1 + \sum_{q=1}^p \delta^{-q} \phi_{P-1+q} (\delta^{-1} z)  \Bigr)^{-1} .
\end{equation}
Note that
${\cal B}^{\beta,\gamma,1} = {\cal B}^{\beta,\gamma}$. We expect that $\partial_z \omega \in \cB^{\beta, \gamma,2}$, and more generally, 
\begin{equation}\label{expect-wbl} \partial_z^k \omega \in \cB^{\beta, \gamma,1+k}, \qquad k\ge 0.\end{equation}
By convention, $\cB^{\beta,\gamma,0} = \cA^{\beta}$, the function space with no boundary layer behavior, which is equipped with the norm 
$$\| f \|_{\beta}  = \sup_{z\ge 0} | f(z) | e^{\beta z} .$$

Our main result in this chapter is the following. 

\begin{theorem} \label{theo-main} 
Let $U(z)$ be a $C^\infty$ smooth boundary layer profile such that 
$U(0)=0$ and 
\begin{equation}\label{def-Ubl}
  |\partial_z^k (U(z) - U_+)| \le C_k e^{-\eta_0 z} , \qquad \forall~ z\ge 0, \quad k\ge 0,
  \end{equation}
for some constants $C_k, U_+, \eta_0$.
 Let $\lambda_0$ be the maximal unstable eigenvalue of the linearized Euler equations around $U$, 
namely the eigenvalue which has the largest real part $\Re\lambda_0$. 
 In case there is no unstable eigenvalue we set $\lambda_0=0$. 
 Let $\tau > 0$. Then there is a constant $C_{\tau}$,  so that  for any for $\alpha \le \nu^\zeta$, $\zeta<1/2$, 
 $\alpha \ne 0$ and
 $\nu \le 1$,  
\begin{equation}\label{main-bound}
\| e^{L_\alpha t} \omega_\alpha(0,.) \|_{\beta,\gamma}
 \le C_\tau e^{(\Re \lambda_0 + \tau) t }  \|\omega_\alpha(0,.) \|_{\beta,\gamma} ,  
\end{equation}
for any initial vorticity $\omega_\alpha(0,.)$  and for all $t\ge 0$, provided $\beta$ and $\gamma$ are small enough.  
\end{theorem}


 Theorem \ref{theo-main} provides a semigroup estimate for the linearized Navier-Stokes problem near an unstable 
 boundary layer profile, which is uniform in the vanishing viscosity limit. 
 Such an estimate is sharp without the knowledge of the multiplicity of the maximal unstable eigenvalue $\lambda_0$. 
The difficulty lies in the fact that the initial data $\omega$  
has a boundary layer type behavior and 
hence in order to propagate this boundary layer behavior, pointwise bounds on the Green 
function of linearized Navier-Stokes equations near a boundary layer are needed.

\section{The resolvent}


In order to study the semigroup $e^{L_\alpha t}$, it is convenient to 
take the Laplace transform of \eqref{eqs-vorticitya}. This leads to  the resolvent equation 
\begin{equation}\label{resolvent} 
(\lambda-L_\alpha)  \omega_\alpha =  f_\alpha
\end{equation} 
with $f_\alpha =  \omega_\alpha(0,z)$. As $L_\alpha$ is a compact perturbation of the Laplacian $\Delta_\alpha$, 
standard energy estimates yield that the operator $(\lambda - L_\alpha)^{-1}$ 
is well-defined and bounded from $H^{-1}$ to $H^{-1}$. Indeed, writing $\omega_\alpha = \Delta_\alpha \phi_\alpha$ and multiplying the equation \eqref{resolvent} by $\phi_\alpha$, we get 
 $$ 
\Re \lambda \| \nabla_\alpha \phi_\alpha\|^2_{L^2}  +  \nu \| \Delta_\alpha \phi_\alpha \|_{L^2}^2 \le \| f_\alpha \|_{H^{-1}} \| \phi_\alpha\|_{H^1} 
+ |\alpha| \int_0^\infty |U' \phi_\alpha \partial_z\phi_\alpha| \; dz ,
$$
in which the last term is bounded by $\gamma_0 \|\nabla_\alpha \phi_\alpha\|_{L^2}^2$. Since $\alpha \in \ZZ \setminus \{0\}$, we obtain 
\begin{equation}\label{est-smalla}
\| \phi_\alpha \|_{H^1} \le  \frac{ \| f_\alpha \|_{H^{-1}}}{\Re \lambda +\alpha^2\nu - \gamma_0 }
\end{equation}
for all complex $\lambda$ as long as the denominator on the right is positive. 
Here, $\gamma_0$ denotes some universal constant that is independent of $\alpha, \nu$ and $\lambda$. This proves that the operator $(\lambda - L_\alpha)^{-1}$ 
is well-defined and bounded from $H^{-1}$ to $H^{-1}$ by $|\Re \lambda - \gamma_0|^{-1}$,
 for any complex number $\lambda$ so that $\Re \lambda > \gamma_0$. Hence, the classical semigroup theory (see, for instance, \cite[Theorem 6.13]{Pazy} or \cite{Z2}) yields   
\beq \label{int11}
e^{L_\alpha t}  f_\alpha = {1 \over 2  \pi i} \int_{\Gamma_\alpha} e^{\lambda t}  (\lambda - L_\alpha)^{-1}  f_\alpha \, d \lambda
\eeq
where $\Gamma_\alpha$ is a contour lying on the right of the spectrum of $L_\alpha$. 
Writing $\omega_\alpha = \Delta_\alpha  \phi_\alpha$, the resolvent equation \eqref{resolvent} 
becomes the classical Orr-Sommerfeld equations for the stream function $\phi_\alpha$
\beq \label{OSphys}
\OS_{\alpha,\lambda}(\phi_\alpha) :=
(\lambda + i\alpha U) \Delta_\alpha \phi_\alpha - i\alpha U'' \phi_\alpha  - \nu  \Delta_\alpha^2 \phi_\alpha = f_\alpha,
\eeq
together with the boundary conditions: 
\begin{equation}\label{OSphys2}
 \phi_\alpha = \partial_z  \phi_\alpha =0, \qquad \mbox{on} \quad z=0.
\end{equation}


We then solve the Orr-Sommerfeld problem using the Green function $G_{\alpha, \lambda} (x,z) $, constructed in Chapter \ref{chapter-OS-unstable}. See Theorem \ref{theo-GreenOS-unstable}. 
The solution $\phi_\alpha$ to the Orr-Sommerfeld problem \eqref{OSphys}-\eqref{OSphys2} is then constructed by 
$$ 
\phi_\alpha(z) =  \int_0^\infty G_{\alpha, \lambda} (x,z)  f_\alpha(x) \; dx .
$$
Such a spectral formulation of the linearized Navier-Stokes equations near a boundary layer shear profile has been 
intensively studied  in the physical literature.  We in particular refer to
\cite{Reid, Sch, Lin0,LinBook} for the major works of Heisenberg, Tollmien, C.C. Lin, and Schlichting on the subject. 


To summarize, we have the following. 

\begin{lemma}\label{lem-integralrep}
Let $G_{\alpha,\lambda}(x,z)$ be the Green function of the Orr-Sommerfeld problem \eqref{OSphys}-\eqref{OSphys2} constructed in Theorem \ref{theo-GreenOS-unstable}. For each fixed $\alpha \not =0$,
there hold the integral representations 
\begin{equation}\label{int-La} 
 (\lambda - L_\alpha)^{-1}  f_\alpha (z) =  \int_0^\infty \Delta_\alpha G_{\alpha,\lambda} (x,z)  f_\alpha (x)\;dx
 \end{equation}
 and 
\begin{equation}\label{int-Ga}
e^{L_\alpha t}  f_\alpha (z) =  \frac{1}{2\pi i}  \int_{\Gamma_\alpha} \int_0^\infty e^{\lambda t}  
\Delta_\alpha  G_{\alpha,\lambda} (x,z)  f_\alpha (x) \; dxd \lambda,
\end{equation}
in which $\Gamma_\alpha$ is chosen depending on $\alpha$ and lying in the resolvent set of $L_\alpha$. 
 \end{lemma}

In what follows, we shall estimate the semigroup $e^{L_\alpha t}  f_\alpha$ relying on the pointwise bounds on the Green function $\Delta_\alpha  G_{\alpha,\lambda} (x,z)$ obtained from Theorem \ref{theo-GreenOS-unstable}.


\section{Semigroup bounds}\label{sec-semigroup}


In this section, we shall bound the semigroup $e^{L_\alpha t}$ for $| \alpha | \le\nu^{-\zeta}$ with $\zeta<1/2$. In view of Theorem \ref{theo-GreenOS-unstable} and Lemma \ref{lem-integralrep}, we decompose $e^{L_\alpha t}$ as follows: 
\begin{equation}\label{de-eLta}
e^{L_\alpha t}  = S_{\alpha,1} + S_{\alpha,2} 
\end{equation} 
with 
\begin{equation}\label{def-S1212}
\begin{aligned}
S_{\alpha,1}  \omega_\alpha (z): &=\frac{1}{2\pi i}  \int_{\Gamma_{\alpha}} \int_0^\infty e^{\lambda t}  
 \mathcal{S}_1(x,z) \omega_\alpha (x) \; dxd \lambda,
\\S_{\alpha,2}  \omega_\alpha (z): &= \frac{1}{2\pi i}  \int_{\Gamma_{\alpha}} \int_0^\infty e^{\lambda t}   
\mathcal{S}_2(x,z) \omega_\alpha (x) \; dxd \lambda,
\end{aligned}
\end{equation}
where the kernels $ \mathcal{S}_j(x,z)$ are meromorphic in $\lambda$ and satisfy 
\begin{equation}\label{est-decompGrOS}
\begin{aligned}
|\mathcal{S}_1(x,z) |
&\le  \frac{C_0}{   d(\alpha,\lambda)^2 } e^{-\theta_0 \mu_s |x-z|}
, \\
|\mathcal{S}_2(x,z)| &\le \frac{C_0 | \mu_f(z)|^2}{| \mu_f(x)| d(\alpha,\lambda) }   e^{- \theta_0 | \int_x^z \Re \mu_f \; dy|} ,
\end{aligned}\eeq
uniformly for $(\alpha,\lambda)\in \RR\setminus\{0\}\times \CC$ so that $| \alpha | \le\nu^{-\zeta}$, \eqref{range-c} holds, and
$
| E(\alpha,\lambda)| > \sigma_0,
$
where we recall 
$\mu_s = | \alpha |$ and $
 \mu_f (z)=  \nu^{-1/2} \sqrt{ \lambda + \nu \alpha^2+i \alpha U(z) }.$
 We shall estimate these operators, upon appropriately choosing the contour $\Gamma_\alpha$ of integration so that the estimates \eqref{est-decompGrOS} remain valid.


\subsection{Bounds on $S_{\alpha,1}$.}


In this section, we prove the following. 

\begin{proposition}\label{prop-S1a} 
Let $\beta \in (0,\frac12]$. 
For any positive $\tau$, there is a constant $C_\tau$ so that 
\begin{equation}\label{est-S1a}
\| S_{\alpha,1}\omega_\alpha \|_{\beta}
\le C_\tau e^{(\Re \lambda_0 +\tau)t} \| \omega_\alpha \|_{\beta,\gamma},
 \end{equation}
uniformly in $t\ge 0$, small $\nu >0$, and $|\alpha|\le \nu^{-\zeta}$ with $\zeta<1/2$. 
\end{proposition}
\begin{proof} 
We restrict ourselves to $\alpha > 0$.
Since the Green kernel $\mathcal{S}_1(x,z) $ is meromorphic in $\lambda$, we can apply the Cauchy's theory and take the contour $\Gamma_\alpha$ of integration to consist of $\lambda$ so that 
\begin{equation}\label{def-tau0}
\Re \lambda = \Re \lambda_0 +\tau
\end{equation}
for arbitrary small, but fixed, constant $\tau>0$. Since $\Gamma_\alpha$ remains in the resolvent set of $L_\alpha$, 
the (inviscid) Evans function $E(\alpha,\lambda)$ never vanishes. In addition, recalling the assumption \eqref{range-c} and writing $\lambda = \Re \lambda + i \Im \lambda$, we have
$$
d(\alpha,\lambda) =\inf_{z\in \RR_+}| \lambda + i \alpha U(z)| \ge \theta_0 (1 + \inf_z |\Im \lambda - \alpha U(z)|). $$ 
We thus obtain from \eqref{est-decompGrOS} that 
\begin{equation}\label{bd-S1X1}
|\mathcal{S}_1(x,z) |\le C_\tau (1 +  \inf_z |\Im \lambda - \alpha U(z)|)^{-2} e^{-\mu_s |x-z|},
\end{equation}
for all $\lambda\in \Gamma_\alpha$.

Let us now estimate the convolution $S_{\alpha,1}  \omega_\alpha (z)$ in the boundary layer norm $\| \cdot \|_{\beta,\gamma}$. First we recall from the definition that 
$$
| \omega_\alpha(x) | \le \|\omega_\alpha\|_{\beta,\gamma} e^{-\beta x} (1+ \delta^{-1}\phi_{P}(\delta^{-1}x)).
$$ 
Hence, recalling \eqref{def-S1212} and using \eqref{bd-S1X1}, we have 
$$\begin{aligned}
|S_{\alpha,1}  \omega_\alpha (z)| 
&\le C_\tau\| \omega\|_{\beta,\gamma} \int_{\RR} \int_0^\infty  (1 +  \inf_z |\Im \lambda - \alpha U(z)|)^{-2}  e^{(\Re \lambda_0 + \tau)t} 
\\&\quad \times e^{-\mu_s |x-z|}e^{-\beta |x|} (1+ \delta^{-1}\phi_{P}(\delta^{-1}x))\; dxd \Im \lambda 
.\end{aligned}$$
The integral in $\Im\lambda$ is clearly bounded, yielding 
$$\begin{aligned}
|S_{\alpha,1}  \omega_\alpha (z)| 
&\le C_\tau\| \omega\|_{\beta,\gamma} e^{(\Re \lambda_0 + \tau)t}  \int_0^\infty  
 e^{-\mu_s |x-z|}e^{-\beta |x|} (1+ \delta^{-1}\phi_{P}(\delta^{-1}x))\; dx
.\end{aligned}$$
Recall that $\mu_s = \alpha$. Using the triangle inequality $|z|\le |x|+ |x-z|$ and the fact that $\beta\le1/2 <\alpha$, we obtain 
$$ e^{-\mu_s |x-z|} 
e^{-\beta |x|} \le e^{-\beta z} e^{-\frac12\alpha |x-z|}.$$
Hence, we get 
$$\begin{aligned}
|S_{\alpha,1}  \omega_\alpha (z)| 
&\le C_\tau\| \omega\|_{\beta,\gamma}e^{(\Re \lambda_0 + \tau)t} 
 e^{-\beta z} \int_0^\infty  e^{-\frac12 \alpha |x-z|}(1+ \delta^{-1}\phi_{P}(\delta^{-1}x))\; dx
 \\
 &\le C_\tau\| \omega\|_{\beta,\gamma}e^{(\Re \lambda_0 + \tau)t} 
 e^{-\beta z} \Big( \alpha^{-1} + \delta^{-1}\int_0^\infty \phi_{P}(\delta^{-1}x)\; dx\Big).
\\
&\le C_\tau\| \omega\|_{\beta,\gamma}e^{(\Re \lambda_0 + \tau)t}  e^{-\beta z} ,
\end{aligned}$$
completing the proof of the proposition. 
\end{proof}


\subsection{Bounds on $\mathcal{S}_{\alpha,2} $.}


In this section, we give bounds on the semigroup $\mathcal{S}_{\alpha,2}$, defined as in \eqref{def-S1212}. Precisely, we have 

\begin{proposition}\label{prop-S2a} Let $\beta \in (0,\frac12]$. 
For any positive $\tau$, there is a constant $C_\tau$ so that 
\begin{equation}\label{est-S2a}
\| S_{\alpha,2}\omega_\alpha \|_{\beta,\gamma}
\le C_\tau e^{(\Re \lambda_0 +\tau)t} \| \omega_\alpha \|_{\beta,\gamma},
 \end{equation}
uniformly in $t\ge 0$, small $\nu >0$, and $|\alpha|\le \nu^{-\zeta}$ with $\zeta<1/2$.
\end{proposition}
To prove this proposition we will use the following Lemma

\begin{lemma}\label{lem-tGreen} 
Let $\mathcal{S}_{2}(x,z)$ be the Green kernel defined as in \eqref{def-S1212}. Introduce the temporal Green function 
\begin{equation}\label{def-tGreen2}G_2(t,x,z) :=  \frac{1}{2\pi i}  \int_{\Gamma_{\alpha}} e^{\lambda t}   
\mathcal{S}_2(x,z) \; d \lambda  .\end{equation}
Then, for any positive $\tau$, there are constants $C_\tau, \theta_\tau$ so that there holds 
\begin{equation}\label{ptw-G22}
\begin{aligned}
& |G_2(t,x,z) |
\\&\le C_\tau (\nu t)^{-1/2}e^{\tau t} e^{-\frac{|x-z|^2}{8 \nu t}} 
 +   C_\tau \sum_{\Re \lambda_\alpha\ge \tau}e^{(\Re \lambda_\alpha + \tau)t}\nu^{-1/2} e^{-\theta_{\tau}\nu^{-1/2}|x-z|} 
 \end{aligned}\end{equation}
uniformly in $t\ge 0$, small $\nu >0$, and $\alpha \in \ZZ^*$, in which 
$$
\theta_{\tau} = \frac12 \sqrt{\Re\lambda_\alpha + \tau + \alpha^2 \nu},
$$
and the summation is taken over finitely many unstable eigenvalues $\lambda_\alpha$ of $L_\alpha$ 
such that $\Re \lambda_\alpha \ge \tau$. 
\end{lemma}

%

\begin{proof} 
We shall apply the Cauchy's theory to move the contour of integration $\Gamma_\alpha$ in \eqref{def-tGreen2} 
so that the claimed bounds on $G_2(t,x,z)$ can be obtained. 
As $\Gamma_\alpha$ moves to the left in the complex plane, it may meet unstable eigenvalues of $L_\alpha$. 
We first treat the integral near these unstable eigenvalues. 

To proceed, let $\tau$ be an arbitrary positive number, and let $\lambda_\alpha = -i\alpha c$ be a zero of $E(\alpha,\lambda)$
 such that $\Re \lambda_\alpha \ge \tau$. As $E(\alpha,\lambda)$ is analytic in $c$, its zeros $\lambda_\alpha$ are isolated. 
 Taking $\tau$ smaller, if needed, we can assume that there is no other unstable eigenvalue
 in the ball $B(\lambda_\alpha, \frac12\tau)  = \{|\lambda - \lambda_\alpha |\le \frac12\tau\}$. 
 In particular, we have $|E(\alpha,\lambda)| \ge C_\tau$
for $\lambda \in \partial B(\lambda_\alpha, \frac12\tau) $. 
In addition, since $\Re \lambda \ge \frac14(\Re \lambda_\alpha + \tau)$ on $ \partial B(\lambda_\alpha, \frac12\tau) $, we have 
$$ 
\Re \mu_f = \nu^{-1/2} \Re \sqrt{\lambda + \alpha^2\nu+ i\alpha U } \ge \nu^{-1/2}\theta_\tau,
$$
with $\theta_\tau = \frac12\sqrt{\Re\lambda_\alpha + \tau + \alpha^2 \nu}.$  
Thus, on $\partial B(\lambda_\alpha, \frac12\tau)$, there holds
$$
\begin{aligned}
|\mathcal{S}_2(x,z)| &\le \frac{C_0 | \mu_f(y)|^2}{| \mu_f(x)| d(\alpha,\lambda) }   e^{- \theta_0 | \int_x^z \Re \mu_f \; dy|}  
\\
&\le C_\tau  \nu^{-1/2} e^{- \theta_\tau  \nu^{-1/2} |x-z| } ,
  \end{aligned}$$
upon recalling \eqref{range-c}. This yields
\begin{equation}\label{bd-GreenRes}
\Big| \int_{\partial B(\lambda_\alpha, \frac12\tau)} e^{\lambda t}   
\mathcal{S}_2(x,z) \; d \lambda \Big|\le C_\tau \nu^{-1/2}  e^{(\Re \lambda_\alpha + \tau)t}
   e^{- \theta_\tau \nu^{-1/2} |x-z| }  , 
\end{equation}  
which contributes to the last term in the Green function bound \eqref{ptw-G22}.  

We are now ready to choose a suitable contour of integration $\Gamma_\alpha$. 
Let us consider the case when $x<z$; the other case is similar.
Recall that 
$$
\mu_f(z) 
= \nu^{-1/2}\sqrt{\lambda + i\alpha U+ \alpha^2  \nu } .
$$ 
By construction, $\mathcal{S}_{2}(x,z)$ is holomorphic in $\lambda$, except on the complex half strip
$$ 
\mathcal{H}_\alpha(x,z): = \Big\{ \lambda = -k -\alpha^2  \nu + i \alpha U(y), \qquad k\in \RR_+, \quad y \in [x,z]\Big\}.
$$
In our choice of contour of integration below, we shall avoid to enter this complex strip. 
We set 
$$
\begin{aligned}
\Gamma_{\alpha,1} &:= \Big\{ \lambda =\gamma_1 -\alpha^2 \nu - i \alpha c, \qquad \min_{y\in [x,z]} U(y) \le c \le \max_{y\in [x,z]} U(y) \Big\} 
\\
\Gamma_{\alpha,2} &:=  \Big\{ \lambda =\gamma_1 -\alpha^2 \nu - k^2  \nu  - i \alpha \min_{[x,z]} U + 2  \nu i ak, \qquad k\ge 0 \Big\}
\\
\Gamma_{\alpha,3} &:= \Big\{ \lambda = \gamma_1 -\alpha^2 \nu - k^2  \nu  - i \alpha \max_{[x,z]} U + 2  \nu i ak, \qquad k\le 0 \Big\} 
\end{aligned}$$
where
\begin{equation}\label{def-aaa} 
\gamma_1 = \tau + a^2\nu + \alpha^2\nu, \qquad a = \frac{|x-z| + \sqrt{\nu t}}{4 \nu t} .
\end{equation}
The choice of the parabolic contours $\Gamma_{\alpha,2}$ and $\Gamma_{\alpha,3}$ 
is necessary to avoid singularities in small time \cite{ZumbrunHoward, GrN1}. 
We stress that they never meet the complex strip $\mathcal{H}_\alpha (x,z)$.
In addition, they may leave unstable eigenvalues to the right, in which case the contribution from unstable eigenvalues \eqref{bd-GreenRes} 
is added into the bounds on the Green function.

%
%

%
%
%
%
%
%
%


\subsubsection*{Bounds on $\Gamma_{\alpha,1}$.}


We start our computation with the integral on $\Gamma_{\alpha,1}$. 
We first note that for $\lambda \in \Gamma_{\alpha,1}$, there holds
$$  
\Re  \mu_f = \nu^{-1/2} \Re   \sqrt{ \gamma_1 + i\alpha (U-c)}  \ge \nu^{-1/2}\sqrt{\gamma_1} .
$$
Recalling \eqref{def-aaa}, we compute 
$$\begin{aligned}
| e^{\lambda t} |  = e^{\gamma_1 t} e^{-\alpha^2 \nu t}  = e^{\tau t} e^{a^2 \nu t} = e^{\tau t} e^{\frac{|x-z|^2}{16\nu t} + \frac{|x-z|}{8\sqrt{\nu t}} + \frac1{16}}.
\end{aligned}
  $$
On the other hand, using $\epsilon = \nu/i\alpha$ and $\Re \mu_f \ge \nu^{-1/2} \sqrt {\gamma_1}$, we have 
$$
\begin{aligned}
| \mathcal{S}_{2}(x,z) |  &\le C  \nu^{-1/2} \gamma_1^{-1/2}  e^{-\nu^{-1/2} \sqrt{\gamma_1} |x-z|} .\end{aligned}
$$
Using $\gamma_1 \ge a^2\nu$ and $\gamma_1 \ge \alpha^2 \nu$, we note that 
$$
\begin{aligned}
 e^{-\frac12\nu^{-1/2} \sqrt{\gamma_1} |x-z|} &\le e^{-\frac a2 |x-z|} = e^{-\frac{|x-z|^2}{8\nu t} - \frac{|x-z|}{8\sqrt{\nu t}}}
 \\
 e^{-\frac12\nu^{-1/2} \sqrt{\gamma_1} |x-z|} &\le e^{-\frac12 \alpha|x-z|}  .
  \end{aligned}$$
Thus, we obtain 
\begin{equation}
\begin{aligned}
|  e^{\lambda t} \mathcal{S}_{2}(x,z) |  &\le C  \nu^{-1/2} \gamma_1^{-1/2} e^{\tau t}  e^{-\frac{|x-z|^2}{8\nu t} - \frac12 \alpha|x-z|}
\end{aligned}\end{equation}
for any $\lambda \in \Gamma_{\alpha,1}$. 
Hence, we estimate 
$$\begin{aligned}
 \Big| \int_{\Gamma_{\alpha,1}} e^{\lambda t}   \mathcal{S}_{2}(x,z) \; d\lambda \Big|
&\le  C \alpha \nu^{-1/2} \gamma_1^{-1/2}e^{\tau t} e^{-\frac{|x-z|^2}{8\nu t} - \frac12 \alpha|x-z|}\int_{\min_{[x,z]} U}^{\max_{[x,z]}U} dc 
 \\&\le C \alpha \nu^{-1/2} \gamma_1^{-1/2}e^{\tau t}e^{-\frac{|x-z|^2}{8\nu t} - \frac12 \alpha|x-z|} |x-z| \| U'\|_{L^\infty}
   .      \end{aligned}
 $$ 
 Using the inequality $X e^{-X} \le C $ for $X\ge 0$, we have 
 $$ 
 e^{-\frac12 \alpha  |x-z|} \alpha |x-z| \le C.
 $$
We thus obtain 
\begin{equation}\label{bd-Ga1}\begin{aligned}
 \Big| \int_{\Gamma_{\alpha,1}} e^{\lambda t}   \mathcal{S}_{2}(x,z) \; d\lambda\Big|
&\le   C \nu^{-1/2} \gamma_1^{-1/2} e^{\tau t}e^{-\frac{|x-z|^2}{8\nu t}}
.\end{aligned}
  \end{equation}
This yields the claimed estimate on $\Gamma_{\alpha,1}$, upon noting that $\sqrt t \le C_\tau e^{\tau t}$ for any $t\ge 0$.

  
\subsubsection*{Bounds on $\Gamma_{\alpha,2}$ and $\Gamma_{\alpha,3}$.}


By symmetry, it suffices to give bounds on $\Gamma_{\alpha,2}$. For $\lambda \in \Gamma_{\alpha,2}$ and $y \in [x,z]$, we compute 
$$
\begin{aligned}
 \mu_f(y)  &= \nu^{-1/2}  \sqrt{ \gamma_1  - k^2 \nu+ i \alpha (U -  \min_{[x,z]} U) + 2 i  \nu  ak }
 \\&  =\nu^{-1/2} \sqrt{ \tau + \alpha^2 \nu + i \alpha (U -  \min_{[x,z]} U) + (a+ik)^2 \nu } .
 \end{aligned}
 $$
 Recalling that $k, \alpha, \tau\ge 0$, the above yields
  $$ 
  \Re \mu_f(y) \ge \Re \sqrt{(a+ik)^2} = a
  $$
for $y\in [x,z]$.  
So, using $a =\frac{|x-z| + \sqrt{\nu t}}{4\nu t}$, we get 
{red}{$$ 
\begin{aligned}
a^2 \nu t - a|x-z| 
&= \frac{|x-z|^2 + 2 \sqrt{\nu t}|x-z| + \nu t }{16 \nu t} - \frac{|x-z|^2 + \sqrt{\nu t}|x-z|}{4\nu t}
\\
&= \frac1{16}- \frac{3|x-z|^2 + 2\sqrt{\nu t}|x-z|}{16\nu t}
\end{aligned} $$
and hence }
\begin{equation}\label{bd-G2exp}\begin{aligned} 
e^{  \Re \lambda t} e^{- \int_x^z  \Re \mu_f(y) \; dy} 
&\le e^{\tau t} e^{a^2  \nu t- \nu k^2 t } e^{-a|x-z|} 
\le C e^{\tau t}e^{- \nu k^2 t } e^{-\frac{3|x-z|^2}{16 \nu t}} .
\end{aligned}\end{equation}
As for $\frac{1}{\mu_f(x)}$ in the Green kernel \eqref{est-decompGrOS}, we write 
$$
\begin{aligned}
 \mu_f(x)  =\nu^{-1/2} \sqrt{ \tau + \alpha^2 \nu + a^2 \nu - k^2 \nu  + i \alpha (U -  \min_{[x,z]} U) + 2 i\nu ak} .
 \end{aligned}$$
This implies that 
$$
|\mu_f(x)|\ge \nu^{-1/2}\sqrt{\alpha (U-\min_{[x,z]}U) + 2 \nu ak} \ge \sqrt{2 ak},
$$ 
recalling $k\ge 0$. We also have $|\mu_f(x)|\ge \Re \mu_f(x) \ge a$. Hence, 
$$
 |\mu_f(x)| \ge \frac12 (a + \sqrt {ak}).
$$
Hence, recalling $i\alpha \varepsilon  =  \nu$ and noting $ d\lambda = 2\nu i (a+ ik) dk $ on $\Gamma_{\alpha,2}$,
we can estimate 
$$
\begin{aligned}
 \Big| \int_{\Gamma_{\alpha,2}} e^{\lambda t}   \mathcal{S}_{2}(x,z) \; d\lambda\Big|
&\le  C_\tau e^{\tau t}e^{-\frac{|x-z|^2}{4 \nu t}}\int_{\RR_+} e^{- \nu k^2 t} { \frac{|a+ik|dk}{ a + \sqrt {ak}}} 
\\&\le C_\tau e^{\tau t}e^{-\frac{|x-z|^2}{4 \nu t}}\int_{\RR_+} e^{- \nu k^2 t}{( 1 + a^{-1/2}\sqrt k )} dk 
\\&\le  C_\tau (\nu t)^{-1/2}e^{\tau t}e^{-\frac{|x-z|^2}{4 \nu t}} {(1 + a^{-1/2} (\nu t)^{-1/4} )}
\end{aligned}
$$
up to the contribution from unstable eigenvalues \eqref{bd-GreenRes}. Note that $a\ge (\nu t)^{-1/2}$ and hence $a^{-1/2}(\nu t)^{-1/4}\le 1$.  
The Lemma follows.
\end{proof}

\begin{lemma}\label{lem-convolutionG2} 
Let 
$$
H(t,x,z): =(\nu t)^{-1/2} e^{-\frac{|x-z|^2}{M\nu t}} 
$$ for some positive $M$. 
For any positive $\beta$, there is a constant $C_0$ so that 
$$
\Big \| \int_0^\infty H(t,x,\cdot) \omega_\alpha(x)\; dx \Big\|_{ \beta, \gamma} 
\le C_0 e^{M\beta^2 \nu t } \| \omega_\alpha\|_{ \beta, \gamma}.
$$
\end{lemma}


\begin{proof} Let $\omega_\alpha (z)$ be a boundary layer function that satisfies
\begin{equation}\label{assmp-wbl}
|\omega_\alpha (z) | \le
 \| \omega_\alpha\|_{ \beta, \gamma} \Bigl( 1 +  \delta^{-1} \phi_{P} (\delta^{-1} z)  \Bigr) e^{-\beta z} .
 \end{equation}
We first show that the convolution has the right exponential decay at infinity. Indeed, in the case when $|x-z|\ge M\beta\nu t$, 
we have
$$
e^{-\frac{|x-z|^2}{M\nu t}} e^{-\beta  |x|} \le e^{-\beta  |z|} e^{-|x-z| \Big( \frac{|x-z|}{M\nu t} - \beta \Big)} \le e^{-\beta  |z|}.
$$
Whereas, for $|x-z| \le M \beta \nu t$, we note that 
$$ 
e^{-\frac{|x-z|^2}{M\nu t}} e^{-\beta  |x|} \le 
e^{- M \beta^2 \nu t} e^{-\beta  |x|} \le e^{-\beta |x-z|} e^{-\beta  |x|} \le e^{-\beta  |z|}.
$$
Combining, we have 
\begin{equation}\label{exp-beta13}
e^{-\frac{|x-z|^2}{M\nu t}} e^{-\beta x} \le 
e^{M \beta^2 \nu t} 
e^{-\beta |z|}, \qquad \forall x,z\in \RR
\end{equation}
{red}{which yields the spatial decay $e^{-\beta z}$ in the norm $\|\cdot \|_{\beta,\gamma}$.}


It remains to study the integral 
\begin{equation}\label{conv-heatbl1-stable}
\begin{aligned}
\int_0^\infty (\nu t)^{-1/2}  e^{-\frac{|x-z|^2}{M\nu t}} 
  \Bigl( 1 + \delta^{-1} \phi_{P} (\delta^{-1} x)  \Bigr) \; dx.
\end{aligned}\end{equation}
The integral without the boundary layer behavior is clearly bounded. Next, using the fact that $\phi_{P}(\delta^{-1}x)$ is decreasing in $x$, we have 
$$
\begin{aligned}
 \int_{z/2}^\infty & (\nu t)^{-1/2}   e^{-\frac{|x-z|^2}{M\nu t}} 
  \delta^{-1} \phi_{P} (\delta^{-1} x) \; dx 
 \\ &\le C_0   \delta^{-1} \phi_{P} (\delta^{-1} z)  \int_{z/2}^\infty  (\nu t)^{-1/2}   e^{-\frac{|x-z|^2}{M\nu t}}   \; dx
  \\&\le C_0   \delta^{-1} \phi_{P} (\delta^{-1} z) .  \end{aligned}$$
Whereas on $x\in (0,\frac z2)$, we have $|x-z|\ge \frac z2 $ and $\phi_{P} \le 1$. We have   
\begin{equation}\label{large-time-del}
\begin{aligned}
 \int_0^{z/2} &  (\nu t)^{-1/2}   e^{-\frac{|x-z|^2}{M\nu t}}  
  \delta^{-1} \phi_{P} (\delta^{-1} x) \; dx 
\\
&\le C_0 e^{-\frac{|z|^2}{8M\nu t}}  \delta^{-1}  \int_0^{z/2}   (\nu t)^{-1/2}   e^{-\frac{|x-z|^2}{2M \nu t}}  \; dx 
\\&\le C_0 e^{-\frac{|z|^2}{8M\nu t}}  \delta^{-1} .
  \end{aligned}\end{equation}
It remains to prove that 
\begin{equation}\label{X-bd}  e^{-\frac{|z|^2}{8M\nu t}}  \delta^{-1} \le C_0 \delta^{-1} e^{8M\nu t}  e^{-z/\sqrt \nu}\end{equation}
for some constant $C_0$. Indeed, the inequality is clear, for $|z| \ge 8M\nu^{3/2} t$, since $ e^{-\frac{|z|^2}{8M\nu t}} \le  e^{-z/\sqrt\nu }  $. Next, for $|z| \le 8M\nu^{3/2} t$, we have 
$$ 1\le e^{8M \nu t} e^{-z/\sqrt \nu}.$$ 
This proves \eqref{X-bd}, and so \eqref{large-time-del} is again bounded by $C_0  e^{8M\nu t}  \delta^{-1} \phi_{P} (\delta^{-1} z)$, upon recalling that the boundary layer thickness is of order $\delta = \gamma \sqrt \nu$. 
%
\end{proof}

\begin{lemma}\label{lem-convolutionG21} 
Let 
$$
R(t,x,z): = \nu^{-1/2} e^{-\theta_{\tau}\nu^{-1/2}|x-z|} 
$$ 
for some positive $\theta_\tau$. Then, for any positive $\beta$, there is a constant $C_\tau$, depending on $\theta_\tau$, so that 
$$
\Big \| \int_0^\infty R(t,x,\cdot) \omega_\alpha(x)\; dx \Big\|_{ \beta, \gamma} \le C_\tau \| \omega_\alpha\|_{ \beta, \gamma}.
$$
\end{lemma}

\begin{proof} We need to bound the integral 
$$
\begin{aligned}
\int_0^\infty \nu^{-1/2} e^{-\theta_{\tau}\nu^{-1/2}|x-z|} \Bigl( 1 + \delta^{-1} \phi_{P} (\delta^{-1} x)  \Bigr) e^{-\beta |x|}\; dx.
\end{aligned}
$$
First, taking $\nu$ smaller, if needed, we can assume that $\frac12\theta_\tau \nu^{-1/2}\ge \beta$, and thus by the triangle inequality $|z|\le |x| + |x-z|$, we have 
 $$ e^{-\frac12\theta_\tau \nu^{-1/2}|x-z|} e^{-\beta |x|} \le e^{-\beta |z|}.$$
Next, similarly as done in the previous lemma, we have 
$$
\begin{aligned}
 \int_{z/2}^\infty & \nu^{-1/2} e^{-\theta_{\tau}\nu^{-1/2}|x-z|} 
  \delta^{-1} \phi_{P} (\delta^{-1} x) \; dx 
 \\ &\le C_0   \delta^{-1} \phi_{P} (\delta^{-1} z)  \int_{z/2}^\infty \nu^{-1/2} e^{-\theta_{\tau}\nu^{-1/2}|x-z|} \; dx
  \\&\le C_\tau \delta^{-1} \phi_{P} (\delta^{-1} z) ,\end{aligned}$$
and 
$$
\begin{aligned}
 \int_0^{z/2} & \nu^{-1/2} e^{-\theta_{\tau}\nu^{-1/2}|x-z|} 
  \delta^{-1} \phi_{P} (\delta^{-1} x) \; dx 
  \\& \le  \nu^{-1/2} e^{-\frac12\theta_{\tau}\nu^{-1/2}|z|} 
   \int_0^{z/2}\delta^{-1} \phi_{P} (\delta^{-1} x) \; dx 
\\& \le C_0  \nu^{-1/2} e^{-\frac12\theta_{\tau}\nu^{-1/2}|z|} ,\end{aligned} 
  $$
which is again bounded by $C_\tau \delta^{-1} \phi_{P} (\delta^{-1} z) $, upon recalling that $\delta = \gamma \sqrt \nu$ and $\phi_P(Z) = (1+Z^P)^{-1}$. 
  \end{proof}

\begin{proof}[Proof of Proposition \ref{prop-S2a}] 
In view of \eqref{def-S1212} and \eqref{def-tGreen2}, we have 
$$ 
S_{\alpha,2} \omega_\alpha (z) = \int_0^\infty G_2(t,x,z) \omega_\alpha(x)\; dx.
$$
For each fixed positive $\tau$, we first show that the set of unstable eigenvalues $\lambda_\alpha$ of $L_\alpha$, 
for all $\alpha\in \ZZ^*$, such that $\Re \lambda_\alpha \ge \tau$ is finite. 
Since viscosity is sufficiently small, it suffices to study the Rayleigh problem
$$
 \Delta_\alpha \phi - \frac{U''}{U-c}\phi =0, \qquad c = -\frac{\lambda_\alpha}{i\alpha},
 $$
with the boundary condition $\phi_{\vert_{z=0}} =0$. For each $\alpha \in \ZZ^*$, 
it is clear that there are only finitely many unstable eigenvalues, since the Rayleigh operator 
is a compact perturbation of the Laplacian $\Delta_\alpha$. In addition, multiplying the Rayleigh equation by $\overline{\phi}$ 
and integrating by parts, we get   
$$ 
\int_0^\infty (|\partial_z\phi|^2 + \alpha^2 |\phi|^2) \; dz \le \frac{1}{|\Im c|} \int_0^\infty |U''| |\phi|^2\; dz.
$$ 
In particular, if $\alpha^2 |\Im c| \ge \|U''\|_{L^\infty}$, there is no nontrivial solution to the Rayleigh problem. 
This implies that there is no unstable eigenvalue $\lambda_\alpha$, whenever $\alpha \Re \lambda_\alpha \ge \|U''\|_{L^\infty}$. 
In particular, there are no unstable eigenvalues $\Re \lambda_\alpha \ge \tau$, whenever $\alpha\ge \tau^{-1}\|U''\|_{L^\infty}.$ 

This proves that there are finitely many unstable eigenvalues of $L_\alpha$ so that $\Re \lambda_\alpha \ge \tau$ for all $\alpha \in \ZZ^*$.
 In particular, the summation in \eqref{ptw-G22} from Lemma \ref{lem-tGreen} is finite, independent of $\alpha\in \ZZ^*$, 
 yielding 
$$ 
|G_2(t,x,z) |\le C_\tau (\nu t)^{-1/2}e^{\tau t} e^{-\frac{|x-z|^2}{8 \nu t}}  
+  C_\tau e^{(\Re \lambda_0 + \tau)t}\nu^{-1/2} e^{-\theta_{\tau}\nu^{-1/2}|x-z|} 
$$
with $\theta_{\tau} = \frac12 \sqrt{\Re \lambda_0 + \tau + \alpha^2 \nu}$, 
where $\lambda_0$ denotes the maximal unstable eigenvalue.
Finally, applying Lemmas \ref{lem-convolutionG2} and \ref{lem-convolutionG21}, 
respectively, to the above pointwise bounds, we complete the proof of the Proposition.
\end{proof}

\part{Nonlinear analysis}

%
%
%
%

\chapter{The unstable case}\label{chapter-nonlinear-unstable}


\section{Introduction}


We now detail our main nonlinear result for the unstable case. Precisely, we prove 

\begin{theo} \label{theoinstable}
Let $U(y)$ be a smooth and analytic function which converges exponentially fast at infinity to a constant, 
with $U(0) = 0$ and assume that it is spectrally unstable for linearized
Euler equations, with a simple eigenvalue. 
Then it is nonlinearly unstable for Navier Stokes equations in $L^\infty$, provided $\nu$ is small enough,
in the following sense.
 For any $s$ arbitrarily large, there exist $\sigma_0 > 0$, $C_0 > 0$ 
and a sequence of solutions $u^\delta$ of Navier Stokes equations with forcing terms $f^\delta$,
 on some interval $[0,T^\delta]$, such that, as $\delta \to 0$,
$$
\| u^\delta(0) - U^\nu(0) \|_{H^s} \le \delta,
$$
$$
\| f^\delta \|_{L^\infty([0,T^\nu],H^s)} \le \delta,
$$
but
$$
\| u^\delta(T^\delta) - U^\nu(T^\delta) \|_{L^p} \ge \sigma_0, \qquad \forall p\in [1,\infty]
$$
and
$$
T^\delta = O ( \log \delta^{-1} ) ,
$$
where $U^\nu(t,y)$ is the solution of heat equation with diffusivity $\nu$ and initial data $U(y)$.
\end{theo}


Let us now discuss this result. Up to the best of our knowledge it is the first rigorous result of instability of a shear layer
profile near a boundary for Navier Stokes equations. According to Rayleigh's criterium 
the profile $U^\nu$ will have an inflection point.
Physically, this may correspond to a reverse flow and thus rules out the exponential profile $U_\infty (1 - e^{-y / C})$.


\subsection*{Notations}


For $\alpha \in \rit$ we define
$$
\Delta_\alpha = \partial_y^2 - \alpha^2 
$$
and
$$
\nabla_\alpha = (i \alpha, \partial_y) .
$$
In particular $\nabla_\alpha^2 = (- \alpha^2, \partial_y^2)$.
The three dimensional case is exactly similar to the two dimensional one, therefore we restrict ourselves to the two dimensional case.


\section{General strategy}\label{sec-strategy}


The starting point is the choice of the shear layer profile (\ref{shear}). We will choose a shear profile $U_0 = (U(y),0)$ 
which is unstable with respect to linearized Euler equations. More precisely, we start from $U_0$, such that
there exists an exponentially growing solution to the following linearized Euler equations
\beq \label{linE1}
\partial_t v + (U_0 \cdot \nabla) v + (v \cdot \nabla) U_0 + \nabla p = 0,
\eeq
\beq \label{linE2}
\nabla \cdot v = 0,
\eeq
\beq \label{linE3} 
v_2 = 0 \qquad \hbox{on} \qquad y = 0 .
\eeq
The study of the linear stability of a shear layer profile is a classical issue in fluid mechanics. The classical strategy to address
this question is to introduce the stream function of $v$ and to take its Fourier transform in the tangential variable $x$ (with dual
Fourier variable $\alpha$) and the Laplace transform in time (with dual variable $\lambda = -i\alpha c$). Precisely, we look for $v$ of the form
\beq \label{stream1}
v = \nabla^\perp \Bigl( e^{i \alpha (x - c t) } \psi(y) \Bigr) + \mbox{complex conjugate.}
\eeq
Putting (\ref{stream1}) in (\ref{linE1}), we get the classical Rayleigh equation for the stream function $\psi$
\beq \label{Ray1}
(U - c) (\partial_y^2 - \alpha^2) \psi = U'' \psi,
\eeq
\beq \label{Ray2}
\alpha  \psi(0) = \lim_{y\to + \infty}  \psi(y) = 0 .
 \eeq
 The study of the linear stability of $U$ reduces to a spectral problem: find $c$ and $\psi$, solutions of Rayleigh equations, with 
 $\Im(\alpha c)  > 0$. Following the classical Rayleigh criterium, if such an instability exists, then $U$ must have an inflection point.
Such smooth unstable profiles do exist (see, for instance, \cite{Grenier00CPAM}).
We choose the most unstable mode, namely the largest $|\alpha \Im c|$.
Starting with such an instability, we can construct an instability for the following linearized Navier Stokes equations
\beq \label{linNS1}
\partial_t v + (U_0 \cdot \nabla) v + (v \cdot \nabla) U_0 - \nu\Delta v + \nabla p = 0,
\eeq
\beq \label{linNS2}
\nabla \cdot v = 0,
\eeq
\beq \label{linNS3} 
v = 0 \qquad \hbox{on} \qquad y = 0 .
\eeq
The analogs of the Rayleigh equations \eqref{Ray1}-\eqref{Ray2} are the Orr Sommerfeld equations which read
\beq \label{inOS1}
-\eps (\partial_y^2 - \alpha^2)^2 \psi + (U - c) (\partial_y^2 - \alpha^2) \psi = U'' \psi,
\eeq
\beq \label{inOS2}
\alpha  \psi(0) = \psi'(0) = \lim_{y\to + \infty}  \psi(y) = 0,
 \eeq
 where
$$
\eps = {\nu \over i \alpha} .
$$
Such a spectral formulation of the linearized Navier-Stokes equations near a boundary layer shear profile has been 
intensively studied  in the physical literature.  We in particular refer to
\cite{Reid,Sch} for the major works of Heisenberg, Tollmien, C.C. Lin, and Schlichting on the subject. 
We also refer to \cite{GGN1,GGN3,GrN1} for the rigorous spectral analysis of the Orr-Sommerfeld equations.  

Now starting from an unstable mode $(\psi^0,c^0)$ of the Rayleigh equation for some positive $\alpha$, it is possible to construct an unstable mode
$(\psi^\nu,c^\nu)$ for Navier Stokes equations, provided $\nu$ is small enough, such that
\beq \label{mode1}
\psi^\nu - \psi^0 = O(\nu^{1/2}),
\eeq
\beq \label{mode2}
c^\nu - c^0 = O(\nu^{1/2}).
\eeq
Let
$$
\lambda_0 = \alpha c^\nu .
$$
This has been proved rigorously in \cite{GrN1} through a complete analysis of the Green function of Orr Sommerfeld equation.
More precisely, the proof of (\ref{mode1})-(\ref{mode2})
relies on the complete description of all four independent solutions of the fourth order differential
equation (\ref{inOS1}). It can be proven that two of them go to $+ \infty$ as
$y \to + \infty$. These two solutions can be forgotten in the construction of an unstable mode. The other two
converge to $0$ as $y \to + \infty$. One, called $\psi_f$, has a "fast" behavior, namely behaves
like $\exp( - C y / \sqrt{\eps})$ for large $y$. The other one, called $\psi_s$, has a "slow" behavior and behaves
like $\exp(- C | \alpha | y)$. The second one, $\psi_s$, comes from the Rayleigh mode, 
and is a small perturbation of $\psi^0$. Then the unstable mode $\psi^\nu$ is a combination of $\psi_f$ and $\psi_s$
and is of the form
\beq \label{structurepsi}
\psi^\nu = \alpha^\nu \psi_f + \beta^\nu \psi_s.
\eeq
The two relations $\psi^\nu(0) = \partial_y \psi^\nu(0) = 0$ give the dispersion relation. Using the fact
that $\psi_s$ is an approximate eigenmode for Rayleigh equation, (\ref{mode1}) and (\ref{mode2}) can be proved using an implicit function
theorem (see \cite{GrN1} for complete details). We also get, for every positive $k$, that
\beq \label{structurepsi2}
| \partial_y^k \psi^\nu (y) | \le {C_k \over | \eps^{(k - 1) / 2} |}  e^{- C y / | \sqrt{\eps}|} + C_k e^{- \beta y} 
\eeq
for some positive $\beta$.
Note that  $\psi^\nu$ has a boundary layer behavior. This is natural since there is a change of boundary conditions 
between Rayleigh and Orr Sommerfeld equations. The size of the boundary layer is of order $\eps^{1/2} \approx \nu^{1/2}$ (for fixed $\alpha$), which is introduced to
balance $\eps \partial_y^4$ and $\partial_y^2$. This sublayer is known as "viscous sublayer" in the physical literature \cite{Reid}.
Note that $\psi_s$ and $\psi_f$ are analytic on a strip $| \Im y | \le \sigma_0$ for some $\sigma_0 > 0$.

Once the linear instability is constructed, we may construct an approximate solution of the form
\beq \label{uapp}
u^{app}(t,x,y) = \sum_{j=1}^M \nu^{ Nj} u^{j}(t,x,y) 
\eeq
starting from the maximal unstable eigenmode
$$
u^1(t,x,y) = \Re \Bigl( \psi^\nu e^{i \alpha (x - c^\nu t)} \Bigr) .
$$
The construction of such an approximate solution is routine work, and involves successive resolutions of linearized Navier
Stokes equations
\begin{equation}\label{eqs-un}
\begin{aligned}
\partial_t u^n + (U_0 \cdot \nabla) u^n + (u^n \cdot \nabla) U_0 -\nu \Delta u^n + \nabla p^n &= F_n,
\\
\nabla \cdot  u^n &= 0,
\end{aligned} \end{equation}
together with the zero initial data and zero Dirichlet boundary conditions, with
$$
F_n = - \sum_{1 \le j \le n-1} (u^j \cdot \nabla ) u^{n-j} .
$$
Note that by construction \cite{Grenier00CPAM}, $u^{app}$ solves Navier Stokes equations, up to a very small term $R^M$, of order
$\nu^{N(M+1)} e^{(M+1) \Re \lambda_\nu t}$, with $\lambda_\nu = -i\alpha c^\nu$, the maximal unstable eigenvalue. 

Let  $u^\nu$ be the solution of Navier Stokes equations
with initial data $u^{app}(0)$. A natural next step is to try to bound the difference $v : = u^\nu - u^{app}$ in $L^2$ norm. However,
we only get
$$
\frac{d}{dt} \| v \|_{L^2}^2 \le C (1+ \| \nabla u^{app} \|_{L^\infty}) \| v \|_{L^2}^2 + \| R^M \|_{L^2}^2 .
$$
As there is a boundary layer in $u^{app}$, $\| \nabla u^{app} \|_{L^\infty}$ is unbounded as $\nu \to 0$, and thus, this energy inequality
is useless when $u^{app} - U$ is of order greater than $\sqrt \nu$. Using only energy estimates, we cannot obtain $O(1)$ instability
in $L^\infty$, and are limited to $O(\sqrt \nu)$ instability (this is the main limitation of \cite{Grenier00CPAM}, noting there the instability is of order $\nu^{1/4}$, since viscosity is of order $\sqrt \nu$ after the hyperbolic scaling in the boundary layers).

The reason of this failure is that the viscous sublayer becomes linearly unstable in the inviscid limit \cite{Reid,GGN3}. 
The next natural idea is to work with analytic initial
data and to hope that analyticity will kill sublayer instabilities, exactly as in Caflisch and Sammartino work \cite{SammartinoCaflisch2}, 
where the authors used
analyticity to kill any instability of Prandtl's layers. However in the current setting, we want to get control over time intervals of order
$\log \nu^{-1}$, namely on unbounded time intervals. As the analyticity radius decreases with time, it becomes small, of order
$1 / t$ as $t$ increases, and is therefore too small to control instabilities in large times. This strategy therefore fails.

In this paper, we will directly prove that the series (\ref{uapp}) converges as $M$ goes to $+ \infty$, in analytic spaces.
This leads to a direct construction of a genuine solution of Navier Stokes equations, defined by
\beq \label{utrue}
u^\nu(t,x,y) = U +  \sum_{j=1}^{+ \infty} \nu^{ Nj} u^{j}(t,x,y) 
\eeq
The underlying idea is the following: if we try to control the difference between the true solution and an approximate one, we have
to bound solutions of linearized Navier Stokes equations. However because of the shear, vertical derivatives of such solutions
increase polynomially in time, simply because of the term $\partial_t + U(y) \partial_x$, which generates high normal derivatives.
This polynomial growth can not be avoided, except if we are working with a finite sum of eigenmodes. For eigenmodes, we simply
have an exponential growth, without polynomial disturbances. As a matter of fact, all the terms appearing in
(\ref{utrue}) are driven by eigenmodes through Orr Sommerfeld equations.

The proof of the convergence of (\ref{utrue}) relies on the accurate description of the Green function of Orr Sommerfeld equations,
detailed in \cite{GrN1}, and on the introduction of so called generator functions. Generator functions combine all the norms
of all the $u^j$, and can be seen as a time and space depending norm. We prove that these generator functions satisfy
a Hopf inequality, which allows us to get analytic bounds which are uniform in $M$.

The plan of this paper is the following. We begin with the definition of generator functions. We then study the generator function of
solutions of Laplace equations, and then of Orr Sommerfeld equations. We then detail the construction of $u^j$ and derive uniform
bounds on the generator functions, which ends the proof.



\section{Generator functions}



\subsection{Definition}\label{sec-Gen}


Let $f(x,y)$ be a smooth function. 
For $z_1,z_2\ge 0$, we define the following two functions, called in this paper "generator functions"
\beq \label{Genbl}
\begin{aligned}
Gen_0(f)(z_1,z_2) &= \sum_{\alpha \in \ZZ} \sum_{\ell \ge 0}  e^{z_1 |\alpha|}  \|  \partial_y^\ell f_\alpha \|_{\ell,0}{z_2^\ell \over \ell ! } ,
\\Gen_\delta(f)(z_1,z_2) &= \sum_{\alpha \in \ZZ}\sum_{\ell \ge 0}  e^{z_1 |\alpha|} \| \partial_y^\ell f_\alpha \|_{\ell,\delta} {z_2^\ell \over \ell ! } ,
\end{aligned}\eeq
in which $f_\alpha(y)$ denotes the Fourier transform of $f(x,y)$ with respect to the $x$ variable. In these sums, 
$$
\begin{aligned}
\| f_\alpha \|_{\ell,0} &= \sup_{y} \varphi(y)^\ell| f_\alpha(y) | ,
\\\| f_\alpha \|_{\ell,\delta} &= \sup_{y} \varphi(y)^{\ell}| f_\alpha(y) | \Bigl( \delta^{- 1} e^{-y / \delta} + 1 \Bigr)^{-1} ,
\end{aligned}$$
where 
$$
\varphi(y) = \frac{y}{1+y}
$$ 
and where the boundary layer thickness $\delta$ is equal to
$$
\delta = \gamma_0 \sqrt \nu
$$
for some sufficiently large $\gamma_0>0$. 
More precisely, $\gamma_0 $ will be chosen so that $\gamma_0^{-1} \le \sqrt{\Re \lambda_0/2}$, 
where $\lambda_0$ is the maximal unstable eigenvalue of the linearized Euler equations around $U$.  

Note that $Gen_0$, $Gen_\delta$ and all their derivatives are non negative for positive $z_1$ and $z_2$.
These generator functions $Gen_0(\cdot)$ and $Gen_\delta(\cdot)$ will respectively control the velocity and the vorticity of the solutions
of Navier Stokes equations.

For convenience, we introduce the following generator functions of one-dimensional functions $f = f(y)$:
\beq \label{Genbl-a}
\begin{aligned}
Gen_{0,\alpha}(f)(z_2) &= \sum_{\ell \ge 0}  \|  \partial_y^\ell f \|_{\ell,0}{z_2^\ell \over \ell ! } ,
\\Gen_{\delta,\alpha}(f)(z_2) &= \sum_{\ell \ge 0}   \| \partial_y^\ell f \|_{\ell,\delta} {z_2^\ell \over \ell ! } .
\end{aligned}\eeq
Of course, it follows that 
$$
Gen_0(f) = \sum_{\alpha\in \ZZ} e^{z_1|\alpha|} Gen_{0,\alpha} (f_\alpha)
$$ 
for functions of two variables $f = f(x,y)$, and similarly for $Gen_\delta$.

\section{Orr-Sommerfeld equations}



\subsection{Introduction}


In this section, we study the Orr-Sommerfeld equations 
\begin{equation}\label{OS1} 
Orr_{\alpha,c}(\phi) :=  - \epsilon \Delta_\alpha^2\phi + (U-c) \Delta_\alpha \phi - U'' \phi = f,
\end{equation}
together with the boundary conditions 
\begin{equation}\label{OS2} \phi_{\vert_{y=0}} = 0, \qquad \partial_y \phi_{\vert_{y=0}} =0,\end{equation}
and $\phi \to 0$ as $y \to + \infty$,
with $\Delta_\alpha = \partial_y^2 - \alpha^2$. We shall focus on the case when $\alpha\not =0$; the $\alpha=0$ case will be treated in Section \ref{sec-azero}. 
 The Orr-Sommerfeld problem is the resolvent problem of the linearized Navier-Stokes equations around
 a shear profile $U$, 
 written in terms of the stream function $\phi$. 
We shall study the generator functions of Orr-Sommerfeld solutions.

Throughout this paper, $| \Im c|$ will always be larger than $3\Re \lambda_0 /2$, 
where $\lambda_0$ is the speed of growth of the linear instability. In particular,
$\Im c$ will be bounded away from $0$. Moreover we will restrict ourselves to $\alpha \ll \epsilon^{-1/3}$, or precisely
\begin{equation}\label{range-a}
| \eps \alpha^3\log \nu | \le 1.
\end{equation}
Let us first describe Orr Sommerfeld equations in an informal way.
For small $\epsilon$, Orr Sommerfeld equations are a viscous perturbation of Rayleigh equations
\begin{equation}\label{def-reRay1} 
Ray_{\alpha,c}(\phi) := (U-c) \Delta_\alpha \phi - U'' \phi = f,
\end{equation}
with boundary conditions $\phi(0) = 0$ and $\lim_{y \to + \infty} \phi(y) = 0$.
Note that the equation $Ray_{\alpha,c}(\phi) = 0$ may be rewritten as
$$
(\partial_y^2 - \alpha^2) \phi - {U'' \over U - c} \phi = 0.
$$
As $y \to + \infty$, it therefore simplifies into $\partial_y^2 \phi - \alpha^2 \phi = 0$. Hence this equation
has two independent solutions $\phi_{s,\pm}$, with respective asymptotic behaviors
$e^{\pm |\alpha| y}$. Note that $c$ is an eigenvalue if and only if $\phi_{s,-}(0) = 0$.
We have to bound these solutions uniformly in $| \alpha | \ge 1$ and $c$, with $| \Im c| \ge 3\Re \lambda_0 / 2$.
As $\alpha$ goes to $\infty$, $\phi_{s,\pm}$ converge to $e^{\pm | \alpha | y}$. Moreover,
$ U'' / (U-c)$ is bounded since $|\Im c| \ge 3 \Re \lambda_0/2$ is bounded away from $0$, therefore this term may be
handled as a regular perturbation. 

When we add the viscous term $\epsilon \Delta_\alpha^2\phi$, these two solutions are slightly perturbed, but give birth to two independent
solutions of Orr Sommerfeld with a "slow" behavior, which behave like $e^{\pm | \alpha | y}$. 
Two additional solutions, called $ \phi_{f,\pm}$, appear, with a fast behavior.
For these solutions the viscous term is no longer negligible and is of the same order as the Rayleigh one. At leading order,
$ \phi_{f,\pm}$ are solutions to
\beq \label{AiryA}
- \eps \partial_y^4 \phi + (U-c + \alpha^2 \epsilon ) \partial_y^2 \phi = 0.
\eeq
Let
$$
\mu_f(y) =  \sqrt{\alpha^2 + \frac{U-c}{\epsilon}},
$$
taking the positive real part.
Then at first order  $\phi_{f,\pm}$ behaves like $e^{\pm \int_0^y \mu_f(z) dz}$.

%
%
%
%
Let $G_{\alpha,c}(x,y)$ be the Green function of the Orr-Sommerfeld problem. This Green function may be decomposed in
a "slow part" $G_s$ and a "fast part" $G_f$, such that
$$
G_{\alpha,c}(x,y) = G_s(x,y) + G_f(x,y) .
$$
We recall the following theorem, which is the main result of \cite[Theorem 2.1]{GrN1}.

\begin{theorem}\label{theo-GreenOS} 
Let $\alpha, c$ be arbitrary, so that  $| \Im c |$ is bounded away from $0$ and $| \alpha \Im c| > 3 \Re \lambda_0 / 2$.
Then, there are universal positive constants $C_0, \theta_0$ so that 
\begin{equation}\label{est-GrOS1}
 |G_s(x,y)|  \le  \frac{C_0}{\mu_s (1 + |\Im c|)}  \Big( e^{-\theta_0\mu_s |x-y|}  +  e^{-\theta_0 \mu_s |x+y|} \Big) 
 \eeq
 \beq \label{est-GrOS1b}
 | G_f(x,y) | \le  \frac{C_0}{m_f (1 + |\Im c|) } \Big( e^{- \theta_0 m_f|x-y|}  + e^{- \theta_0 m_f|x+y|} \Big) 
\end{equation}
for all $x,y\ge 0$, in which 
 \begin{equation}\label{def-mMf}
\mu_s = |\alpha|, \qquad  m_f = \inf_{y} \Re  \mu_f (y) , 
\end{equation}
with 
$$
\mu_f(y) =  \sqrt{\alpha^2 + \frac{U-c}{\epsilon}},
$$ 
taking the positive real part. Similar bounds hold for derivatives, namely for $k \ge 0$ and $l \ge 0$ with $k + l \le 3$,
\begin{equation}\label{est-GrOS2}
 |\partial_x^k \partial_y^l G_s(x,y)|  
 \le  \frac{C_{k,l}}{\mu_s^{1 - k - l}( 1 + |\Im c|)}  \Big( e^{-\theta_0\mu_s |x-y|}  +  e^{-\theta_0 \mu_s (|x+y|)} \Big) 
\eeq
\beq \label{est_GrOS2b}
| \partial_x^k \partial_y^l G_f (x,y) |
\le    \frac{C_{k,l}}{m_f^{1 - k - l} ( 1 + |\Im c| )} \Big( e^{- \theta_0 m_f|x-y|}  + e^{- \theta_0 m_f|x+y|} \Big) 
\end{equation}
\end{theorem}


In the sequel, $\Im c$ will always be bounded away from $0$, but can be very large.
This theorem is the main result of \cite[Theorem 2.1]{GrN1}. However, for the sake of completeness
we sketch in the following lines the computation of the Green function, at a formal level.
The Green function $G_{\alpha,c}(x,y)$ is constructed through the representation 
$$
G_{\alpha,c}(x,y) = \left \{ \begin{aligned} 
\sum_{k = s,f} d_{k}(x) \phi_{k,-}(y)  + \sum_{k = s,f} e_k(x) \phi_{k,-}(y) , \quad & y>x>0\\
\sum_{k = s,f} d_{k}(x) \phi_{k,-}(y)  + \sum_{k = s,f} f_k(x) \phi_{k,+}(y), \quad & 0<y<x\\
\end{aligned} \right.  
$$
where the first sum takes care of the boundary condition and the second one of the singularity of the Green function near $x = y$.

It remains to compute $d_k$, $e_k$ and $f_k$, so that $G_{\alpha,c}$ is continuous, together with its first two derivatives, so that
$\eps \partial^3_y G_{\alpha,c}$ has a unit jump at $y = x$ and so that $G_{\alpha,c}$ satisfies Dirichlet boundary condition together
with its first derivative.
The main contribution in $\eps  \partial^3_y G_{\alpha,c}$ comes from fast modes since $\mu_f \gg \mu_s$. 
In order to get a unit jump at $y = x$
we have to choose, at leading order,
$$
e_f(x)  \sim - {\phi_{f,-}(x)^{-1} \over 2 \eps \mu_f(x)^3}, \qquad
f_f(x) \sim - {\phi_{f,+}(x)^{-1} \over 2 \eps \mu_f(x)^3} 
$$
Note that
$$
{1 \over \eps \mu_f^2} = {1 \over \eps  \alpha^2 + U - c}
$$
and is therefore bounded by $C/ (1+| \Im c|)$ for large $| \Im c |$.
With this choice of $e_f$ and $f_f$, at leading order, $G_{\alpha,c}$ and its second derivatives
are equal at $x^+$ and $x^-$. To match the first derivative we use
$\phi_s(y)$. Note that
$$
\partial_y \Bigl( e_f(x) \phi_{f,-}(y) \Bigr) \sim {1 \over 2 \eps \mu_f^2} = {1 \over 2} {1 \over  \eps \alpha^2 + U - c}
$$
The jump of the first derivative of fast modes is therefore bounded by $C/ (1 + | \Im c|)^{-1}$ for large $| \Im c|$. 
This jump is compensated by the slow modes. 
As a consequence, as the jump in the first derivatives of $e^{\pm | \alpha | y}$ is $|\alpha|$,
 $e_s(x)$ and $f_s(x)$ are of order $1 / |\alpha| (1+| \Im c|)$ for large $| \Im c |$.
The bounds on $d_k$ can be obtained in a similar way.


\subsection{Pseudoinverse}


We will also be interested in the case when $Orr_{\alpha,c}$ is not invertible. In this case,
$Im(Orr_{\alpha,c})$ is of codimension $1$, and the equation $Orr_{\alpha,c}(\phi) = f$
may be solved only if $\langle f,\phi_{\alpha,c}\rangle_{L^2} = 0$, where $\phi_{\alpha,c}$ spans the orthogonal
of $Im(Orr_{\alpha,c})$. In \cite{GrN1} we show that we can construct an inverse of $Orr_{\alpha,c}$
on $Im(Orr_{\alpha,c})$ through a kernel $G = G_s + G_f$ which satisfies the same bounds as in Theorem
\ref{theo-GreenOS}. Moreover, the eigenmode of $Orr_{\alpha,c}$ does not lie in
$Im(Orr_{\alpha,c})$. More precisely

\begin{theo} \label{theopseudo} \cite{GrN1}
Let $\alpha$ be fixed.
Let $c_0$ be a simple eigenvalue of $Orr_{\alpha,c}$ with corresponding eigenmode
$\phi_{\alpha,c_0}$. Then there exists a bounded family of linear forms $l^\nu$
and a family of pseudoinverse operators $Orr^{-1}$ such that for any stream function $\phi$,
$$
Orr_{\alpha,c}\Bigl(Orr^{-1}(\phi) \Bigr)  = \phi - l^\nu(\phi) \phi_{\alpha,c_0} .
$$
Moreover, $Orr^{-1}$ may be defined through a Green function $G = G_s + G_f$
which satisfies (\ref{est-GrOS1}) and (\ref{est-GrOS1b}).
\end{theo}


\subsection{Bounds on solutions of Orr Sommerfeld equations}


\begin{proposition}\label{prop-OS1} ($L^\infty$ norms) \\ 
Let $\phi$ solve the Orr-Sommerfeld problem \eqref{OS1}-\eqref{OS2}. 
For $| \epsilon \alpha^3 | \le 1$, $|\alpha \Im c| >  3 \Re \lambda_0 / 2$ and $| \Im c|$ bounded away from $0$, there hold
\beq \label{LLa1}
\begin{aligned}
 | \alpha |^2  \|\phi \|_{0,0} + 
 | \alpha |  \| \nabla_\alpha \phi \|_{0,0}  +  \|\nabla_\alpha^2 \phi \|_{0,0}
&\le {C_0 \over 1+ | \Im c |} \|f\|_{0,0}
\end{aligned}
\eeq
and
\beq \label{LLa2}
\begin{aligned}
 \|\sqrt \epsilon \nabla_\alpha^3\phi \|_{0,0}  + \| \epsilon \nabla_\alpha^4\phi \|_{0,0}
&\le C_0 \|f\|_{0,0}.
\end{aligned}
\eeq
\end{proposition}
Equations (\ref{LLa1}) and (\ref{LLa2}) express a classical regularity result: Orr Sommerfeld equation is a small fourth order 
elliptic perturbation
of a second order elliptic equation. Therefore we gain the full control on two derivatives of the solution, and partial controls on
third and fourth derivatives, with prefactors $\sqrt{\eps}$ and $\eps$. 
\begin{proof} By construction, the solution $\phi$ is of the form 
\beq \label{expressint} 
\phi(y) = \int_0^\infty G_{\alpha,c} (x,y) f(x) \; dx .
\eeq
Hence, 
$$
\begin{aligned} 
|\phi(y)| 
&\le  C_0 \int_0^\infty  \Big( \frac{e^{-\theta_0\mu_s |x-y|} }{\mu_s (1+|\Im c|)}
+ \frac{e^{- \theta_0 m_f|x-y|}}{ m_f (1+|\Im c|)}  \Big) f(x)\; dx 
\\
& +  C_0 \int_0^\infty  \Big( \frac{e^{-\theta_0\mu_s |x+y|}}{\mu_s (1+|\Im c|)} 
+ \frac{e^{- \theta_0 m_f|x+y|} }{ m_f (1+|\Im c|)} \Big) f(x)\; dx 
\\
&\le  C_0 \| f \|_{0,0} (\mu_s^{-2} + m_f^{-2}) (1+|\Im c|)^{-1} 
.\end{aligned}$$
We recall that $\mu_s = |\alpha|$ and that  $\epsilon \mu_f^2 = \eps \alpha^2 + (U-c)$. Hence,
$$
{\alpha^2 \over \mu_f^2} = {2 \over 1 + \alpha (U - c)/ (\eps \alpha^3) }.
$$
which is bounded since $| \alpha \Im c | > \Re \lambda_0$ and $| \eps \alpha^3 | \le 1$.
This proves that 
$$
\| \alpha^2 \phi\|_{0,0} \le C_0 (1+|\Im c|)^{-1} \|f\|_{0,0}.
$$ 
To get the bounds on  $\alpha \partial_y\phi$ and $\partial_y^2 \phi$,
 we differentiate (\ref{expressint}) with respect to $y$, splitting the integral in $x < y$ and $x > y$, and fulfill similar computations.

Similarly, we compute 
$$
\begin{aligned} 
|\partial_y^3\phi(y)| 
&\le C_0  (1+|\Im c|)^{-1} \int_0^\infty  \Big( \mu_s^2e^{-\theta_0\mu_s |x-y|} + m_f^2 e^{- \theta_0 m_f|x-y|} \Big) f(x)\; dx 
\\
& + C_0  (1+|\Im c|)^{-1} \int_0^\infty  \Big( \mu_s^2e^{-\theta_0\mu_s |x+y|} + m_f^2 e^{- \theta_0 m_f|x+y|} \Big) f(x)\; dx 
\\
&\le C_0( \mu_s  + m_f) (1+|\Im c|)^{-1} \| f\|_{0,0} 
.\end{aligned}$$
As $\sqrt{| \eps |} |\alpha| \le 1$, $\sqrt{| \eps |} \mu_s \le C$ and
$$
\sqrt{| \eps | } \mu_f = | \sqrt{ \eps \alpha^2 + U - c} | \le C (1 + | \Im c |),
$$ 
which yields the estimate for $\sqrt \epsilon \partial_y^3\phi$.
For $\epsilon \partial_y^4 \phi$, we directly use the Orr-Sommerfeld equation $Orr_{\alpha,c}(\phi) = f$. 
\end{proof}

\begin{proposition}\label{prop-OS2} (Boundary layer norms)\\
Let $\phi$ solve the Orr-Sommerfeld problem \eqref{OS1}-\eqref{OS2}, with source $f$ having a boundary layer behavior.
For $| \epsilon \alpha^3 \log \nu | \le 1$, $| \alpha \Im c | > 3 \Re \lambda_0 / 2$ and $| \Im c|$ bounded away from $0$, there holds
\beq \label{propOS2bis}
 (1 + | \Im c |) \Big( \|  \nabla_\alpha \phi \|_{0,0}  + \| \nabla_\alpha^2 \phi\|_{0,\delta}\Big)
+ | \epsilon | \ \|\nabla_\alpha^4 \phi\|_{0,\delta} \le C_0 \|f\|_{0,\delta}.
\eeq
\end{proposition}
\begin{proof} 
Let $\chi(y)$ be a non negative function which equals $1$ for $0 \le y \le 1$ and $0$ for $y > 1$. Let us split the forcing
term $f$ in its boundary layer term and in its "inner term"
$$
f = f_b + f_i
$$
with 
$$
f_b(y)=   \chi\Bigl( { y \over \delta \log \delta^{-1}} \Bigr) f(y) .
$$
Note that $\| f_i \|_{0,0} \le C \| f \|_{0,\delta}$ and 
$$
| f_b (x) | \le C \| f \|_{0,\delta} \delta^{-1} e^{- y / \delta} .
$$
Let $\phi_b$ and $\phi_i$ be the solutions of $Orr(\phi_b) = f_b$ and $Orr(\phi_i) = f_i$.
Note that $\phi_i$ satisfies (\ref{propOS2bis}), thanks to the previous Proposition. 

It remains to bound $\phi_b$. For this
we split the Green function in its fast part $G_f$ and its slow part $G_s$.
For the fast part we have to bound $G_f \star f_b$, which is a convolution between an exponentially decreasing
kernel and a exponentially decreasing source. It is therefore bounded by
$C \| f \|_{0,\delta} \delta^{-1} e^{- y / \delta}$ provided $m_f > 2 \delta^{-1}$, which is the case provided $\gamma_0$ is large enough.

Let us turn to the slow part $G_s$. Let us first assume that $G_s(0,y) = 0$ for any positive $y$.
Then $\partial_y^2 G_s(0,y) = 0$ for any positive $y$. As
$$
| \partial_y^2 \partial_x G_s(x,y) | \le C {\mu_s^2 \over (1 + | \Im c |)}
$$ 
we have 
$$
| \partial_y^2 G_s(x,y) |\le {C \mu_s^2 x  \over 1 + | \Im c |} ,
$$  
noting the $x$ factor on the right hand side. By convolution between $G_s$ and $f_b$ we have
$$
|\partial_y^2 \phi_b(y) | \le C {| \delta \alpha^2 | \over 1 + | \Im c|}  \| f \|_{0,\delta},
$$
which leads to the desired bound, taking into account that $| \delta \alpha^2 | \le C$, provided that $G_s(0,y) = 0$ for any positive $y$. 

However, it is not the case that $G_s(0,y)=0$ for $y>0$, but we rather have 
$$
G(0,y) = G_f(0,y) + G_s(0,y) =0.
$$ 
Therefore
$\partial_y^2 G_s(0,y) = - \partial_y^2 G_f(0,y)$. For $y \ge \frac{1}{\theta_0 m_f} \log m_f$, we get
$$
|\partial_y^2 G_s(0,y) | \le {C_0  m_f \over  1 + | \Im c | } e^{- \theta_0 m_f y}  \le {C_0  \over  1 + | \Im c | } .
$$
On the other hand, for $y \le  \frac{1}{\theta_0 m_f} \log m_f$, we use
$$
|\partial_y^3 G_s(0,y) | \le {C_0 \mu_s^2 \over 1 + | \Im c | } 
$$
to get
$$
|\partial_y^2 G_s(0,y) | \le {C_0 \over 1 + | \Im c |  } 
+  {C_0 m_f^{-1} \mu_s^2 \log m_f \over 1 + | \Im c | } 
$$
which is bounded by a constant divided by $(1 + | \Im c |)$, upon recalling $m_f > 2 \delta^{-1}$ and using the assumption $\alpha^2 \nu \log \frac1\nu \le 1$. This ends the proof of the bound on $\partial_y^2 \phi_b$.
The bounds on $\phi_b$ and $\partial_y \phi_b$ are similar.
\end{proof}


\subsection{Generator functions}


In this section, we study the generator of solutions to the Orr-Sommerfeld problem.

\begin{proposition}\label{prop-OS3} Let $\phi$ solve the Orr-Sommerfeld problem \eqref{OS1}-\eqref{OS2}, 
with source term $f$.
For $| \epsilon \alpha^3 \log \nu | \le  1$, $| \alpha \Im c | > 3 \Re \lambda_0 / 2$ and for $| \Im c|$ bounded away from $0$, 
there are positive constants $C_0, \theta_0$ (independent
on $\epsilon$ and $\alpha$) so that 
\beq \label{prop-OS3-1}
Gen_{0,\alpha}(\nabla_\alpha \phi) + Gen_{\delta,\alpha}(\nabla^2_\alpha \phi) \le  \frac{C_0}{1+|\Im c|} Gen_{\delta,\alpha} (f),
\eeq
for all $z_2$ so that $0 \le z_2  \le  \theta_0$. Moreover, provided $f_\alpha = 0$ if $| \epsilon \alpha^3  \log \nu | \ge 1$,
\beq \label{prop-OS3-2}
Gen_0(u) + Gen_\delta(\omega)  \le  \frac{C_0}{1+|\Im c|} Gen_\delta (f),
\eeq
\beq \label{prop-OS3-2}
\partial_{z_1} Gen_0(u) + \partial_{z_1} Gen_\delta(\omega) 
\le  \frac{C_0}{1+|\Im c|} \partial_{z_1} Gen_\delta (f),
\eeq
and
\beq \label{prop-OS3-3}
\partial_{z_2} Gen_0(u) + \partial_{z_2} Gen_\delta(\omega) 
 \le  \frac{C_0}{1+|\Im c|} \Bigl[ \partial_{z_2} Gen_\delta(f) + Gen_\delta(f) \Bigr].
\eeq
\end{proposition}
\begin{proof} We estimate each term in the generator functions. 
The term  $n=0$ is already treated in Proposition  \ref{prop-OS2}. 
For $n\ge 1$, we compute 
\begin{equation}\label{OS-yndy}
\begin{aligned}
 Orr_{\alpha,c} (\varphi^n \partial_y^n \phi) 
 &= \varphi^n \partial_y^n f 
- 3 \eps \partial_y \varphi^n \partial_y^{n+3} \phi 
 - 6 \eps \partial_y^2 \varphi^n \partial_y^{n+2} \phi 
\\&\quad 
- 3 \eps \partial_y^3 \varphi^n \partial_y^{n+1}\phi  - \epsilon \partial_y^4\varphi^n \partial_y^n \phi 
\\&\quad + 4 \eps \alpha^2 \partial_y \varphi^n \partial_y^{n+1} \phi
+ 2 \eps \alpha^2 \partial_y^2 \varphi^n \partial_y^n \phi
\\&\quad + 
(U-c)\partial_y^2 \varphi^n \partial_y^n \phi 
+ 2 (U-c)\partial_y \varphi^n \partial_y^{n+1}\phi  
\\&\quad + 
 \sum_{1\le k\le n} \frac{n!}{k! (n-k)!} \varphi^n\Big(\partial_y^k U \partial_y^{n-k}
  \Delta_\alpha \phi - \partial_y^k U'' \partial_y^{n-k} \phi \Big) .
\end{aligned}
\end{equation}
Let us estimate each term on the right. For convenience, we set 
$$ \cA_n: =\| \varphi^n \partial_y^n \nabla_\alpha \phi \|_{0,0}  + \| \varphi^n \partial_y^n \nabla_\alpha^2 \phi\|_{0,\delta} 
+ | \epsilon |  (1 + | \Im c |)^{-1} \| \varphi^n \partial_y^n\nabla_\alpha^4 \phi\|_{0,\delta} ,
$$ 
for $n\ge 0$, and $\cA_n=0$ for negative $n$. As $\varphi = y/(1+y)$, we compute 
$$
\| 3 \eps \partial_y \varphi^n \partial_y^{n+3} \phi 
 + 6 \eps \partial_y^2 \varphi^n \partial_y^{n+2} \phi 
 + 3 \eps \partial_y^3 \varphi^n \partial_y^{n+1}\phi  + \epsilon \partial_y^4\varphi^n \partial_y^n \phi \|_{0,\delta}
 $$
 $$
\le 
C_0(1 + | \Im c |) \sum_{k=1}^4 \frac{n!}{(n-k)!}\cA_{n-k}, 
$$
$$
\| 4 \eps \alpha^2 \partial_y \varphi^n \partial_y^{n+1} \phi
+ 2 \eps \alpha^2 \partial_y^2 \varphi^n \partial_y^n \phi \|_{0,\delta} \le
C \eps \alpha^2 \Bigl[ n \cA_{n-1} + n (n-1) \cA_{n-2}\Big] ,
$$
and 
$$
\begin{aligned}
\|(U-c)\partial_y \varphi^n \partial_y^{n+1}\phi \|_{0,\delta} 
& \le  C_0(1 + | \Im c |) n \cA_{n-1}
\\
\|(U-c)\partial_y^2 \varphi^n \partial_y^n \phi \|_{0,\delta}
& \le  C_0(1 + | \Im c |) \Big[ n \cA_{n-1} + n (n-1) \cA_{n-2}\Big] .
\end{aligned}$$
Finally, we treat the summation in \eqref{OS-yndy}. Set 
$$
\cB_n =  \| \partial_y^nU \|_{0,0} + \|  \partial_y^n U'' \|_{0,0} .
$$
We estimate 
$$
 \sum_{1\le k\le n} \frac{n!}{k! (n-k)!} \|\varphi^n(\partial_y^k U \partial_y^{n-k}
  \Delta_\alpha \phi - \partial_y^k U'' \partial_y^{n-k} \phi ) \|_{0,\delta}
$$
$$
\le C_0 \sum_{1\le k\le n} \frac{n!}{k! (n-k)!} \Big( \|\partial_y^k U\|_{0,0} \|\varphi^{n-k} \partial_y^{n-k}
\Delta_\alpha \phi \|_{0,\delta} 
$$
$$
\qquad  + \| \partial_y^k U'' \|_{0,0}\|\varphi^{n-k}\partial_y^{n-k} \phi  \|_{0,\delta}\Big)
$$
$$
\le C_0 \sum_{1\le k\le n} \frac{n!}{k! (n-k)!} \cB_k \cA_{n-k} .
$$  
Thus, applying Proposition \ref{prop-OS2} to \eqref{OS-yndy}, we obtain, using $| \eps \alpha^2 | \le 1$,  
$$ 
\begin{aligned}
&\|  \nabla_\alpha (\varphi^n \partial_y^n \phi) \|_{0,0}  + \| \nabla_\alpha^2 (\varphi^n \partial_y^n \phi)\|_{0,\delta}
+ | \epsilon | (1 + | \Im c |) ^{-1}\|\nabla_\alpha^4  (\varphi^n \partial_y^n \phi)\|_{0,\delta} 
\\&\le \frac{C_0 }{1 + | \Im c |}\|\varphi^n \partial_y^nf\|_{0,\delta} + C_0 \sum_{k=1}^4 \frac{n!}{(n-k)!}\cA_{n-k} 
+ C_0 \sum_{1\le k\le n} \frac{n!}{k! (n-k)!} \cB_k \cA_{n-k} .
\end{aligned}$$
Expanding the left hand side, we thus have 
$$
\cA_n \le  \frac{C_0 }{1 + | \Im c |}\|\varphi^n \partial_y^nf\|_{0,\delta} + C_0 \sum_{k=1}^4 \frac{n!}{(n-k)!}\cA_{n-k} +
C_0 \sum_{1\le k\le n} \frac{n!}{k! (n-k)!} \cB_k \cA_{n-k} 
$$
for all $n\ge 0$. Multiplying the above equation by $z_2^n / n!$ and summing up the result in $n\ge 0$, we obtain 
\begin{equation}\label{bd-Ann} \begin{aligned}
 \sum_{n\ge 0}\cA_n \frac{z_2^n}{n!}
&\le  \frac{C_0}{1+|\Im c|} Gen_{\delta,\alpha}(f) + C_0  \sum_{n\ge 0}\sum_{k=1}^4 \frac{n!}{(n-k)!}\cA_{n-k}  \frac{z_2^n}{n!}
\\
&\quad + 
C_0  \sum_{n\ge 0} \sum_{k=1}^{n} \frac{n!}{k! (n-k)!} \cB_k \cA_{n-k} \frac{z_2^n}{n!}
.\end{aligned}\end{equation}
Since $|z_2| \le 1$, we compute 
$$ \sum_{n\ge 0}\sum_{k=1}^4 \frac{n!}{(n-k)!}\cA_{n-k}  \frac{z_2^n}{n!} \le 4 z_2  \sum_{n\ge 0}\cA_n \frac{z_2^n}{n!},$$
which can be absorbed into the left hand side of \eqref{bd-Ann}, for sufficiently small $z_2$. Similarly, 
$$
\sum_{n\ge 0} \sum_{k=1}^{n} \frac{n!}{k! (n-k)!} \cB_k \cA_{n-k} \frac{z_2^n}{n!} \le 
C_0   \sum_{n\ge 0}\cA_n \frac{z_2^n}{n!} \sum_{n\ge 1}\cB_{n} \frac{z_2^n}{n!}
$$
which is again absorbed into the left of \eqref{bd-Ann}, upon using the assumption that $U$ is analytic, and
that the sum of $\cB_n$ begins on $n\ge 1$. This ends the proof of (\ref{prop-OS3-1}).

Summing (\ref{prop-OS3-1}) gives (\ref{prop-OS3-2}). Multiplying by $| \alpha |$ before summing
(\ref{prop-OS3-1}) we get (\ref{prop-OS3-3}). Now multiplying by $z_1^{n-1} / (n-1)!$ instead of 
$z_1^n \over n!$ we get
$$
\begin{aligned}
 \sum_{n\ge 1}\cA_n \frac{z_2^{n-1}}{(n-1)!}
&\le  \frac{C_0}{1+|\Im c|} \partial_{z_2} Gen_{\delta,\alpha}(f) 
+ C_0  \sum_{n\ge 1}\sum_{k=1}^4 \frac{n!}{(n-k)!}\cA_{n-k}  \frac{z_2^{n-1}}{(n-1)!}
\\
&\quad + 
C_0  \sum_{n\ge 1} \sum_{k=1}^{n} \frac{n!}{k! (n-k)!} \cB_k \cA_{n-k} \frac{z_2^n}{n!}
.\end{aligned}
$$
If $z_2$ is small enough, the right hand side may be absorbed in the left hand side, excepted 
the terms involving $\cA_0$, which are bounded by $Gen_\delta(f)$. This ends the proof of the Proposition.
\end{proof}


\subsection{The $\alpha=0$ case}\label{sec-azero}
In this section, we treat the case when $\alpha=0$. In this case, the resolvent equation of the linearized Navier-Stokes equations simply becomes the resolvent equation for the heat equation  
\begin{equation}
\label{eqs-azero}
(\lambda -\nu \partial_y^2 ) u_0 = F_0,
\end{equation} 
with $u_0=0$ at $y=0$, where $u_0$ denotes the Fourier mode of the first component of velocity $u$ at $\alpha =0$. We have the following simple proposition. 

\begin{proposition}\label{prop-azero} Let $u_0$ solve \eqref{eqs-azero} and let $\omega_0 = \partial_y u_0$ be the corresponding vorticity. For $\Re \lambda > 3 \Re \lambda_0 / 2$, we have  
	\beq \label{prop-azero1}
	\begin{aligned}
	Gen_{0,0}(u_0) &\le  \frac{C_0}{1+\Re \lambda} Gen_{0,0} (F_0), \\
	Gen_{\delta,0}(\omega_0) &\le  \frac{C_0}{1+\Re \lambda} \Big( Gen_{0,0} (F_0) + Gen_{\delta,0} (\partial_y F_0) \Big) 
	,	\end{aligned}
	\eeq
	for all $z_2\ge 0$.\end{proposition}
\begin{proof} The solution $u_0$ to \eqref{eqs-azero} with the zero boundary condition satisfies 
	$$ u_0(y) = \int_0^\infty G_0(y,z) F_0(z) \; dz$$
	where $G_0(y,z)$ denotes the Green function for $(\lambda - \nu \partial_y^2)$ with the Dirichlet boundary condition. In particular, we have 
	$$ |G_0(y,z)| \le \nu^{-1/2}|\lambda|^{-1/2} e^{- \nu^{-1/2} \Re \sqrt \lambda |y-z|} .$$  
In particular, for $\Re \lambda > 3\Re \lambda_0/2$, we estimate 
	$$\begin{aligned}
	 | u_0(y)| 
	 &\le \int_0^\infty \nu^{-1/2}|\lambda|^{-1/2} e^{- \nu^{-1/2} \Re \sqrt \lambda |y-z|} |F_0(z)| \; dz
	 \\
	 &\le C_0 (1+\Re \lambda)^{-1} \sup_y |F_0(y)| .
	\end{aligned}
	$$
The estimates for derivatives are obtained in the same way as done in the previous section for the Orr-Sommerfeld equations. The proposition follows.   
\end{proof}

\section{Construction of the instability}



\subsection{Iterative construction}


Let us now describe the iterative construction of $u^n$ and $\omega^n$ 
and of the infinite series which defines  the solution \eqref{utrue}. 
We start with the most unstable eigenmode $(\psi_0,\alpha_0,c_0)$ to the Orr-Sommerfeld problem; namely, we start with a solution of 
$$
Orr_{\alpha_0,c_0} (\psi_0) = 0,
$$
with the zero boundary conditions on $y=0$, such that $\alpha_0 \Im c_0$ is maximum. In fact we just need to start
from a mode such that $\alpha_0 \Im c_0$ is strictly larger than half of this maximum. 
Up to a change of sign we may assume
that $\alpha_0 > 0$. Up to a rescaling we may also assume that $\alpha_0 = 1$.

This mode corresponds to a complex solution $\nabla^\perp(e^{i \alpha_0 (x - c_0t)} \psi_0(y))$ of the linearized Navier Stokes equations.
We have to take the real part of this solution in order to deal with real valued solutions. Note that
$(\bar \psi_0,-\alpha_0,\bar c_0)$ is also an eigenmode. We therefore sum up the two unstable eigenmodes
corresponding to $\alpha_0$ and $- \alpha_0$ and define 
$$
\psi^1(t,x,y) =  e^{i \alpha_0 ( x - \Re c_0  t) + \alpha_0 \Im c_0 t} \psi_0(y) 
+  e^{ - i \alpha_0 ( x - \Re c_0  t) + \alpha_0 \Im c_0 t} \bar \psi_0(y)  .
$$
Let $\psi^1_\alpha$ be the Fourier transform of $\psi^1$ in $x$ variable. 
Then all the $\psi^1_\alpha$ vanish, except two of them, namely $\alpha = \pm \alpha_0 = \pm 1$. 
We then iteratively solve the resolvent equation of the linearized Navier-Stokes problem \eqref{eqs-un} for $u^n = \nabla^\perp \psi^n$. In term of vorticity $\omega^n = \Delta \psi^n$, the problem reads
\begin{equation}\label{eqs-wn}
\begin{aligned}
\partial_t \omega^n + U \partial_x \omega^n - U'' \partial_x \psi^n -\nu \Delta \omega^n &= - \sum_{1 \le j \le n-1} (u^j \cdot \nabla ) \omega^{n-j},
\end{aligned} \end{equation}
together with the zero boundary condition on $u^n = \nabla^\perp \psi^n$. Precisely, we search for $\psi^n$ 
under the form
\beq \label{firstpsin}
\psi^n = \sum_{|\alpha| \le n} \psi_\alpha^n e^{i \alpha ( x - \Re c_0 t)} e^{n \Im c_0 t},
\eeq
where the sum runs on all the $\alpha$ which are multiples of $\alpha_0$. This yields, for $n\ge 2$,
\begin{equation}\label{OS-un}
Orr_{\alpha,c} (\psi^n_\alpha) = {1 \over i \alpha} \sum_{\alpha'} 
\sum_{1 \le j \le n-1} (u^j_{\alpha - \alpha'} \cdot \nabla_{\alpha'})  \omega_{\alpha'}^{n-j},
\end{equation}
in which $u^j_\alpha = \nabla_\alpha^\perp \psi^j_\alpha$ and $\omega^j_\alpha = \Delta_\alpha \psi^j_\alpha$, 
together with the zero boundary conditions on $\psi^n$ and $\partial_y \psi^n$, and in which
$$
c = \Re c_0  + i n {\alpha_0 \over \alpha} \Im c_0 .
$$
Note that a $\alpha^{-1}$ factor appears in front
of the source term, since Orr Sommerfeld is obtained by taking the vorticity of Navier Stokes equations and dividing by
$\alpha$.
Note also that all but a finite number of $\psi^n_\alpha$ vanish.
Again the sum runs on all the $\alpha$ which are multiple of $\alpha_0$. Note also that
$| \Im c | \ge | \Im c_0|$ and is thus bounded away from $0$.

As Proposition \ref{prop-OS3} only holds for $| \alpha^3 \epsilon |\le 1$ or equivalently
$|\alpha| \le \nu^{-1/2}$, we will only retain the $| \alpha | \le \nu^{-1/2}$ in the construction
of $\psi^n$ and restrict (\ref{firstpsin}) to 
$$
\psi^n = \sum_{| \alpha | \le \nu^{-1/2}} \psi_\alpha^n e^{i \alpha (x - \Re c_0 t)} e^{n \Im c_0 t} .
$$
This leads to the introduction of the force
$$
f^n = \sum_{| \alpha | > \nu^{-1/2}} \sum_{\alpha'} \sum_{1 \le j \le n-1}
(u^j_{\alpha - \alpha'} . \nabla_{\alpha'} ) \omega_{\alpha'}^{n-j} 
$$
that will be estimated below.


\subsection{Bounds on $\psi^n$}


 We prove the following. 

\begin{proposition}
Introduce the iterative norm 
\begin{equation}\label{def-Gn}
G^n =  Gen_0 (u^n) + Gen_0 (v^n) + \partial_{z_1} Gen_0(u^n) 
\eeq
$$
\qquad \qquad + Gen_\delta(\omega^n) + \partial_{z_1} Gen_\delta(\omega^n) + \partial_{z_2} Gen_\delta(\omega^n),
$$
for $n \ge 1$. Then, $G^n(z_1,z_2)$ are well-defined for sufficiently small $z_1,z_2$, and in addition, 
there exists some universal constant $C_0$ so that 
\beq \label{boundGn}
G^n \le {C \over n} \sum_{1 \le j \le n-1} \Big( G^j \partial_{z_1} G^{n-j} + G^j \partial_{z_2} G^{n-j} \Big) .
\eeq
\end{proposition}
Note that the derivatives appearing in (\ref{def-Gn}) are non negative.

\begin{proof} Applying Proposition \ref{prop-OS3} to the Orr-Sommerfeld equation \eqref{OS-un}, 
using $|\alpha | \le \nu^{-1/2}$,  and summing over $\alpha$, we get 
\begin{equation}\label{iter-un}
{\cal A} (\omega^n) \le {C_0 \over n} {\cal A} \Bigl( \sum_{1 \le j \le n-1}  (u^j \cdot \nabla) \omega^{n-j}  \Bigr) .
\end{equation}
Moreover, using Proposition \ref{prop-elliptic}
$$
Gen_0(u^n) + Gen_0(v^n) \le C Gen_\delta(\omega^n)
$$
and
$$
\partial_{z_1} Gen_0(u^n) \le C \partial_{z_1} Gen_\delta(\omega^n).
$$
Proposition \ref{prop-Gendy} then gives the desired bound.
\end{proof}


\subsection{Bounds on the generator function}


\begin{theo} For $n\ge 1$, let $G^n(z_1,z_2)$ be defined as in \eqref{def-Gn}. Then, the series
$$
G(\tau,z_1,z_2) = \sum_{n=1}^{+\infty} G^n(z_1,z_2) \tau^{n-1}  
$$
converges, for sufficiently small $\tau$, $z_1$, and $z_2$. 
\end{theo}
\begin{proof}
For $N\ge 1$, let us introduce the partial sum 
$$
G_N(\tau,z_1,z_2)  := \sum_{n=1}^N G^n(z_1,z_2) \tau^{n-1} ,
$$
for $\tau,z_1,z_2\ge 0$. 
Note that $G_N$ is a polynomial in $\tau$, and thus well-defined for all times $\tau\ge 0$. 
We also note that all the coefficients $G^n(z_1,z_2)$ are positive. 
In particular, $G_N(\tau, z_1,z_2)$ is positive, and so are all its time derivatives (when $z_1 > 0$ and $z_2 > 0$). 
Moreover, $G_N(\tau,z_1,z_2)$, and all its derivatives, are increasing in $N$. We also observe that, at $\tau =0$,
$$
G_N(0,z_1,z_2) = G^1(z_1,z_2),
$$
for all $N\ge 1$, and hence, 
$$
G(0,z_1,z_2) = \lim_{N\to \infty} G_N(0,z_1,z_2) = G^1(z_1,z_2).
$$ 
Next, multiplying (\ref{boundGn}) by $\tau^{n-2}$ and summing up the result, we obtain 
the following partial differential inequality
$$
\partial_\tau G_N \le C G_N \partial_{z_1} G_N + C G_N \partial_{z_2} G_N ,
$$
for all $N\ge 1$. Therefore the generator function satisfies an Hopf-type equation, or more precisely an Hopf inequality.

As $G_N$ is increasing in $z_1$ and $z_2$, we focus on the diagonal $z_1 = z_2$, and introduce
$$
F_N(\tau,z) = G_N(\tau,\theta(\tau) z, \theta(\tau) z) 
$$
for $\tau, z\ge 0$, where $\theta(\cdot)$ will be chosen later, with $\theta(0)=1$.
It follows that $F_N$ satisfies
\begin{equation}\label{ode-FN}
\partial_\tau F_N \le (2 C F_N  + \theta'(\tau) z) \partial_z F_N .
\end{equation}
Note that $F_N$ is increasing in $N$. At $\tau= 0$, $F_N(0,z) = G_N(0,z,z) = G^1(z,z)$, which is independent on $N$.
Let  $\rho > 0$ be small enough such that
$$
M_0 = \sup_{0 \le z \le \rho} G^1(z,z)
$$
is well defined. 
We now define $\theta(\tau)$ in such a way that 
$$
4 C M_0 + \theta'(\tau) \rho < 0,
$$
with $\theta(0)=1$. For instance, we take $$
\theta(\tau) = 1 - 6 C M_0 \rho^{-1} \tau .
$$
We will work on a time interval where $\theta(\tau) \ge 1/2$, namely on $[0,T_0]$ where $T_0 = \rho / 12 C M_0$.
Let $T_N$ be the largest time $\le T_0$ such that $F_N(\tau ,z)  \le 2 M_0$, for $0 \le \tau \le T_N$ and $0 \le z \le \rho$.
Note that $T_N$ exists and is strictly positive, since $F_N$ is well defined for all the positive times, and continuous in time. It remains to prove that $\inf_{N\ge 1}T_N$ is positive. 

Let us define the characteristics curves $X_N(\tau ,z)$ by solving
$$
\partial_\tau X_N(\tau ,z) = - 2 C F_N(\tau ,X_N(\tau ,z)) - \theta'(\tau) X_N(\tau ,z),
$$
together with $
X_N(0,z) = z
$. Observe that the characteristics are outgoing at $z = 0$ and $z = \rho$. Therefore the characteristics completely fill 
$[0,T_N] \times [0,\rho]$. Let us now introduce 
$$
\widetilde F_N(\tau ,z) = F_N(\tau ,X_N(\tau ,z)) .
$$
It follows from \eqref{ode-FN} that 
$$
\partial_\tau \widetilde F_N(\tau ,z) = \partial_\tau F_N + \partial_\tau X_N \partial_z F_N \le 0.
$$
As a consequence, 
$$
\sup_{0 \le z \le \rho} F_N(\tau ,z) \le \sup_{0 \le z \le \rho} F_N(0,z)  = \sup_{0 \le z \le \rho} G^1(z,z) = M_0.
$$
Therefore $T_N \ge T_0$, for all $N\ge 1$, and $F_N$ is bounded uniformly on $[0,T_0] \times [0,\rho]$. We can therefore take the limit $N \to + \infty$. This leads to the 
convergence of $G_N(\tau,z_1,z_1)$ as $N \to \infty$ for $0 \le \tau \le T_0$ and for $0 \le z_1 \le \rho$. Since $G_N(\tau,z_1,z_2)$ is increasing in $z_1,z_2$, the convergence of $G_N(\tau,z_1,z_2)$ as $N\to \infty$ follows.
\end{proof}


\subsection{End of proof}


It remains to bound the force term $f^n$.
For this we note that the cut off occurs for $| \alpha | \ge \nu^{-1/2}$ where the corresponding modes are exponentially
small. The force term is therefore exponentially small itself, and therefore arbitrary small in any Sobolev space.

Now $\sum_n \omega^n \tau^n$ converges for $\tau$ small enough. Let 
$$
\tau = \nu^N e^{\alpha_0 \Im c_0 t} .
$$
Then as long as $\tau$ remains small, namely as long as $t$ remains small than 
$C \log \nu^{-1}$ for some constant $C$, this series converges and defines a solution of the full incompressible
Navier Stokes equations. Note that the series defining $u$ and $v$ also converges in the same way.
This prove Theorem \ref{theoinstable} for stationary boundary layers. 



\section{Time dependent shear flow}


We now turn to the case where the shear flow $U_s$ depends on time. Let $\Omega_s$ be the corresponding vorticity.
Note that $U_s$ is a solution of 
$$
\partial_t U_s - \nu \partial_{yy} U_s = 0
$$ 
and hence depends on $t$ through
$\nu t$. We put this dependency in the notation $U_s(\nu t,y)$.
The perturbation satisfies
\beq \label{NSnon1}
\begin{aligned}
\partial_t \omega &+ (U_s(0) . \nabla) \omega + (u \cdot \nabla) \Omega_s(0) - \nu \Delta \omega 
\\&= - (u \cdot \nabla) \omega
- Q_1(\nu t) \omega - Q_2(\nu t) u
\end{aligned}\eeq
where
$$
Q_1(\nu t) \omega = \Bigl(U_s(\nu t) - U_s(0) \Bigr) \partial_x \omega
$$
and
$$
Q_2(\nu t) u = (u . \nabla) \Bigl(\Omega_s(\nu t) - \Omega_s(0) \Bigr) .
$$
Note that
$$
U_s(\nu t,y) = U_s(0) + \sum_{k = 1}^M (\nu t)^k U_s^k(y) + O((\nu t)^{M+1}).
$$
As we are interested in times of order $\log \nu$, and keeping in mind that $\partial_x \omega$ is always bounded
by $\nu^{-1/2}$, we can put the $O()$ in the forcing term.  

We will fulfill a perturbative analysis and look for solutions of (\ref{NSnon1}) of the form
\beq \label{perturb1}
\psi(t,x,y) = \sum_{n \ge 1} \sum_{p \ge 0} \sum_{q \ge 0} \sum_{\alpha \in \zit} e^{n \Im c_0 t} t^p \nu^q 
e^{i \alpha (x - \Re c_0 t)} \psi_{\alpha}^{n,p,q}(y) .
\eeq
In fact it is sufficient to bound $q$ by some large integer $M$, since we allow a small forcing term.

Putting (\ref{perturb1}) in (\ref{NSnon1}) we get
\beq \label{perturb2}
i\alpha~ Orr_{\alpha,c} (\psi_\alpha^{n,p,q}) + (p+1)  \psi_{\alpha}^{n,p+1,q} = Q^{n,p,q} + L^{n,p,q}
\eeq
where
$$
Q^{n,p,q} = 
\sum_{\alpha'} \sum_{1 \le j \le n-1}\sum_{0 \le k \le p} \sum_{0 \le l \le q}
(u_{\alpha - \alpha'}^{j,k,l} . \nabla_{\alpha'} ) \omega_{\alpha'}^{n-j,p-k,q-l}
$$
and 
$$
L^{n,p,q} = i\alpha
 \sum_{k=1}^M U_s^k \omega_{\alpha}^{n,p-k,q-k}
- i \alpha  \sum_{k=1}^M \psi_\alpha^{n,p-k,q-k} \partial_y \Omega_s^k,
$$
with the convention that a quantity vanishes if one of its indices is negative.
Note that $Q^{n,p,q}$ only involves
$\psi_{\alpha'}^{n',p',q'}$ with $n' < n$. Next, $L^{n,p,q}$ involves $\psi_{\alpha'}^{n',p',q'}$ with $n' \le n$ and $p' < p$ and $q' < q$.

We will solve this equation by recurrence on the power of $\nu$, namely on $q$.
We begin with the leading order $q = 0$. All the $\psi_\alpha^{n,p,0}$ vanish, except when $p = 0$.
System (\ref{perturb1}) then reduces to (\ref{OS-un}) 
$$
\psi_\alpha^{n,0,0} = \psi_\alpha^n 
$$
which are constructed in the previous section, up to any arbitrarily large $n$. 

We then turn to $q = 1$.
The first terms $\psi_\alpha^{n,0,0}$ create an "error term" $L$ involving $(\nu t)^k$ for $1 \le k \le M$. 
Let us first focus on the case $k = 1$.
For $k = 1$, the corresponding $L^{n,1,1}$ term is
$$
i\alpha \Bigl( U_s^1 \omega_\alpha^{n,0,0} -\psi_\alpha^{n,0,0} \partial_y \Omega_s^1  \Bigr) . 
$$
For $n = 1$ and $\alpha = \pm 1$ we note that $Orr_{\alpha,c}$ is not invertible.
This operator may also be non invertible for other values of $\alpha$ (in finite number).
To simplify the discussion we assume that this does not occur (the general case is similar). 

If $(n,\alpha) \ne (1, \pm 1)$, we take $\psi_\alpha^{n,p,1} = 0$ for $p > 1$. This leads to
\beq \label{perturb10}
i\alpha Orr_{\alpha,c}(\psi_\alpha^{n,1,1}) = 
\sum_{\alpha'} \sum_{1 \le j \le n-1} \sum_{k=0,1}
(u^{j,k,k}_{\alpha - \alpha'} . \nabla_{\alpha'}) \omega_{\alpha'}^{n-j,1-k,1-k} + L^{n,1,1}
\eeq
which is a linearized version of (\ref{OS-un}).

For $(n,\alpha) = (1,\pm 1)$, we note that $Orr_{\alpha,c}$ is not invertible. 
Let $A$ be the right hand side of (\ref{perturb10}). Thanks to Theorem \ref{theopseudo} 
we define 
$$
\psi_{\pm 1}^{1,1,1} = Orr^{-1} (A),
$$ 
and we use $\psi_{\pm 1}^{1,2,1}$ to handle the remainder
$$
2 \omega_{\pm 1}^{1,2,1} =  l^{\nu} (A) \phi_{1,c_0}.
$$
Now to bound $\psi_\alpha^{n,1,1}$ we introduce the corresponding generator function $G^1$ and proceed as
in the previous section. This leads to the following inequality
$$
\partial_t G^1 \le G^0 \partial_{z_1} G^1 + G^0 \partial_{z_2} G^1 
+ G^1 \partial_{z_1} G^0 + G^1 \partial_{z_2} G^0 + C \partial_x G^0 + C G^0 .
$$
Using the same arguments as in the previous section, we obtain bound uniform bounds on $G^1$.
The recurrence can be continued using similar arguments. This ends the proof of Theorem \ref{theoinstable}.

\chapter{The stable case}\label{chapter-nonlinear-stable}

\def\app{\mathrm{app}}


\section{Introduction}


In this chapter, we prove the nonlinear instability of monotonic shear profiles. 
Roughly speaking, the two main results are
\begin{itemize} 

\item in the case of time-dependent shear flows, we construct Navier-Stokes solutions, with arbitrarily small forcing, of order $\mathcal{O}(\nu^P)$, with $P$ as large as we want,
so that the shear flows are nonlinear unstable up to order $\nu^{1/2+\epsilon}$ in $L^\infty$ and $L^2$, with $\epsilon$ being arbitrarily small. 

\item in the case of stationary shear flows, we construct Navier-Stokes solutions, without  forcing term, 
so that the shear flows are nonlinear unstable up to order $\nu^{5/4}$ in $L^\infty$ and $L^2$. 

\end{itemize}

%
%
%
%


\subsection{Stable shear profiles}\label{sec-BL}



%

We consider {\em stable shear profiles}; namely, those that are spectrally stable to the Euler equations. This includes, for instance, boundary layer profiles without an inflection point 
by view of the classical Rayleigh's inflection point theorem. We also assume that $U(z)$ is strictly monotonic, real analytic, that $U(0) =0$ 
and that $U(z)$ converges exponentially fast at infinity to a finite constant $U_+$. 
By a slight abuse of language, such profiles will be referred to
as stable profiles throughout this book.

In order to study the instability of such shear layers, we first analyze the spectrum of the corresponding linearized problem 
around initial profiles $U(z)$. This leads to the following linearized problem for vorticity $\omega = \partial_z v_1 - \partial_x v_2$, which reads 
\begin{equation}\label{EE-vort} 
(\partial_t - L)\omega = 0, \qquad L\omega : = \nu \Delta \omega - U\partial_x \omega - v_2 U'' ,
\end{equation}
together with $v = \nabla^\perp \phi$ and $\Delta \phi = \omega$, satisfying the no-slip boundary conditions 
$\phi = \partial_z \phi =0$ on $\{z=0\}$. 

We then take the the Fourier transform in the $x$ variable only, denoting by $\alpha$ the corresponding wavenumber,
which leads to
\begin{equation}\label{EE-vort-a}
 (\partial_t - L_\alpha)\omega = 0, \qquad L_\alpha\omega : = \nu \Delta_\alpha \omega - i\alpha U \omega - i\alpha \phi U'' ,
 \end{equation}
where
$$
\omega_\alpha = \Delta_\alpha \phi_\alpha,
$$
together with the zero boundary conditions 
$\phi_\alpha = \phi'_\alpha =0$ on $z=0$. Here, 
$$
\Delta_\alpha = \partial_z^2 - \alpha^2. 
$$
Together with Y. Guo, we proved in \cite{GGN1,GGN3} that, even for profiles $U$ which are stable as $\nu = 0$, 
there are unstable eigenvalues to the Navier-Stokes problem \eqref{EE-vort-a} for sufficiently small viscosity $\nu$ 
and for a range of wavenumber $\alpha \in [\alpha_1,\alpha_2]$, with $\alpha_1\sim \nu^{1/4}$ and $\alpha_2\sim \nu^{1/6}$. 
The unstable eigenvalues $\lambda_*$ of $L_\alpha$, found in \cite{GGN3}, satisfy 
\begin{equation}\label{lambda-GGN} 
\Re \lambda_* \sim \nu^{1/2} .
\end{equation}
Such an instability was first observed by Heisenberg \cite{Hei,HeiICM}, then Tollmien and C. C. Lin \cite{Lin0,LinBook}; 
see also Drazin and Reid \cite{Reid,Schlichting} for a complete account of the physical literature on the subject. 
See also Theorem \ref{theo-spectralinstablity} below for precise details. In coherence with the physical literature \cite{Reid},
we believe that, $\alpha$ being fixed,
 this eigenvalue is the most unstable one. However, this point is an open question from the mathematical point of view.
 
Next, we observe that $L_\alpha$ is a compact perturbation of the Laplacian $ \nu \Delta_\alpha$, and hence its unstable spectrum in the usual $L^2$ space is discrete. Thus, for each $\alpha,\nu$, we can define the maximal unstable eigenvalue $\lambda_{\alpha,\nu}$ so that $\Re \lambda_{\alpha,\nu}$ is maximum. We set $\lambda_{\alpha,\nu} =0$, if no unstable eigenvalues exist. 

In this paper, we assume that the unstable eigenvalues found in the spectral instability result, Theorem \ref{theo-spectralinstablity}, 
are maximal eigenvalues. Precisely, we introduce 
\begin{equation}\label{def-ga0max0}
\gamma_0 : = \lim_{\nu \to 0} \sup_{\alpha \in \RR}  \nu^{-1/2}\Re \lambda_{\alpha,\nu}  .
\end{equation}
The existence of unstable eigenvalues in Theorem \ref{theo-spectralinstablity} implies that $\gamma_0$ is positive. 
Our spectral assumption is that $\gamma_0$ is finite (that is, the eigenvalues in Theorem \ref{theo-spectralinstablity} are maximal).


\subsection{Main results}


We are ready to state the main instability results for stable shear layers.

\begin{theorem}\label{theo-approx} 
Let $U_\bl(t,z)$ be a time-dependent stable shear layer profile as described in Section \ref{sec-BL}. 
Then, for arbitrarily large $s,N$ and arbitrarily small positive $\epsilon$, there exists
a sequence of functions $u^\nu$ that solves the Navier-Stokes equations, with some forcing $f^\nu$, so that 
$$
\| u^\nu(0) - U_\bl(0) \|_{H^s} + \sup_{t\in [0,T^\nu]} \| f^\nu (t)\|_{H^s} \le \nu^N,
$$
but 
$$
\| u^\nu(T^\nu) - U_\bl(T^\nu) \|_{L^\infty} \gtrsim \nu^{\frac12+\epsilon},
$$
$$
\| u^\nu(T^\nu) - U_\bl(T^\nu) \|_{L^2} \gtrsim \nu^{\frac12+\epsilon},
$$
$$\| \omega^\nu(T^\nu) - \omega_\bl(T^\nu) \|_{L^\infty} \to \infty,$$
for time sequences $T^\nu$ of order $\nu^{-1/2}\log \nu^{-1}$. Here, $\omega^\nu = \nabla\times u^\nu$ denotes the vorticity of fluids. 
\end{theorem}


\begin{theorem}[Instability result for stable profiles] \label{firstinstability-stable}
Let $U_\bl = U(z)$ be a stable stationary shear layer profile as described in Section \ref{sec-BL}. 
Then, for any $s,N$ arbitrarily large, there exists a sequence of solutions $u^\nu$ to the Navier-Stokes equations, 
with forcing $f^\nu = f^P$ (boundary layer forcing), so that $u^\nu$ satisfy 
$$
\| u^\nu(0) - U_\bl \|_{H^s} \le \nu^N,
$$
but 
$$\| u^\nu (T^\nu) - U_\bl\|_{L^\infty} \gtrsim \nu^{5/4},$$
$$\| u^\nu (T^\nu) - U_\bl\|_{L^2} \gtrsim \nu^{5/4},$$
$$\| \omega^\nu (T^\nu) - \omega_\bl \|_{L^\infty} \gtrsim 1,$$
for some time sequences  $T^\nu$ of order $\nu^{-1/2}\log \nu^{-1}$. 
\end{theorem}

The spectral instability for stable profiles gives rise to sublayers (or critical layers) whose thickness is of order $\nu^{3/4}$. 
The velocity gradient in this sublayer grows like $\nu^{-3/4} e^{t/\nu^{1/2} }$, and becomes large when $t$ is of order $T^\nu$.
As a consequence, they may in turn become unstable after the instability time $T^\nu$ obtained in the above theorem. 
Thus, in order to improve the $\nu^{5/4}$ instability, one needs to further examine the stability properties 
of this sublayer itself (see \cite{GrN4}).


\subsection{Boundary layer norms}


We end the introduction by introducing the boundary layer norms to be used throughout the chapter. Precisely, for each vorticity function $\omega_\alpha = \omega_\alpha(z)$, we introduce 
the following boundary layer norms
\begin{equation}\label{assmp-wbl-stable} 
\| \omega_\alpha\|_{ \beta, \gamma, 1} : = \sup_{z\ge 0} 
\Big [ \Bigl( 1 +  \delta^{-1} \phi_{p} (\delta^{-1} z)  \Bigr)^{-1} e^{\beta z} |\omega_\alpha (z)| \Big],
\end{equation}
where $\beta > 0$, $p$ is a large, fixed number, 
$$
\phi_p(z) = {1 \over 1 + z^p},
$$
and with the boundary layer thickness 
$$
\delta = \gamma \nu^{1/4}
$$
for some $\gamma>0$. We introduce the boundary layer space ${\cal B}^{\beta,\gamma,1}$ 
  to consist of functions whose $\|\cdot \|_{ \beta, \gamma, 1} $ norm is finite. 
  We also denote by $L^\infty_\beta$ the function spaces equipped with the finite norm 
  $$
  \|\omega \|_\beta = \sup_{z\ge 0} e^{\beta z} |\omega(z)|.
  $$ 
  When there is no weight $e^{\beta z}$, we simply write $L^\infty$ for the usual bounded function spaces. 
  Clearly, 
  $$
  L^\infty_\beta \subset {\cal B}^{\beta,\gamma,1}
.  $$ 
 In addition, it is straightforward to check that 
\begin{equation}\label{al-norm}
\| f g \|_{\beta,\gamma, 1} \le \| f \|_{L^\infty} \| g \|_{ \beta,\gamma,1}.
\end{equation}
Finally, for functions $\omega(x,z)$, we introduce 
$$\| \omega\|_{ \sigma,\beta, \gamma, 1} : = \sup_{\alpha \in \RR} (1+|\alpha|)^{\sigma}\| \omega_\alpha\|_{ \beta, \gamma, 1} ,$$
for $\sigma> 1$, in which $\omega_\alpha$ is the Fourier transform of $\omega$ in the variable $x$. Combining with \eqref{al-norm}, we have 
\begin{equation}\label{al-norm1}
\| f g \|_{\sigma,\beta,\gamma, 1} \le \| f \|_{\sigma,0} \| g \|_{\sigma,\beta,\gamma,1},
\end{equation}
where $\| f \|_{\sigma,0} = \sup_{\alpha \in \RR} (1+|\alpha|)^{\sigma}\| f_\alpha\|_{L^\infty}$. 


\section{Linear instability}\label{sec-linear}


In this section, we shall recall the spectral instability of stable boundary layer profiles \cite{GGN3} and the semigroup estimates 
on the corresponding linearized Navier-Stokes equation \cite{GrN2,GrN3}.
 

\subsection{Spectral instability}\label{sec-grmode}


The following theorem, proved in \cite{GGN3}, provides an unstable eigenvalue of $L$ for generic shear flows.  

\begin{theorem}[Spectral instability; \cite{GGN3}]\label{theo-spectralinstablity}
Let $U(z)$ be an arbitrary shear profile with $U(0)=0$ and $U'(0) > 0$ and satisfy 
$$
\sup_{z \ge 0} | \partial^k_z (U(z) - U_+) e^{\eta_0 z} | < + \infty, \qquad k=0,\cdots ,4,
$$ 
for some constants $U_+$ and $\eta_0 > 0$. Let $R = \nu^{-1}$ be the Reynolds number, 
and set $\alpha_\mathrm{low}(R)\sim R^{-1/4}$ and  $ \alpha_\mathrm{up}(R)\sim R^{-1/6}$ be the lower and upper stability branches. 

Then, there is a critical Reynolds number $R_c$ so that for all $R\ge R_c$ and all $\alpha \in (\alpha_\mathrm{low}(R), 
\alpha_\mathrm{up}(R))$, there exist
a nontrivial triple $c(R), \hat v(z; R), \hat p(z;R)$, with $\mathrm{Im} ~c(R) >0$, 
such that $v_R: = e^{i\alpha(x-ct) }\hat v(z;R)$ and $p_R: = e^{i\alpha(x-ct)} \hat p(z;R)$ solve the linearized Navier-Stokes problem 
\eqref{EE-vort}. Moreover there holds the following estimate for the growth rate of the unstable solutions:
$$ 
\alpha \Im c(R) \quad \approx\quad  R^{-1/2}
$$
as $R \to \infty$. 
\end{theorem}

The proof of the previous Theorem, which can be found in \cite{GGN3}, gives a detailed description of the unstable mode.
In this paper we focus on the lower branch of instability.  In this case
$$
\alpha_\nu \approx R^{-1/4} = \nu^{1/4}, \qquad \Re \lambda_\nu\approx R^{-1/2} = \nu^{1/2},
$$
The vorticity of the unstable mode is of the form 
\begin{equation}\label{w0-gr-stable} 
\omega_0 = e^{\lambda_\nu t} \Delta ( e^{i \alpha_\nu x} \phi_0(z) ) \quad + \quad \mbox{complex conjugate} 
\end{equation}
The stream function $\phi_0$ is constructed through asymptotic expansions, and is of the form 
$$
 \phi_0: =  \phi_{in,0}(z) +\delta_\bl \phi_{bl,0} ( \delta_\bl^{-1} z) 
  $$ 
for some boundary layer function $\phi_{bl,0}$, where $\delta_\bl = \nu^{1/4}$.


By construction,  derivatives of $\phi_{\bl,0}$ satisfy 
$$ 
|\partial_z^k \phi_{\bl,0}(\delta_\bl^{-1} z)| \le C_k \delta_\bl^{-k} e^{-\eta_0 z / \delta_\bl} .
$$
%
%
In addition, it is clear that each $x$-derivative of $\omega_0$ gains a small factor of $\alpha_\nu \approx \nu^{1/4}$. 
We therefore have an accurate description of the linear unstable mode.


\subsection{Linear estimates}


The corresponding semigroup $e^{Lt}$ of the linear problem \eqref{EE-vort} is constructed through the path integral 
\beq \label{int2}
e^{L t} \omega = \int_\RR e^{i\alpha x} e^{L_\alpha t} \omega_\alpha \; d\alpha 
\eeq
in which $\omega_\alpha$ is the Fourier transform of $\omega$ in tangential variables and
$L_\alpha$, defined as in \eqref{EE-vort-a}, is the Fourier transform of $L$. 
One of the main results proved in \cite{GrN3} is the following theorem.

\begin{theorem} \cite{GrN3}\label{theo-eLt-stable} 
Let $\alpha \lesssim \nu^{1/4}$.
Let $\omega_\alpha \in {\cal B}^{\beta,\gamma,1}$ for some positive $\beta$, $\gamma_0$ be defined as in \eqref{def-ga0max0}. 
Assume that $\gamma_0$ is finite. 
Then, for any $\gamma_1>\gamma_0$, there is some positive constant $C_\gamma$ so that 
$$\begin{aligned}
\| e^{L_\alpha t}\omega_\alpha\|_{ \beta, \gamma, 1} &\le C_\gamma  
e^{\gamma_1 \sqrt{\nu} t }  e^{- \frac14 \alpha^2 \nu t}  \| \omega_\alpha\|_{ \beta, \gamma, 1},
\\
\| \partial_ze^{L_\alpha t}\omega_\alpha\|_{ \beta, \gamma, 1} &\le C_\gamma \Big( \nu^{-1/4}+
 (\nu t)^{-1/2} \Big) e^{\gamma_1 \sqrt{\nu} t }  e^{- \frac14 \alpha^2 \nu t}  \| \omega_\alpha\|_{ \beta, \gamma, 1}. 
\end{aligned}$$
\end{theorem}


\section{Approximate solutions}\label{sec-approx}



Theorem \ref{theo-approx} follows at once from the following theorem. 

\begin{theorem}\label{theo-approx-stable} 
Let $U(z)$ be a stable shear layer profile, and let $U_\bl(\nu t,z)$ be the corresponding time-dependent shear profiles. 
Then, there exist an approximate solution $\widetilde u_\app$ that approximately solves Navier-Stokes equations in the following sense: 
for arbitrarily large numbers $p,M$ and for any $\epsilon>0$, the functions $\widetilde u_\app$ 
solve
\begin{equation}\label{NS-Uapp}
\begin{aligned}
\partial_t \widetilde u_\app + (\widetilde u_\app \cdot \nabla) \widetilde u_\app + \nabla \widetilde p_\app   &=
\nu \Delta \widetilde u_\app + {\cal E}_\app,
\\
\nabla \cdot \widetilde u_\app &= 0,
\end{aligned}\end{equation}
for some remainder $ {\cal E}_\app$ and for time $t \le T_\nu$, with $T_\nu$ being defined through
$$
\nu^p e^{\Re \lambda_0 T_\nu} = \nu^{\frac12 + \epsilon}.
$$
In addition, for all $t \in [0,T_\nu ]$, there hold
$$
\begin{aligned}\| \mathrm{curl} (\widetilde u_\app - U_\bl(\nu t,z) )\|_{\beta,\gamma,1} &\lesssim \nu^{\frac12 + \epsilon},
\\
\|\mathrm{curl} \mathcal{E}_\app (t)\|_{\beta,\gamma,1} 
&\lesssim   \nu^{M} .
\end{aligned}$$
Furthermore, there are positive constants $\theta_0,\theta_1,\theta_2$ independent of $\nu$ so that there holds 
$$  
\theta_1 \nu^p e^{\Re \lambda_0 t} \le  \| (\widetilde u_\app - U_\bl) (t)\|_{L^\infty} \le \theta_2 \nu^p e^{\Re \lambda_0 t} 
$$
for all $t\in [0,T_\nu]$. In particular, 
$$
\| (\widetilde u_\app - U_\bl) (T_\nu)\|_{L^\infty}\gtrsim \nu^{\frac12 + \epsilon}.
$$ 
\end{theorem}


\subsection{Formal construction}


The construction is classical, following \cite{Gr1}. Indeed, the approximate solutions are constructed in the following form
\beq \label{def-Uapp}
\widetilde u_\app(t,x,z) = U_\bl( \nu t,z) + \nu^p \sum_{j = 0}^M \nu^{j/4} u_j(t,x,z).
\eeq
For convenience, let us set $v = u -U_\bl $, where $u$ denotes the genuine solution to the Navier-Stokes equations. 
Then, the vorticity $\omega = \nabla \times v$ solves 
$$
 \partial_t \omega + (U_\bl  (\nu t, y)  + v)\cdot \nabla \omega + v_2 \partial_y^2U_s(\nu t,y) - \nu \Delta \omega =0 
$$
in which $v = \nabla^\perp \Delta^{-1} \omega$ and $v_2$ denotes the vertical component of velocity. Here and in what follows, 
$\Delta^{-1}$ is computed with the zero Dirichlet boundary condition. As $U_\bl $ depends slowly on time, 
we can rewrite the vorticity equation as follows: 
\begin{equation}\label{vort-Phi}
 (\partial_t - L) \omega  + \nu^{1/4} S \omega + Q(\omega, \omega) =0.
\end{equation}
In \eqref{vort-Phi}, $L$ denotes the linearized Navier-Stokes operator around the stationary boundary layer $U  = U_s(0,z)$:  
$$ L \omega : = \nu \Delta \omega  - U \partial_x \omega - u_2 U'' ,$$
$Q(\omega, \tilde \omega)$ denotes the quadratic nonlinear term $u \cdot \nabla \tilde \omega$, 
with $v = \nabla^\perp \Delta^{-1} \omega$, and $S$ denotes the perturbed operator defined by 
$$
\begin{aligned}
 S\omega: &= \nu^{-1/4}[U_s ( \nu t, z)  - U (z)] \partial_x \omega + \nu^{-1/4} u_2 [\partial_y^2 U_s ( \nu t, z)  - U''(z)] 
.\end{aligned}$$
Recalling that $U_s$ solves the heat equation with initial data $U(z)$, we have 
$$| U_s ( \nu t, z)  - U (z) | \le C \| U''\|_{L^\infty} e^{-\eta_0 z} \nu t$$ 
and 
$$ | \partial_z^2 U_s ( \nu t, z)  - U''(z)| \le C \| U''\|_{W^{2,\infty}} e^{-\eta_0 z} \nu t.$$
Hence, 
\begin{equation}\label{def-Sop}
\begin{aligned}
 S\omega  = \nu^{-1/4}\cO( \nu t e^{-\eta_0 z})\Big[  |\partial_x \omega |+ |\partial_x \Delta^{-1}\omega| \Big]
\end{aligned}\end{equation}
in which $\Delta^{-1} \omega$ satisfies the zero boundary condition on $z=0$. 
The approximate solutions are then constructed via the asymptotic expansion: 
\begin{equation}\label{def-omegaAAA} \omega_\app = \nu^p \sum_{j=0}^M \nu^{j/4} \omega_j,\end{equation}
in which $p$ is an arbitrarily large integer. Plugging this Ansatz into \eqref{vort-Phi} and matching order in $\nu$,
 we are led to solve 
 \begin{itemize}

\item for $j =0$: 
$$ (\partial_t - L) \omega_0 = 0$$
with zero boundary conditions on $v_0= \nabla^\perp (\Delta)^{-1}\omega_0$ on $z=0$; 

\item for $0<j\le M$: 
\begin{equation}\label{eqs-omegaj} (\partial_t - L) \omega_j  = R_j, \qquad {\omega_j}_{\vert_{t=0}}=0,\end{equation}
with zero boundary condition on $v_j = \nabla^\perp (\Delta)^{-1}\omega_j$ on $z=0$. Here, the remainders $R_j$ are defined by 
$$ 
R_j = S \omega_{j-1} + \sum_{k + \ell + 4p = j}Q(\omega_k, \omega_\ell).
$$

\end{itemize}
As a consequence, the approximate vorticity $\omega_\app$ solves \eqref{vort-Phi}, leaving the error $R_\app$ defined by
\begin{equation}
\label{error-app}
\begin{aligned}
R_\app
&= \nu^{p+\frac{M+1}{4}} S \omega_{M} +  \sum_{k+ \ell> M+1 -4 p; 1\le k,\ell \le M} \nu^{2p+ \frac{k+\ell}{4}}Q(\omega_k, \omega_\ell) 
\end{aligned}\end{equation}
which formally is of order $ \nu^{p+\frac{M+1}{4}}$, for arbitrary $p$ and $M$.

 
\subsection{Estimates}


We start the construction with $\omega_0$ being the maximal growing mode, constructed in Section \ref{sec-grmode}. We recall 
\begin{equation}\label{def-omega000}
\omega_0 = e^ {\lambda_\nu t} e^{i\alpha_\nu x} \Delta_{\alpha_\nu} \Big( \phi_{in,0}(z) 
+\nu^{1/4} \phi_{\bl,0} ( \nu^{-1/4} z) \Big)  \quad+\quad \mbox{c.c.}
\end{equation}
with $\alpha_\nu \sim \nu^{1/4}$ and $\Re \lambda_\nu \sim \sqrt \nu$. In what follows, $\alpha_\nu$ and $\lambda_\nu$ are fixed. 
We obtain the following lemma.  

\begin{lemma}\label{lem-omegaj} 
Let $\omega_0$ be the maximal growing mode \eqref{def-omega000}, and let $\omega_j$ 
be inductively constructed by \eqref{eqs-omegaj}. Then, there hold the following uniform bounds:
\begin{equation}\label{induction-jn}
\begin{aligned}
\| \partial_x^a\partial_z^b\omega_j \|_{\sigma,\beta,\gamma,1} & \le C_0 \nu^{a/4}\nu^{-b/4} \nu^{-\frac12[\frac{j}{4p}]}
e^{\gamma_0 (1+\frac{j}{4p})\nu^{1/2}t } 
\end{aligned} \end{equation}
for all $a,j\ge 0$ and for $b=0,1$. 
In addition, the approximate solution $\omega_\app$ defined as in \eqref{def-omegaAAA} satisfies 
\begin{equation}\label{est-omegaAAA}
\| \partial_x^a \partial_z^b \omega_\app\|_{\sigma,\beta,\gamma,1} 
\lesssim \nu^{a/4} \nu^{-b/4} \sum_{j=0}^M \nu^{-\frac12[\frac{j}{4p}]}  \Big( \nu^p e^{\gamma_0 \nu^{1/2} t}\Big)^{1+ \frac{j}{4p}} , 
\end{equation}
for $a\ge 0$ and $b=0,1$. 
 Here, $[k]$ denotes the largest integer so that $[k]\le k$. 
\end{lemma}

\begin{proof} 
For $j\ge 1$, we construct $\omega_j$ having the form 
$$ \omega_j = \sum_{n \in \ZZ} e^{i n \alpha_\nu x} \omega_{j,n}$$
It follows that $\omega_{j,n}$ solves  
$$
(\partial_t - L_{\alpha_n}) \omega_{j,n}  = R_{j,n}, \qquad {\omega_{j,n}}_{\vert_{t=0}}=0 
$$
with $\alpha_n = n \alpha_\nu$ and $R_{j,n}$ the Fourier transform of $R_j$ 
evaluated at the Fourier frequency $\alpha_n$. Precisely, we have 
$$ 
R_{j,n} = S_{\alpha_n} \omega_{j-1,n} + \sum_{k + \ell + 8p = j}\sum_{n_1+n_2 = n}Q_{\alpha_n}(\omega_{k,n_1}, \omega_{\ell,n_2}),
$$
in which $S_{\alpha_n}$ and $Q_{\alpha_n}$ denote the corresponding operator $S$ and $Q$ in the Fourier space. 
The Duhamel's integral reads 
\begin{equation}\label{Duh-omegajn}
\omega_{j,n}(t) = \int_0^t e^{L_{\alpha_n} (t-s)} R_{j,n}(s)\; ds \end{equation}
for all $j\ge 1$ and $n \in \ZZ$. 

It follows directly from an inductive argument and the quadratic nonlinearity of $Q(\cdot,\cdot)$ that for all $0\le j\le M$, 
$\omega_{j,n} = 0$ for all $|n|\ge 2^{j+1}$. 
This proves that $|\alpha_n| \le 2^{M+1} \alpha_\nu \lesssim \nu^{1/4}$, for all $|n| \le 2^{M+1}$.  
Since $\alpha_n \lesssim \nu^{1/4}$, the semigroup bounds from Theorem \ref{theo-eLt-stable} read 
\begin{equation}\label{eLt-jn}\begin{aligned}
\| e^{L_\alpha t}\omega_\alpha\|_{ \beta, \gamma, 1} &\lesssim   e^{\gamma_1 \nu^{1/2} t }  
e^{- \frac14 \alpha^2 \nu t}  \| \omega_\alpha\|_{ \beta, \gamma, 1},
\\\| \partial_ze^{L_\alpha t}\omega_\alpha\|_{ \beta, \gamma, 1} &\lesssim \Big( \nu^{-1/4}
+ ( \nu t)^{-1/2} \Big) e^{\gamma_1 \nu^{1/2} t }  e^{- \frac14 \alpha^2 \nu t}  \| \omega_\alpha\|_{ \beta, \gamma, 1}. 
\end{aligned}\end{equation}
In addition, since $\alpha_n\lesssim \nu^{1/4}$, from \eqref{def-Sop}, 
we compute 
$$
\begin{aligned}
 S_{\alpha_n}\omega_{j-1,n}  = \cO( \nu t e^{-\eta_0 z})\Big[  |\omega_{j-1,n}|+ |\Delta_{\alpha_n}^{-1}\omega_{j-1,n}| \Big]
\end{aligned}
$$
and hence by induction we obtain 
\begin{equation}\label{bd-Sjn}
\begin{aligned}
\| S_{\alpha_n}\omega_{j-1,n}\|_{\beta,\gamma,1} 
&\lesssim  \nu t \Big[ \|\omega_{j-1,n}\|_{\beta,\gamma,1}+ \| e^{-\eta_0 z}\Delta_{\alpha_n}^{-1}\omega_{j-1,n}\|_{\beta,\gamma,1} \Big]
\\
&\lesssim  \nu t \Big[ \|\omega_{j-1,n}\|_{\beta,\gamma,1}+ \| \Delta_{\alpha_n}^{-1}\omega_{j-1,n}\|_{L^\infty} \Big]
\\
&\lesssim  \nu t \nu^{-\frac12[\frac{j-1}{4p}]} e^{\gamma_0 (1+\frac{j-1}{4p})\nu^{1/2}t } ,
\end{aligned}
\end{equation}
 where we used $\|e^{-\eta_0 z} \cdot \|_{\beta,\gamma,1} \le \|\cdot \|_{L^\infty}$ for $\beta < \eta_0$, and 
 $$
 \| \Delta_{\alpha}^{-1} \omega\|_{L^\infty} 
 \le C \| \omega \|_{\beta,\gamma,1},
 $$
uniformly in small $\alpha$; we shall prove this inequality in the Appendix. Let us first consider the case when $1\le j\le 4p-1$, for which $R_{j,n} = S_{\alpha_n} \omega_{j-1,n} $. 
That is, there is no nonlinearity in the remainder. 
Using the above estimate on $S_{\alpha_n}$ and the semigroup estimate \eqref{eLt-jn} into \eqref{Duh-omegajn}, 
we obtain, for $1\le j\le 4p-1$,
$$\begin{aligned} \|\omega_{j,n}(t)\|_{\beta,\gamma,1} 
&\le \int_0^t \| e^{L_{\alpha_n} (t-s)} S_{\alpha_n} \omega_{j-1,n} (s) \|_{\beta,\gamma,1}\; ds 
\\
&\le C\int_0^t e^{\gamma_1 \nu^{1/2} (t-s) }  \| S_{\alpha_n} \omega_{j-1,n} (s) \|_{\beta,\gamma,1}\; ds 
\\
&\le C\int_0^t e^{\gamma_1 \nu^{1/2} (t-s) }  \nu s e^{\gamma_0 (1+\frac{j-1}{4p})\nu^{1/2}s } \; ds .
\end{aligned}$$
We choose
$$
\gamma_1 = \gamma_0 (1+ \frac{j-1}{4p} + \frac{1}{8p})
$$
in (\ref{eLt-jn}) and use the inequality 
$$
\nu^{1/2} t \le C e^{\frac{\gamma_0}{8p}\nu^{1/2}t}.
$$
and obtain
\begin{equation}\label{est-ojn1}\begin{aligned} \|\omega_{j,n}(t)\|_{\beta,\gamma,1} 
&\le C\int_0^t e^{\gamma_1 \nu^{1/2} (t-s) } \nu^{1/2} e^{\gamma_0 (1+\frac{j-1}{4p} + \frac{1}{8p})\nu^{1/2}s } \; ds 
\\
&\le C\nu^{1/2} e^{\gamma_0 (1+\frac{j-1}{4p} + \frac{1}{8p})\nu^{1/2} t} \int_0^t  \; ds 
\\
&\le C\nu^{1/2} t e^{\gamma_0 (1+\frac{j-1}{4p} + \frac{1}{8p})\nu^{1/2} t} 
\\& \le C e^{\gamma_0 (1+\frac{j}{4p})\nu^{1/2}t } .
\end{aligned}\end{equation}
Similarly, as for derivatives, we obtain 
$$\begin{aligned} 
&
\|\partial_z\omega_{j,n}(t)\|_{\beta,\gamma,1} 
\\&\le \int_0^t \| e^{L_{\alpha_n} (t-s)} S_{\alpha_n} \omega_{j-1,n} (s) \|_{\beta,\gamma,1}\; ds 
\\
&\le C \int_0^t \Big( \nu^{-1/4}+ ( \nu (t-s))^{-1/2} \Big) e^{\gamma_1 \nu^{1/2} (t-s) }  \| S_{\alpha_n} \omega_{j-1,n} (s) \|_{\beta,\gamma,1}\; ds 
\\
&\le C 
\int_0^t \Big( \nu^{-1/4}+ ( \nu (t-s))^{-1/2} \Big) e^{\gamma_1 \nu^{1/2} (t-s) }  \nu s e^{\gamma_0 (1+\frac{j-1}{4p})\nu^{1/2} s } \; ds ,
\end{aligned}$$
in which the integral involving $\nu^{-1/4}$ is already treated in \eqref{est-ojn1} and bounded by $C\nu^{-1/4}e^{\gamma_0 (1+\frac{j}{4p})\nu^{1/2}t }.$ As for the second integral, we estimate 
\begin{equation}\label{t12-bd} 
\begin{aligned}
\int_0^t &( \nu (t-s))^{-1/2} e^{\gamma_1 \nu^{1/2} (t-s) }  \nu s e^{\gamma_0 (1+\frac{j-1}{4p})\nu^{1/2} s } \; ds
\\ 
&\le  \int_0^t ( \nu (t-s))^{-1/2} e^{\gamma_1 \nu^{1/2} (t-s) }  \nu^{1/2} e^{\gamma_0 (1+\frac{j-1}{4p} + \frac{1}{8p})\nu^{1/2} s } \; ds
\\&\le \nu^{1/2} e^{\gamma_0 (1+\frac{j-1}{4p} + \frac{1}{8p})\nu^{1/2} t }  \int_0^t ( \nu (t-s))^{-1/2} \; ds 
\\&\le C \sqrt t e^{\gamma_0 (1+\frac{j-1}{4p} + \frac{1}{8p})\nu^{1/2} t } 
\\&\le C \nu^{-1/4} e^{\gamma_0 (1+\frac{j}{4p} )\nu^{1/2} t } .
\end{aligned}\end{equation}
Thus, 
$$\|\partial_z\omega_{j,n}(t)\|_{\beta,\gamma,1} \le C \nu^{-1/4} e^{\gamma_0 (1+\frac{j}{4p} )\nu^{1/2} t }.$$ 
This and \eqref{est-ojn1} prove the inductive bound \eqref{induction-jn} for $j\le 4p-1$. 

For $j \ge 4p$, the quadratic nonlinearity starts to play a role. For $k+\ell = j-4p$, we compute 
\begin{equation}\label{est-Q2w}
Q_{\alpha_n}(\omega_{k,n_1}, \omega_{\ell,n_2}) = i \alpha_\nu \Big( n_2 \partial_z \Delta_{\alpha_n}^{-1} \omega_{k,n_1} 
\omega_{\ell,n_2}  - n_1 \Delta_{\alpha_n}^{-1} \omega_{k,n_1} \partial_z \omega_{\ell,n_2}\Big).
\end{equation}
Using the algebra structure of the boundary layer norm (see \eqref{al-norm}), we have 
$$\begin{aligned}
\alpha_\nu\| \partial_z \Delta_{\alpha_n}^{-1} \omega_{k,n_1} \omega_{\ell,n_2} \|_{\beta,\gamma,1} 
&\lesssim 
\nu^{1/4}\| \partial_z \Delta_{\alpha_n}^{-1} \omega_{k,n_1}\|_{L^\infty} \|\omega_{\ell,n_2} \|_{\beta,\gamma,1}
\\&\lesssim 
\nu^{1/4}\| \omega_{k,n_1}\|_{\beta,\gamma,1} \|\omega_{\ell,n_2} \|_{\beta,\gamma,1}
\\&\lesssim \nu^{1/4}\nu^{-\frac12[\frac{k}{4p}]}  \nu^{-\frac12[\frac{\ell}{4p}]}  e^{\gamma_0 (2+\frac{k+\ell}{4p})\nu^{1/2}t }
 \end{aligned}$$
 where we used
 $$
 \| \partial_z \Delta_{\alpha_n}^{-1} \omega_{k,n_1} \|_{L^\infty} 
 \le C \| \omega_{k,n_1}\|_{\beta,\gamma,1},
 $$
 an inequality which is proven in the Appendix.
Moreover,
$$\begin{aligned}
\alpha_\nu\| \Delta_{\alpha_n}^{-1} \omega_{k,n_1} \partial_z \omega_{\ell,n_2} \|_{\beta,\gamma,1} 
&\lesssim \nu^{1/4}
\|  \Delta_{\alpha_n}^{-1} \omega_{k,n_1}\|_{L^\infty} \| \partial_z\omega_{\ell,n_2} \|_{\beta,\gamma,1}
\\&\lesssim 
\nu^{1/4}\| \omega_{k,n_1}\|_{\beta,\gamma,1} \|\partial_z\omega_{\ell,n_2} \|_{\beta,\gamma,1}
\\&\lesssim \nu^{-\frac12[\frac{k}{4p}]}  \nu^{-\frac12[\frac{\ell}{4p}]}  e^{\gamma_0 (2+\frac{k+\ell}{4p})\nu^{1/2}t },
 \end{aligned}$$
 in which the derivative estimate \eqref{induction-jn} was used. 
We note that 
$$
[\frac{k}{4p}]+ [\frac{\ell}{4p}] \le [\frac{k+\ell}{4p}] = [\frac{j}{4p}] - 1.
$$
 This proves 
$$\begin{aligned}
\|& Q_{\alpha_n}(\omega_{k,n_1}, \omega_{\ell,n_2}) \|_{\beta,\gamma,1} 
\lesssim \nu^{1/2} \nu^{-\frac12[\frac{j}{4p}]} e^{\gamma_0 (1+\frac{j}{4p})\nu^{1/2}t }
\end{aligned}$$ 
for all $k+\ell = j-4p$. This, together with the estimate \eqref{bd-Sjn} on $S_{\alpha_n}$, yields 
$$
\begin{aligned}
\| R_{j,n} (t)\|_{\beta,\gamma,1} 
&\lesssim \nu t \nu^{-\frac12[\frac{j-1}{4p}]} e^{\gamma_0 (1+\frac{j-1}{4p})\nu^{1/2}t } + \nu^{1/2} \nu^{-\frac12[\frac{j}{4p}]} e^{\gamma_0 (1+\frac{j}{4p})\nu^{1/2}t }
\\
&\lesssim \nu^{1/2} \nu^{-\frac12[\frac{j}{4p}]} e^{\gamma_0 (1+\frac{j}{4p})\nu^{1/2}t },
\end{aligned}
$$
for all $j\ge 4p$ and $n\in \ZZ$, in which we used $\nu^{1/2} t \le e^{\gamma_0 t/4p}$. 

Putting these estimates into the Duhamel's integral formula \eqref{Duh-omegajn}, we obtain, for $j\ge 4p$,
$$
\begin{aligned}
\|\omega_{j,n} (t)\|_{\beta,\gamma,1}  
&\le C \int_0^t  e^{\gamma_1 \nu^{1/2} (t-s) } \| R_{j,n}(s)\|_{\beta,\gamma,1} \; ds
\\
&\le C \int_0^t  e^{\gamma_1 \nu^{1/2} (t-s) } \nu^{1/2} \nu^{-\frac12[\frac{j}{4p}]} e^{\gamma_0 (1+\frac{j}{4p})\nu^{1/2}s } \; ds  
\\
&\lesssim \nu^{-\frac12[\frac{j}{4p}]}  e^{\gamma_0 (1+\frac{j}{4p})\nu^{1/2}s } 
 \end{aligned}$$
and 
$$\begin{aligned} 
&
\|\partial_z\omega_{j,n}(t)\|_{\beta,\gamma,1} 
\\&\le C \int_0^t \Big( \nu^{-1/4}+ (\nu (t-s))^{-1/2} \Big) e^{\gamma_1 \nu^{1/2} (t-s) }  \| R_{j,n}(s)\|_{\beta,\gamma,1} \; ds
\\
&\le C 
\int_0^t \Big( \nu^{-1/4}+ (\nu (t-s))^{-1/2} \Big) e^{\gamma_1 \nu^{1/2} (t-s) } \nu^{1/2} \nu^{-\frac12[\frac{j}{4p}]} e^{\gamma_0 (1+\frac{j}{4p})\nu^{1/2}s } \; ds .
\end{aligned}$$
Using \eqref{t12-bd}, we obtain 
$$
\begin{aligned}
\|\partial_z \omega_{j,n} (t)\|_{\beta,\gamma,1}  
\lesssim \nu^{-1/4}\nu^{-\frac12[\frac{j}{4p}]}  e^{\gamma_0 (1+\frac{j}{4p})\nu^{1/2}s } ,
 \end{aligned}$$
which completes the proof of \eqref{induction-jn}. The lemma follows. 
\end{proof}

 
 \subsection{The remainder}
 

We recall that the approximate vorticity $\omega_\app$, constructed as in \eqref{def-omegaAAA}, approximately solves \eqref{vort-Phi}, 
leaving the error $R_\app$ defined by
$$
\begin{aligned}
R_\app
&= \nu^{p+\frac{M+1}{4}} S \omega_{M} +  \sum_{k+ \ell> M+1 -4 p; 1\le k,\ell \le M} \nu^{2p+ \frac{k+\ell}{4}}Q(\omega_k, \omega_\ell) .
\end{aligned}$$
Using the estimates in Lemma \ref{lem-omegaj}, we obtain  
$$\begin{aligned}
\|S \omega_{M} \|_{\sigma,\beta,\gamma,1} &\lesssim  \nu^{1/2} \nu^{-\frac12[\frac{M+1}{4p}]} 
e^{\gamma_0 (1+\frac{M+1}{4p})\nu^{1/2}t }
 \\
 \|Q(\omega_k, \omega_\ell) \|_{\sigma,\beta,\gamma,1} &\lesssim  \nu^{1/2} 
 \nu^{-\frac12[\frac{k+\ell}{4p}]} e^{\gamma_0 (2+\frac{k+\ell}{4p})\nu^{1/2}t }.\end{aligned}$$
This yields 
\begin{equation}\label{est-RAA}
\begin{aligned}
\|\ R_\app\|_{\sigma,\beta,\gamma,1} 
&\lesssim  \nu^{1/2} \sum_{j=M+1}^{2M} \nu^{-\frac12[\frac{j}{4p}]}  \Big( \nu^p e^{\gamma_0 \nu^{1/2} t}\Big)^{1+ \frac{j}{4p}} .
\end{aligned}\end{equation}


\subsection{Proof of Theorem \ref{theo-approx-stable}}


The proof of the Theorem now  straightforwardly follows from the estimates from Lemma \ref{lem-omegaj} 
and the estimate \eqref{est-RAA} on the remainder. Indeed, we choose the time $T_*$ so that 
\begin{equation}\label{def-Tstar} \nu^p e^{\gamma_0 \nu^{1/4} T_*} = \nu^\tau\end{equation}
for some fixed $\tau>\frac12$. It then follows that for all $t\le T_*$ and $j\ge 0$, there holds 
$$
\begin{aligned}
 \nu^{-\frac12[\frac{j}{4p}]}  \Big( \nu^p e^{\gamma_0 \nu^{1/2} t}\Big)^{1+ \frac{j}{4p}}  \lesssim \nu^\tau  \nu^{(\tau - \frac12) \frac{j}{4p}}.
\end{aligned}
$$
Using this into the estimates \eqref{est-omegaAAA} and \eqref{est-RAA}, respectively, we obtain 
\begin{equation}\label{est-wRapp}
\begin{aligned}
\| \partial_z^b\omega_\app(t)\|_{\sigma,\beta,\gamma,1} &\lesssim \nu^{p-b/4} e^{\gamma_0 \nu^{1/2} t}  \lesssim \nu^{\tau- b/4},
\\
\|R_\app (t)\|_{\sigma,\beta,\gamma,1} 
&\lesssim  \nu^{1/2} \nu^{-\frac12[\frac{M}{4p}]}  \Big( \nu^p e^{\gamma_0 \nu^{1/2} t}\Big)^{1+ \frac{M}{4p}} 
\\&\lesssim \nu^{\tau+1/2}  \nu^{(\tau - \frac12) \frac{M}{4p}},
\end{aligned}\end{equation}
for all $t \le T_*$. Since $\tau>\frac12$ and $M$ is arbitrarily large (and fixed), the remainder is of order $\nu^P$ 
for arbitrarily large number $P$. The theorem is proved.


\section{Nonlinear instability}\label{sec-nonlinear}


We are now ready to give the proof of Theorem \ref{firstinstability-stable}. Let $\widetilde u_\app$ 
be the approximate solution constructed in Theorem \ref{theo-approx-stable} and let  
$$v = u - \widetilde u_\app,$$
with $u$ being the genuine solution to the nonlinear Navier-Stokes equations. 
The corresponding vorticity $\omega = \nabla \times v$ solves 
$$ 
\partial_t   \omega + (\widetilde u_\mathrm{app} + v) \cdot \nabla \omega + v \cdot \nabla \widetilde \omega_\mathrm{app} 
= \Delta \omega +  R_\app
$$
for the remainder $R_\app  = \mathrm{curl } \,  \, {\cal E}_\app$ satisfying the estimate \eqref{est-RAA}. Let us write 
$$
u_\app = \widetilde u_\app - U_\bl .
$$
To make use of the semigroup bound for the linearized operator $\partial_t - L$, we rewrite the vorticity equation as  
$$ 
(\partial_t - L) \omega + (u_\app + v) \cdot \nabla \omega + v \cdot \nabla \omega_\app  = R_\app
$$
with $\omega_{\vert_{t=0}} = 0$. We note that since the boundary layer profile is stationary, 
the perturbative operator $S$ defined as in \eqref{def-Sop} is in fact zero. The Duhamel's principle then yields 
\begin{equation}\label{Duh-omega} 
\omega (t) = \int_0^t e^{L(t-s)} \Big( R_\app - (u_\app + v) \cdot \nabla \omega - v \cdot \nabla \omega_\app  \Big) \; ds.
\end{equation}
Using the representation \eqref{Duh-omega}, we shall prove the existence and give estimates on $\omega$. 
We shall work with the following norm 
\begin{equation}\label{def-123norm} 
||| \omega(t) |||: = \| \omega(t) \|_{\sigma,\beta,\gamma,1}+ \nu^{1/4}\| \partial_x \omega(t) \|_{\sigma,\beta,\gamma,1} 
+ \nu^{1/4}\| \partial_z \omega (t)\|_{\sigma,\beta,\gamma,1} 
\end{equation}
in which the factor $\nu^{1/4}$ added in the norm is to overcome the loss of $\nu^{-1/4}$ for derivatives
(see \eqref{semi-bounds} for more details).

Let $p$ be an arbitrary large number. We introduce the maximal time $T_\nu$ of existence, defined by 
\begin{equation}\label{claim-www}
\begin{aligned}
T_\nu: = \max\Big \{ t\in [0,T_*]~:~\sup_{0\le s\le t} ||| \omega(s) |||  \le \nu^p e^{\gamma_0 \nu^{1/2} t}\Big\}
 \end{aligned}\end{equation}
in which $T_*$ is defined as in \eqref{def-Tstar}. By the short time existence theory, with zero initial data, $T_\nu$ exists and is positive. 
It remains to give a lower bound estimate on $T_\nu$.  First, we obtain the following lemmas. 
\begin{lemma} For $t \in [0,T_*]$, there hold
$$
\begin{aligned}
\| \partial_x^a \partial_z^b \omega_\app(t)\|_{\sigma,\beta,\gamma,1} &\lesssim \nu^{a/4-b/4} \Big( \nu^p e^{\gamma_0 \nu^{1/2} t}\Big)
\\
\|\partial_x^a \partial_z^bR_\app (t)\|_{\sigma,\beta,\gamma,1} 
&\lesssim  \nu^{1/2+a/4-b/4} \nu^{-\frac12[\frac{M}{4p}]}  \Big( \nu^p e^{\gamma_0 \nu^{1/2} t}\Big)^{1+ \frac{M}{4p}} .
\end{aligned}$$
\end{lemma}
\begin{proof} This follows directly from Lemma \ref{lem-omegaj} and the estimate \eqref{est-RAA} on the remainder $R_\app$, 
upon noting the fact that for $t\in [0,T_*]$, $\nu^p e^{\gamma_0 \nu^{1/2} t}$ remains sufficiently small.
\end{proof}

\begin{lemma} There holds 
$$
\Big\|( u_\app + v) \cdot \nabla \omega + v \cdot \nabla  \omega_\app \Big\|_{\sigma,\beta,\gamma,1} 
\lesssim \nu^{-\frac14}\Big( \nu^p e^{\gamma_0 \nu^{1/2} t}\Big)^2.
$$
for $t\in [0,T_\nu]$. 
\end{lemma}
\begin{proof}
We first recall the elliptic estimate 
$$
\| u\|_{\sigma,0} \lesssim \|\omega\|_{\sigma,\beta,\gamma,1}
$$ 
which is proven in the Appendix \ref{sec-elliptic}), and the following uniform bounds (see \eqref{al-norm1})
$$
\begin{aligned}
\| u \cdot \nabla \tilde \omega \|_{\sigma,\beta,\gamma,1}
&\le \| u \|_{\sigma,0} \| \nabla \tilde \omega \|_{\sigma,\beta,\gamma,1}
\\&\le \| \omega \|_{\sigma,\beta,\gamma,1} \| \nabla\tilde \omega \|_{\sigma,\beta,\gamma,1} .
 \end{aligned} $$
Using this and the bounds on $\omega_\app$, we obtain 
$$
\begin{aligned}
 \| v \cdot \nabla \omega_\app \|_{\sigma,\beta,\gamma,1} &\lesssim 
 \nu^{-1/4}\Big( \nu^p e^{\gamma_0 \nu^{1/2} t}\Big) \| \omega \|_{\sigma,\beta,\gamma,1} 
 \lesssim\nu^{-\frac14} \Big( \nu^p e^{\gamma_0 \nu^{1/2} t}\Big)^2
\end{aligned}
$$
and 
$$
\begin{aligned}
 \| ( u_{\app} + v)\cdot \nabla \omega \|_{\sigma,\beta,\gamma,1}  &
 \lesssim \Big( \nu^p e^{\gamma_0 \nu^{1/4} t} + \| \omega\|_{\sigma,\beta,\gamma,1}\Big) \| \nabla \omega \|_{\sigma,\beta,\gamma,1} 
\\&\lesssim \nu^{-\frac14}\Big( \nu^p e^{\gamma_0 \nu^{1/4} t}\Big)^2
 .\end{aligned}
 $$
This proves the lemma. \end{proof}

Next, using Theorem \ref{theo-eLt-stable} and noting that $\alpha e^{-\alpha^2 \nu t} \lesssim 1+ (\nu t)^{-1/2}$, we obtain the following uniform semigroup bounds:
\begin{equation}\label{semi-bounds}\begin{aligned}
\| e^{L t}\omega\|_{ \sigma,\beta, \gamma, 1} &\le C_0 \nu^{-1/2} e^{\gamma_1 \nu^{1/2} t }  \| \omega\|_{\sigma, \beta, \gamma, 1} 
\\
\| \partial_x e^{L t}\omega\|_{ \sigma,\beta, \gamma, 1} &\le
 C_0 \nu^{-1/2}\Big( 1+ (\nu t)^{-1/2} \Big) e^{\gamma_1 \nu^{1/2} t }  \| \omega\|_{\sigma, \beta, \gamma, 1} 
\\
\| \partial_z e^{L t}\omega\|_{ \sigma,\beta, \gamma, 1} &\le
 C_0 \nu^{-1/2} \Big( \nu^{-1/4}+ (\nu t)^{-1/2} \Big) e^{\gamma_1 \nu^{1/2} t }  \| \omega\|_{\sigma, \beta, \gamma, 1} .
\end{aligned}\end{equation}
We are now ready to apply the above estimates into the Duhamel's integral formula \eqref{Duh-omega}. 
We obtain 
$$
\begin{aligned}
 \|  \omega (t)\|_{\sigma,\beta,\gamma,1} 
 &\lesssim \nu^{-1/2}\int_0^t e^{\gamma_1 \nu^{1/2} (t-s)} \nu^{-\frac14}\Big (\nu^{p} e^{\gamma_0 \nu^{1/2} s}\Big)^2\; ds
 \\&\quad + \nu^{-1/2}\int_0^t e^{\gamma_1 \nu^{1/2} (t-s)}  
 \nu^{1/2} \nu^{-\frac12[\frac{M}{4p}]}  \Big( \nu^p e^{\gamma_0 \nu^{1/2} s}\Big)^{1+ \frac{M}{4p}}  
 \; ds 
\\
 &\lesssim \nu^{-5/4} \Big (\nu^{p} e^{\gamma_0 \nu^{1/2} t}\Big)^2 + \nu^P\Big( \nu^p e^{\gamma_0 \nu^{1/2} t} \Big),
 \end{aligned}$$
upon taking $\gamma_1$ sufficiently close to $\gamma_0$. Set $T_1$ so that 
\begin{equation}\label{def-T11} \nu^p e^{\gamma_0 \nu^{1/2} T_1} =\theta_0 \nu^{\frac54},\end{equation}
for some sufficiently small and positive constant $\theta_0$. 
Then, for all $t\le T_1$, there holds 
$$
 \begin{aligned}\|  \omega (t)\|_{\sigma,\beta,\gamma,1}  
 &\lesssim \nu^{p} e^{\gamma_0 \nu^{1/2}t} \Big[ \theta_0
 + \nu^{P}\Big]
\end{aligned} $$
Similarly, we estimate the derivatives of $\omega$. The Duhamel integral and the semigroup bounds yield 
$$
\begin{aligned}
\| \nabla \omega (t)\|_{\sigma,\beta,\gamma,1} 
 &\lesssim \nu^{-1/2}\int_0^t e^{\gamma_1 \nu^{1/2} (t-s)}  \Big( \nu^{-1/4}+ (\nu (t-s))^{-1/2} \Big) 
 \\&\quad \times  
 \Big[ 
 \nu^{-\frac14}\Big (\nu^{p} e^{\gamma_0 \nu^{1/2} s}\Big)^2 
+  \nu^{-\frac12[\frac{M}{4p}]}  \Big( \nu^p e^{\gamma_0 \nu^{1/2} s}\Big)^{1+ \frac{M}{4p}} 
\Big] \; ds 
\\&\lesssim \nu^{-5/4} \Big[ 
 \nu^{-\frac14}\Big (\nu^{p} e^{\gamma_0 \nu^{1/2} s}\Big)^2 
+  \nu^{-\frac12[\frac{M}{4p}]}  \Big( \nu^p e^{\gamma_0 \nu^{1/2} s}\Big)^{1+ \frac{M}{4p}} 
\Big] 
 \end{aligned}$$
By view of \eqref{def-T11} and the estimate \eqref{t12-bd}, 
the above yields 
$$
 \begin{aligned}
 \| \nabla \omega (t)\|_{\sigma,\beta,\gamma,1}  &\lesssim \nu^{p-\frac14} e^{\gamma_0 \nu^{1/2}t} \Big[ \theta_0 
+ \nu^P\Big] .
\end{aligned} 
$$
To summarize, for $t\le \min\{T_*, T_1,T_\nu\}$, with the times defined as in \eqref{def-Tstar},
 \eqref{claim-www}, and \eqref{def-T11}, we obtain 
$$ ||| w(t)||| \lesssim  \nu^{p} e^{\gamma_0 \nu^{1/2}t} \Big[ \theta_0+ \nu^P\Big].$$
Taking $\theta_0$ sufficiently small, we obtain 
$$
 ||| w(t)||| \ll\nu^{p} e^{\gamma_0 \nu^{1/2}t} 
$$
for all time $t\le \min\{T_*, T_1,T_\nu\}$. 
In particular, this proves that the maximal time of existence $T_\nu$ is greater than $ T_1$, defined as in \eqref{def-T11}.  
This proves that at the time $t=T_*$, the approximate solution grows to order of $\nu^{5/8}$ in the $L^\infty$ norm. 
Theorem \ref{firstinstability-stable} is proved.



\section{Elliptic estimates}\label{sec-elliptic}


In this section, for sake of completeness, we recall the elliptic estimates with respect to the boundary layer norms. 
These estimates are proven in \cite[Section 3]{GrN2}. 

First, we consider the classical one-dimensional Laplace equation
\beq \label{Lap1}
\Delta_\alpha \phi = \partial_z^2 \phi - \alpha^2 \phi = f
\eeq
on the half line $z \ge 0$, with the Dirichlet boundary condition $ \phi(0) = 0$. We recall the function space $L^\infty_\beta$ defined by the finite norm $\| f\|_\beta  =\sup_{z \ge 0}|f(z) | e^{\beta z}$. We will prove
\begin{prop}
If $f \in L^\infty_\beta$ for some $\beta >0$, then $\phi \in L^\infty$. In addition, there holds
\beq \label{Lap3}
(1+\alpha^2) \| \phi \|_{L^\infty} + (1+ | \alpha |) \,  \| \partial_z \phi \|_{L^\infty}
+ \| \partial_z^2 \phi \|_{L^\infty}  \le C \| f \|_{\beta},
\eeq
where the constant $C$ is independent of $\alpha \in \RR$.
\end{prop}
\begin{proof} The solution $\phi$ of (\ref{Lap1}) is explicitly given by
\begin{equation}\label{laplacephi1}
\phi(z) =\int_0^\infty G_\alpha (x,z)f(x) dx 
\end{equation}
where $G_\alpha(x,z) =  - {1 \over 2 \alpha} \Bigl( e^{- \alpha | x-z | }  - e^{-\alpha | x + z |} \Bigr) $. A direct bound leads to
$$
\| \phi \|_{L^\infty} \le {C \over \alpha^2} \| f \|_{\beta}
$$
in which the extra factor of $\alpha^{-1}$ is due to the $x$-integration. 
Differentiating the integral formula, we get
$$
 \|\partial_z \phi \|_{L^\infty} \le {C \over \alpha} \| f \|_{\beta} .
$$
The estimate for $\partial_z^2 \phi$ follows by using directly the equation $\partial_z^2 \phi = \alpha^2 \phi + f$. This yields the lemma for the case when $\alpha$ is bounded away from zero. 

As for small $\alpha$, we note that $G_\alpha(0,z) = 0$ and $|\partial_x G_\alpha(x,z)|\le 1$. Hence, $|G_\alpha(x,z)|\le |x|$ and so    
$$ |\phi(z)| \le \int_0^\infty |G_\alpha (x,z)f(x)| dx  \le \|f\|_\beta \int_0^\infty |x| e^{-\beta x} \; dx \le C \| f\|_\beta.$$
Similarly, since $|\partial_z G_\alpha(x,z)|\le 1$, we get 
$$ |\partial_z\phi(z)| \le \int_0^\infty |\partial_zG_\alpha (x,z)f(x)| dx  \le \|f\|_\beta \int_0^\infty e^{-\beta x} \; dx \le C \| f\|_\beta.$$
The lemma follows. 
\end{proof}
We now establish a similar property for ${\cal B}^{\beta,\gamma,1}$ norms:
\begin{prop} \label{proplaplace3}
If $f \in {\cal B}^{\beta,\gamma,1}$ for some $\beta>0$, then $\phi \in L^\infty$. In addition, there holds
\beq \label{Lap4}
(1+| \alpha |) \, \| \phi \|_{L^\infty} 
+  \| \partial_z \phi \|_{L^\infty}   \le C \| f \|_{\beta,\gamma,1},
\eeq
where the constant $C$ is independent of $\alpha \in \RR$. 
\end{prop}
\begin{proof}
We will only consider the case $\alpha > 0$, the opposite case being similar. As above, since $G_\alpha(x,z)$ is bounded by $\alpha^{-1}$, using (\ref{laplacephi1}), we have 
$$
| \phi (z) | \le \alpha^{-1} \| f \|_{\beta,\gamma,1} \int_0^\infty
e^{- \alpha |   z -   x |} e^{-\beta   x} 
\Bigl( 1 + \delta^{-1} \phi_P(\delta^{-1} x) \Bigr) d  x
$$
$$
\le  \alpha^{-1} \| f \|_{\beta,\gamma,1} 
\Bigl( \alpha^{-1} + \delta^{-1} \int_0^\infty \phi_P(\delta^{-1} x) d   x \Bigr) 
$$
which yields the claimed bound for $\phi$ since $P >1$. A similar proof applies for $\partial_z \phi$. 
\end{proof}

%

Next, let us now turn to the two dimensional Laplace operator.
\begin{prop} \label{inverseLaplace}
Let $\phi$ be the solution of
$$
- \Delta \phi = \omega
$$
with the zero Dirichlet boundary condition, and let
$$
v = \nabla^\perp \phi. 
$$
If $\omega \in {\cal B}^{\sigma,\beta,\gamma,1}$, then $\phi  \in {\cal B}^{\sigma,0}$ and
$v  = (v_1,v_2) \in {\cal B}^{\sigma,0}$. Moreover, there hold the following elliptic estimates 
\beq \label{Lap4}
\| \phi \|_{\sigma,0} + \|  v_1 \|_{\sigma,0} + \|  v_2 \|_{\sigma,0}   \le C \| \omega \|_{\sigma,\beta,\gamma,1},
\eeq
\end{prop}
\begin{proof}
The proof follows directly from taking the Fourier transform in the $x$ variable, with dual integer Fourier component $\alpha$, and using Proposition \ref{proplaplace3}. 
\end{proof}

\chapter{Prandtl boundary layer}\label{chapter-prandtl}


In this chapter, we shall discuss the classical inviscid limit problem and present new results on the classical Prandtl's boundary layers, which are direct consequences of the results established in the previous chapters. Precisely, consider the incompressible Navier Stokes equations 
\begin{equation}\label{NSE-B1} 
\left \{ \begin{aligned}
v_t + v \cdot \nabla v + \nabla p &= \nu \Delta v 
\\
\nabla \cdot v &=0
\end{aligned}\right. \end{equation} 
posed in a spatial domain $\Omega \subset \mathbb{R}^n$ with $n \ge 2$. When there is a boundary, we impose the classical no-slip boundary condition
\begin{equation}\label{NSE-B2}
v_{\vert_{\partial \Omega}} =0 .
\end{equation}
The inviscid limit problem of $\nu\to 0$ is very classical and challenging. As $\nu\to 0$, we expect to obtain Euler solutions that solve 
\begin{equation}\label{Euler-B1} 
\left \{ \begin{aligned}
v^E_t + v^E \cdot \nabla v^E + \nabla p^E &= 
0
\\
\nabla \cdot v^E &=0
\end{aligned}\right. \end{equation} 
together with the boundary condition 
\begin{equation}\label{Euler-B2}
v^E \cdot n_{\vert_{\partial \Omega}} =0
\end{equation}
where $n$ denotes the outer unit vector normal to the boundary $\partial \Omega$. Due to the mismatch of the boundary conditions between Euler and Navier-Stokes problems, boundary layers appear, which are the main source of obstacles to understand the inviscid limit. 

We start with the inviscid limit problem on the whole space or domains without a boundary. We then focus on the case with a boundary. In particular, we recall the classical result of Kato and of Sammartino-Calfisch. We end the chapter with new instability results on the classical Prandtl's boundary layers.


\section{Whole space case}


Let us consider in this section the whole space case: $\Omega = \mathbb{R}^n$ (or domains without a boundary).  We have the following. 

\begin{theorem} 
Let $v^E$ be a smooth solution of Euler equations on $[0,T]$, 
and let $v^\nu$ be a sequence of weak solutions of Navier Stokes equations on $[0,T]$, with smooth initial data $v_0^E$ and $v_0$, respectively. 
Then, in the energy norm, solutions to Navier-Stokes converge to those of Euler as $\nu \to 0$, 
if the convergence holds at initial time. More precisely, there holds 
$$ 
\sup_{t\in [0,T]} \|v(t) - v^E(t)\|_{L^2}  \le e^{C_0 t}\|v_0 - v_0^E\|_{L^2}+ C_0 \nu, 
$$
as $\nu \to 0$, for some positive constant $C_0$ that depends on certain Sobolev norms of Euler solution and on time $T$. 
\end{theorem}    
\begin{proof} The difference $w = v - v^E$ satisfies $\nabla \cdot w =0$ and solves 
$$ w_t + (v^E + w)\cdot \nabla w + w\cdot \nabla v^E + \nabla (p - p^E) = \nu \Delta w + \nu \Delta v^E.$$
We multiply the equation by $w$ and integrate over $\Omega$. Since both $v^E$ and $w$ are divergence-free, we can compute 
$$
 \int_\Omega ((v^E + w) \cdot \nabla w) \cdot w\; dx = \int_\Omega ((v^E + w)\cdot \nabla ) \frac{|w|^2}{2} \; dx   = 0.
 $$  
This  yields 
$$ 
\frac{1}{2}\frac{d}{dt} \int_\Omega |w|^2\; dx  + \nu \int_\Omega |\nabla w|^2 \; dx 
= - \int_\Omega (w\cdot \nabla v^E) \cdot w \; dx + \nu \int_\Omega \Delta v^E \cdot w\; dx. 
$$  
Now, the first term on the right is bounded by $C_0 \| w\|_{L^2}^2$, assuming that the Euler solution has bounded derivatives. 
One can use  Hold\"er's and  Young's inequalities to treat the last term as
 $$
  \nu \int_\Omega \Delta v^E \cdot w\; dx \le \| w\|_{L^2}^2  + \nu^2 \|\Delta v^E\|_{L^2}^2.
  $$
Putting this into the above energy equality, one gets 
$$
 \frac{1}{2}\frac{d}{dt} \int_\Omega |w|^2\; dx  + \nu \int_\Omega |\nabla w|^2 \; dx \le C_0 \|w\|_{L^2}^2 + C_0 \nu^2.  
 $$ 
The standard Gronwall's inequality finishes the theorem.  
\end{proof}

\begin{remark} One could in fact obtain a strong convergence of Navier-Stokes to Euler in $H^s$ space with 
$s>\frac n2+1$ and lower the regularity of the initial data in the same $H^s$ space. 
The original proof was due to Swann \cite{Swann71} and Kato \cite{Kato72}, and later improved by Constantin \cite{Constantin86} and Masmoudi \cite{Masmoudi07}. 
Many authors study the inviscid limit problem and obtain the convergence for even less regular data; 
see, for instance, \cite{Chemin93,Chemin95,Chemin96, ConstantinWu95, ConstantinWu96} on the study of vortex patches (vorticity is a characteristic function of a smooth domain).
\end{remark}


\section{$L^2$ description: Kato's criteria}\label{sec-K}


We now study the inviscid limit problem in domains with a boundary. Recall that there is a mismatch between the boundary conditions for Euler and Navier-Stokes problems, \eqref{NSE-B2} and \eqref{Euler-B2}. In particular, the tangential component of Euler velocity $v^E$ may not vanish on the boundary, and thus a boundary layer corrector is needed. Classically, Prandtl corrected it with a boundary layer whose size is of order $\sqrt\nu$, which arises from balancing the viscous and inertial forces, leading to the celebrated Prandtl boundary layer equation. We discuss this in details in the next section. On the other hand, in order to correct the right boundary condition, Kato \cite{Kato84} introduce a  so-called ``fake''  boundary layer, with size of order $\nu$, much smaller than the classical layer size, and perform the standard energy estimates, leading to the well-known Kato's criteria for the inviscid limit to hold. Precisely, we have the following.

\begin{theorem}[Kato's criteria; \cite{Kato84}] Let $T$ be a positive time. The followings are equivalent in the inviscid limit $\nu \to 0$:

(i) $v \to v^E$ strongly in $L^2(\Omega)$, uniformly in time in $[0,T]$;

(ii) $v \to v^E$ weakly in $L^2(\Omega)$, a.e. in time in $[0,T]$;

(iii) the energy dissipation rate in $\Omega$ is vanishing: $$\nu \int_0^T \int_{\Omega} |\nabla v|^2\; dx dt \to 0;$$

(iv) the energy dissipation rate near $\partial \Omega$ is vanishing: 
$$\nu \int_0^T \int_{\{ d(x,\partial\Omega)\le \nu\}} |\nabla v|^2\; dx dt \to 0.$$

\end{theorem}

\begin{proof}
By definition, it is clear that $(i)$ implies $(ii)$ and $(iii)$ implies $(iv)$. Moreover, the energy inequality for solutions of Navier Stokes equations gives
$$
\| v(t) \|_{L^2}^2  + \nu \int_0^t \int_\Omega | \nabla v (\tau,x) |^2 dx d\tau \le \| v(0) \|_{L^2}^2.
$$
As smooth solutions of Euler equations have a constant norm, namely 
$$
\| v^E(t) \|_{L^2} = \| v^E_0 \|_{L^2},
$$
it follows that $(ii)$ implies $(iii)$.
It therefore remains to prove that $(iv)$ implies $(i)$.
For this we introduce Kato's construction of boundary layers. Kato's boundary layer is supported near the boundary: 
  $$ 
  \mathrm{supp}(v^K) \subset \Gamma_\nu : = \{ x\in \Omega ~:~ d(x,\partial\Omega) \le \nu\} .
  $$
  It may even  be chosen explicitly and somehow arbitrarly.
For instance, in the half-space domain with boundary $\{y=0\}$, one might take 
$$
v_K = - \nabla^\perp \Big( \chi (\frac{y}{\nu}) y v^E_t(x,0) \Big),
$$
where $\chi(\cdot)$ is some smooth cut-off function with support within $[0,1]$ 
and with $\chi(0) = 1$.
Note that $v^K$ is divergence-free and satisfies 
$v^K = - v^E$ on the boundary $\{y=0\}.$ 

For general domains, the construction is similar: 
$y$ is replaced by the distance function $d(x,\partial \Omega)$ to the boundary 
and $v_t^E$ is replaced by the value of the tangential component of velocity on the boundary.
Since  Kato's layer is supported in a thin strip of order $\nu$ near the boundary, we have
$$
\|v^K \|_{L^\infty}\lesssim 1, \quad \| v^K \|_{L^2} \lesssim  \nu^{1/2},
 \quad \|v^K_t\|_{L^2} \lesssim  \nu^{1/2}, \quad \|\nabla v^K \|_{L^2} \lesssim \nu^{-1/2},
 $$   
 where $v^K_t$ is the tangential component of the velocity.

Having defined $v_\mathrm{app} = v^E + v^K$, we now apply the above energy estimates
on $w = v - v^E - v^K$, yielding 
\begin{equation}\label{EE1}
 \frac 12 \frac{d}{dt} \| w\|_{L^2}^2 + \nu \| \nabla w\|_{L^2}^2  = - \int_\Omega (w\cdot \nabla v_\mathrm{app})\cdot w \; dx +  \int_\Omega R_{\mathrm{app}} \cdot w\; dx,
 \end{equation}
in which $R_\mathrm{app}$ denotes the error of "approximation", which in our case is simply 
$$ 
R_\mathrm{app}: = NS( v^E + v^K) .
$$
 Let us begin by the study of the second term of
 the right-hand side of \eqref{EE1}. Note that $R_{app}$ equals
 $$
 R_{app} = NS(v^E) + v^E . \nabla v^K + v^K . \nabla v^E + v^K . \nabla v^K - \nu \Delta v^K,
 $$ 
 with $NS(v^E) = - \nu \Delta v^E$.
 We will only bound the worst one, namely $v^K . \nabla v^K$,
 leaving the other terms to the reader. 
The most difficult term in the right-hand side of \eqref{EE1} is 
$$\begin{aligned}
 \int_\Omega (v^K \cdot \nabla v^K) \cdot w \; dx &= \int_\Omega (v^K \cdot \nabla v^K) \cdot (v - v^E - v^K) \; dx  \\
 &= \int_\Omega (v^K \cdot \nabla v^K )\cdot (v-v^E) \; dx
 \\
 &\le  \int_{\Gamma_\nu} \nu |v^K \cdot \nabla v^K| \frac{|v-v^E|}{d(x,\partial\Omega)} \; dx 
 \\
 &\lesssim \nu \| v^K \cdot \nabla v^K \|_{L^2(\Gamma_\nu)} \| \nabla (v-v^E)\|_{L^2(\Gamma_\nu)} 
 \\
 &\lesssim \sqrt \nu \|\nabla (v-v^E)\|_{L^2(\Gamma_\nu)},
  \end{aligned}$$
in which the Hardy's inequality was used. The other  terms  appear to
be easier. Namely $NS(v^E) = \nu \Delta v^E$ is straightforward to bound. 
Let us give bounds on the convection term
$$\begin{aligned}
 \int_\Omega & (w\cdot \nabla v_\mathrm{app})\cdot w \; dx 
\\ &\le \| \nabla v^E\|_{L^2} \| w\|_{L^2}^2 +  \int_\Omega (w\cdot \nabla v^K)\cdot  (v - v^E) \; dx 
 \\
 &\le \| \nabla v^E\|_{L^2} \| w\|_{L^2}^2 + \nu^2 \|\nabla v^K \|_{L^\infty} \int_{\Gamma_\nu} \Big| \frac{v-v^E}{d(x,\partial \Omega)}\Big|^2\; dx 
 \\&\quad +  \int_\Omega (v^K\cdot \nabla v^K)\cdot  (v - v^E) \; dx
 \\
 &\lesssim
  \| \nabla v^E\|_{L^2} \| w\|_{L^2}^2 +\nu 
\|\nabla (v-v^E)\|_{L^2(\Gamma_\nu)}^2 +  \sqrt \nu \|\nabla (v-v^E)\|_{L^2(\Gamma_\nu)},\end{aligned}$$ 
in which again the Hardy's inequality and the previous estimate were used. Putting all these estimates
together, one gets at once the energy inequality 
\begin{equation}\label{EE2} \frac 12 \frac{d}{dt} \| w\|_{L^2}^2 + \nu \| \nabla w\|_{L^2}^2 
\lesssim  \| w\|_{L^2}^2 
+ \sqrt \nu  \|\nabla v\|_{L^2(\Gamma_\nu)} 
+ \nu  \|\nabla v\|_{L^2(\Gamma_\nu)}^2 
+ \delta(\nu),
\end{equation}
for some constant $\delta(\nu)\to 0$ as $\nu \to 0$. 

It follows that if we assume that the anomalous dissipation {within a very thin layer near the boundary} tends to zero, namely  
\begin{equation}\label{Kato}
\nu 
\int_0^T \int_{\Gamma_\nu} |\nabla v|^2\; dx dt \to 0,\end{equation}
then inviscid limit to hold. Indeed, if \eqref{Kato} holds, the standard Gronwall's inequality yields that the $L^2$ energy norm of 
$w = v - v^E - v^K$ tends to zero, uniformly in time. As the $L^2$ energy norm of Kato layer is of order $\nu^{1/2}$, 
this proves the $L^2$ convergence of NSE to Euler solutions, under the assumption \eqref{Kato}. 
Hence $(iv)$ implies $(i)$, which ends the proof.  
\end{proof}

\bigskip

\begin{remark}
Since Kato's work, there are many variants of his criteria for the inviscid limit to hold. This includes (i) several equivalent formulations in term of vorticity  \cite{Kelliher07, Kelliher17}, (ii) the convergence to Lions' dissipative solutions of Euler equations is equivalent to the weak convergence of the vorticity component $\nu (\omega \times n)\cdot \tau $ on the boundary \cite{BardosTiti}, among many others.

\end{remark}

%

%
%
%
%
%
%
%

 Kato's criteria and its variants appear equally hard to verify. 
 They do however indicate that there would be violent behaviors occurring in the thin layer of size $\nu$ near the boundary, 
 should the inviscid limit fail to hold. The size $\nu$ is much smaller than the size $\sqrt \nu$ of the classical boundary layer, 
 predicted by Prandtl's theory. For instance, Prandtl predicted that a vorticity
of order $\nu^{-1/2}$ is created near the boundary, whereas the mentioned criteria indicates 
that the vorticity would be much larger, of order $\nu^{-1}$ near the boundary, should the inviscid limit fail.


\section{$L^\infty$ description: Prandtl's theory}


Prandtl in 1904 theorized that a slightly viscous Navier-Stokes flows can be decomposed into an Euler flow, plus a boundary layer corrector. Let us detail this point. 
For the sake of presentation, we consider the Navier Stokes equations in the half-space
$$
\Omega = \mathbb{R}^{n-1} \times \mathbb{R}_+
$$
 with the flat boundary 
 $
 \{ y=0\}.
 $
  We then write 
  $$
  \vec x = (x, y)\in  \mathbb{R}^{n-1} \times \mathbb{R}_+, \qquad   \vec v = (u,v)\in \mathbb{R}^{n-1}\times \mathbb{R}.
  $$ 
The  Navier Stokes equations written in the component-wise read 
\begin{equation}\label{NS-vector} 
\left \{ \begin{aligned}
u_t + (u\cdot \nabla_{x} + v \partial_y) u + \nabla_x p & = \nu \Delta u 
\\
v_t + ( u \cdot \nabla_x + v \partial_y) v + \partial_y p &= \nu \Delta v 
\\
\nabla_x \cdot u + \partial_y v &= 0
\\
(u,v)_{\vert_{y=0}} &= 0.
\end{aligned}\right. \end{equation} 
Let $\vec v^E = (u^E, v^E)$ be the solution to the corresponding Euler equation 
(that is, the equation \eqref{NS-vector} above, with $\nu =0$ and with zero boundary condition on $v^E$). We stress that there is no boundary condition on the tangential component $u^E$ is needed. In particular, $u^E$ needs not to vanish on the boundary $y=0$,
 and hence a boundary layer is needed to correct this discrepancy of the zero boundary conditions for Navier-Stokes. 

\subsection{Prandtl's Ansatz}
In 1904, Prandtl \cite{Prandtl04} postulated the following solution Ansatz: 
\begin{equation}\label{Ansatz} \begin{pmatrix} 
u\\v \end{pmatrix} = \begin{pmatrix} u^E\\v^E \end{pmatrix} (t,x,y)+ \begin{pmatrix} u^P \\  v^P \end{pmatrix}(t,x,\frac y{\sqrt\nu}) + o(1)_{L^\infty}
\end{equation}
for a boundary layer corrector $(u^P,v^P)$ with size of order $\sqrt \nu$, up to a remainder that tends to zero in the inviscid limit. We shall now derive equations for $(u^P,v^P)$. Denote the fast variable by 
$$
z= \frac{y}{\sqrt\nu}
$$
The boundary conditions for the boundary corrector $(u^P, v^P)$ are to ensure the no-slip boundary condition for Navier-Stokes solutions \eqref{NSE-B2}; namely, 
$$ 
u^P_{\vert_{z=0}} = - u^E_{\vert_{y=0}}, \qquad v^P_{\vert_{z=0}} = 0, 
\qquad \lim_{z\to \infty} u^P(t,x,z) = 0.
$$
Next, observe that for boundary layers, $ \partial_y= \frac 1{\sqrt\nu}\partial_z$.
Thus, from the divergence-free condition
 $$
 \nabla_x\cdot u^P + \frac 1{\sqrt\nu} \partial_z v^P = 0.
 $$ 
 This balance shows that the vertical velocity of $v^P$ is of order ${\sqrt\nu}$, 
 as compared to the order one of the horizontal component $u^P$. Thus, we rescale $v^P$ by writing 
 $$
 v^P = {\sqrt\nu} \tilde v^P,
 $$
  for some new $\tilde v^P$. The divergence-free condition becomes 
  $$ \nabla_x \cdot u^P + \partial_z \tilde v^P = 0.$$
Therefore, to leading order, the Navier-Stokes equation for the vertical component of velocity simply reads 
$$ \frac 1 {\sqrt\nu} \partial_z p^P = 0,$$ 
if we denote by $p^P$ the pressure for the Prandtl layer. This yields at once that 
$$
p^P = p^P(t,x),
$$
that is, the pressure does not depend on the vertical direction within the boundary layer. Next, to leading order, the equation for the horizontal component of velocity becomes
$$ 
u^P_t + (u^E + u^P)\cdot \nabla_{x} u^P  + u^P \cdot \nabla_x u^E + (\frac{v^E}{{\sqrt\nu}} + \tilde v^P) \partial_z u^P + \nabla_x p^P  =\partial_z^2 u^P 
$$
In which we note that $\nu^{-1/2} v$ can be approximated by $z \partial_y v^E(t,x,0)$, since $v^E(t,x,0) = 0$. 
As the above equation is defined near the boundary, one can replace $u^E$ by its value on the boundary $u^E(t,x,0)$ (to the leading order).
 The above set of equations is called the Prandtl boundary layer equation, a single equation for the horizontal component of velocity $u^P$, 
 with its vertical velocity $\tilde v^P$ being defined through the divergence-free condition. 
 Note that since $u^P$ vanishes away from the boundary, the above equation yields that $p^P$ is simply a constant.
  That is, the pressure is not an unknown in the Prandtl boundary layer equation. It turns out to be convenient to denote the following Prandtl's variables: 
 \begin{equation}\label{def-Pvar} 
 \bar u = u^E (t,x,0)+ u^P(t,x,z), \qquad \bar v = z \partial_y v^E(t,x,0) + \tilde v^P(t,x,z) .
 \end{equation}
The Prandtl boundary layer problem simply becomes
 \begin{equation}\label{def-Peqs}
\left \{ 
\begin{aligned} 
\bar u_t +\bar  u \cdot \nabla_x\bar  u + \bar v \partial_z\bar  u  &=  \partial_z^2\bar  u -  \nabla_x\bar p
\\
\nabla_x \cdot \bar u + \partial_z \bar v &=0
\\
 (\bar u,\bar v)_{\vert_{z=0}}  & =0
 \\
  \lim_{z\to \infty} \bar u(t,x,z) & = u^E(t,x,0)
\end{aligned}
\right. 
\end{equation}
in which the pressure is known: 
$$ 
\bar p= -u_t^E(t,x,0) - \frac{|u^E(t,x,0)|^2}{2} .
$$ 
  One can eliminate the vertical velocity $\bar v$ by writing 
  \beq \label{barv}
  \bar v = - \int_0^z \nabla_x \cdot \bar u \; dz,
  \eeq
   thanks to the divergence-free condition. The above set of equations and boundary conditions is complete. The tangential velocity component $\bar u$ is the only unknown scalar function, and the normal velocity component $\bar v$ is defined through the divergence-free condition \eqref{barv}.  

The existence and uniqueness of solutions to the Prandtl equations 
have been constructed for monotonic data by Oleinik  \cite{Ole} in the sixties. There are also recent reconstructions \cite{Alex, MW} of Oleinik's solutions via a more direct energy method. For data with analytic or Gevrey regularity, the well-posedness of the Prandtl equations is established in \cite{SammartinoCaflisch1, GVMasmoudi15}, among others. In the case of non-monotonic data with Sobolev regularity, the Prandtl boundary layer equations are known to be ill-posed (\cite{GVDormy,GVN1,GN1}). 

\subsection{Sammartino-Caflisch's analytic solutions}

Concerning the validity of Prandtl's Ansatz \eqref{Ansatz}, this was established for data with analytic regularity in the celebrated work of Caflisch and Sammartino \cite{SammartinoCaflisch2}. In particular, it was proven that if a boundary layer Ansatz exists to describe the limiting
behavior of $u^\nu$, then it must be of the Prandtl's form \eqref{Ansatz}. 
A similar result were also obtained by \cite{Mae} for data whose initial vorticity is compactly supported away from the boundary. The stability of shear flows under perturbations with Gevrey regularity is recently proved in \cite{GVM}.

However these positive results hide a strong instability occurring at high spatial frequencies. For some profiles, instabilities
with horizontal wave numbers of order $\nu^{-1/2}$ grow like $\exp(C t / \sqrt{\nu})$. Within an analytic framework, these
instabilities are initially of order $\exp(-D / \sqrt{\nu})$ and grow like $\exp( (Ct - D) /\sqrt{\nu})$. They remain negligible in bounded
time (as long as $t < D/ 2 C$ for instance). 

Within Sobolev spaces, these instabilities are predominant. In the next section, we shall present our recent instability results for data with Sobolev regularity.

\section{Instability results}


In this section, we present our instability results for data with Sobolev regularity. In particular, we prove that the Prandtl's Ansatz \eqref{Ansatz} is false and the boundary layer asymptotic expansions are invalid. Precisely, we consider the Navier-Stokes equations with forcing $f^\nu$:
\beq \label{NS-B1}
\partial_t u^\nu + (u^\nu \cdot \nabla) u^\nu - \nu \Delta u^\nu + \nabla p^\nu = f^\nu,
\eeq
\beq \label{NS-B2}
\nabla \cdot u^\nu = 0
\eeq
and Dirichlet boundary condition
\beq \label{NS-B3} 
u^\nu = 0 \qquad \hbox{on} \qquad y = 0 .
\eeq
The data we construct are near a shear layer profile which is a solution of the form
$$
U^\nu(t,x,y) = \begin{pmatrix}U(t,y/ \sqrt{\nu}) \\ 0 \end{pmatrix} 
$$
that is a solution of both Prandtl and Navier Stokes equations. Here $U(t,y)$ is a smooth function with $U(t,0) = 0$ such that $U(t,y)$ converges when $y \to + \infty$
to a constant Euler flow $U_\infty$. We consider two cases of shear layers:

\begin{itemize} 

\item time dependent boundary layers: $U^\nu$ solves
$$
\partial_t U^\nu - \partial_{yy} U^\nu = 0,
$$
namely $U^\nu$ is a solution of the classical heat equation

\item time independent boundary layers: in this case, we add  a time-independent forcing term which compensates for
the viscosity. Precisely, we take 
\begin{equation}\label{def-source}
F^\nu = \begin{pmatrix} - U''(0,y / \sqrt{\nu}) \\0\end{pmatrix} .
\end{equation}

\end{itemize}

~\\
There are two kinds of instability we establish: 

\begin{itemize}

{\em \item Inviscid instability:} An instability with horizontal wave numbers of order $O(\nu^{-1/2})$, growing like $\exp(C t / \sqrt{\nu})$.
This instability occurs when the profile $U$ is linearly unstable with respect to the inviscid Euler equations, or, very roughly speaking, 
when the profile $U$ has a "strong" inflexion point (Rayleigh's instability criterium). For such profiles 
E. Grenier proved in \cite{Grenier00CPAM} that Prandtl's asymptotic expansion is false,  up to a remainder of order $\nu^{1/4}$ in $L^\infty$ norm. 
In the next subsection, we are able to replace the lower bound $O(\nu^{1/4})$ by $O(1)$, and prove that the difference between
the genuine solution and the Prandtl's expansion may be of order one in supremum norm, and that this difference does not
vanish as $\nu$ goes to $0$. The Prandtl's Ansatz \eqref{Ansatz} is thus false. These instabilities are driven by inviscid instabilities occurring within the boundary layer.

{\em \item Secondary instability:} An instability with horizontal wave numbers of order $O(\nu^{-1/8})$, growing like $\exp(C t / \nu^{1/4})$. 
These instabilities are much more subtle, since the profile $U$ may be linearly stable with respect to the Euler equations. The growth is much slower, and the instability is driven by the so called "critical layer", which
is at a distance $O(\nu^{5/8})$ from the boundary. We prove the invalidity of boundary asymptotic expansions, an equivalence of the instability result established in \cite{Grenier00CPAM}, for monotone shear profiles.  

\end{itemize}

\subsection{Inviscid instability}

Our first instability result is as follows. 

\begin{theo}[The failure of Prandtl's Ansatz] \label{maintheo-Linfty}
There exists a smooth, analytic function $U(0,Y)$, such that the corresponding  sequence of  time dependent shear layers 
\beq \label{shear}
U^\nu(t,x,y) = \begin{pmatrix} U(t,y / \sqrt{\nu}) \\0 \end{pmatrix},
\eeq
which are smooth solutions
of Navier Stokes, Prandtl, and heat equations satisfies the following assertion. For any $N$ and $s$ arbitrarily 
large, there exist $\sigma_0 > 0$, $C_0 > 0$ 
and a sequence of solutions $u^\nu$ of Navier Stokes equations \eqref{NS-B1}-\eqref{NS-B3} with forcing terms $f^\nu$,
 on some interval $[0,T^\nu]$, such that
$$
\| u^\nu(0) - U^\nu(0) \|_{H^s} \le \nu^N,
$$
$$
\| f^\nu \|_{L^\infty([0,T^\nu],H^s)} \le \nu^N,
$$
but
$$
\| u^\nu(T^\nu) - U^\nu(T^\nu) \|_{L^\infty} \ge \sigma_0
$$
and
$$
T^\nu = O ( \sqrt{\nu} \log \nu^{-1} ).
$$
\end{theo}

Theorem \ref{maintheo-Linfty} is a direct consequence of the isotropic scaling 
$$(T,X,Z) = \frac{1}{\sqrt{\nu}} (t,x,z)$$
and Theorem \ref{theoinstable} on the instability of shear flows. Theorem \ref{maintheo-Linfty} proves that Prandtl Ansatz is false in $L^\infty$ in very small times, of order $\sqrt\nu \log \nu^{-1}$. The same theorem holds true for the time independent boundary layer with a forcing term (\ref{def-source}). In particular, it is proved that the convergence of Navier-Stokes solutions to Euler solutions, plus a boundary layer, fails in $L^\infty$ in the inviscid limit. We remark that this however does not prevent the convergence to hold in $L^p$ for $p<\infty$. In addition, the Navier-Stokes solutions obtained in Theorem \ref{maintheo-Linfty} involve not only the Prandt's layer of size $\sqrt\nu$, but also a viscous sublayer of size $\nu^{3/4}$. It is also important to note that the instability occurs in the vanishing time $T^\nu$ of order $\sqrt\nu \log\nu^{-1}$.

\subsection{Secondary instability}

In this section, we establish the nonlinear instability of the Ansatz \eqref{Ansatz} near monotonic profiles. 
Roughly speaking, given an arbitrary stable boundary layer, the two main results are 
\begin{itemize} 

\item in the case of time-dependent boundary layers, we construct Navier-Stokes solutions, with arbitrarily small forcing, of order $\mathcal{O}(\nu^P)$, with $P$ as large as we want,
so that the Ansatz \eqref{Ansatz} is false near the boundary layer, 
up to a remainder of order $\nu^{1/4+\epsilon}$ in $L^\infty$ norm, $\epsilon$ being arbitrarily small. 

\item in the case of stationary boundary layers, we construct Navier-Stokes solutions, without  forcing term, 
so that the Ansatz \eqref{Ansatz} is false, up to a remainder of order $\nu^{5/8}$ in $L^\infty$ norm. 

\end{itemize}

These results prove that there exist no asymptotic expansion of Prandtl's type, even in the case of monotonic profiles. For such 
profiles adding viscosity destabilizes the flow, which is counter intuitive.
Even in Prandtl boundary layer equation is well posed, it does not describe the limiting behavior as the viscosity goes to $0$.
 

Our main results are as follows. 
 
\begin{theorem}\label{theo-approxBL} 
Let $U_\bl(t,z)$ be a time-dependent stable boundary layer profile. Assume as in Theorem \ref{theo-approx}.  
Then, for arbitrarily large $s,N$ and arbitrarily small positive $\epsilon$, there exists
a sequence of functions $u^\nu$ that solves the Navier-Stokes equations \eqref{NS-B1}-\eqref{NS-B3}, with some forcing $f^\nu$, so that 
$$
\| u^\nu(0) - U_\bl(0) \|_{H^s} + \sup_{t\in [0,T^\nu]} \| f^\nu (t)\|_{H^s} \le \nu^N,
$$
but 
$$
\| u^\nu(T^\nu) - U_\bl(T^\nu) \|_{L^\infty} \ge \nu^{\frac14+\epsilon},
$$
$$\| \omega^\nu(T^\nu) - \omega_\bl(T^\nu) \|_{L^\infty} \to \infty,$$
for time sequences $T^\nu \to 0$, as $\nu \to 0$. Here, $\omega^\nu = \nabla\times u^\nu$ denotes the vorticity of fluids. 
\end{theorem}

This Theorem proves that the Ansatz \eqref{Ansatz} is false, even near stable boundary layers, for data with Sobolev regularity. 

\begin{theorem}[Instability result for stable profiles] \label{firstinstability-stable}
Let $U_\bl(t,z)$ be a time-dependent stable boundary layer profile. Assume as in Theorem \ref{theo-approx}.  
Then, for any $s,N$ arbitrarily large, there exists a sequence of solutions $u^\nu$ to the Navier-Stokes equations \eqref{NS-B1}-\eqref{NS-B3}, 
with forcing $f^\nu = f^P$ (boundary layer forcing), so that $u^\nu$ satisfy 
$$
\| u^\nu(0) - U_\bl \|_{H^s} \le \nu^N,
$$
but 
$$\| u^\nu (T^\nu) - U_\bl\|_{L^\infty} \gtrsim \nu^{5/8},$$
$$\| \omega^\nu (T^\nu) - \omega_\bl \|_{L^\infty} \gtrsim 1,$$
for some time sequences $T^\nu \to 0$, as $\nu \to 0$. 
\end{theorem}

The spectral instability for stable profiles gives rise to sublayers (or critical layers) whose thickness is of order $\nu^{5/8}$. 
The velocity gradient in this sublayer grows like $\nu^{-5/8} e^{t/\nu^{1/4} }$, and becomes large when $t$ is of order $T^\nu$.
As a consequence, they may in turn become unstable after the instability time $T^\nu$ obtained in the above theorem. 
Thus, in order to improve the $\nu^{5/8}$ instability, one needs to further examine the stability properties 
of this sublayer itself. Again, these instability results are a direct consequence of the instability results for shear profiles. 

\chapter{Boundary layer theory}\label{chapter-Prandtl}

 \def\Airy{\mathrm{Airy}}
 \def\OS{\mathrm{OS}}
 \def\Ray{\mathrm{Ray}}
 \def\Iter{\mathrm{Iter}}

With simple integrations by parts, we were able to see in the last chapter essentially the current ``state of art" of the $L^2$ 
convergence of Navier-Stokes to Euler. The inviscid limit problem is widely open as discussed. It is noted that the $L^2$ 
energy norm is quite weak, 
and does not see in the inviscid limit the appearance of thin layers that might (and indeed will) occur near the boundary (for instance, 
the $L^2$ norm of Kato's layer is of order $\sqrt \nu$). We will have to work with a different, stronger norm. 
Regarding the significance of viscosity despite being arbitrarily small (e.g., viscosity of air at zero temperature is about $10^{-4}$, 
which seems to be neglectable), d'Alembert in the 18th century has already argued out that ideal flows can't explain well many of the physics,
 and that the viscosity plays a crucial role near the boundary; for instance, one of his conclusions, known as d'Alembert's paradox, 
 asserts that solid body emerged in stationary ideal flows feels no drag acting on it (in the layman words, birds can't fly!). 
 Not until the beginning of the 20th century, Prandtl then postulated a solution Ansatz that revolutionized the previous understanding
  of slightly viscous flows near a boundary, later known as Prandtl boundary layer theory.  
  The theory gave birth to the field of aerodynamics, and is  regarded as one of the greatest achievements
   in fluid dynamics in the last century. Below, we shall derive the Prandtl boundary layers.


\section{Regular versus singular perturbations}


Let us first discuss the notation of regular versus singular perturbation.
Regular perturbations are perturbations that preserve the nature of the equations. For instance, consider the problem
$$ 
A v - \epsilon B v = f,\qquad \epsilon \to 0.
$$
Assume that $A,B$ are two bounded linear operators from a space $X$ to another space $Y$, such that the domain of $A$ is  contained 
in the domain of $B$. For simplicity, assume that $A$ is invertible.
The solution for the perturbed problem can be constructed in term of the series in $\epsilon$:
\begin{equation}\label{series}  
v= (A-\epsilon B)^{-1}f  
\end{equation}
$$
= A^{-1}(1-\epsilon BA^{-1})^{-1}f = A^{-1} f + A^{-1} \sum_{k = 1}^\infty (\epsilon B A^{-1})^{k} f .
$$
When $\epsilon \to 0$, the solution of the perturbed problem simply converges to the solution of the unperturbed problem. 
 The quantitative nature of the problem is unchanged 
when $\epsilon \to 0$ and no new behavior arises in the limit.

\medskip

A perturbation is singular when $A,B$ are qualitatively not of the same type. For instance, let us study  the following simple algebraic problem
$$ 
a x - \epsilon  x^2 = f,
$$
where $a$ is a nonzero constant. Beside the regular solution of the form 
$x = a^{-1} f + \mathcal{O}(\epsilon)$, 
there is a { new solution} due to the singular perturbation, which is of the form: 
 $$ 
 x = \frac{a}{\epsilon}  - \frac fa + \mathcal{O}(1) .
 $$
(note that the first term in the expansion comes from balancing $ax - \epsilon x^2 = 0$). 
Clearly, this solution does not converge to the solution of the unperturbed problem as $\epsilon \to 0$. 

Boundary layers arise in singular perturbations when the leading operator vanishes in the limit. Let us now give a very simple example
of boundary layer.


\section{A simple example of boundary layer}


Consider the following problem on the half-line $x\ge 0$:
$$\begin{aligned}
  v + \epsilon \partial_x v &= 1,
 \\
 v_{\vert_{x=0}} &= v_0.
 \end{aligned}$$
 When $\epsilon =0$, the constant function $v^E  = 1$ is the only solution to the limiting equation, 
 but it does not solve the boundary (or rather, initial) condition at $x=0$, except if $v_0$ identically vanishes. 
 
 When $\epsilon>0$, the ODE problem has a unique smooth solution,
  which is equal to the prescribed boundary condition $v_0$ and is well approximated by $v^E =1$ away from the boundary. 
  As $v_0$ might be different from $v^E=1$, a boundary layer appears near the boundary to correct the discrepancy. 
  We have to deal with a singular perturbation.
  
  To find the equation for this boundary layer, we postulate a solution Ansatz: 
 $$
 v  = v^E(x) + v^\mathrm{bl} (\frac x\delta)
 $$ 
 for some $\delta$ which goes to $0$ as $\epsilon \to 0$. Plugging this Ansatz into the equation, one gets the ``boundary layer equation'':
 $$
 v^\mathrm{bl} + \frac{\epsilon}{\delta} \partial_z v^\mathrm{bl} (z) = 0, \qquad v^\mathrm{bl}_{\vert_{z=0}} 
 = v_0 - v^E_{\vert_{x=0}}, \qquad \lim_{z\to \infty} v^\mathrm{bl} (z) =0,
 $$
  where $z$ denotes the fast variable $z = x/\delta$. To balance the above equation,
  we choose the ``boundary layer thickness'' $\delta$ to be
  $$
  \delta  = \epsilon.
  $$
   We get the solution 
   $$
   v^\mathrm{bl} = (v_0 - 1) e^{-x/\epsilon},
   $$
    which is called a boundary layer. By construction, the solution ansatz 
    $$
    v^{app} =   v^E(x) + v^\mathrm{bl} (\frac x\epsilon)
    $$ 
    satisfies the boundary condition and approximately solves the ODE equation (in this example, as
    the equation is linear and as
     $\partial_x v^E =0$, the ansatz in fact  exactly solves the equation). 
     We remark that we would miss out this boundary layer, if we were doing $L^2$ estimates. More precisely
     $$
     (v - v_E)   + \eps \partial_x (v - v_E) = 0 .
     $$
     Hence
 $$ \int_0^\infty | v - v^E|^2\; dx =  - \epsilon \int_0^\infty \partial_x (v-v^E) (v - v^E)\; dx = \frac \epsilon 2 |v_0 - v^E|^2 $$ 
which converges to zero as $\epsilon \to 0$, for arbitrary initial data $v_0$.  

Let us give another example of a singular perturbation, which will later come up in our study 
(for instance, when studying the resolvent equation of some linearized operators around a given profile solution). 
Consider the ODE problem on the half-line $x\ge 0$:
$$\begin{aligned}  v + a \partial_x v - \epsilon \partial^2_x v &= f(x)
\\
v_{\vert_{x=0}} &=0.
\end{aligned}$$    
Assume that the given source $f(x)$ is sufficiently smooth and integrable. Here, $a$ is some nonzero constant, 
and $\epsilon$ is positive and sufficiently small. When $\epsilon =0$, the unique bounded solution of the ODE equation is defined by 
$$ v^E(x) = \left \{ \begin{aligned} \frac 1a \int_0^x e^{-(x-y)/a} f(y)\; dy , &\qquad a>0 \\
- \frac 1a \int_x^\infty e^{-(x-y)/a} f(y)\; dy , &\qquad a<0,
\end{aligned}\right. $$
Consider the case $a>0$. In this case, the inviscid solution $v^E$ satisfies the zero boundary condition and there is no need
 to add a boundary layer to correct the boundary condition. The solution of the perturbed ODE can be constructed via a formal series 
 of the form \eqref{series}, yielding the estimate $v = v^E(x) + \mathcal{O}(\epsilon)$ in the $L^\infty$ norm. 

Next, consider the case $a<0$. The inviscid solution $v^E$ defined as above does not in general satisfy the zero boundary condition,
 and so a boundary layer is needed. Again, let us denote the boundary layer solution by $v^\mathrm{bl}(\delta^{-1} x)$.
 Then, the boundary layer thickness is $\delta = \epsilon$ and $v^\mathrm{bl}(z)$ solves the boundary layer problem 
$$ a \partial_z v^{\mathrm{bl}}  - \partial_z^2 v^\mathrm{bl} = 0, \qquad   v^\mathrm{bl}_{\vert_{z=0}} 
= - v^E_{\vert_{x=0}}, \qquad \lim_{z\to \infty} v^\mathrm{bl} (z) =0 ,$$
which simply gives $v^\mathrm{bl} = -v^E(0) e^{a x / \epsilon}$.
It can be proved that the exact solution of the perturbed ODE problem satisfies 
$$
v = v^E (x)+ v^\mathrm{bl}(\frac x \epsilon) + \mathcal{O}(\epsilon)
$$ 
in the $L^\infty$ norm. 

\medskip

In this section we presented two simple examples of boundary layers, together with an usual method to investigate it, namely
to construct approximate solutions and to match asymptotic expansions. This approach appears to be awkward in the case
of Navier Stokes equations. Instead of constructing approximate solutions and matched expansions, we prefer an operator approach
that we will introduce in the next section


\section{Operator approach to boundary layers}\label{sec-opBL}


In this section we introduce our strategy on a very simple example.
Namely we
consider the toy problem 
$$
L\phi: = (1 - \partial_z - \epsilon \partial_z^2) \phi = f, \qquad \forall ~z\ge 0,
$$
with the zero boundary condition on $z=0$. Our iterative construction starts with the inviscid solution. Indeed, let $\phi_0$ be the (bounded) inviscid solution $\phi_0$
of $$
\phi_0 - \partial_z \phi_0 = f .
$$
 Due to the outward characteristic to the boundary $\{z=0\}$, bounded inviscid solutions
  need not satisfy the zero boundary condition and the viscous equation. 
  Boundary Layers are needed, solving 
$$ -(\partial_z + \epsilon \partial_z^2) \phi_{bl} = g,$$
for some source $g$, which comes from the error introduced by the inviscid solution. 
Precisely, 
$$ L\phi_0 = f  - \epsilon \partial_z^2 \phi_0  = f  - \epsilon \partial_z^2 (1-\partial_z)^{-1} f ,$$
leaving the error term: 
$\epsilon \partial_z^2 (1-\partial_z)^{-1} f $, 
which might not be well-defined if $f$ is not smooth enough 
(and so not contractive for the iteration to work!).
We then correct this by introducing the iteration operator: 
\begin{equation}\label{Iter-toy}
\Iter: = -  \underbrace{(\epsilon \partial_z^2  + \partial_z )^{-1}}_{\mbox{boundary layer}} \circ \quad \underbrace{\epsilon \partial_z^2}_{\mbox{error}} \quad \circ \quad \underbrace{(1-\partial_z)^{-1}}_{\mbox{inviscid}}.
\end{equation}
Note that formally $Iter$ is a zeroth order operator.
The Boundary Layer operator is added to regularize the possible singularity coming from the loss of derivatives in the error term
$\partial_z^2 (1-\partial_z)^{-1}f$. 
Note that
$$
L (\Iter(f)) =  -  (\eps \partial_z^2 + \partial_z) \Iter(f) + \Iter(f)
$$
$$
=  \eps \partial_z^2 (1 - \partial_z)^{-1} f + \Iter(f).
$$
We can check that the new approximate solution 
$$
\phi_1: = \phi_0 + \Iter(f)
$$ 
solves 
$$ 
L\phi_1 = f + \Iter(f) .
$$
Inductively, let $E_n = \Iter(f)^n$ be the error at the $n^{th}$ step and define 
$$
\phi_{n+1} = \phi_{n} + (1-\partial_z)^{-1} E_{n} + \Iter(E_n) 
$$ to be the next iterative solution. There holds 
$$ 
L\phi_{n+1} = f + \Iter(f)^{n+1}.
$$ 
It suffices to show that $\Iter(\cdot)$ is well-defined and contractive in some function space, 
and so $E_n \to 0$ and $\phi_n \to \phi_\infty$ as $n\to \infty$, giving an exact solution 
$\phi_\infty$ to the problem $L \phi =f$. Let us work with the function space $X = L^{\infty}$. The iteration solves 
$$
(\epsilon \partial_z^2  + \partial_z ) \Iter(f) = \epsilon \partial^2_z g,
$$
with $g = (1-\partial_z)^{-1}f$, which gives 
$$ 
\Iter(f)(z) = e^{-z/\epsilon} \Iter(f)(0) + \int_0^z e^{-(z-y) /\epsilon} \partial_y g \; dy.
$$  
The boundary value $\Iter(f)(0)$ can be taken to be the minus of that of the inviscid solution. For simplicity, we take $\Iter(f)(0)=0$. The iteration is contractive in $L^\infty$, since 
$$ 
|\Iter(f)(z)| = \Big| \int_0^z e^{-(z-y) /\epsilon} \partial_y g \; dy  \Big| \le \| \partial_z g \|_\infty   
\int_0^\infty  e^{-|z-y|/\epsilon}  \; dy =  \epsilon \| \partial_z g \|_\infty 
$$
and 
$$
\partial_z g  = \partial_z (1-\partial_z)^{-1} f = - f + (1-\partial_z)^{-1} f ,
$$ 
which is  bounded by $C \|f\|_{L^\infty}$. This proves that the Iter operator is contractive for sufficiently small $\epsilon$.
Pecisely there holds 
$$
\|\Iter(f)\|_{L^\infty} \le C \epsilon \|f\|_{L^\infty}
$$ 
for all bounded functions $f$. For small $\eps$, $Iter$ is contractive. Therefore the series $\sum \phi_n$ converges, towards a solution
of our problem. The asymptotic expansion $\sum \phi_n$ gives a complete description of the solution since $\phi_n$ is of order
$O(\eps^n)$.

\chapter{Nonlinear instability: the stable case}\label{chapter-nonlinear-stable}

\def\app{\mathrm{app}}


\section{Introduction}


In this chapter, we prove the nonlinear instability of monotonic shear profiles. 
Roughly speaking, the two main results are
\begin{itemize} 

\item in the case of time-dependent shear flows, we construct Navier-Stokes solutions, with arbitrarily small forcing, of order $\mathcal{O}(\nu^P)$, with $P$ as large as we want,
so that the shear flows are nonlinear unstable up to order $\nu^{1/2+\epsilon}$ in $L^\infty$ and $L^2$, with $\epsilon$ being arbitrarily small. 

\item in the case of stationary shear flows, we construct Navier-Stokes solutions, without  forcing term, 
so that the shear flows are nonlinear unstable up to order $\nu^{5/4}$ in $L^\infty$ and $L^2$. 

\end{itemize}

%
%
%
%


\subsection{Stable shear profiles}\label{sec-BL}



%

We consider {\em stable shear profiles}; namely, those that are spectrally stable to the Euler equations. This includes, for instance, boundary layer profiles without an inflection point 
by view of the classical Rayleigh's inflection point theorem. We also assume that $U(z)$ is strictly monotonic, real analytic, that $U(0) =0$ 
and that $U(z)$ converges exponentially fast at infinity to a finite constant $U_+$. 
By a slight abuse of language, such profiles will be referred to
as stable profiles throughout this book.

In order to study the instability of such shear layers, we first analyze the spectrum of the corresponding linearized problem 
around initial profiles $U(z)$. This leads to the following linearized problem for vorticity $\omega = \partial_z v_1 - \partial_x v_2$, which reads 
\begin{equation}\label{EE-vort} 
(\partial_t - L)\omega = 0, \qquad L\omega : = \nu \Delta \omega - U\partial_x \omega - v_2 U'' ,
\end{equation}
together with $v = \nabla^\perp \phi$ and $\Delta \phi = \omega$, satisfying the no-slip boundary conditions 
$\phi = \partial_z \phi =0$ on $\{z=0\}$. 

We then take the the Fourier transform in the $x$ variable only, denoting by $\alpha$ the corresponding wavenumber,
which leads to
\begin{equation}\label{EE-vort-a}
 (\partial_t - L_\alpha)\omega = 0, \qquad L_\alpha\omega : = \nu \Delta_\alpha \omega - i\alpha U \omega - i\alpha \phi U'' ,
 \end{equation}
where
$$
\omega_\alpha = \Delta_\alpha \phi_\alpha,
$$
together with the zero boundary conditions 
$\phi_\alpha = \phi'_\alpha =0$ on $z=0$. Here, 
$$
\Delta_\alpha = \partial_z^2 - \alpha^2. 
$$
Together with Y. Guo, we proved in \cite{GGN1,GGN3} that, even for profiles $U$ which are stable as $\nu = 0$, 
there are unstable eigenvalues to the Navier-Stokes problem \eqref{EE-vort-a} for sufficiently small viscosity $\nu$ 
and for a range of wavenumber $\alpha \in [\alpha_1,\alpha_2]$, with $\alpha_1\sim \nu^{1/4}$ and $\alpha_2\sim \nu^{1/6}$. 
The unstable eigenvalues $\lambda_*$ of $L_\alpha$, found in \cite{GGN3}, satisfy 
\begin{equation}\label{lambda-GGN} 
\Re \lambda_* \sim \nu^{1/2} .
\end{equation}
Such an instability was first observed by Heisenberg \cite{Hei,HeiICM}, then Tollmien and C. C. Lin \cite{Lin0,LinBook}; 
see also Drazin and Reid \cite{Reid,Schlichting} for a complete account of the physical literature on the subject. 
See also Theorem \ref{theo-spectralinstablity} below for precise details. In coherence with the physical literature \cite{Reid},
we believe that, $\alpha$ being fixed,
 this eigenvalue is the most unstable one. However, this point is an open question from the mathematical point of view.
 
Next, we observe that $L_\alpha$ is a compact perturbation of the Laplacian $ \nu \Delta_\alpha$, and hence its unstable spectrum in the usual $L^2$ space is discrete. Thus, for each $\alpha,\nu$, we can define the maximal unstable eigenvalue $\lambda_{\alpha,\nu}$ so that $\Re \lambda_{\alpha,\nu}$ is maximum. We set $\lambda_{\alpha,\nu} =0$, if no unstable eigenvalues exist. 

In this paper, we assume that the unstable eigenvalues found in the spectral instability result, Theorem \ref{theo-spectralinstablity}, 
are maximal eigenvalues. Precisely, we introduce 
\begin{equation}\label{def-ga0max0}
\gamma_0 : = \lim_{\nu \to 0} \sup_{\alpha \in \RR}  \nu^{-1/2}\Re \lambda_{\alpha,\nu}  .
\end{equation}
The existence of unstable eigenvalues in Theorem \ref{theo-spectralinstablity} implies that $\gamma_0$ is positive. 
Our spectral assumption is that $\gamma_0$ is finite (that is, the eigenvalues in Theorem \ref{theo-spectralinstablity} are maximal).


\subsection{Main results}


We are ready to state the main instability results for stable shear layers.

\begin{theorem}\label{theo-approx} 
Let $U_\bl(t,z)$ be a time-dependent stable shear layer profile as described in Section \ref{sec-BL}. 
Then, for arbitrarily large $s,N$ and arbitrarily small positive $\epsilon$, there exists
a sequence of functions $u^\nu$ that solves the Navier-Stokes equations, with some forcing $f^\nu$, so that 
$$
\| u^\nu(0) - U_\bl(0) \|_{H^s} + \sup_{t\in [0,T^\nu]} \| f^\nu (t)\|_{H^s} \le \nu^N,
$$
but 
$$
\| u^\nu(T^\nu) - U_\bl(T^\nu) \|_{L^\infty} \gtrsim \nu^{\frac12+\epsilon},
$$
$$
\| u^\nu(T^\nu) - U_\bl(T^\nu) \|_{L^2} \gtrsim \nu^{\frac12+\epsilon},
$$
$$\| \omega^\nu(T^\nu) - \omega_\bl(T^\nu) \|_{L^\infty} \to \infty,$$
for time sequences $T^\nu$ of order $\nu^{-1/2}\log \nu^{-1}$. Here, $\omega^\nu = \nabla\times u^\nu$ denotes the vorticity of fluids. 
\end{theorem}


\begin{theorem}[Instability result for stable profiles] \label{firstinstability-stable}
Let $U_\bl = U(z)$ be a stable stationary shear layer profile as described in Section \ref{sec-BL}. 
Then, for any $s,N$ arbitrarily large, there exists a sequence of solutions $u^\nu$ to the Navier-Stokes equations, 
with forcing $f^\nu = f^P$ (boundary layer forcing), so that $u^\nu$ satisfy 
$$
\| u^\nu(0) - U_\bl \|_{H^s} \le \nu^N,
$$
but 
$$\| u^\nu (T^\nu) - U_\bl\|_{L^\infty} \gtrsim \nu^{5/4},$$
$$\| u^\nu (T^\nu) - U_\bl\|_{L^2} \gtrsim \nu^{5/4},$$
$$\| \omega^\nu (T^\nu) - \omega_\bl \|_{L^\infty} \gtrsim 1,$$
for some time sequences  $T^\nu$ of order $\nu^{-1/2}\log \nu^{-1}$. 
\end{theorem}

The spectral instability for stable profiles gives rise to sublayers (or critical layers) whose thickness is of order $\nu^{3/4}$. 
The velocity gradient in this sublayer grows like $\nu^{-3/4} e^{t/\nu^{1/2} }$, and becomes large when $t$ is of order $T^\nu$.
As a consequence, they may in turn become unstable after the instability time $T^\nu$ obtained in the above theorem. 
Thus, in order to improve the $\nu^{5/4}$ instability, one needs to further examine the stability properties 
of this sublayer itself (see \cite{GrN4}).


\subsection{Boundary layer norms}


We end the introduction by introducing the boundary layer norms to be used throughout the chapter. Precisely, for each vorticity function $\omega_\alpha = \omega_\alpha(z)$, we introduce 
the following boundary layer norms
\begin{equation}\label{assmp-wbl-stable} 
\| \omega_\alpha\|_{ \beta, \gamma, 1} : = \sup_{z\ge 0} 
\Big [ \Bigl( 1 +  \delta^{-1} \phi_{p} (\delta^{-1} z)  \Bigr)^{-1} e^{\beta z} |\omega_\alpha (z)| \Big],
\end{equation}
where $\beta > 0$, $p$ is a large, fixed number, 
$$
\phi_p(z) = {1 \over 1 + z^p},
$$
and with the boundary layer thickness 
$$
\delta = \gamma \nu^{1/4}
$$
for some $\gamma>0$. We introduce the boundary layer space ${\cal B}^{\beta,\gamma,1}$ 
  to consist of functions whose $\|\cdot \|_{ \beta, \gamma, 1} $ norm is finite. 
  We also denote by $L^\infty_\beta$ the function spaces equipped with the finite norm 
  $$
  \|\omega \|_\beta = \sup_{z\ge 0} e^{\beta z} |\omega(z)|.
  $$ 
  When there is no weight $e^{\beta z}$, we simply write $L^\infty$ for the usual bounded function spaces. 
  Clearly, 
  $$
  L^\infty_\beta \subset {\cal B}^{\beta,\gamma,1}
.  $$ 
 In addition, it is straightforward to check that 
\begin{equation}\label{al-norm}
\| f g \|_{\beta,\gamma, 1} \le \| f \|_{L^\infty} \| g \|_{ \beta,\gamma,1}.
\end{equation}
Finally, for functions $\omega(x,z)$, we introduce 
$$\| \omega\|_{ \sigma,\beta, \gamma, 1} : = \sup_{\alpha \in \RR} (1+|\alpha|)^{\sigma}\| \omega_\alpha\|_{ \beta, \gamma, 1} ,$$
for $\sigma> 1$, in which $\omega_\alpha$ is the Fourier transform of $\omega$ in the variable $x$. Combining with \eqref{al-norm}, we have 
\begin{equation}\label{al-norm1}
\| f g \|_{\sigma,\beta,\gamma, 1} \le \| f \|_{\sigma,0} \| g \|_{\sigma,\beta,\gamma,1},
\end{equation}
where $\| f \|_{\sigma,0} = \sup_{\alpha \in \RR} (1+|\alpha|)^{\sigma}\| f_\alpha\|_{L^\infty}$. 


\section{Linear instability}\label{sec-linear}


In this section, we shall recall the spectral instability of stable boundary layer profiles \cite{GGN3} and the semigroup estimates 
on the corresponding linearized Navier-Stokes equation \cite{GrN2,GrN3}.
 

\subsection{Spectral instability}\label{sec-grmode}


The following theorem, proved in \cite{GGN3}, provides an unstable eigenvalue of $L$ for generic shear flows.  

\begin{theorem}[Spectral instability; \cite{GGN3}]\label{theo-spectralinstablity}
Let $U(z)$ be an arbitrary shear profile with $U(0)=0$ and $U'(0) > 0$ and satisfy 
$$
\sup_{z \ge 0} | \partial^k_z (U(z) - U_+) e^{\eta_0 z} | < + \infty, \qquad k=0,\cdots ,4,
$$ 
for some constants $U_+$ and $\eta_0 > 0$. Let $R = \nu^{-1}$ be the Reynolds number, 
and set $\alpha_\mathrm{low}(R)\sim R^{-1/4}$ and  $ \alpha_\mathrm{up}(R)\sim R^{-1/6}$ be the lower and upper stability branches. 

Then, there is a critical Reynolds number $R_c$ so that for all $R\ge R_c$ and all $\alpha \in (\alpha_\mathrm{low}(R), 
\alpha_\mathrm{up}(R))$, there exist
a nontrivial triple $c(R), \hat v(z; R), \hat p(z;R)$, with $\mathrm{Im} ~c(R) >0$, 
such that $v_R: = e^{i\alpha(x-ct) }\hat v(z;R)$ and $p_R: = e^{i\alpha(x-ct)} \hat p(z;R)$ solve the linearized Navier-Stokes problem 
\eqref{EE-vort}. Moreover there holds the following estimate for the growth rate of the unstable solutions:
$$ 
\alpha \Im c(R) \quad \approx\quad  R^{-1/2}
$$
as $R \to \infty$. 
\end{theorem}

The proof of the previous Theorem, which can be found in \cite{GGN3}, gives a detailed description of the unstable mode.
In this paper we focus on the lower branch of instability.  In this case
$$
\alpha_\nu \approx R^{-1/4} = \nu^{1/4}, \qquad \Re \lambda_\nu\approx R^{-1/2} = \nu^{1/2},
$$
The vorticity of the unstable mode is of the form 
\begin{equation}\label{w0-gr-stable} 
\omega_0 = e^{\lambda_\nu t} \Delta ( e^{i \alpha_\nu x} \phi_0(z) ) \quad + \quad \mbox{complex conjugate} 
\end{equation}
The stream function $\phi_0$ is constructed through asymptotic expansions, and is of the form 
$$
 \phi_0: =  \phi_{in,0}(z) +\delta_\bl \phi_{bl,0} ( \delta_\bl^{-1} z) 
  $$ 
for some boundary layer function $\phi_{bl,0}$, where $\delta_\bl = \nu^{1/4}$.


By construction,  derivatives of $\phi_{\bl,0}$ satisfy 
$$ 
|\partial_z^k \phi_{\bl,0}(\delta_\bl^{-1} z)| \le C_k \delta_\bl^{-k} e^{-\eta_0 z / \delta_\bl} .
$$
%
%
In addition, it is clear that each $x$-derivative of $\omega_0$ gains a small factor of $\alpha_\nu \approx \nu^{1/4}$. 
We therefore have an accurate description of the linear unstable mode.


\subsection{Linear estimates}


The corresponding semigroup $e^{Lt}$ of the linear problem \eqref{EE-vort} is constructed through the path integral 
\beq \label{int2}
e^{L t} \omega = \int_\RR e^{i\alpha x} e^{L_\alpha t} \omega_\alpha \; d\alpha 
\eeq
in which $\omega_\alpha$ is the Fourier transform of $\omega$ in tangential variables and
$L_\alpha$, defined as in \eqref{EE-vort-a}, is the Fourier transform of $L$. 
One of the main results proved in \cite{GrN3} is the following theorem.

\begin{theorem} \cite{GrN3}\label{theo-eLt-stable} 
Let $\alpha \lesssim \nu^{1/4}$.
Let $\omega_\alpha \in {\cal B}^{\beta,\gamma,1}$ for some positive $\beta$, $\gamma_0$ be defined as in \eqref{def-ga0max0}. 
Assume that $\gamma_0$ is finite. 
Then, for any $\gamma_1>\gamma_0$, there is some positive constant $C_\gamma$ so that 
$$\begin{aligned}
\| e^{L_\alpha t}\omega_\alpha\|_{ \beta, \gamma, 1} &\le C_\gamma  
e^{\gamma_1 \sqrt{\nu} t }  e^{- \frac14 \alpha^2 \nu t}  \| \omega_\alpha\|_{ \beta, \gamma, 1},
\\
\| \partial_ze^{L_\alpha t}\omega_\alpha\|_{ \beta, \gamma, 1} &\le C_\gamma \Big( \nu^{-1/4}+
 (\nu t)^{-1/2} \Big) e^{\gamma_1 \sqrt{\nu} t }  e^{- \frac14 \alpha^2 \nu t}  \| \omega_\alpha\|_{ \beta, \gamma, 1}. 
\end{aligned}$$
\end{theorem}


\section{Approximate solutions}\label{sec-approx}



Theorem \ref{theo-approx} follows at once from the following theorem. 

\begin{theorem}\label{theo-approx-stable} 
Let $U(z)$ be a stable shear layer profile, and let $U_\bl(\nu t,z)$ be the corresponding time-dependent shear profiles. 
Then, there exist an approximate solution $\widetilde u_\app$ that approximately solves Navier-Stokes equations in the following sense: 
for arbitrarily large numbers $p,M$ and for any $\epsilon>0$, the functions $\widetilde u_\app$ 
solve
\begin{equation}\label{NS-Uapp}
\begin{aligned}
\partial_t \widetilde u_\app + (\widetilde u_\app \cdot \nabla) \widetilde u_\app + \nabla \widetilde p_\app   &=
\nu \Delta \widetilde u_\app + {\cal E}_\app,
\\
\nabla \cdot \widetilde u_\app &= 0,
\end{aligned}\end{equation}
for some remainder $ {\cal E}_\app$ and for time $t \le T_\nu$, with $T_\nu$ being defined through
$$
\nu^p e^{\Re \lambda_0 T_\nu} = \nu^{\frac12 + \epsilon}.
$$
In addition, for all $t \in [0,T_\nu ]$, there hold
$$
\begin{aligned}\| \mathrm{curl} (\widetilde u_\app - U_\bl(\nu t,z) )\|_{\beta,\gamma,1} &\lesssim \nu^{\frac12 + \epsilon},
\\
\|\mathrm{curl} \mathcal{E}_\app (t)\|_{\beta,\gamma,1} 
&\lesssim   \nu^{M} .
\end{aligned}$$
Furthermore, there are positive constants $\theta_0,\theta_1,\theta_2$ independent of $\nu$ so that there holds 
$$  
\theta_1 \nu^p e^{\Re \lambda_0 t} \le  \| (\widetilde u_\app - U_\bl) (t)\|_{L^\infty} \le \theta_2 \nu^p e^{\Re \lambda_0 t} 
$$
for all $t\in [0,T_\nu]$. In particular, 
$$
\| (\widetilde u_\app - U_\bl) (T_\nu)\|_{L^\infty}\gtrsim \nu^{\frac12 + \epsilon}.
$$ 
\end{theorem}


\subsection{Formal construction}


The construction is classical. Indeed, the approximate solutions are constructed in the following form
\beq \label{def-Uapp}
\widetilde u_\app(t,x,z) = U_\bl( \nu t,z) + \nu^p \sum_{j = 0}^M \nu^{j/4} u_j(t,x,z).
\eeq
For convenience, let us set $v = u -U_\bl $, where $u$ denotes the genuine solution to the Navier-Stokes equations. 
Then, the vorticity $\omega = \nabla \times v$ solves 
$$
 \partial_t \omega + (U_\bl  (\nu t, y)  + v)\cdot \nabla \omega + v_2 \partial_y^2U_s(\nu t,y) - \nu \Delta \omega =0 
$$
in which $v = \nabla^\perp \Delta^{-1} \omega$ and $v_2$ denotes the vertical component of velocity. Here and in what follows, 
$\Delta^{-1}$ is computed with the zero Dirichlet boundary condition. As $U_\bl $ depends slowly on time, 
we can rewrite the vorticity equation as follows: 
\begin{equation}\label{vort-Phi}
 (\partial_t - L) \omega  + \nu^{1/4} S \omega + Q(\omega, \omega) =0.
\end{equation}
In \eqref{vort-Phi}, $L$ denotes the linearized Navier-Stokes operator around the stationary boundary layer $U  = U_s(0,z)$:  
$$ L \omega : = \nu \Delta \omega  - U \partial_x \omega - u_2 U'' ,$$
$Q(\omega, \tilde \omega)$ denotes the quadratic nonlinear term $u \cdot \nabla \tilde \omega$, 
with $v = \nabla^\perp \Delta^{-1} \omega$, and $S$ denotes the perturbed operator defined by 
$$
\begin{aligned}
 S\omega: &= \nu^{-1/4}[U_s ( \nu t, z)  - U (z)] \partial_x \omega + \nu^{-1/4} u_2 [\partial_y^2 U_s ( \nu t, z)  - U''(z)] 
.\end{aligned}$$
Recalling that $U_s$ solves the heat equation with initial data $U(z)$, we have 
$$| U_s ( \nu t, z)  - U (z) | \le C \| U''\|_{L^\infty} e^{-\eta_0 z} \nu t$$ 
and 
$$ | \partial_z^2 U_s ( \nu t, z)  - U''(z)| \le C \| U''\|_{W^{2,\infty}} e^{-\eta_0 z} \nu t.$$
Hence, 
\begin{equation}\label{def-Sop}
\begin{aligned}
 S\omega  = \nu^{-1/4}\cO( \nu t e^{-\eta_0 z})\Big[  |\partial_x \omega |+ |\partial_x \Delta^{-1}\omega| \Big]
\end{aligned}\end{equation}
in which $\Delta^{-1} \omega$ satisfies the zero boundary condition on $z=0$. 
The approximate solutions are then constructed via the asymptotic expansion: 
\begin{equation}\label{def-omegaAAA} \omega_\app = \nu^p \sum_{j=0}^M \nu^{j/4} \omega_j,\end{equation}
in which $p$ is an arbitrarily large integer. Plugging this Ansatz into \eqref{vort-Phi} and matching order in $\nu$,
 we are led to solve 
 \begin{itemize}

\item for $j =0$: 
$$ (\partial_t - L) \omega_0 = 0$$
with zero boundary conditions on $v_0= \nabla^\perp (\Delta)^{-1}\omega_0$ on $z=0$; 

\item for $0<j\le M$: 
\begin{equation}\label{eqs-omegaj} (\partial_t - L) \omega_j  = R_j, \qquad {\omega_j}_{\vert_{t=0}}=0,\end{equation}
with zero boundary condition on $v_j = \nabla^\perp (\Delta)^{-1}\omega_j$ on $z=0$. Here, the remainders $R_j$ are defined by 
$$ 
R_j = S \omega_{j-1} + \sum_{k + \ell + 4p = j}Q(\omega_k, \omega_\ell).
$$

\end{itemize}
As a consequence, the approximate vorticity $\omega_\app$ solves \eqref{vort-Phi}, leaving the error $R_\app$ defined by
\begin{equation}
\label{error-app}
\begin{aligned}
R_\app
&= \nu^{p+\frac{M+1}{4}} S \omega_{M} +  \sum_{k+ \ell> M+1 -4 p; 1\le k,\ell \le M} \nu^{2p+ \frac{k+\ell}{4}}Q(\omega_k, \omega_\ell) 
\end{aligned}\end{equation}
which formally is of order $ \nu^{p+\frac{M+1}{4}}$, for arbitrary $p$ and $M$.

 
\subsection{Estimates}


We start the construction with $\omega_0$ being the maximal growing mode, constructed in Section \ref{sec-grmode}. We recall 
\begin{equation}\label{def-omega000}
\omega_0 = e^ {\lambda_\nu t} e^{i\alpha_\nu x} \Delta_{\alpha_\nu} \Big( \phi_{in,0}(z) 
+\nu^{1/4} \phi_{\bl,0} ( \nu^{-1/4} z) \Big)  \quad+\quad \mbox{c.c.}
\end{equation}
with $\alpha_\nu \sim \nu^{1/4}$ and $\Re \lambda_\nu \sim \sqrt \nu$. In what follows, $\alpha_\nu$ and $\lambda_\nu$ are fixed. 
We obtain the following lemma.  

\begin{lemma}\label{lem-omegaj} 
Let $\omega_0$ be the maximal growing mode \eqref{def-omega000}, and let $\omega_j$ 
be inductively constructed by \eqref{eqs-omegaj}. Then, there hold the following uniform bounds:
\begin{equation}\label{induction-jn}
\begin{aligned}
\| \partial_x^a\partial_z^b\omega_j \|_{\sigma,\beta,\gamma,1} & \le C_0 \nu^{a/4}\nu^{-b/4} \nu^{-\frac12[\frac{j}{4p}]}
e^{\gamma_0 (1+\frac{j}{4p})\nu^{1/2}t } 
\end{aligned} \end{equation}
for all $a,j\ge 0$ and for $b=0,1$. 
In addition, the approximate solution $\omega_\app$ defined as in \eqref{def-omegaAAA} satisfies 
\begin{equation}\label{est-omegaAAA}
\| \partial_x^a \partial_z^b \omega_\app\|_{\sigma,\beta,\gamma,1} 
\lesssim \nu^{a/4} \nu^{-b/4} \sum_{j=0}^M \nu^{-\frac12[\frac{j}{4p}]}  \Big( \nu^p e^{\gamma_0 \nu^{1/2} t}\Big)^{1+ \frac{j}{4p}} , 
\end{equation}
for $a\ge 0$ and $b=0,1$. 
 Here, $[k]$ denotes the largest integer so that $[k]\le k$. 
\end{lemma}

\begin{proof} 
For $j\ge 1$, we construct $\omega_j$ having the form 
$$ \omega_j = \sum_{n \in \ZZ} e^{i n \alpha_\nu x} \omega_{j,n}$$
It follows that $\omega_{j,n}$ solves  
$$
(\partial_t - L_{\alpha_n}) \omega_{j,n}  = R_{j,n}, \qquad {\omega_{j,n}}_{\vert_{t=0}}=0 
$$
with $\alpha_n = n \alpha_\nu$ and $R_{j,n}$ the Fourier transform of $R_j$ 
evaluated at the Fourier frequency $\alpha_n$. Precisely, we have 
$$ 
R_{j,n} = S_{\alpha_n} \omega_{j-1,n} + \sum_{k + \ell + 8p = j}\sum_{n_1+n_2 = n}Q_{\alpha_n}(\omega_{k,n_1}, \omega_{\ell,n_2}),
$$
in which $S_{\alpha_n}$ and $Q_{\alpha_n}$ denote the corresponding operator $S$ and $Q$ in the Fourier space. 
The Duhamel's integral reads 
\begin{equation}\label{Duh-omegajn}
\omega_{j,n}(t) = \int_0^t e^{L_{\alpha_n} (t-s)} R_{j,n}(s)\; ds \end{equation}
for all $j\ge 1$ and $n \in \ZZ$. 

It follows directly from an inductive argument and the quadratic nonlinearity of $Q(\cdot,\cdot)$ that for all $0\le j\le M$, 
$\omega_{j,n} = 0$ for all $|n|\ge 2^{j+1}$. 
This proves that $|\alpha_n| \le 2^{M+1} \alpha_\nu \lesssim \nu^{1/4}$, for all $|n| \le 2^{M+1}$.  
Since $\alpha_n \lesssim \nu^{1/4}$, the semigroup bounds from Theorem \ref{theo-eLt-stable} read 
\begin{equation}\label{eLt-jn}\begin{aligned}
\| e^{L_\alpha t}\omega_\alpha\|_{ \beta, \gamma, 1} &\lesssim   e^{\gamma_1 \nu^{1/2} t }  
e^{- \frac14 \alpha^2 \nu t}  \| \omega_\alpha\|_{ \beta, \gamma, 1},
\\\| \partial_ze^{L_\alpha t}\omega_\alpha\|_{ \beta, \gamma, 1} &\lesssim \Big( \nu^{-1/4}
+ ( \nu t)^{-1/2} \Big) e^{\gamma_1 \nu^{1/2} t }  e^{- \frac14 \alpha^2 \nu t}  \| \omega_\alpha\|_{ \beta, \gamma, 1}. 
\end{aligned}\end{equation}
In addition, since $\alpha_n\lesssim \nu^{1/4}$, from \eqref{def-Sop}, 
we compute 
$$
\begin{aligned}
 S_{\alpha_n}\omega_{j-1,n}  = \cO( \nu t e^{-\eta_0 z})\Big[  |\omega_{j-1,n}|+ |\Delta_{\alpha_n}^{-1}\omega_{j-1,n}| \Big]
\end{aligned}
$$
and hence by induction we obtain 
\begin{equation}\label{bd-Sjn}
\begin{aligned}
\| S_{\alpha_n}\omega_{j-1,n}\|_{\beta,\gamma,1} 
&\lesssim  \nu t \Big[ \|\omega_{j-1,n}\|_{\beta,\gamma,1}+ \| e^{-\eta_0 z}\Delta_{\alpha_n}^{-1}\omega_{j-1,n}\|_{\beta,\gamma,1} \Big]
\\
&\lesssim  \nu t \Big[ \|\omega_{j-1,n}\|_{\beta,\gamma,1}+ \| \Delta_{\alpha_n}^{-1}\omega_{j-1,n}\|_{L^\infty} \Big]
\\
&\lesssim  \nu t \nu^{-\frac12[\frac{j-1}{4p}]} e^{\gamma_0 (1+\frac{j-1}{4p})\nu^{1/2}t } ,
\end{aligned}
\end{equation}
 where we used $\|e^{-\eta_0 z} \cdot \|_{\beta,\gamma,1} \le \|\cdot \|_{L^\infty}$ for $\beta < \eta_0$, and 
 $$
 \| \Delta_{\alpha}^{-1} \omega\|_{L^\infty} 
 \le C \| \omega \|_{\beta,\gamma,1},
 $$
uniformly in small $\alpha$; we shall prove this inequality in the Appendix. Let us first consider the case when $1\le j\le 4p-1$, for which $R_{j,n} = S_{\alpha_n} \omega_{j-1,n} $. 
That is, there is no nonlinearity in the remainder. 
Using the above estimate on $S_{\alpha_n}$ and the semigroup estimate \eqref{eLt-jn} into \eqref{Duh-omegajn}, 
we obtain, for $1\le j\le 4p-1$,
$$\begin{aligned} \|\omega_{j,n}(t)\|_{\beta,\gamma,1} 
&\le \int_0^t \| e^{L_{\alpha_n} (t-s)} S_{\alpha_n} \omega_{j-1,n} (s) \|_{\beta,\gamma,1}\; ds 
\\
&\le C\int_0^t e^{\gamma_1 \nu^{1/2} (t-s) }  \| S_{\alpha_n} \omega_{j-1,n} (s) \|_{\beta,\gamma,1}\; ds 
\\
&\le C\int_0^t e^{\gamma_1 \nu^{1/2} (t-s) }  \nu s e^{\gamma_0 (1+\frac{j-1}{4p})\nu^{1/2}s } \; ds .
\end{aligned}$$
We choose
$$
\gamma_1 = \gamma_0 (1+ \frac{j-1}{4p} + \frac{1}{8p})
$$
in (\ref{eLt-jn}) and use the inequality 
$$
\nu^{1/2} t \le C e^{\frac{\gamma_0}{8p}\nu^{1/2}t}.
$$
and obtain
\begin{equation}\label{est-ojn1}\begin{aligned} \|\omega_{j,n}(t)\|_{\beta,\gamma,1} 
&\le C\int_0^t e^{\gamma_1 \nu^{1/2} (t-s) } \nu^{1/2} e^{\gamma_0 (1+\frac{j-1}{4p} + \frac{1}{8p})\nu^{1/2}s } \; ds 
\\
&\le C\nu^{1/2} e^{\gamma_0 (1+\frac{j-1}{4p} + \frac{1}{8p})\nu^{1/2} t} \int_0^t  \; ds 
\\
&\le C\nu^{1/2} t e^{\gamma_0 (1+\frac{j-1}{4p} + \frac{1}{8p})\nu^{1/2} t} 
\\& \le C e^{\gamma_0 (1+\frac{j}{4p})\nu^{1/2}t } .
\end{aligned}\end{equation}
Similarly, as for derivatives, we obtain 
$$\begin{aligned} 
&
\|\partial_z\omega_{j,n}(t)\|_{\beta,\gamma,1} 
\\&\le \int_0^t \| e^{L_{\alpha_n} (t-s)} S_{\alpha_n} \omega_{j-1,n} (s) \|_{\beta,\gamma,1}\; ds 
\\
&\le C \int_0^t \Big( \nu^{-1/4}+ ( \nu (t-s))^{-1/2} \Big) e^{\gamma_1 \nu^{1/2} (t-s) }  \| S_{\alpha_n} \omega_{j-1,n} (s) \|_{\beta,\gamma,1}\; ds 
\\
&\le C 
\int_0^t \Big( \nu^{-1/4}+ ( \nu (t-s))^{-1/2} \Big) e^{\gamma_1 \nu^{1/2} (t-s) }  \nu s e^{\gamma_0 (1+\frac{j-1}{4p})\nu^{1/2} s } \; ds ,
\end{aligned}$$
in which the integral involving $\nu^{-1/4}$ is already treated in \eqref{est-ojn1} and bounded by $C\nu^{-1/4}e^{\gamma_0 (1+\frac{j}{4p})\nu^{1/2}t }.$ As for the second integral, we estimate 
\begin{equation}\label{t12-bd} 
\begin{aligned}
\int_0^t &( \nu (t-s))^{-1/2} e^{\gamma_1 \nu^{1/2} (t-s) }  \nu s e^{\gamma_0 (1+\frac{j-1}{4p})\nu^{1/2} s } \; ds
\\ 
&\le  \int_0^t ( \nu (t-s))^{-1/2} e^{\gamma_1 \nu^{1/2} (t-s) }  \nu^{1/2} e^{\gamma_0 (1+\frac{j-1}{4p} + \frac{1}{8p})\nu^{1/2} s } \; ds
\\&\le \nu^{1/2} e^{\gamma_0 (1+\frac{j-1}{4p} + \frac{1}{8p})\nu^{1/2} t }  \int_0^t ( \nu (t-s))^{-1/2} \; ds 
\\&\le C \sqrt t e^{\gamma_0 (1+\frac{j-1}{4p} + \frac{1}{8p})\nu^{1/2} t } 
\\&\le C \nu^{-1/4} e^{\gamma_0 (1+\frac{j}{4p} )\nu^{1/2} t } .
\end{aligned}\end{equation}
Thus, 
$$\|\partial_z\omega_{j,n}(t)\|_{\beta,\gamma,1} \le C \nu^{-1/4} e^{\gamma_0 (1+\frac{j}{4p} )\nu^{1/2} t }.$$ 
This and \eqref{est-ojn1} prove the inductive bound \eqref{induction-jn} for $j\le 4p-1$. 

For $j \ge 4p$, the quadratic nonlinearity starts to play a role. For $k+\ell = j-4p$, we compute 
\begin{equation}\label{est-Q2w}
Q_{\alpha_n}(\omega_{k,n_1}, \omega_{\ell,n_2}) = i \alpha_\nu \Big( n_2 \partial_z \Delta_{\alpha_n}^{-1} \omega_{k,n_1} 
\omega_{\ell,n_2}  - n_1 \Delta_{\alpha_n}^{-1} \omega_{k,n_1} \partial_z \omega_{\ell,n_2}\Big).
\end{equation}
Using the algebra structure of the boundary layer norm (see \eqref{al-norm}), we have 
$$\begin{aligned}
\alpha_\nu\| \partial_z \Delta_{\alpha_n}^{-1} \omega_{k,n_1} \omega_{\ell,n_2} \|_{\beta,\gamma,1} 
&\lesssim 
\nu^{1/4}\| \partial_z \Delta_{\alpha_n}^{-1} \omega_{k,n_1}\|_{L^\infty} \|\omega_{\ell,n_2} \|_{\beta,\gamma,1}
\\&\lesssim 
\nu^{1/4}\| \omega_{k,n_1}\|_{\beta,\gamma,1} \|\omega_{\ell,n_2} \|_{\beta,\gamma,1}
\\&\lesssim \nu^{1/4}\nu^{-\frac12[\frac{k}{4p}]}  \nu^{-\frac12[\frac{\ell}{4p}]}  e^{\gamma_0 (2+\frac{k+\ell}{4p})\nu^{1/2}t }
 \end{aligned}$$
 where we used
 $$
 \| \partial_z \Delta_{\alpha_n}^{-1} \omega_{k,n_1} \|_{L^\infty} 
 \le C \| \omega_{k,n_1}\|_{\beta,\gamma,1},
 $$
 an inequality which is proven in the Appendix.
Moreover,
$$\begin{aligned}
\alpha_\nu\| \Delta_{\alpha_n}^{-1} \omega_{k,n_1} \partial_z \omega_{\ell,n_2} \|_{\beta,\gamma,1} 
&\lesssim \nu^{1/4}
\|  \Delta_{\alpha_n}^{-1} \omega_{k,n_1}\|_{L^\infty} \| \partial_z\omega_{\ell,n_2} \|_{\beta,\gamma,1}
\\&\lesssim 
\nu^{1/4}\| \omega_{k,n_1}\|_{\beta,\gamma,1} \|\partial_z\omega_{\ell,n_2} \|_{\beta,\gamma,1}
\\&\lesssim \nu^{-\frac12[\frac{k}{4p}]}  \nu^{-\frac12[\frac{\ell}{4p}]}  e^{\gamma_0 (2+\frac{k+\ell}{4p})\nu^{1/2}t },
 \end{aligned}$$
 in which the derivative estimate \eqref{induction-jn} was used. 
We note that 
$$
[\frac{k}{4p}]+ [\frac{\ell}{4p}] \le [\frac{k+\ell}{4p}] = [\frac{j}{4p}] - 1.
$$
 This proves 
$$\begin{aligned}
\|& Q_{\alpha_n}(\omega_{k,n_1}, \omega_{\ell,n_2}) \|_{\beta,\gamma,1} 
\lesssim \nu^{1/2} \nu^{-\frac12[\frac{j}{4p}]} e^{\gamma_0 (1+\frac{j}{4p})\nu^{1/2}t }
\end{aligned}$$ 
for all $k+\ell = j-4p$. This, together with the estimate \eqref{bd-Sjn} on $S_{\alpha_n}$, yields 
$$
\begin{aligned}
\| R_{j,n} (t)\|_{\beta,\gamma,1} 
&\lesssim \nu t \nu^{-\frac12[\frac{j-1}{4p}]} e^{\gamma_0 (1+\frac{j-1}{4p})\nu^{1/2}t } + \nu^{1/2} \nu^{-\frac12[\frac{j}{4p}]} e^{\gamma_0 (1+\frac{j}{4p})\nu^{1/2}t }
\\
&\lesssim \nu^{1/2} \nu^{-\frac12[\frac{j}{4p}]} e^{\gamma_0 (1+\frac{j}{4p})\nu^{1/2}t },
\end{aligned}
$$
for all $j\ge 4p$ and $n\in \ZZ$, in which we used $\nu^{1/2} t \le e^{\gamma_0 t/4p}$. 

Putting these estimates into the Duhamel's integral formula \eqref{Duh-omegajn}, we obtain, for $j\ge 4p$,
$$
\begin{aligned}
\|\omega_{j,n} (t)\|_{\beta,\gamma,1}  
&\le C \int_0^t  e^{\gamma_1 \nu^{1/2} (t-s) } \| R_{j,n}(s)\|_{\beta,\gamma,1} \; ds
\\
&\le C \int_0^t  e^{\gamma_1 \nu^{1/2} (t-s) } \nu^{1/2} \nu^{-\frac12[\frac{j}{4p}]} e^{\gamma_0 (1+\frac{j}{4p})\nu^{1/2}s } \; ds  
\\
&\lesssim \nu^{-\frac12[\frac{j}{4p}]}  e^{\gamma_0 (1+\frac{j}{4p})\nu^{1/2}s } 
 \end{aligned}$$
and 
$$\begin{aligned} 
&
\|\partial_z\omega_{j,n}(t)\|_{\beta,\gamma,1} 
\\&\le C \int_0^t \Big( \nu^{-1/4}+ (\nu (t-s))^{-1/2} \Big) e^{\gamma_1 \nu^{1/2} (t-s) }  \| R_{j,n}(s)\|_{\beta,\gamma,1} \; ds
\\
&\le C 
\int_0^t \Big( \nu^{-1/4}+ (\nu (t-s))^{-1/2} \Big) e^{\gamma_1 \nu^{1/2} (t-s) } \nu^{1/2} \nu^{-\frac12[\frac{j}{4p}]} e^{\gamma_0 (1+\frac{j}{4p})\nu^{1/2}s } \; ds .
\end{aligned}$$
Using \eqref{t12-bd}, we obtain 
$$
\begin{aligned}
\|\partial_z \omega_{j,n} (t)\|_{\beta,\gamma,1}  
\lesssim \nu^{-1/4}\nu^{-\frac12[\frac{j}{4p}]}  e^{\gamma_0 (1+\frac{j}{4p})\nu^{1/2}s } ,
 \end{aligned}$$
which completes the proof of \eqref{induction-jn}. The lemma follows. 
\end{proof}

 
 \subsection{The remainder}
 

We recall that the approximate vorticity $\omega_\app$, constructed as in \eqref{def-omegaAAA}, approximately solves \eqref{vort-Phi}, 
leaving the error $R_\app$ defined by
$$
\begin{aligned}
R_\app
&= \nu^{p+\frac{M+1}{4}} S \omega_{M} +  \sum_{k+ \ell> M+1 -4 p; 1\le k,\ell \le M} \nu^{2p+ \frac{k+\ell}{4}}Q(\omega_k, \omega_\ell) .
\end{aligned}$$
Using the estimates in Lemma \ref{lem-omegaj}, we obtain  
$$\begin{aligned}
\|S \omega_{M} \|_{\sigma,\beta,\gamma,1} &\lesssim  \nu^{1/2} \nu^{-\frac12[\frac{M+1}{4p}]} 
e^{\gamma_0 (1+\frac{M+1}{4p})\nu^{1/2}t }
 \\
 \|Q(\omega_k, \omega_\ell) \|_{\sigma,\beta,\gamma,1} &\lesssim  \nu^{1/2} 
 \nu^{-\frac12[\frac{k+\ell}{4p}]} e^{\gamma_0 (2+\frac{k+\ell}{4p})\nu^{1/2}t }.\end{aligned}$$
This yields 
\begin{equation}\label{est-RAA}
\begin{aligned}
\|\ R_\app\|_{\sigma,\beta,\gamma,1} 
&\lesssim  \nu^{1/2} \sum_{j=M+1}^{2M} \nu^{-\frac12[\frac{j}{4p}]}  \Big( \nu^p e^{\gamma_0 \nu^{1/2} t}\Big)^{1+ \frac{j}{4p}} .
\end{aligned}\end{equation}


\subsection{Proof of Theorem \ref{theo-approx-stable}}


The proof of the Theorem now  straightforwardly follows from the estimates from Lemma \ref{lem-omegaj} 
and the estimate \eqref{est-RAA} on the remainder. Indeed, we choose the time $T_*$ so that 
\begin{equation}\label{def-Tstar} \nu^p e^{\gamma_0 \nu^{1/4} T_*} = \nu^\tau\end{equation}
for some fixed $\tau>\frac12$. It then follows that for all $t\le T_*$ and $j\ge 0$, there holds 
$$
\begin{aligned}
 \nu^{-\frac12[\frac{j}{4p}]}  \Big( \nu^p e^{\gamma_0 \nu^{1/2} t}\Big)^{1+ \frac{j}{4p}}  \lesssim \nu^\tau  \nu^{(\tau - \frac12) \frac{j}{4p}}.
\end{aligned}
$$
Using this into the estimates \eqref{est-omegaAAA} and \eqref{est-RAA}, respectively, we obtain 
\begin{equation}\label{est-wRapp}
\begin{aligned}
\| \partial_z^b\omega_\app(t)\|_{\sigma,\beta,\gamma,1} &\lesssim \nu^{p-b/4} e^{\gamma_0 \nu^{1/2} t}  \lesssim \nu^{\tau- b/4},
\\
\|R_\app (t)\|_{\sigma,\beta,\gamma,1} 
&\lesssim  \nu^{1/2} \nu^{-\frac12[\frac{M}{4p}]}  \Big( \nu^p e^{\gamma_0 \nu^{1/2} t}\Big)^{1+ \frac{M}{4p}} 
\\&\lesssim \nu^{\tau+1/2}  \nu^{(\tau - \frac12) \frac{M}{4p}},
\end{aligned}\end{equation}
for all $t \le T_*$. Since $\tau>\frac12$ and $M$ is arbitrarily large (and fixed), the remainder is of order $\nu^P$ 
for arbitrarily large number $P$. The theorem is proved.


\section{Nonlinear instability}\label{sec-nonlinear}


We are now ready to give the proof of Theorem \ref{firstinstability-stable}. Let $\widetilde u_\app$ 
be the approximate solution constructed in Theorem \ref{theo-approx-stable} and let  
$$v = u - \widetilde u_\app,$$
with $u$ being the genuine solution to the nonlinear Navier-Stokes equations. 
The corresponding vorticity $\omega = \nabla \times v$ solves 
$$ 
\partial_t   \omega + (\widetilde u_\mathrm{app} + v) \cdot \nabla \omega + v \cdot \nabla \widetilde \omega_\mathrm{app} 
= \Delta \omega +  R_\app
$$
for the remainder $R_\app  = \mathrm{curl } \,  \, {\cal E}_\app$ satisfying the estimate \eqref{est-RAA}. Let us write 
$$
u_\app = \widetilde u_\app - U_\bl .
$$
To make use of the semigroup bound for the linearized operator $\partial_t - L$, we rewrite the vorticity equation as  
$$ 
(\partial_t - L) \omega + (u_\app + v) \cdot \nabla \omega + v \cdot \nabla \omega_\app  = R_\app
$$
with $\omega_{\vert_{t=0}} = 0$. We note that since the boundary layer profile is stationary, 
the perturbative operator $S$ defined as in \eqref{def-Sop} is in fact zero. The Duhamel's principle then yields 
\begin{equation}\label{Duh-omega} 
\omega (t) = \int_0^t e^{L(t-s)} \Big( R_\app - (u_\app + v) \cdot \nabla \omega - v \cdot \nabla \omega_\app  \Big) \; ds.
\end{equation}
Using the representation \eqref{Duh-omega}, we shall prove the existence and give estimates on $\omega$. 
We shall work with the following norm 
\begin{equation}\label{def-123norm} 
||| \omega(t) |||: = \| \omega(t) \|_{\sigma,\beta,\gamma,1}+ \nu^{1/4}\| \partial_x \omega(t) \|_{\sigma,\beta,\gamma,1} 
+ \nu^{1/4}\| \partial_z \omega (t)\|_{\sigma,\beta,\gamma,1} 
\end{equation}
in which the factor $\nu^{1/4}$ added in the norm is to overcome the loss of $\nu^{-1/4}$ for derivatives
(see \eqref{semi-bounds} for more details).

Let $p$ be an arbitrary large number. We introduce the maximal time $T_\nu$ of existence, defined by 
\begin{equation}\label{claim-www}
\begin{aligned}
T_\nu: = \max\Big \{ t\in [0,T_*]~:~\sup_{0\le s\le t} ||| \omega(s) |||  \le \nu^p e^{\gamma_0 \nu^{1/2} t}\Big\}
 \end{aligned}\end{equation}
in which $T_*$ is defined as in \eqref{def-Tstar}. By the short time existence theory, with zero initial data, $T_\nu$ exists and is positive. 
It remains to give a lower bound estimate on $T_\nu$.  First, we obtain the following lemmas. 
\begin{lemma} For $t \in [0,T_*]$, there hold
$$
\begin{aligned}
\| \partial_x^a \partial_z^b \omega_\app(t)\|_{\sigma,\beta,\gamma,1} &\lesssim \nu^{a/4-b/4} \Big( \nu^p e^{\gamma_0 \nu^{1/2} t}\Big)
\\
\|\partial_x^a \partial_z^bR_\app (t)\|_{\sigma,\beta,\gamma,1} 
&\lesssim  \nu^{1/2+a/4-b/4} \nu^{-\frac12[\frac{M}{4p}]}  \Big( \nu^p e^{\gamma_0 \nu^{1/2} t}\Big)^{1+ \frac{M}{4p}} .
\end{aligned}$$
\end{lemma}
\begin{proof} This follows directly from Lemma \ref{lem-omegaj} and the estimate \eqref{est-RAA} on the remainder $R_\app$, 
upon noting the fact that for $t\in [0,T_*]$, $\nu^p e^{\gamma_0 \nu^{1/2} t}$ remains sufficiently small.
\end{proof}

\begin{lemma} There holds 
$$
\Big\|( u_\app + v) \cdot \nabla \omega + v \cdot \nabla  \omega_\app \Big\|_{\sigma,\beta,\gamma,1} 
\lesssim \nu^{-\frac14}\Big( \nu^p e^{\gamma_0 \nu^{1/2} t}\Big)^2.
$$
for $t\in [0,T_\nu]$. 
\end{lemma}
\begin{proof}
We first recall the elliptic estimate 
$$
\| u\|_{\sigma,0} \lesssim \|\omega\|_{\sigma,\beta,\gamma,1}
$$ 
which is proven in the Appendix \ref{sec-elliptic}), and the following uniform bounds (see \eqref{al-norm1})
$$
\begin{aligned}
\| u \cdot \nabla \tilde \omega \|_{\sigma,\beta,\gamma,1}
&\le \| u \|_{\sigma,0} \| \nabla \tilde \omega \|_{\sigma,\beta,\gamma,1}
\\&\le \| \omega \|_{\sigma,\beta,\gamma,1} \| \nabla\tilde \omega \|_{\sigma,\beta,\gamma,1} .
 \end{aligned} $$
Using this and the bounds on $\omega_\app$, we obtain 
$$
\begin{aligned}
 \| v \cdot \nabla \omega_\app \|_{\sigma,\beta,\gamma,1} &\lesssim 
 \nu^{-1/4}\Big( \nu^p e^{\gamma_0 \nu^{1/2} t}\Big) \| \omega \|_{\sigma,\beta,\gamma,1} 
 \lesssim\nu^{-\frac14} \Big( \nu^p e^{\gamma_0 \nu^{1/2} t}\Big)^2
\end{aligned}
$$
and 
$$
\begin{aligned}
 \| ( u_{\app} + v)\cdot \nabla \omega \|_{\sigma,\beta,\gamma,1}  &
 \lesssim \Big( \nu^p e^{\gamma_0 \nu^{1/4} t} + \| \omega\|_{\sigma,\beta,\gamma,1}\Big) \| \nabla \omega \|_{\sigma,\beta,\gamma,1} 
\\&\lesssim \nu^{-\frac14}\Big( \nu^p e^{\gamma_0 \nu^{1/4} t}\Big)^2
 .\end{aligned}
 $$
This proves the lemma. \end{proof}

Next, using Theorem \ref{theo-eLt-stable} and noting that $\alpha e^{-\alpha^2 \nu t} \lesssim 1+ (\nu t)^{-1/2}$, we obtain the following uniform semigroup bounds:
\begin{equation}\label{semi-bounds}\begin{aligned}
\| e^{L t}\omega\|_{ \sigma,\beta, \gamma, 1} &\le C_0 \nu^{-1/2} e^{\gamma_1 \nu^{1/2} t }  \| \omega\|_{\sigma, \beta, \gamma, 1} 
\\
\| \partial_x e^{L t}\omega\|_{ \sigma,\beta, \gamma, 1} &\le
 C_0 \nu^{-1/2}\Big( 1+ (\nu t)^{-1/2} \Big) e^{\gamma_1 \nu^{1/2} t }  \| \omega\|_{\sigma, \beta, \gamma, 1} 
\\
\| \partial_z e^{L t}\omega\|_{ \sigma,\beta, \gamma, 1} &\le
 C_0 \nu^{-1/2} \Big( \nu^{-1/4}+ (\nu t)^{-1/2} \Big) e^{\gamma_1 \nu^{1/2} t }  \| \omega\|_{\sigma, \beta, \gamma, 1} .
\end{aligned}\end{equation}
We are now ready to apply the above estimates into the Duhamel's integral formula \eqref{Duh-omega}. 
We obtain 
$$
\begin{aligned}
 \|  \omega (t)\|_{\sigma,\beta,\gamma,1} 
 &\lesssim \nu^{-1/2}\int_0^t e^{\gamma_1 \nu^{1/2} (t-s)} \nu^{-\frac14}\Big (\nu^{p} e^{\gamma_0 \nu^{1/2} s}\Big)^2\; ds
 \\&\quad + \nu^{-1/2}\int_0^t e^{\gamma_1 \nu^{1/2} (t-s)}  
 \nu^{1/2} \nu^{-\frac12[\frac{M}{4p}]}  \Big( \nu^p e^{\gamma_0 \nu^{1/2} s}\Big)^{1+ \frac{M}{4p}}  
 \; ds 
\\
 &\lesssim \nu^{-5/4} \Big (\nu^{p} e^{\gamma_0 \nu^{1/2} t}\Big)^2 + \nu^P\Big( \nu^p e^{\gamma_0 \nu^{1/2} t} \Big),
 \end{aligned}$$
upon taking $\gamma_1$ sufficiently close to $\gamma_0$. Set $T_1$ so that 
\begin{equation}\label{def-T11} \nu^p e^{\gamma_0 \nu^{1/2} T_1} =\theta_0 \nu^{\frac54},\end{equation}
for some sufficiently small and positive constant $\theta_0$. 
Then, for all $t\le T_1$, there holds 
$$
 \begin{aligned}\|  \omega (t)\|_{\sigma,\beta,\gamma,1}  
 &\lesssim \nu^{p} e^{\gamma_0 \nu^{1/2}t} \Big[ \theta_0
 + \nu^{P}\Big]
\end{aligned} $$
Similarly, we estimate the derivatives of $\omega$. The Duhamel integral and the semigroup bounds yield 
$$
\begin{aligned}
\| \nabla \omega (t)\|_{\sigma,\beta,\gamma,1} 
 &\lesssim \nu^{-1/2}\int_0^t e^{\gamma_1 \nu^{1/2} (t-s)}  \Big( \nu^{-1/4}+ (\nu (t-s))^{-1/2} \Big) 
 \\&\quad \times  
 \Big[ 
 \nu^{-\frac14}\Big (\nu^{p} e^{\gamma_0 \nu^{1/2} s}\Big)^2 
+  \nu^{-\frac12[\frac{M}{4p}]}  \Big( \nu^p e^{\gamma_0 \nu^{1/2} s}\Big)^{1+ \frac{M}{4p}} 
\Big] \; ds 
\\&\lesssim \nu^{-5/4} \Big[ 
 \nu^{-\frac14}\Big (\nu^{p} e^{\gamma_0 \nu^{1/2} s}\Big)^2 
+  \nu^{-\frac12[\frac{M}{4p}]}  \Big( \nu^p e^{\gamma_0 \nu^{1/2} s}\Big)^{1+ \frac{M}{4p}} 
\Big] 
 \end{aligned}$$
By view of \eqref{def-T11} and the estimate \eqref{t12-bd}, 
the above yields 
$$
 \begin{aligned}
 \| \nabla \omega (t)\|_{\sigma,\beta,\gamma,1}  &\lesssim \nu^{p-\frac14} e^{\gamma_0 \nu^{1/2}t} \Big[ \theta_0 
+ \nu^P\Big] .
\end{aligned} 
$$
To summarize, for $t\le \min\{T_*, T_1,T_\nu\}$, with the times defined as in \eqref{def-Tstar},
 \eqref{claim-www}, and \eqref{def-T11}, we obtain 
$$ ||| w(t)||| \lesssim  \nu^{p} e^{\gamma_0 \nu^{1/2}t} \Big[ \theta_0+ \nu^P\Big].$$
Taking $\theta_0$ sufficiently small, we obtain 
$$
 ||| w(t)||| \ll\nu^{p} e^{\gamma_0 \nu^{1/2}t} 
$$
for all time $t\le \min\{T_*, T_1,T_\nu\}$. 
In particular, this proves that the maximal time of existence $T_\nu$ is greater than $ T_1$, defined as in \eqref{def-T11}.  
This proves that at the time $t=T_*$, the approximate solution grows to order of $\nu^{5/8}$ in the $L^\infty$ norm. 
Theorem \ref{firstinstability-stable} is proved.



\section{Elliptic estimates}\label{sec-elliptic}


In this section, for sake of completeness, we recall the elliptic estimates with respect to the boundary layer norms. 
These estimates are proven in \cite[Section 3]{GrN2}. 

First, we consider the classical one-dimensional Laplace equation
\beq \label{Lap1}
\Delta_\alpha \phi = \partial_z^2 \phi - \alpha^2 \phi = f
\eeq
on the half line $z \ge 0$, with the Dirichlet boundary condition $ \phi(0) = 0$. We recall the function space $L^\infty_\beta$ defined by the finite norm $\| f\|_\beta  =\sup_{z \ge 0}|f(z) | e^{\beta z}$. We will prove
\begin{prop}
If $f \in L^\infty_\beta$ for some $\beta >0$, then $\phi \in L^\infty$. In addition, there holds
\beq \label{Lap3}
(1+\alpha^2) \| \phi \|_{L^\infty} + (1+ | \alpha |) \,  \| \partial_z \phi \|_{L^\infty}
+ \| \partial_z^2 \phi \|_{L^\infty}  \le C \| f \|_{\beta},
\eeq
where the constant $C$ is independent of $\alpha \in \RR$.
\end{prop}
\begin{proof} The solution $\phi$ of (\ref{Lap1}) is explicitly given by
\begin{equation}\label{laplacephi1}
\phi(z) =\int_0^\infty G_\alpha (x,z)f(x) dx 
\end{equation}
where $G_\alpha(x,z) =  - {1 \over 2 \alpha} \Bigl( e^{- \alpha | x-z | }  - e^{-\alpha | x + z |} \Bigr) $. A direct bound leads to
$$
\| \phi \|_{L^\infty} \le {C \over \alpha^2} \| f \|_{\beta}
$$
in which the extra factor of $\alpha^{-1}$ is due to the $x$-integration. 
Differentiating the integral formula, we get
$$
 \|\partial_z \phi \|_{L^\infty} \le {C \over \alpha} \| f \|_{\beta} .
$$
The estimate for $\partial_z^2 \phi$ follows by using directly the equation $\partial_z^2 \phi = \alpha^2 \phi + f$. This yields the lemma for the case when $\alpha$ is bounded away from zero. 

As for small $\alpha$, we note that $G_\alpha(0,z) = 0$ and $|\partial_x G_\alpha(x,z)|\le 1$. Hence, $|G_\alpha(x,z)|\le |x|$ and so    
$$ |\phi(z)| \le \int_0^\infty |G_\alpha (x,z)f(x)| dx  \le \|f\|_\beta \int_0^\infty |x| e^{-\beta x} \; dx \le C \| f\|_\beta.$$
Similarly, since $|\partial_z G_\alpha(x,z)|\le 1$, we get 
$$ |\partial_z\phi(z)| \le \int_0^\infty |\partial_zG_\alpha (x,z)f(x)| dx  \le \|f\|_\beta \int_0^\infty e^{-\beta x} \; dx \le C \| f\|_\beta.$$
The lemma follows. 
\end{proof}
We now establish a similar property for ${\cal B}^{\beta,\gamma,1}$ norms:
\begin{prop} \label{proplaplace3}
If $f \in {\cal B}^{\beta,\gamma,1}$ for some $\beta>0$, then $\phi \in L^\infty$. In addition, there holds
\beq \label{Lap4}
(1+| \alpha |) \, \| \phi \|_{L^\infty} 
+  \| \partial_z \phi \|_{L^\infty}   \le C \| f \|_{\beta,\gamma,1},
\eeq
where the constant $C$ is independent of $\alpha \in \RR$. 
\end{prop}
\begin{proof}
We will only consider the case $\alpha > 0$, the opposite case being similar. As above, since $G_\alpha(x,z)$ is bounded by $\alpha^{-1}$, using (\ref{laplacephi1}), we have 
$$
| \phi (z) | \le \alpha^{-1} \| f \|_{\beta,\gamma,1} \int_0^\infty
e^{- \alpha |   z -   x |} e^{-\beta   x} 
\Bigl( 1 + \delta^{-1} \phi_P(\delta^{-1} x) \Bigr) d  x
$$
$$
\le  \alpha^{-1} \| f \|_{\beta,\gamma,1} 
\Bigl( \alpha^{-1} + \delta^{-1} \int_0^\infty \phi_P(\delta^{-1} x) d   x \Bigr) 
$$
which yields the claimed bound for $\phi$ since $P >1$. A similar proof applies for $\partial_z \phi$. 
\end{proof}

%

Next, let us now turn to the two dimensional Laplace operator.
\begin{prop} \label{inverseLaplace}
Let $\phi$ be the solution of
$$
- \Delta \phi = \omega
$$
with the zero Dirichlet boundary condition, and let
$$
v = \nabla^\perp \phi. 
$$
If $\omega \in {\cal B}^{\sigma,\beta,\gamma,1}$, then $\phi  \in {\cal B}^{\sigma,0}$ and
$v  = (v_1,v_2) \in {\cal B}^{\sigma,0}$. Moreover, there hold the following elliptic estimates 
\beq \label{Lap4}
\| \phi \|_{\sigma,0} + \|  v_1 \|_{\sigma,0} + \|  v_2 \|_{\sigma,0}   \le C \| \omega \|_{\sigma,\beta,\gamma,1},
\eeq
\end{prop}
\begin{proof}
The proof follows directly from taking the Fourier transform in the $x$ variable, with dual integer Fourier component $\alpha$, and using Proposition \ref{proplaplace3}. 
\end{proof}

%
%
%
%
%
%
%
%
%

\bibliographystyle{plain}

\def\cprime{$'$} \def\cprime{$'$}

\end{document}